\pgfplotsset{compat=1.15}
\def\makeCal#1{%
\expandafter\newcommand\csname c#1\endcsname{\mathcal{#1}}}
\def\makeBB#1{%
\expandafter\newcommand\csname b#1\endcsname{\mathbb{#1}}}
\def\makeFrak#1{%
\expandafter\newcommand\csname f#1\endcsname{\mathfrak{#1}}}
\edef\y{\@Alph\count@}%
\theoremstyle{plain}
\newtheorem{thm}{Theorem}[section]
\newtheorem{cor}[thm]{Corollary}
\newtheorem{lem}[thm]{Lemma}
\newtheorem{prop}[thm]{Proposition}
\newtheorem{quest}[thm]{Question}
\theoremstyle{definition}
\newtheorem{rem}[thm]{Remark}
\newtheorem{defn}[thm]{Definition}
\newtheorem{hyp}[thm]{Hypotheses}
\newtheorem{notn}[thm]{Notation}
\newtheorem{const}[thm]{Construction}
\newtheorem{ex}[thm]{Example}
\newtheorem*{thm*}{Theorem}
\newtheorem*{prop*}{Proposition}
\def\rm{\mathrm}
\DeclareMathOperator{\rk}{rank}
\DeclareMathOperator{\Aut}{Aut}
\DeclareMathOperator{\Hom}{Hom}
\newcommand{\id}{\mathrm{id}}
\newcommand{\Sch}{\mathrm{Sch}}
\DeclareMathOperator{\Spec}{Spec}
\DeclareMathOperator{\Stab}{Stab}
\DeclareMathOperator{\Pic}{Pic}
\DeclareMathOperator{\im}{im}
\DeclareMathOperator{\Bl}{Bl}
\DeclareMathOperator{\Proj}{Proj}
\DeclareMathOperator{\logZ}{logZ}
\def\mscbar{\mathcal{A}}
\def\Mmscbar{\mathcal{A}}
\def\cal{\mathcal}
\def\bb{\mathbb}
\def\rk{\mathrm{rk}}
\begin{document}

\title{Spaces of Multiscaled Lines with Collision}

\author{Antonios-Alexandros Robotis}

\thanks{This material is based upon work supported by the National Science Foundation under Grant No. DMS-1928930 and by the Alfred P. Sloan Foundation under grant G-2021-16778, while the author was in residence at the Simons Laufer Mathematical Sciences Institute (formerly MSRI) in Berkeley, California, during the Spring 2024 semester.}

\dedicatory{for my mother}

\maketitle

\begin{abstract}
    We study varieties $\Mmscbar_n$ arising as equivariant compactifications of the space of $n$ points in $\bC$ up to overall translation. We define $\Mmscbar_n$ and examine its basic geometric properties before constructing an isomorphism to an augmented wonderful variety as studied in \cite{Huhetalsemismall}. We show that $\Mmscbar_n$ is in a canonical way a resolution of the space of scaled curves considered in \cite{Zahariucmarked}, proving along the way that the resolution constructed in \cite{Zahariucresolution} is equivalent to ours. 
\end{abstract}

\tableofcontents

\section{Introduction}

Configuration spaces are classical objects of study in mathematics and physics which often carry rich and intricate geometric structure. Roughly speaking, a configuration space of $n$ points in a space $X$ is a subset of $X^n$ or a related space. For instance, one often considers $\mathrm{Conf}_n(X) = \{(x_1,\ldots, x_n)\in X^n: x_i\ne x_j \text{ for all } i\ne j\}$ or its quotient space $\mathrm{Conf}_n(X)/\mathfrak{S}_n$. However, spaces such as $\mathrm{Conf}_n(X)$ have the defect that they are not compact; indeed, limit points are missing corresponding to paths in $\mathrm{Conf}_n(X)$ in which some $x_i$ and $x_j$ become arbitrarily close. As such, a compactification of $\mathrm{Conf}_n(X)$ consists, at least, of the information of what happens when a pair of points collide. 

One potential strategy is to simply allow points to collide, and one has $X^n$ as a compactific\-ation of $\mathrm{Conf}_n(X)$ when $X$ is compact. Sometimes, however, this is insufficient as one would like to remember more information about colliding points, such as their relative trajectories. A possible solution in algebraic geometry is to consider the Hilbert scheme of $n$ points on $X$, $\mathrm{Hilb}^n(X)$, which remembers information such as tangent directions of colliding points. For a smooth surface $X$, $\mathrm{Hilb}^n(X)$ is smooth and quite well-behaved \cite{Fogarty}*{Thm. 2.4}. However, when $\dim X\ge 3$ these spaces are often rather singular; for instance, $\mathrm{Hilb}^4(\bP^3)$ is singular \cite{Fogarty}*{p.11}.\footnote{Thanks to Ritvik Ramkumar for pointing out this example.} An alternative compactification $X[n]$ is defined in \cite{Fultonmacpherson} with the benefit that it is nonsingular when $X$ is. 

As a final example, consider the space $\mathrm{Conf}_n(\bA^1)/\mathrm{Aff}(\bA^1)$, where $\mathrm{Aff}(\bA^1)$ is the 2-dimen\-sional group of affine transformations of $\bA^1$ and $n\ge 3$. We can consider $\mathrm{Aff}(\bA^1)$ as the subgroup of $\Aut(\bP^1) = \mathrm{PGL}_2(\bC)$ consisting of automorphisms sending the point at infinity to itself. Since the action of $\mathrm{Aff}(\bA^1)$ on $\bA^1$ is 2-transitive, we may identify $\mathrm{Conf}_n(\bA^1)/\mathrm{Aff}(\bA^1)$ with an open subset of $\bA^{n-2}$, given by the complement of the hyperplane arrangement: $\cG = \{z_i = z_j: 1\le i<j\le n-2 \}\cup \{z_i = 0,1\}_{i=1}^{n-2}$. On the other hand, one can regard a point of $\mathrm{Conf}_n(\bA^1)/\mathrm{Aff}(\bA^1)$ as an $(n+1)$-marked genus $0$ curve, $(\bP^1,p_\infty,p_1,\ldots,p_n)$, where we set $p_\infty = \infty, p_1 = 0,$ and $p_2 = 1$. $\mathrm{Conf}_n(\bA^1)/\mathrm{Aff}(\bA^1)$ thus admits a smooth compactification given by the Grothendieck-Knudsen moduli space of genus $0$ curves, $\overline{M}_{0,n+1}$ \cite{Knudsencurves}. Here, ``bubbling'' occurs when marked points collide, expressing limits of paths in $\mathrm{Conf}_n(\bA^1)$ as reducible $(n+1)$-marked nodal genus $0$ curves. Moreover, for all $n\ge 2$ a result of Kapranov expresses $\overline{M}_{0,n+1}$ as an explicit blowup of a hyperplane arrangement in $\bP^{n-2}$ \cite{ChowQuotients}*{Thm. 4.3.3}.

In the present work, we consider the space of configurations of $n$ points in $\bA^1$ up to translation, $\bA^n/\bG_a$, where $\bG_a$ acts diagonally. We construct a complex variety $\Mmscbar_n$ which is a smooth compactification of $\bA^n/\bG_a$. Our original motivation for constructing $\Mmscbar_n$ comes from Bridgeland stability \cite{Br07}: $\Mmscbar_n$ is used in forthcoming work joint work with Daniel Halpern-Leistner \cite{StabSODsII} to model degeneration of stability conditions converging in a partial compactification of $\Stab(\cC)/\bG_a$. We elaborate on this later in the introduction, focusing on the construction of $\Mmscbar_n$ for now.

We regard $p\in \bA^n$ as a configuration of $n+1$ points in $\bP^1$, $(\infty,p_1,\ldots, p_n)$, where the $p_i$ can collide with each other, but not with $\infty$. If $z$ is a coordinate on $\bA^1 \subset \bP^1$, then $dz$ defines a meromorphic differential on $\bP^1$ with a unique pole of order $2$ at $\infty$. $\alpha \in \Aut(\bP^1)$ satisfies $\alpha^*(dz) = dz$ if and only if it restricts to a translation $p\mapsto p+a$ on $\bA^1$. As a consequence, an element of $\bA^n/\bG_a$ is equivalent to an isomorphism class of the data $(\Sigma,p_\infty,\omega,p_1,\ldots, p_n)$ where
\begin{enumerate}
    \item $\Sigma$ is compact Riemann surface of genus $0$;
    \item $\omega$ is a meromorphic differential on $\Sigma$ with a unique pole of order $2$ at $p_\infty$; and 
    \item the marked points $p_1,\ldots, p_n$ may freely collide with each other, but not $p_\infty$.
\end{enumerate}

Denote the space of such data by $\Mmscbar_n^\circ$. To compactify $\bA^n/\bG_a$, we thus need to understand what happens in $\Mmscbar_n^\circ$ when one or more of the $p_i$ tends towards $p_\infty$. As in the case of $\overline{M}_{0,n+1}$, bubbling occurs and the limiting object is a reducible arithmetic genus 0 curve with $n+1$ marked points. Furthermore, the condition (3) above holds.

It is subtle matter to determine what structure the reducible genus $0$ marked curves $(\Sigma,p_\infty,p_1,\ldots, p_n)$ should carry which is a degeneration of the meromorphic differential $\omega$. A natural guess would be to consider a meromorphic section of $\Omega_\Sigma$ with some prescribed zeros and poles, as in the irreducible case. However, the moduli spaces that one obtains in this fashion were already studied by Zahariuc in \cites{Zahariucmarked,Zahariucresolution} under a slightly different guise (see \S5.1 for the relation) and are mildly singular, rendering them insufficient for our intended applications. The insight for how to proceed came from \cite{bcggm}. The input is twofold: (for more precise definitions see \S3)
\begin{enumerate}
    \item[(a)] Augment the combinatorial structure associated to $\Sigma$: instead of considering the dual graph $\Gamma(\Sigma) = (V(\Sigma),E(\Sigma))$ as simply a tree, equip it with the structure of a \emph{rooted level tree}. 
        \begin{enumerate}
            \item[(i)] The root structure is a choice of a distinguished root vertex $v_0$ of $\Gamma(\Sigma)$, which is the one corresponding to the irreducible component containing $p_\infty$. This introduces a partial order on $V(\Sigma)$ by saying $u \subset v$ if $u$ is closer to $v_0$ than $v$. A vertex $u$ is called \emph{terminal} if it is maximal for this partial order.
            \item[(ii)] The level structure $\preceq$ is determined by a function $\lambda:V(\Sigma)\twoheadrightarrow [\ell]$ such that $\lambda(v_0) = 0$, if $u \subset v$ then $\ell(u) <  \ell(v)$, and $\lambda$ maps all terminal vertices to $\ell$. Two vertices are on the same \emph{level} if their value under $\lambda$ coincides. 
        \end{enumerate}
    \item[(b)] Equip $\Sigma$ as above with a meromorphic differential $\omega_v$ on each irreducible component $\Sigma_v$ with a unique pole of order $2$ at the node connecting it to the lower level components of the curve, or at $p_\infty$ when $v = v_0$. We consider such collections $\omega_\bullet=\{\omega_v\}_{v\in V(\Sigma)}$ up to an equivalence relation involving rescaling differentials levelwise by $\bC^*$, except for on the terminal level. Notably, the equivalence relation ensures that the terminal components of $\Sigma$ are equipped with a \emph{bona fide} meromorphic differential.
\end{enumerate}

The space of such data $(\Sigma,\preceq,p_\infty,\omega_\bullet,p_1,\ldots,p_n)$ up to isomorphism is called the space of \emph{multiscaled lines with collision} and denoted $\Mmscbar_n$. The moniker ``with collision'' is to distinguish these spaces from the ones considered in \cite{bcggm} in which marked points are not allowed to collide. Nevertheless, we will simply say ``multiscaled lines'' in what follows for brevity. We give $\Mmscbar_n$ the structure of a complex variety by constructing a system of coordinate charts. With respect to this structure one has:

\begin{thm*}
[\ref{T:spaceconstruction}]
    $\Mmscbar_n$ is a compact complex algebraic manifold of dimension $n-1$ such that $\bA^n/\bG_a \hookrightarrow \Mmscbar_n$ is an open immersion.
\end{thm*}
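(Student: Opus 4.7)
The proof strategy is to construct $\Mmscbar_n$ by providing explicit coordinate charts indexed by the combinatorial type of the underlying rooted level tree, in the spirit of \cite{bcggm}.

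I would begin by stratifying $\Mmscbar_n$ into locally closed strata $S_\tau$ indexed by isomorphism classes $\tau$ of decorated rooted level trees (tree $T$, level function $\lambda$, and distribution of marked points among the vertices). The open stratum corresponds to the trivial tree with a single root vertex at level $0$ carrying $p_\infty$ and all $n$ marked points; on this stratum the equivalence relation records $(\bP^1, \infty, dz, p_1, \ldots, p_n)$ modulo translation, giving a canonical identification with $\bA^n/\bG_a$ and the candidate open immersion.

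Next, for each point $x \in S_\tau$ I would construct a chart $U_x \subset \Mmscbar_n$ containing $x$. The chart coordinates are of two types: first, moduli describing variation within $S_\tau$ (positions of marked points and sub-nodes on each irreducible component, modulo the automorphisms preserving the differential); and second, one plumbing parameter $s_k$ per non-root level of $\tau$, where turning on $s_k$ simultaneously smooths every node separating level $<k$ from level $\ge k$ and rescales the higher-level differentials accordingly. The levelwise $\bC^*$-rescaling equivalence on $\omega_\bullet$ is used to eliminate redundant degrees of freedom, leaving a chart isomorphic to an open subset of $\bC^{n-1}$. Transition functions between overlapping charts correspond to explicit comparisons of plumbing to moduli coordinates and are algebraic, giving $\Mmscbar_n$ the structure of a smooth $(n-1)$-dimensional algebraic variety containing $\bA^n/\bG_a$ as an open subvariety (namely, the chart of the trivial type).

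The main obstacle is compactness. For this I would argue by a bubbling-style sequential compactness argument: given a sequence $(x_m) \subset \Mmscbar_n$, after passing to a subsequence of fixed combinatorial type $\tau$ I would identify groups of marked points that collide or escape into a distinguished node on some component. Rescaling the relevant differentials levelwise and passing to the limit produces a multiscaled line of combinatorial type strictly deeper than $\tau$. Since the combinatorial types of multiscaled lines with $n$ marked points are finite in number, this procedure terminates and yields a limit in $\Mmscbar_n$. A secondary technical issue is Hausdorffness, which reduces to the closedness of orbits of the auxiliary $(\bC^*)^{\ell(\tau)}$-action on pre-quotient smoothing models and follows from the weight structure of that action.
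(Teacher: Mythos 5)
Your chart-based construction is essentially the approach the paper takes. The charts indexed by dual level trees, the coordinates split into ``moduli within a stratum'' (the $z_{ij}$) plus one smoothing parameter per non-root level (the $t_m$), the use of levelwise $\bC^*$-rescaling to eliminate redundancy, the algebraic transition maps between coarsenings, and the bubbling-style limit for compactness all match. A minor structural difference is that the paper indexes each chart by a whole dual level tree $\Gamma$ rather than by a point $x\in S_\tau$, so each chart $U_\Gamma$ contains $S_\Gamma^\circ$ together with all ``shallower'' strata on which the tree of $\Gamma$ contracts; this is what makes the coarsening/transition bookkeeping in \Cref{L:goodindicesCOV} clean.

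The genuine divergence, and the place where your sketch has a gap, is Hausdorffness. You propose to reduce it to closedness of orbits of an auxiliary $(\bC^*)^{\ell(\tau)}$-action on a pre-quotient smoothing model and appeal to the weight structure. First, no such pre-quotient model is constructed here (the space is a set of equivalence classes, not a GIT quotient), so you would have to build that model and verify that the action is actually proper — closed orbits are necessary but not sufficient for the quotient to be separated. Second, even if each $U_\Gamma$ were separated, you would still need to show separatedness of the glued space across different $\Gamma$, and the orbit argument says nothing about that. The paper handles this globally and concretely: it extends the period functions $\Pi_{ij}$ to continuous maps $\Mmscbar_n\to\bP^1$, observes that a point $\Sigma$ is uniquely determined by its level tree and the data $\{z_{ij}(\Sigma)\}$ (\Cref{C:determinedbyz}), and then shows that a net from the dense open set $Y$ where all periods are nonzero has at most one limit, by reading off the putative limit's level tree and coordinates from $\lim\Pi_{ij}$ and $\lim\Pi_{kl}/\Pi_{ij}$. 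This is what makes the period functions not merely a nice byproduct but the engine of the separation argument.

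There is also a logical-ordering issue worth flagging: you call Hausdorffness a ``secondary technical issue'' after compactness, but your compactness proof argues with sequences, and sequential compactness only implies compactness once you know the space is a (Hausdorff) manifold, hence metrizable. So Hausdorffness must come first, as it does in the paper's proof of \Cref{T:spaceconstruction}: nets and separation are established, then the complex manifold structure, then the reduction of compactness to sequential compactness for sequences in the dense locus $Y$, and finally the period-ratio analysis producing the limit.
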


Despite its relatively \textit{ad hoc} construction, $\Mmscbar_n$ possess some fairly miraculous properties. It is uniformly rational in the sense of \cite{BogomolovBöhning} and by \Cref{P:GactiononMn} carries a natural algebraic action by $G := \bG_a^n/\Delta$ which makes it an equivariant compactification of $\bA^n/\bG_a \cong \bA^{n-1}$ as studied in \cite{HassettTschinkelequiv}. Furthermore, $\Mmscbar_n$ has a pair of natural stratifications. One of them is indexed by the combinatorial data of dual level trees and is quasi-affine, see \Cref{P:stratificationofMn}. The other is by collision of points and reveals some recursive structures on the collection of spaces $\{\Mmscbar_n\}_{n\ge 1}$, see \Cref{P:collisionstratification}.

The space $\Mmscbar_n$ arose fairly early on in the writing of \cite{StabSODsII} and in \S3 we give a preliminary version of the construction, based on joint work with Daniel Halpern-Leistner. We initially attempted to compactify $\bA^n/\bG_a$ by writing down a certain moduli functor, equivalent to the one represented by $\overline{P}_n$ in \cite{Zahariucmarked}. However, in \cite{Zahariucresolution} Zahariuc proves that $\overline{P}_n$ has singularities for all $n\ge 4$, rendering $\overline{P}_n$ unsuitable for our purposes. \cite{Zahariucresolution} gave a possible explanation for the abundance of structure on $\Mmscbar_n$ and inspired this paper. There, Zahariuc proves that an augmented wonderful variety $W_n$, as studied in \cite{Huhetalsemismall}, provides a resolution of singularities $\gamma:W_n\to \overline{P}_n$ for all $n\ge 4$. On the other hand, based on the explicit descriptions of $\Mmscbar_n$ and $\overline{P}_n(\bC)$ (regarded as a complex variety) there is a natural set map $\xi:\Mmscbar_n \to \overline{P}_n(\bC)$ which forgets the non-terminal differentials and level structure and restricts to the identity on $\bA^n/\bG_a$ (see \S5.1). This led to the following question:

\begin{quest}
    Does there exist an isomorphism of algebraic varieties $\Mmscbar_n \to W_n$ such that the natural map $\xi:\Mmscbar_n \to \overline{P}_n(\bC)$ corresponds to the resolution $\gamma$ of \cite{Zahariucresolution}?
\end{quest}

The main results of this work answer this question in the affirmative: 

\begin{thm*}[\ref{T:isothm}]
    There exists an isomorphism $f:\Mmscbar_n \to W_n$ of complex varieties.
\end{thm*}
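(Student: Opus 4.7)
The plan is to construct the isomorphism $f$ by showing that the forgetful map $\xi:\Mmscbar_n\to \overline{P}_n(\bC)$ lifts uniquely through the resolution $\gamma:W_n\to \overline{P}_n$ of \cite{Zahariucresolution}, and that this lift $f:\Mmscbar_n\to W_n$ is an isomorphism. Since $\gamma$ factors as a sequence of blowups along smooth centers coming from the augmented wonderful variety construction of \cite{Huhetalsemismall}, and since $\Mmscbar_n$ is smooth by Theorem \ref{T:spaceconstruction}, the universal property of iterated blowups reduces the existence of $f$ to the verification that the pullback under $\xi$ of the ideal sheaf of each blowup center is locally principal on $\Mmscbar_n$.

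First I would recall the explicit realization of $W_n$ as an iterated blowup: one starts from $\overline{P}_n$ and successively blows up the strict transforms of the loci indexed by nonempty subsets $S\subset\{1,\ldots,n\}$, proceeding in order of decreasing codimension. The combinatorial data indexing the boundary stratification of $W_n$ is a nested family of such subsets. Next I would match this data with the stratification of $\Mmscbar_n$ by rooted level trees of \Cref{P:stratificationofMn}: to each non-root vertex $v$ of a level tree, assign the subset $S_v\subset\{1,\ldots,n\}$ of marked indices whose corresponding leaves are descendants of $v$. The root ordering then yields the nesting relation $S_v\subseteq S_u$ whenever $u\subset v$, and the level function $\lambda$ records the order in which the corresponding exceptional divisors appear in the blowup sequence.

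With this combinatorial dictionary in hand, I would verify locally on the coordinate charts of $\Mmscbar_n$ constructed in \S3 that $\xi^{-1}$ of each blowup center is principal; this amounts to exhibiting, in each chart, the smoothing parameter of the corresponding node as a product of chart coordinates, which is essentially how these charts were engineered. This produces the lift $f$. To conclude that $f$ is an isomorphism I would observe that both sides are smooth and proper of dimension $n-1$, that $f$ is birational (restricting to the identity on $\bA^n/\bG_a$), and that the induced map on stratifications is a bijection via the dictionary above; together with properness this gives bijectivity on geometric points, and then Zariski's main theorem concludes.

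The hardest step will be the local verification that the equivalence relation defining $\Mmscbar_n$ --- the levelwise rescaling of the differentials $\omega_v$ by $\bC^*$ except at the terminal level --- matches precisely the successive projectivizations occurring along the exceptional divisors of the blowup sequence defining $W_n$. While this is a coordinate calculation, it requires carefully tracking how differentials at higher levels are normalized against their parents through the node residues, and it is where the particular augmented (rather than ordinary) wonderful variety structure is essential: the fact that the terminal differentials retain an honest $\bC^*$-worth of data rather than a projectivized one is exactly what matches the augmentation datum on $W_n$.
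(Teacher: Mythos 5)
The central step of your plan --- lifting $\xi:\Mmscbar_n\to\overline{P}_n(\bC)$ through $\gamma:W_n\to\overline{P}_n$ by ``the universal property of iterated blowups'' --- does not apply as stated, because $\gamma$ is not an iterated blowup of $\overline{P}_n$. The variety $W_n$ is constructed as an iterated blowup of the projective space $\bP(V)$ (\Cref{D:wonderfulvariety}, \Cref{C:wonderfulaug}), and $\gamma$ is a \emph{small resolution} of $\overline{P}_n(\bC)$; it is not presented as a blowup along any ideal on $\overline{P}_n$, so there is no ``ideal sheaf of each blowup center'' on $\overline{P}_n$ to pull back along $\xi$, and no universal property of blowups to invoke for $\gamma$. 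The paper therefore constructs $f$ by going through $\bP(V)$, not through $\overline{P}_n$: it first builds a morphism $f_0:\Mmscbar_n\to\bP(V)$ extending the identity on $\bA^n/\bG_a$ (\Cref{P:morphismtopn}), then lifts $f_0$ stage by stage through the blowup tower $\Bl_{\le k}\bP(V)$, using \Cref{L:univpropblowup} at each step and the inductive \Cref{H:hypothesis} to control preimages of the strict transforms of the $H_\rho$. The resolution $\gamma$ plays no role in constructing $f$; the compatibility $\gamma\circ f=\xi$ is a separate theorem (\Cref{T:diagramcommutes}) with its own, different argument.

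There is also a combinatorial misidentification that would derail the local verification even if the blowup framework were available. The blowup centers defining $W_n$ are the linear subspaces $H_\rho$ indexed by \emph{set partitions} $\rho\in L_n^-$, i.e.\ the flats of the graphic matroid $M(K_n)$ (\Cref{P:lattice}), not by nonempty subsets $S\subset\{1,\dots,n\}$. A partition with several nontrivial blocks is a single flat and yields a single center $H_\rho$; the nested-subset picture you describe is the minimal-building-set (Fulton--MacPherson) story, which produces a different variety. Accordingly, the boundary strata of both $\Mmscbar_n$ (via \Cref{P:treeversuschain}) and $W_n$ (via \Cref{C:wonderfulaug}) are indexed by \emph{chains of partitions} in $L_n^-$, and the level function on a dual tree is recorded in the chain structure, not merely in nesting. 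Your concluding observation --- that a proper birational morphism of smooth varieties which is a bijection on strata is an isomorphism --- is correct and matches how the paper closes the argument, but the morphism it is supposed to apply to has not been legitimately produced.
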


\begin{thm*}[\ref{T:diagramcommutes}]
    Under the isomorphism $f:\cA_n\to W_n$, one has $\gamma\circ f = \xi$.
\end{thm*}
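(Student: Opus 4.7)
\medskip

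My plan is to verify the identity on the dense open stratum and then extend by a density argument. Both $\gamma\circ f$ and $\xi$ are maps $\mathcal{A}_n \to \overline{P}_n(\mathbb{C})$, and the key inputs are (i) that all three of $f$, $\gamma$, and $\xi$ restrict to the identity on $\bA^n/\bG_a$, (ii) continuity of $\xi$, and (iii) irreducibility of $\mathcal{A}_n$ together with separatedness of $\overline{P}_n$.

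\textbf{Step 1 (agreement on $\bA^n/\bG_a$).} A point of $\bA^n/\bG_a$ corresponds to a smooth irreducible genus $0$ curve carrying a single meromorphic differential and a trivial (one-level) level structure, so the forgetful description of $\xi$ given in \S5.1 is literally the identity on this open subset. On the $W_n$ side, Zahariuc's resolution $\gamma$ is an isomorphism over $\bA^n/\bG_a$, and by construction in Theorem \ref{T:isothm} the isomorphism $f\colon\mathcal{A}_n\to W_n$ intertwines the two natural open embeddings of $\bA^n/\bG_a$ (both coming from the $G$-equivariant structure of Proposition \ref{P:GactiononMn}), so $f|_{\bA^n/\bG_a}$ is the identity. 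Consequently $(\gamma\circ f)|_{\bA^n/\bG_a} = \id = \xi|_{\bA^n/\bG_a}$.

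\textbf{Step 2 (continuity of $\xi$ and conclusion).} To promote this to a global equality, I need $\xi$ to be continuous in the analytic topology. Using the coordinate charts of $\mathcal{A}_n$ produced in the proof of Theorem \ref{T:spaceconstruction} and the moduli description of $\overline{P}_n$ from \cite{Zahariucmarked}, the task is to check that a holomorphic family of multiscaled lines collapses, under ``forget the non-terminal differentials and level structure'', to a holomorphic family of scaled curves. This reduces to tracking the dependence of the terminal differentials and node data on the chart parameters, and is the main technical step: once it is in place, $\gamma\circ f$ and $\xi$ are two continuous maps from the irreducible (by Theorem \ref{T:spaceconstruction}) variety $\mathcal{A}_n$ to the Hausdorff space $\overline{P}_n(\mathbb{C})$ agreeing on the dense open set $\bA^n/\bG_a$, hence they coincide.

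\textbf{Main obstacle.} The hard part will be Step 2: giving $\xi$ the structure of a morphism of complex varieties. A possible fallback, if a direct continuity check is awkward, is to avoid continuity entirely and instead verify $\gamma\circ f = \xi$ stratum by stratum, using the stratifications of $\mathcal{A}_n$ by dual level trees (Proposition \ref{P:stratificationofMn}) and of $W_n$ by the wonderful boundary to compute $\gamma\circ f$ combinatorially on each piece and match the output against the explicit pointwise description of $\xi$.
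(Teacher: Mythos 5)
Your outline is logically sound, and Step 1 is correct (the embeddings of $\bA^n/\bG_a$ are all compatible because $f$ is a morphism over $\bP(V)$ and $\gamma$, $\xi$ both restrict to the identity there). The problem is Step 2: it is not a reduction of the theorem, it is essentially a restatement of it. Once Step 1 is in hand, continuity of $\xi$ is \emph{equivalent} to the conclusion $\gamma\circ f=\xi$: the forward direction is the density argument you describe, and the converse is immediate since $\gamma\circ f$ is a morphism. So proposing to prove $\xi$ is continuous by "tracking the dependence of the terminal differentials and node data on the chart parameters" does not make the problem smaller, and more to the point it does not engage with where the difficulty actually lives: $\overline{P}_n(\bC)$ is defined by representability of a moduli functor, and there are no explicit coordinate charts on it against which you could carry out such a tracking argument. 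Constructing a holomorphic family of scaled curves over $\Mmscbar_n$ inducing $\xi$ (which would certainly suffice) is at least as hard as the theorem itself, since $\Mmscbar_n$ is built from coordinate charts rather than as a fine moduli space.

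The paper avoids proving continuity of $\xi$ head-on. Writing $\sigma=\gamma\circ f$, it instead identifies a small set of continuous invariants on $\overline{P}_n(\bC)$ that separate points within each stratum $Z_\rho^\circ(\bC)$: the dual tree, the Knudsen forgetful map $\psi_\rho$ (a morphism, hence continuous for free), and the period functions $I_{ij}$ on the loci $V_{ij}$. Continuity of the $I_{ij}$ is the only place a genuine analytic argument is needed (Proposition \ref{L:continuousintegration}, via local trivializations of the universal family and dominated convergence), and your density argument is then applied only at that level to conclude $\sigma^*(I_{ij})=\Pi_{ij}$ (Corollary \ref{C:scalingsagree}). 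The reconstruction statement (Lemma \ref{L:recoveringcurve}) then reduces the theorem to matching these finitely many invariants between $\sigma$ and $\xi$, which is carried out in Lemma \ref{L:dualtreepreserved}, Proposition \ref{P:knudsenmapcomp}, and an induction on the number of colliding marked points. Your fallback "stratum-by-stratum" idea points in this direction, but it still lacks the key mechanism: a reconstruction lemma of the type of Lemma \ref{L:recoveringcurve} that converts "equality of points of $\overline{P}_n(\bC)$" into equality of concrete invariants each of which can be controlled separately.
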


One might consider the compactification of $\bA^n/\bG_a$ by $\Mmscbar_n$ analogous to the compactification of $\mathrm{Conf}_n(\bA^1)/\mathrm{Aff}(\bA^1)$ by $\overline{M}_{0,n+1}$ as discussed above. $W_n$ is defined explicitly as an iterated blowup of a subspace arrangement of projective space in the sense of \cite{Hucompactification} so that one could also regard \Cref{T:isothm} as an analogue of the blowup description of $\overline{M}_{0,n+1}$ of Kapranov \cite{ChowQuotients}. This blowup description implies projectivity of $\Mmscbar_n$ (\Cref{C:projectivity}) and allows for explicit computation (\Cref{C:chow}) of the ring structure of $\mathrm{CH}^*(\Mmscbar_n)$ by appealing to results of \cite{Huhetalsemismall}. One advantage of considering the resolution of \cite{Zahariucresolution} from this perspective is that one can see rather explicitly what the resolution morphism does. In particular, $\xi$ is tautologically a $G$-equivariant resolution of singularities.

Given that they parametrize similar objects, one should expect a close relationship between the space $\bP\Xi\overline{\mathcal{M}}_{0,n+1}(2,0,\ldots,0)$ of \cite{bcggm} and $\Mmscbar_n$. Thus, structures and properties of $\Mmscbar_n$ may give some insight into the spaces of \emph{loc. cit.} In future work, we will investigate more closely the relationship between $\Mmscbar_n$ and the spaces of \cite{bcggm} 

\subsection*{Connections to Bridgeland stability}

As mentioned above, our main motivation for considering the problem of compactifying $\bA^n/\bG_a$ is the work in preparation \cite{StabSODsII}, which constructs a modular partial compactification of a quotient of the space of Bridgeland stability conditions $\Stab(\cC)$ of a triangulated category $\cC$ \cite{Br07}. For simplicity, we suppose here that $K_0(\cC)$ is of finite rank.\endnote{This assumption is not necessary at all in general, as one typically asks stability conditions to factor through a different finite rank Abelian group $\Lambda$ when $K_0(\cC)$ is of infinite rank. However, it is not important to discuss this here.} There is a natural $\bG_a$-action on $\Stab(\cC)$ and we pass to $\Stab(\cC)/\bG_a$ which is more suitable for partial compactification as the quotient operation removes redundant non-compact directions. Bayer's refinement \cite{Bayer_short} of Bridgeland's deform\-ation theorem implies that given $\sigma \in \Stab(\cC)$ and a set of objects $\{E_1,\ldots, E_n\}$ whose classes generate $K_0(\cC)$, one obtains a holomorphic chart around $\sigma$ by $\tau \mapsto (Z_\tau(E_1),\ldots, Z_\tau(E_n)) \in \bC^n$.

When one passes to $\Stab(\cC)/\bG_a$, the idea needs to be modified since $(Z_\tau(E_1),\ldots, Z_\tau(E_n))$ is not $\bG_a$-invariant. However, one can define logarithms $\logZ_\tau(E_i)$ so that $\logZ_\tau(E_i) - \logZ_\tau(E_j)$ is $\bG_a$-invariant for any pair of indices $1\le i<j\le n$ \cite{StabSODsI}. Consequently, $(\logZ_\tau(E_1),\ldots, \logZ_\tau(E_n)) \in \bC^n/\bG_a$ is well-defined, and gives local coordinates around $[\sigma]\in \Stab(\cC)/\bG_a$. A key idea of the partial compactification in \cite{StabSODsII} is that by analogy to Bridgeland's deformation theorem, a degeneration of a path $\sigma(t):[0,\infty)\to \Stab(\cC)/\bG_a$ towards the boundary should be modeled by convergence of $(\logZ_{\sigma(t)}(E_1),\ldots, \logZ_{\sigma(t)}(E_n))$ in a smooth modular compactification $\Mmscbar_n$ of $\bA^n/\bG_a$. 

Smoothness is an essential feature of $\Mmscbar_n$. Bridgeland's deformation theorem says that deformations of a stability condition $\sigma \in \Stab(\cC)$ are locally controlled by deformations of its central charge $Z\in \Hom(K_0(\cC),\bC)$. By analogy to this, one would like deformations of objects in a partial compactification of $\Stab(\cC)/\bC$ to be modeled on deformations in a smooth space. The model space used in \cite{StabSODsII} is a certain real oriented blowup of $\Mmscbar_n$, which has the structure of a manifold with corners by smoothness of $\Mmscbar_n$. Using this, one is able to formulate a proposal for a generalization of the deformation theorem to a more general class of objects.

A final important feature of $\Mmscbar_n$ is that the period functions $\Pi_{ij}:\bA^n/\bG_a\to \bC$ defined by $\Pi_{ij}(z_1,\ldots, z_n) = z_j-z_i$ extend to define morphisms $\Pi_{ij}:\Mmscbar_n\to \bP^1$, which on curves $\Sigma$ for which $p_i$ and $p_j$ lie on the same terminal component $\Sigma_\tau$, are given by $\Pi_{ij}(\Sigma) = \int_{p_i}^{p_j}\omega_\tau$; here, the integral means the line integral along any chosen path in the smooth locus of $\Sigma_\tau$ connecting $p_i$ to $p_j$. The existence of these period functions on the compactification is necessary for the intended applications, where the period functions record differences between log central charges as $t\to\infty$. This was the main reason for which the related spaces in \cite{bcggm} could not be used.

\subsection*{Acknowledgements and author's note} 
I thank Benjamin Dozier for crucial input during the construction of $\Mmscbar_n$ and Samuel Grushevsky for helpful suggestions and patient expl\-anations of parts of \cite{bcggm}. It is my great pleasure to thank my advisor Daniel Halpern-Leistner for his support and guidance. I am also grateful to Max Hallgren and Andres Fernandez Herrero for helpful conversations and advice. I would also like to thank the Simons Laufer Mathematical Sciences Institute for its hospitality while much of this work was prepared. Finally, I thank Maria Teresa for her unwavering love and encouragement, without which this work would not have been possible.

\S\S3.1-3.2 are a preliminary version of a construction in the forthcoming joint work \cite{StabSODsII} with Daniel Halpern-Leistner. Once the finalized version is available, \S3 of the present work will be rewritten with references to results in that paper. As such, the current work should be regarded as a temporary preliminary version.

\subsection*{Notation}
By default, we work over $\bC$. However, later on in the paper it is essential to distinguish the field $\bC$ from the geometric space $\bA^1$. As such, we use $\bA^1$ and $\bC$ interchangeably to refer to the corresponding geometric spaces with the exception of in \S5. $\bG_a$ is used for the additive group, typically considered only over $\bC$. When $f$ is a function valued in a field (rational, regular, or otherwise) on a space $X$, we write $Z(f)$ for its zero locus and $D(f) = Z(f)^c$.

For $n\ge 0$, we write $[n] = \{0<\cdots<n\}$, which is usually regarded as a totally ordered set. If $X$ is a subset of $\{1,\ldots,n\}$, we sometimes write $\mu(X)$ as a shorthand for $\min X$.

\section{Preliminaries on discrete structures}

In this section, we recall some combinatorial notions that will be crucial in the sequel. Partitions and trees will be used throughout the paper, while matroids will be used in \Cref{C:chow} to give explicit generators and relations for the Chow rings of the spaces $\Mmscbar_n$.

\subsection{Level trees}
Level trees are not a new structure, see for example \cite{Ulyanovpoly}*{\S 2}. However, our original motivation to consider these structures came from \cite{bcggm}. We briefly recall some definitions and constructions involving trees here.

\begin{defn}
\label{D:treestuff}
    A \emph{tree} is an undirected graph $(V,E)$ in which every pair of vertices may be joined by a unique simple path.\endnote{By a simple path we mean a sequence of edges $e_1,\ldots, e_k$ in $E$ so that $e_i$ and $e_{i+1}$ have one vertex in common for all $1\le i \le k-1$.}
    \begin{enumerate} 
        \item A \emph{rooted tree} is a tree $(V,E,v_0)$ with a distinguished vertex $v_0\in V$ called the \emph{root}.
        \item Let $X$ be a set. An $X$\emph{-marked rooted} tree $(V,E,v_0,h)$ is a rooted tree $(V,E,v_0)$ equipped with a function $h:X\to V$. We call $(V,E,v_0,h)$ $n$-marked when $X = \{1,\ldots, n\}$.
    \end{enumerate}
\end{defn}

\begin{defn}
\label{D:rootedporder}
    Associated to a rooted tree $(V,E,v_0)$ is a partial order $\subseteq$ on $V$: we say that $v\subseteq u$ if $v$ lies on the unique simple path connecting $u$ to $v_0$. Maximal elements for $\subseteq$ are called \emph{terminal} vertices. Given $u,v\in V$, their \emph{meet} $u\wedge v$ is the maximal vertex with respect to $\subseteq$ where their unique simple paths to $v_0$ intersect. 
\end{defn}

\begin{defn}
\label{D:level}
    Given a tree $(V,E)$, a \emph{level function} is a function $\lambda:V\to \bZ$ whose image is $[\ell]$. A tree equipped with a level function is called a \emph{level tree}. We call $\ell$ as above the \emph{length} of the level tree. When the tree is rooted, we require that $\lambda:(V,\subseteq) \to (\bZ,\le)$ be a homomorphism\endnote{That is, if $v\subseteq w$, then $\lambda(v)\le \lambda(w)$.} of posets such that the terminal vertices are all mapped to $\ell$ and such that $\lambda^{-1}(0) = \{v_0\}$. For any $k\in [\ell]$, $\lambda^{-1}(k)$ is the set of level $k$ vertices.
\end{defn}

\begin{defn}
\label{D:dualleveltree}
    An $n$-marked rooted level tree $\Gamma = (V,E,v_0,h,\lambda)$ is called a \emph{dual level tree} if 
    \begin{enumerate} 
    \item the image of $h$ is the set of terminal vertices of $V$;\endnote{This will translate to a stability condition when we construct $\Mmscbar_n$: terminal components of the corresponding curves will be required to have at least one marked point.} and
    \item for each non-terminal vertex $v$, there are at least two edges connecting it to higher level vertices.
    \end{enumerate}
    We denote the set of all such trees by $\Delta(n)$. 
\end{defn}

In what follows, we will often write $\Gamma$ to mean a tree with any of the structures defined above, however it should be clear from the context which we are referring to.

\subsection{Partitions}

Let $(X,\le)$ denote a partially ordered set. A \emph{chain} in $X$ is a totally ordered subset of $X$. $\mathfrak{Ch}(X)$ denotes the set of chains in $X$. A poset $(X,\le)$ is a \emph{lattice} if any pair of elements $x,y\in X$ has a least upper bound $x\vee y$, called their \emph{join}, and a greatest lower bound $x\wedge y$, called their \emph{meet}. 

\begin{defn} 
A \emph{partition} of $\{1,\ldots,n\}$ is a collection of disjoint subsets of $\{1,\ldots,n\}$, called \emph{blocks}, whose union is $\{1,\ldots,n\}$. $L_{n}$ denotes the set of such partitions. For $\rho \in L_{n}$, let $B(\rho)$ denote the set of blocks of $\rho$.\endnote{In other articles, such as \cite{Zahariucresolution}, $L_{[n]}$ is written for what we have denoted by $L_n$. We chose the latter notation to avoid confusion with partitions of $[n] = \{0<\cdots<n\}$.}
\end{defn}

There is a natural partial order on $L_{n}$; given $\eta,\pi\in L_{n}$, write $\eta\le \pi$ if $\pi$ \emph{refines} $\eta$; i.e. if every block of $\pi$ is contained in a block of $\eta$. In this case, there is an induced surjection $B(\pi)\twoheadrightarrow B(\eta)$ mapping each block of $\pi$ to the block of $\eta$ containing it. We may also regard $\rho \in L_n$ as an equivalence relation on $\{1,\ldots, n\}$: for $1\le i,j\le n$, we say $i\sim^\rho j$ if $i$ and $j$ are in the same block of $\rho$. Given $\rho \in L_n$ and $b\in B(\rho)$, we write $\mu(b) = \min(b)$ for brevity.

\begin{lem}
$(L_n,\le)$ is a lattice with unique maximal element $\top = 1|2|\cdots|n$ and minimal element $\bot = 12\cdots n$ (i.e. the trivial partition).
\end{lem}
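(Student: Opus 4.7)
The plan is to translate the partition order into the language of equivalence relations, where the lattice structure is standard. For $\rho \in L_n$, let $R_\rho \subseteq \{1,\ldots,n\}^2$ denote the equivalence relation $\sim^\rho$. Unwinding the definition of refinement gives $\eta \le \pi$ if and only if $R_\pi \subseteq R_\eta$; that is, $\rho \mapsto R_\rho$ is an order-reversing bijection from $(L_n,\le)$ onto the poset of equivalence relations on $\{1,\ldots,n\}$ ordered by inclusion. Via this identification, both claims of the lemma reduce to well-known facts about that poset.

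For the extremal elements, the partition $\top = 1|2|\cdots|n$ corresponds to the diagonal $R_\top = \{(i,i) : 1 \le i \le n\}$, which is contained in every equivalence relation; hence $R_\pi \subseteq R_\pi \cup R_\top$ forces $\pi \le \top$ for all $\pi$. Dually, $\bot = 12\cdots n$ corresponds to the total relation $R_\bot = \{1,\ldots,n\}^2$, which contains every equivalence relation, giving $\bot \le \pi$ for all $\pi$. Uniqueness of these extrema is immediate since the bijection $\rho \mapsto R_\rho$ is injective.

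For the lattice structure I would exhibit explicit meets and joins. Given $\eta,\pi \in L_n$, set $R_{\eta \vee \pi} := R_\eta \cap R_\pi$; an intersection of equivalence relations is an equivalence relation, and the blocks of $\eta \vee \pi$ are exactly the nonempty intersections $b \cap b'$ with $b \in B(\eta)$, $b' \in B(\pi)$. Any common refinement $\sigma$ of $\eta$ and $\pi$ satisfies $R_\sigma \subseteq R_\eta$ and $R_\sigma \subseteq R_\pi$, hence $R_\sigma \subseteq R_\eta \cap R_\pi = R_{\eta \vee \pi}$, so $\eta \vee \pi \le \sigma$, confirming the join property. For the meet, define $R_{\eta \wedge \pi}$ to be the equivalence relation generated by $R_\eta \cup R_\pi$, i.e.\ its transitive closure. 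Any common coarsening $\tau$ of $\eta$ and $\pi$ satisfies $R_\eta \cup R_\pi \subseteq R_\tau$, and since $R_\tau$ is transitive the generated relation is also contained in $R_\tau$, giving $\tau \le \eta \wedge \pi$ as required.

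The content is standard and the only real obstacle is bookkeeping: one must consistently respect the order reversal between refinement and inclusion of equivalence relations, so that the intersection of relations produces the \emph{join} of partitions and the transitive closure of their union produces the \emph{meet}.
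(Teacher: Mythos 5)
Your proof is correct. The paper simply leaves this as an exercise (with a pointer to Birkhoff), so there is no authorial argument to compare against; your write-up supplies the standard argument via the order-reversing identification of $L_n$ with the poset of equivalence relations on $\{1,\ldots,n\}$ under inclusion, and you correctly track the order reversal against the paper's refinement convention (so that intersection of relations gives the join $\vee$ in $L_n$, and the transitive closure of the union gives the meet $\wedge$). Your description of the extremal elements and the universal properties of $\vee$ and $\wedge$ are all consistent with \Cref{L:comparabilitycriteria} and with how the meet $\rho\wedge\pi$ is later used in \Cref{L:propertiesofHs}(4).
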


\begin{proof}
    Left as an exercise (cf. \cite{Birkhofflattice}).
\end{proof}

We will write $L_n^- = L_n\setminus \{\bot\}$.

\begin{lem}
\label{L:comparabilitycriteria}
    For any $\eta,\pi \in L_n$, 
    \begin{enumerate}
        \item $\eta\le \pi$ is equivalent to the statement $i\sim^\pi j \Rightarrow i\sim^\eta j$; and
        \item $\eta$ and $\pi$ are incomparable if and only if there exist $i,j,k,\ell \in \{1,\ldots,n\}$ such that $i\sim^\eta j$ but $i\not\sim^\pi j$ and $k\sim^\pi \ell$ but $k\not\sim^\eta \ell$.
    \end{enumerate}
\end{lem}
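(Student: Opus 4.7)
The plan is to observe that (2) is essentially a formal consequence of (1), so the content of the lemma is concentrated in part (1), which itself follows immediately upon unwinding the definition of refinement.

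For part (1), I would first prove the forward direction: assume $\eta \le \pi$, so every block of $\pi$ is contained in a block of $\eta$. If $i \sim^\pi j$, then $i$ and $j$ both lie in some block $b \in B(\pi)$. Since $b$ is contained in some block $b' \in B(\eta)$, both $i$ and $j$ lie in $b'$, so $i \sim^\eta j$. For the converse, assume the implication $i \sim^\pi j \Rightarrow i \sim^\eta j$, and pick any block $b \in B(\pi)$. All elements of $b$ are pairwise $\pi$-equivalent, hence pairwise $\eta$-equivalent by hypothesis; as the blocks of $\eta$ partition $\{1,\ldots,n\}$, there must be a single block $b' \in B(\eta)$ containing all of $b$. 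So $\pi$ refines $\eta$, i.e.\ $\eta \le \pi$.

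For part (2), incomparability of $\eta$ and $\pi$ means $\eta \not\le \pi$ and $\pi \not\le \eta$. By the contrapositive of part (1), $\pi \not\le \eta$ is equivalent to the existence of $i,j$ with $i \sim^\eta j$ but $i \not\sim^\pi j$, and (by swapping the roles of $\eta$ and $\pi$) $\eta \not\le \pi$ is equivalent to the existence of $k,\ell$ with $k \sim^\pi \ell$ but $k \not\sim^\eta \ell$. Combining these two statements yields the claimed characterization. The converse direction (if such $i,j,k,\ell$ exist then $\eta,\pi$ are incomparable) follows by the same equivalences read in reverse.

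There is no substantive obstacle: both implications in each direction are literal translations between ``blocks contain blocks'' and ``equivalence classes are coarser,'' and part (2) is just the logical dual of part (1) applied in both directions. If anything, the only thing to be careful about is the asymmetric labeling of $(i,j)$ and $(k,\ell)$ in the statement of (2)---a trivial relabeling compared to the form that drops directly out of applying (1) twice.
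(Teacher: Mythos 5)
Your proof is correct and follows essentially the same approach as the paper: part (1) by directly unwinding the definition of refinement in terms of blocks, and part (2) as a formal consequence of (1) by negating both $\eta \le \pi$ and $\pi \le \eta$. You spell out the converse of (1) in more detail than the paper does, but the argument is the same.
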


\begin{proof}
    $\eta \le \pi$ iff for all $b \in B(\pi)$ there exists $b' \in B(\eta)$ such that $b\subset b'$. This in turn is equivalent to the statement $i\sim^\pi j \Rightarrow i\sim^\eta j$. $\eta$ and $\pi$ being incomparable is the negation of both $\eta\le \pi$ and $\eta\ge \pi$, so (2) follows from (1).
\end{proof}

\begin{const}
\label{Const:treesandpartitions}
    Let $\Delta(n)$ be as in \Cref{D:dualleveltree}. We define a function $\Gamma:\mathfrak{Ch}(L_n^-) \to \Delta(n)$. Given $\rho_\bullet = \{\rho_1< \cdots < \rho_\ell\} \in \mathfrak{Ch}(L_n^-)$ we define $\Gamma(\rho_\bullet)$: on level $0$ there is a single vertex; for $1\le k \le \ell$, the level $k$ vertex set is $B(\rho_k)$. $b\in B(\rho_k)$ is connected by an edge to $b'\in B(\rho_{k-1})$ iff $b\subset b'$. For all $1\le k \le \ell-1$, beginning with $k = \ell - 1$, delete any level $k$ vertex $v$ and all edges attached to it if it is connected to exactly one higher vertex $v'$ and one level $k-1$ vertex $v''$; add a ``long'' edge connecting $v'$ and $v''$. Finally, define $h:\{1,\ldots,n\}\to B(\rho_\ell)$ by $h(i) = b$ for $b\ni i$.

    We define $\mathfrak{c}:\Delta(n)\to \mathfrak{Ch}(L_n^-)$ by assigning to $\Gamma$ with level function $\lambda:V\twoheadrightarrow [\ell]$ the chain $\{\rho_1<\cdots<\rho_\ell\}\in \mathfrak{Ch}(L_n^-)$ where $p \sim^{\rho_i} q$ if and only if $\lambda(h(p)\wedge h(q))\ge i$.\endnote{Note that $\rho_i<\rho_{i+1}$ for all $1 \le i \le \ell-1$: if $p\sim^{\rho_{i+1}}q$ then $\lambda(h(p)\wedge h(q)) \ge i+1\ge i$ so that $p\sim^{\rho_i} q$.}
\end{const}

\begin{prop}
\label{P:treeversuschain}
    $\Gamma: \mathfrak{Ch}(L_n^-) \to \Delta(n)$ and $\mathfrak{c}:\Delta(n)\to \mathfrak{Ch}(L_n^-)$ are mutually inverse bijections.
\end{prop}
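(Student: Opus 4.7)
The plan is to verify that both $\Gamma$ and $\mathfrak{c}$ are well-defined and to show each composition is the identity, with the main technical point being how the pruning step in $\Gamma$ interacts with the partition-chain data. For well-definedness, $\Gamma(\rho_\bullet)$ satisfies axiom (2) of \Cref{D:dualleveltree} because pruning removes exactly those level-$k$ vertices with one parent and one child, and level-$\ell$ vertices are never pruned, so the terminal vertex set coincides with $B(\rho_\ell)$ and $h$ is surjective onto it, yielding axiom (1). For $\mathfrak{c}(\Gamma) = \{\rho_1<\cdots<\rho_\ell\}$, the relation $p\sim^{\rho_i} q \iff \lambda(h(p)\wedge h(q)) \geq i$ is reflexive and symmetric by construction, and transitive because the ancestors of any fixed vertex form a chain under $\subseteq$. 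Strictness of the chain comes from axiom (2): at each $k\in\{1,\ldots,\ell-1\}$ some branching vertex at level $k$ produces a pair of marked points whose meet has level exactly $k$. Nontriviality of each $\rho_i$ for $i\ge 1$ follows because the root has at least two children, so the total meet of all $h(p)$ is $v_0$ at level $0$.

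For $\mathfrak{c}\circ \Gamma = \mathrm{id}$, I would show that in $\Gamma(\rho_\bullet)$ one has $\lambda(h(p)\wedge h(q)) = k^*(p,q) := \max\{k : p \sim^{\rho_k} q\}$. In the unpruned tree, the block of $\rho_{k^*}$ containing both $p$ and $q$ is the deepest common ancestor, and this vertex survives pruning because it has at least two children (the distinct sub-blocks of $\rho_{k^*+1}$ containing $p$ and $q$). Therefore the partitions $\rho'_i$ recovered via $\mathfrak{c}$ satisfy $p\sim^{\rho'_i} q \iff k^*(p,q)\geq i \iff p \sim^{\rho_i} q$, so $\rho'_i = \rho_i$ by \Cref{L:comparabilitycriteria}(1).

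For $\Gamma\circ \mathfrak{c} = \mathrm{id}$, I would construct a bijection $V(\Gamma)\to V(\Gamma(\mathfrak{c}(\Gamma)))$ by first showing that for $\rho_i = \mathfrak{c}(\Gamma)_i$, the blocks of $\rho_i$ correspond bijectively to those $v\in V(\Gamma)$ with $\lambda(v)\ge i$ whose immediate predecessor has level $<i$, via $v\mapsto h^{-1}(\{t\text{ terminal}:v\subseteq t\})$. Thus each $v\in V(\Gamma)$ of level $k$ with immediate predecessor $v_-$ yields a chain of single-child unpruned vertices at levels $\lambda(v_-)+1,\ldots,k$, which pruning contracts to the top vertex at level $k$; the long edges of $\Gamma(\mathfrak{c}(\Gamma))$ thus appear exactly where they do in $\Gamma$, and the markings agree because blocks of $\rho_\ell$ correspond to terminal vertices of $\Gamma$ via $h$. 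The main obstacle in this step is bookkeeping, and the cleanest strategy is to work first in the unpruned intermediate tree $\{v_0\}\sqcup\bigsqcup_i B(\rho_i)$ of \Cref{Const:treesandpartitions}: there the bijection between blocks and ``top-of-level-$\ge i$'' vertices of $\Gamma$ is transparent, and pruning is then exactly the contraction of the redundant chains described above.
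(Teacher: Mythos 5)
The paper's own ``proof'' of this proposition is a one-line pointer (``We leave this to the reader --- see [Ulyanovpoly, Fig.~7]~\ldots''), so there is no argument in the paper to compare against; you are filling a gap rather than replicating or diverging from an argument. Your proof is correct. The essential verifications are all in place: well-definedness of $\Gamma$ uses that every block of $\rho_{k+1}$ sits in a unique block of $\rho_k$, so unpruned vertices below level $\ell$ have exactly one downward edge and $\ge 1$ upward edge, and the hypothesis $\rho_1\in L_n^-$ (not $\bot$) is exactly what keeps the root with $\ge 2$ children; well-definedness of $\mathfrak{c}$ hinges on the ancestor chain under $\subseteq$ (for transitivity) and on axiom (2) applied at every level $0,\ldots,\ell-1$ (for strictness and nontriviality); the identity $\lambda(h(p)\wedge h(q)) = \max\{k:p\sim^{\rho_k}q\}$ in $\Gamma(\rho_\bullet)$ and the survival of that meet under pruning give $\mathfrak{c}\circ\Gamma=\mathrm{id}$; and the block-of-$\rho_i$ $\leftrightarrow$ minimal-vertex-of-$L_{\ge i}(\Gamma)$-component bijection gives $\Gamma\circ\mathfrak{c}=\mathrm{id}$.

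One detail you leave implicit but that is worth making explicit, because it is what makes the pruning step deterministic: when a vertex is pruned, the long edge replaces one upward edge of $v''$ by another, so the upward-edge count of every surviving vertex is invariant throughout the pruning process. Consequently a vertex of the unpruned tree survives if and only if it had $\ge 2$ upward edges to begin with --- the outcome does not depend on the order in which levels are processed, and the bijection between surviving vertices and $V(\Gamma)$ (sending $v$ with parent $v_-$ to the single surviving block at level $\lambda(v)$, with the blocks at levels $\lambda(v_-)+1,\ldots,\lambda(v)-1$ all contracted) is determined directly by the chain $\rho_\bullet$. Stating this up front would clean up the bookkeeping in your final paragraph.
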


\begin{proof}
    We leave this to the reader - see \cite{Ulyanovpoly}*{Fig. 7} for a visual explanation. Be warned, however, that our convention for drawing trees is the opposite of that in \emph{loc. cit.} - the roots of our trees are at the bottom of the tree.
\end{proof}

\subsection{Matroids}  

\begin{defn}
\label{D:matroid}
    A \emph{matroid} on a finite set $E$ is a collection of subsets $\cI$ of $E$, called the \emph{independent subsets}, such that 
    \begin{enumerate}
        \item $\varnothing \in \cI$;
        \item if $I\in \cI$ and $I'\subset I$, then $I'\in \cI$; and 
        \item if $I_1$ and $I_2$ are in $\cI$ with $\lvert I_1\rvert < \lvert I_2\rvert$ then there exists $e\in I_2\setminus I_1$ such that $I_1\cup \{e\}\in \cI$. 
    \end{enumerate}
\end{defn}

There are a number of equivalent ways to specify the data of a matroid on a finite set $E$; see \cite{Oxleymatroid} for more information.

\begin{ex} 
    Let $G = (V,E)$ be a graph. The \emph{graphic matroid} associated to $G$, denoted $M(G)$, has as its underlying set $E$, and as independent sets the forests in $G$, denoted $F$.\endnote{A forest $F$ is a graph in which any pair of vertices can be connected by at most one simple path. It follows that a forest is a disjoint union of trees, as the name suggests.} It is an exercise to verify that $M(G) = (E,F)$ defines a matroid in the sense of \Cref{D:matroid}.\endnote{Here we are committing a standard abuse of notation by conflating the set of edges in the forest with the forest they span. $\varnothing$ is vacuously a forest. If $F$ is a forest and $F'\subset F$ a subset, then any two vertices in the graph spanned by $F'$ are connected by at most one path since the edge set is a subset of that of $F$. If $F_1$ and $F_2$ are forests with $|F_1|<|F_2|$ then in fact any edge $\in F_2\setminus F_1$ gives rise to a new forest $F_1\cup \{e\}$.}
\end{ex}

Suppose given a graph $G = (V,E)$. For $A\subset E$ let $v(A)$ denote the number of vertices contained in the subgraph of $G$ spanned by $A$. In the same notation, $\omega(A)$ denotes the number of connected components of the subgraph of $G$ spanned by $A$. $M(G)$ has a \emph{rank} function $r: 2^E\to \bZ$ given by $r(A) = v(A) - \omega(A)$ and \emph{corank} given by $\mathrm{corank}(A) = v(E) - v(A) + \omega(A) = |V| - 1 - v(A) + \omega(A)$.

Given $A\subset E$, we define its \emph{closure} $\rm{cl}(A) = \{x\in E: r(A\cup \{x\}) = r(A)\}$. If $A = \rm{cl}(A)$ then it is called a \emph{flat} of the matroid. The collection of flats of a matroid $M$ forms a lattice $L(M)$, partially ordered by inclusion.

\begin{ex}
\label{E:matroidofKn}
    Of particular importance to us is the complete graph on $n$ vertices, $K_n$. Its vertex set is $\{1,\ldots,n\}$ and a unique edge connects any pair of distinct vertices. Let $G$ denote a subgraph of $K_n$ such that $V(G) = V(K_n)$ and $G$ has as connected components complete graphs, i.e. graphs isomorphic to $K_r$ for some $1\le r\le n$. It is an exercise to see that the flats of $M(K_n)$ are exactly of the form $E(G)$ for such $G$.\endnote{It suffices to compute the closure of a set of edges $A\subset E$. This consists of adding edges to $A$; if $e$ increases the number of vertices in the graph spanned by $A\cup \{e\}$, then $r(A\cup\{e\}) > r(A)$, so any added edge $e$ must connect vertices already spanned by $A$. On the other hand, adding an edge that unites two connected components of the subgraph spanned by $A$ decreases the rank. So, $\mathrm{cl}(A)$ is obtained by adding all edges that connect two vertices on the same connected component of the subgraph spanned by $A$. The result of this is a disjoint union of complete graphs.}
\end{ex}

If $A$ is a flat of $M_n := M(K_n)$, we define $\rho\in L_n$ given by saying $i\sim^\rho j$ if and only if $i$ and $j$ lie on the same connected component of the subgraph spanned by $A$. This gives a map $L(M_n)\to L_n$. 

\begin{prop}
\label{P:lattice}
    $L(M_n)\to L_n$ is an anti-isomorphism of lattices such that the rank function on $L(M)$ corresponds to the function $r(\rho) = n - |B(\rho)|$ and $\mathrm{corank}(\rho) = |B(\rho)| - 1$.
\end{prop}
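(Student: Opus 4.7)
The plan is to build an explicit inverse $\Phi : L_n \to L(M_n)$, verify it is an order-reversing bijection, and then read off the rank/corank formulas block-by-block using Example~\ref{E:matroidofKn}.

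First I would set $\Phi(\rho) = A(\rho) := \{\{i,j\} : i \ne j \text{ and } i \sim^\rho j\}$. The subgraph of $K_n$ spanned by $A(\rho)$ is a disjoint union of complete graphs, one on each non-singleton block of $\rho$, so by Example~\ref{E:matroidofKn} it is a flat. To see $\Phi$ is inverse to the given map $L(M_n) \to L_n$: given a flat $A$, it is (again by Example~\ref{E:matroidofKn}) a disjoint union of complete graphs on its connected components, so every pair in the same component is already an edge of $A$, proving $A(\rho) = A$ when $\rho$ is the component partition. Going the other way, the connected components of the graph spanned by $A(\rho)$ are exactly the blocks of $\rho$ (non-singleton blocks via the complete subgraphs, singleton blocks as isolated vertices).

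Next I would check the order reversal. If $A \subseteq A'$ correspond to $\rho, \rho'$, then any two vertices connected by a path in $A$ are connected by a path in $A'$, so $i \sim^\rho j \Rightarrow i \sim^{\rho'} j$, which by Lemma~\ref{L:comparabilitycriteria}(1) is precisely $\rho' \le \rho$. Conversely, if $\rho' \le \rho$ then every edge of $A(\rho)$ joins two vertices in a common block of $\rho$, which is contained in a block of $\rho'$, so the edge lies in $A(\rho')$. Since $L(M_n)$ is a lattice and we have a set-level bijection that reverses the partial orders, the transported structure automatically makes it a lattice anti-isomorphism (meets and joins swap).

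Finally, for the rank, write $\rho$ with blocks of sizes $k_1,\ldots,k_m$ where $m = |B(\rho)|$, and let $s$ be the number of singleton blocks. The graph spanned by $A(\rho)$ is a disjoint union of the complete graphs $K_{k_i}$ on the non-singleton blocks, contributing $v(A) = \sum_{k_i>1} k_i = n-s$ vertices and $\omega(A) = m - s$ components. Hence
\[
r(A) = v(A) - \omega(A) = (n-s) - (m-s) = n - |B(\rho)|,
\]
and since $K_n$ is connected $r(M_n) = n-1$, so $\mathrm{corank}(A) = (n-1) - r(A) = |B(\rho)| - 1$. The only real obstacle is the bookkeeping for singleton blocks, which contribute isolated vertices without edges and must be tracked carefully to make $v(A) - \omega(A)$ collapse cleanly.
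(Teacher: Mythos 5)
Your proof is correct and follows the same core idea as the paper's: both a partition (viewed as which pairs are related) and a flat of $M(K_n)$ (viewed via Example~\ref{E:matroidofKn} as a disjoint union of complete graphs) are determined by the same data. The paper's version is considerably terser — it simply observes that both objects can be regarded as subsets of $\{1,\ldots,n\}^2$ and that inclusion of relations is coarsening, and it does not explicitly verify the rank/corank formulas — whereas you spell out the inverse map, the flat-ness check, and the singleton-block bookkeeping for $v(A)-\omega(A)$; all of that is sound and actually fills in what the paper leaves to the reader.
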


\begin{proof}
    We can identify $\rho \in L_n$ with the equivalence relation that it defines, which can be regarded as a subset of $\{1,\ldots,n\}^2$. A flat in $M_n$ is a collection of edges in $K_n$, which is also a subset of $\{1,\ldots,n\}^2$. The map $L(M_n)\to L_n$ is just the identification of these two perspectives. $L(M_n)$ is partially ordered by inclusion, but this is easily seen to be the same as the coarsening partial order (i.e. the opposite of refinement as defined above).\endnote{Indeed, given a relations $R$ and $R'$ on a set $X$ regarded as a subsets of $X^2$, $R\subset R'$ means that every equivalence class of $R$ is contained in an equivalence class for $R'$. That is, $R$ refines $R'$.}
\end{proof}

In spite of this anti-isomorphism, we continue to regard $L_n$ as a poset ordered by refinement. This will be more natural in what follows.

\subsection{Subspace arrangements from partitions}

$V = \bC^n/\bG_a \oplus \bC$, where $\bC^n/\bG_a$ has coordinates $P_{12},\ldots, P_{1n}$, where $P_{ij}(p) = z_j(p) - z_i(p)$ for each $i,j$, and the copy of $\bC$ has coordinate $t$.

\begin{defn}
\label{D:subspacearrangement}
    For $\rho \in L_n \setminus \{\top,\bot\}$, $H_\rho\subset \bP(V)$ denotes the linear subspace $Z(\{P_{ij} = 0 : i\sim^\rho j\} \cup \{t\})$. Define $H_\top$ by $Z(t)$ and $H_\bot = \varnothing$. Let $\cH = \{H_\rho:\rho\in L_n\}$, regarded as a poset ordered by inclusion.
\end{defn}

\begin{defn}
\label{D:cleanint}
    A pair of smooth closed subvarieties $U$ and $V$ of a smooth variety $W$ are said to intersect \emph{cleanly} if
    \begin{enumerate} 
        \item the scheme theoretic intersection $U\cap V$ is smooth; and 
        \item for each $p\in U\cap V$ one has $T_p(U\cap V) = T_p(U) \cap T_p(V)$. 
    \end{enumerate}
\end{defn}

\begin{lem}
\label{L:propertiesofHs}
    $L_n \to \cH$ given by $\rho \mapsto H_\rho$ is an isomorphism of posets. Moreover,
    \begin{enumerate}
        \item each $H_\rho$ is a smooth subvariety of $\bP(V)$;
        \item for all $\rho,\pi \in L_n$, $H_\rho$ and $H_\pi$ intersect cleanly;
        \item $\dim H_\pi = |B(\pi)| - 2$;\endnote{Note that by convention, the empty set has dimension $-1$ and so the result holds even for $\pi = \bot$.} and 
        \item $H_\rho \cap H_\pi = H_{\rho \wedge \pi}$.
    \end{enumerate}
\end{lem}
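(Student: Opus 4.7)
The plan is to realize each $H_\rho$ as the projectivization of an explicit linear subspace $\widetilde H_\rho \subset V$ and to deduce all four claims, together with the poset isomorphism, from elementary linear algebra plus the combinatorial description of the meet in $L_n$. By construction, $\widetilde H_\rho$ is the zero locus in $V$ of the linear forms $\{P_{ij} : i \sim^\rho j\} \cup \{t\}$, hence a linear subspace; its projectivization $H_\rho$ is therefore smooth, which gives (1). Moreover, any two linear subspaces of a vector space (or projective space) always meet cleanly in the sense of \Cref{D:cleanint}, since their scheme-theoretic intersection is again a linear subspace whose tangent space everywhere coincides with the intersection of the ambient tangent spaces; this disposes of (2).

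For (3) I would compute dimensions upstairs in $V$. Representing points of $\bC^n/\bG_a$ by tuples $(z_1,\ldots,z_n)$ modulo simultaneous translation, the relations $P_{ij} = 0$ for $i \sim^\rho j$ force $z_i$ to be constant on each block of $\rho$, so the image of $\widetilde H_\rho$ in $\bC^n/\bG_a$ is isomorphic to $\bC^{|B(\rho)|}/\bG_a$ of dimension $|B(\rho)| - 1$. Combined with the condition $t = 0$, this yields $\dim \widetilde H_\rho = |B(\rho)| - 1$ and hence $\dim H_\rho = |B(\rho)| - 2$. The degenerate cases $\rho = \top, \bot$ are checked directly from the definitions and are consistent with the formula.

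For (4), the essential combinatorial input is that $\rho \wedge \pi \in L_n$ encodes the equivalence relation generated by those of $\rho$ and $\pi$: $i \sim^{\rho \wedge \pi} j$ iff there is a chain $i = i_0, i_1, \ldots, i_k = j$ with successive pairs related by either $\rho$ or $\pi$. Using that $P_{i_0 i_k} = \sum_{s} P_{i_s i_{s+1}}$ as linear forms on $V$, one sees that the defining equations of $\widetilde H_\rho$ and $\widetilde H_\pi$ together linearly span precisely the defining equations of $\widetilde H_{\rho \wedge \pi}$, and conversely. Hence $\widetilde H_\rho \cap \widetilde H_\pi = \widetilde H_{\rho \wedge \pi}$, which gives (4) upon projectivization, modulo a direct check at the boundary cases $\bot, \top$.

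The poset isomorphism $L_n \to \cH$ is then a formal consequence of (3) and (4): monotonicity is immediate from the definition (refining reduces the system of equations and enlarges the subvariety), and injectivity follows because $H_\rho = H_\pi$ would force $H_\rho = H_{\rho \wedge \pi}$, and then the dimension count in (3) combined with $\rho \wedge \pi \le \rho, \pi$ forces $\rho = \rho \wedge \pi = \pi$. I do not foresee a genuinely difficult step in this lemma; the only real care needed is bookkeeping with the two degenerate partitions $\bot$ and $\top$, where $H_\bot = \varnothing$ and $H_\top$ is a hyperplane rather than a linear subspace of codimension $\ge 2$.
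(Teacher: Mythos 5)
Your proof is correct and follows essentially the same route as the paper: realize each $H_\rho$ as the projectivization of a linear subspace $\widetilde H_\rho\subset V$, so that (1) and (2) are immediate, count defining equations (or, equivalently, compute $\dim\widetilde H_\rho$) for (3), and use the description of $\rho\wedge\pi$ as the finest common coarsening for (4). The only substantive difference is one of explicitness: where the paper simply asserts the containment $H_\rho\cap H_\pi\subset H_{\rho\wedge\pi}$, you supply the reason via the chain decomposition $P_{ij}=\sum_s P_{i_s i_{s+1}}$, which shows the linear span of the forms cutting out $H_\rho$ and $H_\pi$ together coincides with that cutting out $H_{\rho\wedge\pi}$; and you derive injectivity of $\rho\mapsto H_\rho$ from (3) and (4) rather than asserting it ``by construction.'' Both are small improvements in rigor, not changes of method.
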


\begin{proof}
    $\rho \mapsto H_\rho$ is by construction bijective. If $\rho_1<\rho_2$, then $i \sim^{\rho_2} j$ implies $i\sim^{\rho_1} j$ by \Cref{L:comparabilitycriteria} and thus $H_{\rho_1}\subset H_{\rho_2}$. (1) is clear as these are linear subvarieties of $\bP(V)$. (2) follows from working locally in affine coordinates. Write $\{b_1,\ldots, b_k\} = B(\pi)$; (3) is obtained by computing that there are $\sum_{j=1}^k (|b_j| -1) + 1$ equations cutting out $H_\pi$ irredundantly. Therefore, its dimension is $n-2 + k -\sum_j |b_j| = |B(\pi)| - 2$. To prove (4), note that $H_\rho\cap H_\pi = Z(\{P_{ij}:i\sim^\rho j \text{ and } i\sim^\pi j\}\cup \{t\}) \subset H_{\pi\wedge \rho}$. Then, $\pi\wedge \rho \le \rho,\pi$ implies $H_{\pi\wedge \rho} \subset H_\rho \cap H_\pi$.
\end{proof}

\begin{notn}
\label{N:dimpartition}
    For $\pi \in L_n$, we write $\dim \pi := \dim H_\pi = |B(\pi)| - 2 = \mathrm{corank}(\rho) - 1$ (by \Cref{L:propertiesofHs}) and $c(\pi) = \rm{codim}_{\bP(V)} H_\pi = n + 1 - |B(\pi)| = r(\pi) + 1$. $\dim \pi$ will be a relevant inductive parameter in \S4. For $k\in \bZ_{\ge 0}$, $(L_n)_{\le k}$ denotes the poset of partitions of dimension $\le k$.
\end{notn}

\begin{defn}
    Given $\pi \in L_n$, we define $H_\pi^\circ = H_\pi \setminus \bigcup_{\rho<\pi} H_\rho$
\end{defn}

\begin{lem}
\label{L:stratificationofH}
    For all $\pi \in L_n$, one has $H_\pi = \bigsqcup_{\rho\le \pi}H_\rho^\circ$.
\end{lem}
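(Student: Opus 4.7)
The plan is to establish both inclusion and disjointness, leveraging the meet-stability property $H_\rho \cap H_{\rho'} = H_{\rho \wedge \rho'}$ from Lemma~\ref{L:propertiesofHs}(4).

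\textbf{Covering.} Given $x \in H_\pi$, I will consider the set
\[S_x = \{\rho \in L_n : \rho \le \pi \text{ and } x \in H_\rho\}.\]
This set is nonempty since $\pi \in S_x$. The key observation is that $S_x$ is closed under meets: if $\rho, \rho' \in S_x$, then by Lemma~\ref{L:propertiesofHs}(4), $x \in H_\rho \cap H_{\rho'} = H_{\rho \wedge \rho'}$, and $\rho \wedge \rho' \le \rho \le \pi$. Being a finite meet-closed subset of $L_n$, $S_x$ has a unique minimum $\rho_0$. I will then verify that $x \in H_{\rho_0}^\circ$; this requires showing that $x \notin H_{\rho'}$ for every $\rho' < \rho_0$ in all of $L_n$ (not merely those below $\pi$). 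If such a $\rho'$ existed, then $x \in H_{\rho_0} \cap H_{\rho'} = H_{\rho_0 \wedge \rho'} = H_{\rho'}$, and since $\rho' < \rho_0 \le \pi$ we get $\rho' \in S_x$, contradicting minimality of $\rho_0$.

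\textbf{Disjointness.} Suppose $x \in H_\rho^\circ \cap H_{\rho'}^\circ$ with $\rho, \rho' \le \pi$. Then $x \in H_{\rho \wedge \rho'}$ again by Lemma~\ref{L:propertiesofHs}(4). If $\rho \wedge \rho' < \rho$, this contradicts $x \in H_\rho^\circ$; so $\rho \wedge \rho' = \rho$, i.e., $\rho \le \rho'$. By symmetry $\rho = \rho'$. It also needs to be checked that each $H_\rho^\circ$ with $\rho \le \pi$ is actually contained in $H_\pi$, which is immediate from $H_\rho \subseteq H_\pi$ (a consequence of the poset isomorphism in Lemma~\ref{L:propertiesofHs}).

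There is no real obstacle here: the lemma is essentially a bookkeeping consequence of meet-stability, and the only subtle point is ensuring that the minimal element $\rho_0$ produced by the covering argument satisfies the stronger condition defining $H_{\rho_0}^\circ$ (avoiding \emph{all} smaller $\rho'$, not just those below $\pi$), which is handled by the meet trick above.
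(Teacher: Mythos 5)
Your proof is correct and takes essentially the same approach as the paper: both arguments rest on the meet-stability $H_\rho \cap H_{\rho'} = H_{\rho \wedge \rho'}$ from Lemma~\ref{L:propertiesofHs}(4), use it to extract a unique minimal $\rho_0 \le \pi$ with $x \in H_{\rho_0}$, and derive disjointness from the same identity. (One minor remark: the ``subtlety'' you flag in the covering step is vacuous — any $\rho' < \rho_0$ automatically satisfies $\rho' < \rho_0 \le \pi$, hence $\rho' \le \pi$, so no extra meet trick is needed to place it in $S_x$; your argument is correct, just slightly redundant there.)
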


\begin{proof}
    Suppose given $\rho,\eta \le \pi$ distinct. By \Cref{L:propertiesofHs}, $H_\rho \cap H_\eta = H_{\rho\wedge \eta}$, and as $\rho\wedge \eta < \rho$ and $\eta$, one sees $H_\rho^\circ \cap H_\eta^\circ = \varnothing$. For $x\in H_\pi$, consider $\Lambda_x = \{\bot<\rho \le \pi: x\in H_\rho\}$. $\Lambda_x$ has minimal elements since $L_n^-$ is a finite poset. However, if $\mu$ and $\nu$ are two such minimal elements, we have $x\in H_\mu \cap H_\nu = H_{\mu\wedge \nu}$, and consequently $\nu = \mu \wedge \nu = \mu$. Therefore, $x\in H_\mu^\circ$ and we have $H_\pi^\circ \subset \bigcup_{\rho \le \pi} H_\rho^{\circ}$. The other inclusion is automatic.
\end{proof}

\begin{defn}
\label{D:Nsets}
    Suppose $\rho < \pi$ are given in $L_n$. Define 
    \[ 
        N_{\rho|\pi} = \{(\mu(b),\mu(b')):b\ne b'\in B(\pi), b\cup b'\subset B\in B(\rho),\text{ and }\mu(B) = \mu(b)\}. 
    \]
    To clarify, $b\cup b'\subset B \in B(\rho)$ means that there exists a $B\in B(\rho)$ in which both $b$ and $b'$ are contained. By convention, $N_{\rho|\rho} = \varnothing$. If $\rho<\pi <\top$, let $P_{\rho|\pi} = \{P_{ij}: (i,j)\in N_{\rho|\pi}\}$. Let $P_{\rho|\top} = \{P_{ij}:(i,j)\in N_{\rho|\pi}\} \cup \{t\} \subset H^0(\bP(V),\cO(1))$. 
\end{defn}

\begin{lem}
\label{L:cuttingout}
    Let $\rho<\pi \in L_n$ be given
    \begin{enumerate}   
        \item if $\rho \ne \bot$, $N_{\rho|\pi}$ is a minimal set of linear equations defining $H_\rho$ as a subvariety of $H_{\pi}$; and 
        \item $N_{\bot|\pi}$ restricts to a set of homogeneous coordinates for $H_\pi$. 
    \end{enumerate}
\end{lem}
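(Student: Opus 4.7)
The plan is to prove both parts by a clean dimension count paired with a forest-rank argument for linear independence. The key numerical fact is that $|N_{\rho|\pi}|$ matches the codimension of $H_\rho$ inside $H_\pi$. By \Cref{L:propertiesofHs}(3) this codimension equals $|B(\pi)| - |B(\rho)|$. On the combinatorial side, each block $B \in B(\rho)$ decomposes into some number $k_B$ of blocks of $\pi$, and by definition of $N_{\rho|\pi}$ this $B$ contributes exactly $k_B - 1$ pairs: one for each block of $\pi$ inside $B$ other than the distinguished block $b_B$ containing $\mu(B)$. Summing over $B \in B(\rho)$ yields $|N_{\rho|\pi}| = |B(\pi)| - |B(\rho)|$.

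Each $P_{ij}$ with $(i,j) \in N_{\rho|\pi}$ vanishes on $H_\rho$ tautologically, since $i \sim^\rho j$ by construction, so $\{P_{ij} = 0\}_{(i,j)\in N_{\rho|\pi}}$ defines a linear subspace $H' \subset H_\pi$ with $H_\rho \subseteq H'$. If the restrictions of these forms to $H_\pi$ are linearly independent, the codimension count forces $H' = H_\rho$ and the set is automatically a minimal defining family. To check independence I would use the $H_\pi$-relations $P_{1,i} = P_{1,\mu(b)}$ for $i \in b \in B(\pi)$ to rewrite the equation indexed by $(\mu(b), \mu(b'))$ as $x_{b'} - x_b = 0$, where $x_c := P_{1,\mu(c)}$, with the convention $x_{b_0} = 0$ for $b_0$ the block of $\pi$ containing $1$. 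The resulting collection of equations is encoded by a graph on vertex set $B(\pi)$ that is a disjoint union of stars, one for each $B \in B(\rho)$ centered at $b_B$, hence a forest on $|B(\pi)|$ vertices with $|B(\pi)| - |B(\rho)|$ edges. The standard fact that the signed incidence matrix of a forest has full rank gives linear independence in the ambient affine coordinate space, and this persists after imposing $x_{b_0} = 0$ because $b_0$ coincides with $b_B$ for the unique block $B \in B(\rho)$ containing $1$, so the identification only converts some edge-equations into coordinate-vanishings without creating new relations. This proves (1).

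For (2), specializing $\rho = \bot$ forces $B(\rho) = \{\{1,\ldots,n\}\}$ with $\mu(B) = 1$, so $b_B = b_0$ and $N_{\bot|\pi}$ reduces to $\{(1, \mu(b')) : b' \in B(\pi),\, 1 \notin b'\}$. The corresponding $|B(\pi)| - 1$ forms $P_{1,\mu(b')}$ restrict to $H_\pi$ as precisely the free homogeneous coordinates of $H_\pi$, which is a projective linear subspace of $\bP(V)$ of dimension $|B(\pi)| - 2$ cut out by $t = 0$ together with the blockwise identifications $P_{1,i} = P_{1,\mu(b)}$ for $i \in b$. I do not anticipate any serious obstacle; the only non-trivial input is the full-rank property of a forest's incidence matrix, and the remainder is combinatorial bookkeeping.
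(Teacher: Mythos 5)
Your proof is correct and follows the same outline as the paper's: a dimension count combining \Cref{L:propertiesofHs} with the identity $|N_{\rho|\pi}| = |B(\pi)| - |B(\rho)|$, plus a linear-independence check, then the elementary restriction-of-a-basis fact for (2). The paper leaves both the counting identity and the independence as ``one can verify''; your blockwise counting and forest-incidence-rank argument supply a concrete verification of exactly those points (just be sure to establish the coordinate description of $H_\pi$ from (2) before invoking it in the independence step of (1), a minor reordering and not a gap).
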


\begin{proof}
    By \Cref{L:propertiesofHs}, $H_\rho \subset H_\pi$ and $\dim H_\pi - \dim H_\rho = |B(\pi)| - |B(\rho)|$. However, one can verify that $|B(\pi)| - |B(\rho)| = |N_{\rho|\pi}|$ and that $N_{\rho|\pi}$ is linearly independent. This proves (1). (2) follows from the following: Given $V$ a vector space with $f_1,\ldots, f_n$ a basis of $V^*$, then $f_{c+1},\ldots, f_n$ restrict to a basis of $W^*$ where $W = Z(f_1,\ldots, f_c)$. 
\end{proof}

\section{Multiscaled lines with collision}

\S\S 3.1-3.2 are based on part of a joint work with Daniel Halpern-Leistner \cite{StabSODsII}.

\subsection{Multiscaled lines}

\begin{defn}
\label{D:multiscaledline}
A \emph{multiscaled line} is a tuple $(\Sigma,p_\infty,\preceq,\omega_\bullet)$, where
\begin{enumerate}
    \item $\Sigma$ is a connected nodal complex genus $0$ curve\endnote{This is equivalent to saying that every component of $\Sigma$ is isomorphic to $\bP^1$, and the dual graph of $\Sigma$ is a tree.} with marked smooth point $p_\infty$. We let $\Gamma(\Sigma) = (V(\Sigma),E(\Sigma))$ denote the dual graph of $\Sigma$, and let $v_0 \in V(\Sigma)$ denote the vertex corresponding to the component containing $p_\infty$. This gives $\Gamma(\Sigma)$ the structure of a rooted tree with partial order $\subseteq$ (see \Cref{D:treestuff} and \Cref{D:rootedporder}).
    \item $\preceq$ is a total preorder on the set $V(\Sigma)$ of irreducible components of $\Sigma$ such that:
    \begin{enumerate}
        \item for all $v \in V(\Sigma) \setminus \{v_0\}$, there is a unique $w\in V(\Sigma)$ that is adjacent to $v$ with $w\prec v$. The edge from $v$ to $w$ is called the edge \emph{descending} from $v$; and
        \item $v \sim w$, meaning $v \preceq w$ and $w \preceq v$, for any two vertices $v,w \in V(\Sigma)$ that are maximal with respect to $\subseteq$.
    \end{enumerate}
    \item For every $v \in V(\Sigma)$, $\omega_v$ is a meromorphic section of $\Omega_{\Sigma_v}$ with a pole of order $2$ at the node corresponding to the descending edge from $v$ when $v \neq v_0$, or the marked point $p_\infty$ when $v=v_0$, and no other zeros or poles.
\end{enumerate}
A \emph{complex projective isomorphism} of multiscaled lines is an isomorphism of nodal curves $f : \Sigma \to \Sigma'$ that preserves the respective preorders and marked points such that $f^\ast(\omega'_v) = c_v \omega_v$ for some constant $c_v \in \bC^\ast$, $c_v = c_w$ whenever $v \sim w$, and $c_v=1$ if $v$ is maximal.\endnote{It follows from condition (3) of \Cref{D:multiscaledline} that the $\omega_v$ are all of the form $\lambda \cdot dz$ for a chosen affine coordinate $z:C_v\setminus \{n_v\}\to \bb{C}$, where $n_v$ is the descending node of $v$, and $\lambda \in \bC^*$. A complex projective isomorphism of multiscaled lines is an isomorphism of nodal curves that does not preserve the data of any $\omega_v$ for $v$ non-maximal, but does preserve the information of $\omega_v/\omega_{v'}$ for any $v$ and $v'$. }
\end{defn}

Condition (2) implies that $u \subseteq v \Rightarrow u \preceq v$, that \emph{every} edge of $\Gamma$ is the descending edge for one of its vertices, and that $v_0$ is the unique minimum with respect to the preorder $\preceq$.\endnote{As $V(\Gamma)$ equipped with $\subseteq$ is a finite poset, one can define $\rm{depth}(v)$ to be the maximum length of chains $v\subsetneq v_1\subsetneq \cdots$. We prove $v\subseteq u\Rightarrow v \preceq u$ by inducting on $\mathrm{depth}(v)$. For depth $0$, $v$ is maximal for $\subseteq$ and so $v = u$. Suppose the claim has been proven for $\mathrm{depth}<k$ and let $w$ of depth $k$ be given. Suppose $w\subset v$ and that $v$ and $w$ are joined by an edge. It follows that $\mathrm{depth}(v)\le k-1$. If $v\subseteq u$ then $v\preceq u$ by induction. On the other hand, there can be at most one vertex $w$ such that $w\subset v$ since the graph is a tree. Consequently, $w$ is the descending edge from $v$ and $w\prec v$. So, for any $w\subseteq v \subseteq u$ we have $w\prec v\preceq u$ and the result follows. 

Since $v_0\subseteq u$ for all $u \in V(\Gamma)$, it now follows that $v_0$ is a minimum element with respect to $\preceq$. Lastly, let $e\in E(\Gamma)$ be given connecting vertices $v$ and $w$. Then $v\subset w$ or $w\subset v$. If $v\subset w$, $v\preceq w$ and so $e$ is the descending edge from $w$ to $v$.} Also, terminal vertices (\Cref{D:rootedporder}) form an equivalence class under $\sim$.\endnote{We consider some examples of totally preordered rooted level trees. There is a unique length $0$ totally preordered rooted level tree consisting of a single vertex and no edges. It is easy to write down many length $1$ examples:
\begin{center}
\begin{tikzpicture}[node distance =4 cm and 5cm ,on grid ,
semithick ,
state/.style ={ circle ,top color =white , draw, , minimum width =1 cm}]
\filldraw[black] (0,0) circle (1.5pt) ;
\filldraw[black] (0,-.75) circle (1.5pt) ; 
\draw[black, thick] (0,0) -- (0,-.75) ;
\end{tikzpicture}
\quad
\begin{tikzpicture}[node distance =4 cm and 5cm ,on grid ,
semithick ,
state/.style ={ circle ,top color =white , draw, , minimum width =1 cm}]
\filldraw[black] (0,-.75) circle (1.5pt) ; 
\filldraw[black] (-.5,0) circle (1.5pt) ; 
\filldraw[black] (.5,0) circle (1.5pt) ; 
\draw[black, thick] (0,-.75) -- (-.5,0) ; 
\draw[black, thick] (0,-.75) -- (.5,0) ; 
\end{tikzpicture}
\quad
\begin{tikzpicture}[node distance =4 cm and 5cm ,on grid ,
semithick ,
state/.style ={ circle ,top color =white , draw, , minimum width =1 cm}]
\filldraw[black] (0,0) circle (1.5pt) ; 
\filldraw[black] (-.5,0) circle (1.5pt) ; 
\filldraw[black] (0,-.75) circle (1.5 pt) ;
\filldraw[black] (.5,0) circle (1.5pt) ; 
\draw[black, thick] (0,-.75) -- (-.5,0) ; 
\draw[black, thick] (0,-.75) -- (0,0) ; 
\draw[black, thick] (0,-.75) -- (.5,0) ; 
\end{tikzpicture}
\quad
\begin{tikzpicture}[node distance =4 cm and 5cm ,on grid ,
semithick ,
state/.style ={ circle ,top color =white , draw, , minimum width =1 cm}]
\filldraw[black] (0,-.75) circle (1.5pt) ; 
\filldraw[black] (-.5,0) circle (1.5pt) ; 
\filldraw[black] (-.17,0) circle (1.5pt) ; 
\filldraw[black] (.17,0) circle (1.5pt) ; 
\filldraw[black] (.5,0) circle (1.5pt) ; 
\draw[black, thick] (0,-.75) -- (-.5,0) ; 
\draw[black, thick] (0,-.75) -- (-.17,0) ; 
\draw[black, thick] (0,-.75) -- (.17,0) ; 
\draw[black, thick] (0,-.75) -- (.5,0) ; 
\end{tikzpicture}
\end{center}
Indeed, the length $1$ examples are classified by the number $n\in \bN$ of terminal vertices. The $n=1$ case will not occur as the dual tree of a stable $n$-marked multiscaled line. In these graphs and the subsequent ones, the root node is placed at the bottom. $v\preceq w$ if $w$ has $y$ coordinate at least as large as that of $v$. The lowest level is regarded as level $0$, the next lowest as level $1$, etc. Here are some examples of length $2$ totally preordered rooted level trees:

\begin{center}
\begin{tikzpicture}[node distance =4 cm and 5cm ,on grid ,
semithick ,
state/.style ={ circle ,top color =white , draw, , minimum width =1 cm}]
\filldraw[black] (0,-1.5) circle (1.5pt) ;
\filldraw[black] (-.5,-.75) circle (1.5pt) ;
\filldraw[black] (.5,-.75) circle (1.5pt) ;
\filldraw[black] (-.75,0) circle (1.5pt) ; 
\filldraw[black] (-.25,0) circle (1.5pt) ; 
\filldraw[black] (.25,0) circle (1.5pt) ; 
\filldraw[black] (.75,0) circle (1.5pt) ; 
\filldraw[black] (-.5,0) circle (1.5pt) ;

\draw[black, thick] (0,-1.5) -- (-.5, -.75) ;
\draw[black, thick] (0,-1.5) -- (.5, -.75) ;
\draw[black, thick] (.5,-.75) -- (.25, 0) ;
\draw[black, thick] (.5,-.75) -- (.75, 0) ;
\draw[black, thick] (-.5,-.75) -- (-.25, 0) ;
\draw[black, thick] (-.5,-.75) -- (-.75, 0) ;
\draw[black, thick] (-.5,-.75) -- (-.5,0) ; 
\end{tikzpicture}
\quad 
\begin{tikzpicture}[node distance =4 cm and 5cm ,on grid ,
semithick ,
state/.style ={ circle ,top color =white , draw, , minimum width =1 cm}]
\filldraw[black] (0,-1.5) circle (1.5pt) ;
\filldraw[black] (-.5,-.75) circle (1.5pt) ;
\filldraw[black] (-.75,0) circle (1.5pt) ; 
\filldraw[black] (-.5,0) circle (1.5pt) ; 
\filldraw[black] (-.25,0) circle (1.5pt) ; 
\filldraw[black] (.5,0) circle (1.5pt) ; 
\draw[black, thick] (0,-1.5) -- (-.5, -.75) ;
\draw[black, thick] (-.5,-.75) -- (-.25, 0) ;
\draw[black, thick] (-.5,-.75) -- (-.5, 0) ;
\draw[black, thick] (-.5,-.75) -- (-.75, 0) ;
\draw[black, thick] (0,-1.5) -- (.5,0) ; 
\end{tikzpicture}
\end{center}
The first tree admits three level structures, while the second admits only one. Here are the possible level structures on the first tree:
\begin{center}
\begin{tikzpicture}[node distance =4 cm and 5cm ,on grid ,
semithick ,
state/.style ={ circle ,top color =white , draw, , minimum width =1 cm}]
\filldraw[black] (0,-1.5) circle (1.5pt) ;
\filldraw[black] (-.5,-.75) circle (1.5pt) ;
\filldraw[black] (.5,-.75) circle (1.5pt) ;
\filldraw[black] (-.75,0) circle (1.5pt) ; 
\filldraw[black] (-.25,0) circle (1.5pt) ; 
\filldraw[black] (.25,0) circle (1.5pt) ; 
\filldraw[black] (.75,0) circle (1.5pt) ; 
\filldraw[black] (-.5,0) circle (1.5pt) ;

\draw[black, thick] (0,-1.5) -- (-.5, -.75) ;
\draw[black, thick] (0,-1.5) -- (.5, -.75) ;
\draw[black, thick] (.5,-.75) -- (.25, 0) ;
\draw[black, thick] (.5,-.75) -- (.75, 0) ;
\draw[black, thick] (-.5,-.75) -- (-.25, 0) ;
\draw[black, thick] (-.5,-.75) -- (-.75, 0) ;
\draw[black, thick] (-.5,-.75) -- (-.5,0) ; 
\end{tikzpicture}
\quad
\begin{tikzpicture}[node distance =4 cm and 5cm ,on grid ,
semithick ,
state/.style ={ circle ,top color =white , draw, , minimum width =1 cm}]
\filldraw[black] (0,-1.5) circle (1.5pt) ;
\filldraw[black] (-.5,-1) circle (1.5pt) ;
\filldraw[black] (.5,-.5) circle (1.5pt) ;
\filldraw[black] (-.75,0) circle (1.5pt) ; 
\filldraw[black] (-.25,0) circle (1.5pt) ; 
\filldraw[black] (.25,0) circle (1.5pt) ; 
\filldraw[black] (.75,0) circle (1.5pt) ; 
\filldraw[black] (-.5,0) circle (1.5pt) ; 

\draw[black, thick] (0,-1.5) -- (-.5,-1) ; 
\draw[black, thick] (0,-1.5) -- (.5, -.5) ;
\draw[black, thick] (.5,-.5) -- (.25,0) ;
\draw[black, thick] (.5,-.5) -- (.75,0) ;
\draw[black, thick] (-.5,-1) -- (-.25,0) ;
\draw[black, thick] (-.5,-1) -- (-.5,0) ;
\draw[black, thick] (-.5,-1) -- (-.75,0) ;
\end{tikzpicture}
\quad 
\begin{tikzpicture}[node distance =4 cm and 5cm ,on grid ,
semithick ,
state/.style ={ circle ,top color =white , draw, , minimum width =1 cm}]
\filldraw[black] (0,-1.5) circle (1.5pt) ;
\filldraw[black] (.5,-1) circle (1.5pt) ;
\filldraw[black] (-.5,-.5) circle (1.5pt) ;
\filldraw[black] (.75,0) circle (1.5pt) ; 
\filldraw[black] (.25,0) circle (1.5pt) ; 
\filldraw[black] (-.25,0) circle (1.5pt) ; 
\filldraw[black] (-.75,0) circle (1.5pt) ; 
\filldraw[black] (-.5,0) circle (1.5pt) ; 

\draw[black, thick] (.5,-1) -- (.25,0) ; 
\draw[black, thick] (0,-1.5) -- (-.5, -.5) ;
\draw[black, thick] (0,-1.5) -- (.5, -1) ;
\draw[black, thick] (-.5,-.5) -- (-.25,0) ;
\draw[black, thick] (-.5,-.5) -- (-.5,0) ;
\draw[black, thick] (-.5,-.5) -- (-.75,0) ;
\draw[black, thick] (.5,-1) -- (.75,0) ;
\end{tikzpicture}
\end{center}

}

\begin{defn}\label{D:specialization_totally_preordered_rooted_tree}
A \emph{contraction} of totally preordered rooted trees $f : (\Gamma,\preceq,v_0) \twoheadrightarrow (\Gamma',\preceq',v_0')$ is a surjection $f:V(\Gamma) \to V(\Gamma')$ such that\endnote{This is a slight abuse of notation. We use the same symbol $f$ to denote both the contraction, and the underlying surjection on vertex sets.}
\begin{enumerate} 
    \item $v\preceq w$ implies $f(v)\preceq f(w)$;
    \item for adjacent vertices $v,w \in V(\Gamma)$, $f(v)$ and $f(w)$ are either equal or adjacent.
    \item for any $v \in V(\Gamma')$, $f^{-1}(v)$ spans a connected subgraph.
\end{enumerate}
Condition (1) implies that $f(v_0) = v_0'$.\endnote{Indeed, $v_0$ is uniquely characterized by the property that $v_0\preceq w$ for all $w\in \Gamma(V)$ and likewise for $v_0'$. Therefore, by (1) it follows that $f(v_0)\preceq w'$ for all $w'\in V(\Gamma')$ and thus that $f(v_0) = v_0'$. Note that here we used the surjectivity of $f:V(\Gamma)\to V(\Gamma')$.}
\end{defn}

\begin{lem}\label{L:specializations}
For any totally preordered rooted tree $(\Gamma,\preceq,v_0)$ and any order preserving surjection of totally ordered sets
\[
V(\Gamma) /{\sim} \twoheadrightarrow [n] := \{0<\cdots<n\},
\]
there is a unique contraction $f:(\Gamma, \preceq, v_0) \twoheadrightarrow (\Gamma', \preceq', v'_0)$ such that $V(\Gamma')/{\sim'}$ is isomorphic to $[n]$ under $V(\Gamma)/{\sim}$.\endnote{By this, we mean that there is an order-preserving bijection $V(\Gamma')/{\sim} \to [n]$ whose composition with the canonical map $V(\Gamma)/{\sim} \to V(\Gamma')/{\sim}$ induced by $f$ agrees with the given map $V(\Gamma)/{\sim} \to [n]$. Such a map is unique if it exists.}
\end{lem}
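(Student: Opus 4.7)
The plan is to construct $(\Gamma',\preceq',v_0')$ explicitly as a quotient of $\Gamma$ by an appropriate equivalence relation and then verify the axioms and universal property. Write $\pi : V(\Gamma) \to [n]$ for the composite of the projection $V(\Gamma) \to V(\Gamma)/{\sim}$ with the given surjection. Define $\approx$ on $V(\Gamma)$ to be the equivalence relation generated by the rule $v \approx w$ whenever $v,w$ are adjacent in $\Gamma$ and $\pi(v) = \pi(w)$. Let $V(\Gamma') := V(\Gamma)/{\approx}$ with $f : V(\Gamma) \to V(\Gamma')$ the projection, declare two classes adjacent in $\Gamma'$ if they admit representatives adjacent in $\Gamma$, take $v_0' := [v_0]_{\approx}$, and set $[v]_{\approx} \preceq' [w]_{\approx}$ iff $\pi(v) \le \pi(w)$ (well-defined since $\pi$ is constant on $\approx$-classes).

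For existence, each fiber $T := f^{-1}(v')$ is a connected subgraph of $\Gamma$ by construction, so $\Gamma'$ is a tree by the standard fact that contracting disjoint connected subgraphs of a tree yields a tree. The contraction axioms (1)--(3) of \Cref{D:specialization_totally_preordered_rooted_tree} are essentially built in, as is the identification $V(\Gamma')/{\sim'} \cong [n]$. The main check is that $(\Gamma',\preceq',v_0')$ is itself a totally preordered rooted tree, and the delicate part is the unique-descending-edge condition at each $v' \ne v_0'$. The key structural observation is that any connected subgraph $T$ of a rooted tree has a unique $\subseteq$-minimal element $r_T$: two such minima would share a $\subseteq$-predecessor outside $T$ on the unique $\Gamma$-path between them, contradicting connectedness. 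Combined with the fact noted in the excerpt that a descending edge from $v$ in $\Gamma$ must land on the unique neighbor $w \subset v$ (any neighbor $c$ with $v \subset c$ satisfies $v \preceq c$, ruling out $c \prec v$), this implies every vertex of $T$ other than $r_T$ has its $\subseteq$-predecessor inside $T$, so the only edge of $\Gamma$ leaving $T$ at strictly smaller $\pi$-value emanates from $r_T$; this is the unique descending neighbor of $v'$. Terminality of $v'$ in $\Gamma'$ forces $T$ to contain all $\subseteq$-descendants of its vertices, so any $\subseteq$-maximal element of $T$ is terminal in $\Gamma$, hence of $\pi$-value $n$, so all terminal vertices of $\Gamma'$ lie in the top $\sim'$-class.

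For uniqueness, suppose $g : (\Gamma,\preceq,v_0) \twoheadrightarrow (\Gamma'',\preceq'',v_0'')$ is another contraction realizing the same map on levels. Condition (1) forces $g$ to be constant on $\sim$-classes and to track $\pi$ on $V(\Gamma)/{\sim}$. For $v,w$ adjacent in $\Gamma$ with $\pi(v) = \pi(w)$, condition (2) forces $g(v)$ and $g(w)$ to be equal or adjacent in $\Gamma''$; but adjacent vertices of any totally preordered rooted tree lie in distinct $\sim$-classes (every parent-child edge is strict in the preorder, a direct consequence of the unique-descending-edge axiom), so $g(v) = g(w)$. Iterating along paths gives $v \approx w \Rightarrow g(v) = g(w)$, and the converse follows from condition (3) applied to $g$. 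Thus $g$ factors uniquely through $f$ via a bijection $\Gamma' \to \Gamma''$ preserving root, preorder, and adjacency---that is, an isomorphism. I expect the unique-descending-edge verification to be the main obstacle, since it sits at the interface of the tree and preorder structures; the analysis of $r_T$ reduces it to a manageable bookkeeping argument.
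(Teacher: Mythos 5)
Your proof is correct and takes essentially the same route as the paper's: your $\approx$-classes are precisely the connected components $T_k^i$ of the fibers $g^{-1}(k)$ that the paper contracts, and the identification of edges, levels, and the uniqueness argument all match. You supply one useful detail that the paper's proof and its endnote leave largely to the reader, namely the verification via the $\subseteq$-minimum $r_T$ of each contracted subtree that $\Gamma'$ satisfies the unique-descending-edge axiom of \Cref{D:multiscaledline}(2)(a).
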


\begin{proof}
    In this proof we write $(i)$ for \Cref{D:specialization_totally_preordered_rooted_tree}(i) for $1\le i \le 3$.
    Write $g$ for the composite $V(\Gamma)\to V(\Gamma)/{\sim} \twoheadrightarrow [n]$. Suppose $f:(\Gamma,\preceq,v_0)\twoheadrightarrow (\Gamma',\preceq',v_0')$ is a contraction with the desired property. For $k\in [n]$, consider $g^{-1}(k)$. Given $v,w\in g^{-1}(k)$, any path connecting them also lies in $g^{-1}(k)$ and hence $g^{-1}(k)$ spans a disjoint union of trees, $\{T_k^i\}$. 
    
    Let $h:V(\Gamma')/{\sim'}\to [n]$ denote the isomorphism under $V(\Gamma)/{\sim}$; one has $V(T_k^i)\to h^{-1}(k)$ for each $i$. By (2), $f(T_k^i)$ is a single vertex for each $i$ and by (3) $f(T_k^i) = f(T_k^j)$ implies $i=j$. So, for each $k\in [n]$ the level $k$ vertices of $\Gamma'$ correspond bijectively to $\{T_k^i\}$. $\preceq'$ is uniquely determined by $V(\Gamma')/{\sim'}\cong [n]$; indeed, $v'\preceq'w'$ if and only if $g(v')\le g(w')$. $T_0 = f^{-1}(v_0)$ is a subtree of $\Gamma$ and $f(T_0)$ is the root of $\Gamma'$. 

    Finally, (2) implies that each $f(T_k^i)$ has a unique descending edge, and $f(T_k^i)$ and $f(T_j^{i'})$ are connected if and only if $T_k^i$ and $T_j^{i'}$ are adjacent in $\Gamma$. There can be no other edges without introducing a loop. This uniquely characterizes $(\Gamma',\preceq',v_0')$. We leave it to the reader to verify that this defines a totally preordered rooted tree with the necessary properties.\endnote{To be precise, define the level $k$ vertices of $\Gamma'$ to be $\{T_k^i\}$. Attach $T_k^i$ to $T_{k-p}^j$ with $p \ge 1$ via an edge if and only if the unique maximal vertex of $T_k^i$ is connected by an edge to a vertex in $T_{k-p}^j$. This connects each $T_k^i$ to a unique vertex on a a level $<k$. We define the marking function $h':\{1,\ldots,n\}\to V(\Gamma')$ by $h' = f\circ h$. Finally, as mentioned in the body of the proof the total preorder $\preceq'$ is uniquely determined by the condition $v'\preceq'w'$ iff $g(v')\le g(w').$}
\end{proof}

If one identifies $V(\Gamma)/{\sim} \cong [\ell]$ for some $\ell\geq 0$, then \Cref{L:specializations} says that contractions $(\Gamma,\preceq,v_0) \twoheadrightarrow (\Gamma',\preceq,v_0')$ correspond bijectively to order-preserving surjections $[\ell] \twoheadrightarrow [n]$ for $n \in [0,\ell]$. 

\begin{ex}
\label{ex:contractinglevels}
    There is a unique isomorphism $V(\Gamma)/{\sim} \cong [\ell]$ of totally ordered sets carrying $v_0$ to $0$. Given $k\in \{1,\ldots,\ell\}$, one defines the \emph{contraction of the level} $k$ as the contraction induced by the unique order preserving surjection $[\ell]\twoheadrightarrow[\ell-1]$ such that $k,k-1\mapsto k-1$. This can be regarded as a degeneracy map $\sigma_{k-1}^\ell:[\ell]\to [\ell-1]$ defined by
    \[
        \sigma_{k-1}^\ell(p) = 
        \begin{cases}
            p&p\le k-1\\
            p-1 & p \ge k.
        \end{cases}
    \]
    Since every order preserving surjection $[\ell]\twoheadrightarrow[\ell-k]$ for $0\le k \le \ell$ factorizes uniquely as $\sigma_{j_1}^{\ell-k+1}\circ \cdots \circ \sigma_{j_k}^\ell$ for $0\le j_1<\cdots<j_k\le \ell-1$ (see \cite{Maclanecategories}*{Lem. p. 173}), every contraction of $V(\Gamma)$ is determined by a choice of levels $1\le i_1<\cdots<i_k\le \ell$ that are contracted. The corresponding contraction is given by $\sigma_{i_1-1}^{\ell-k+1}\circ \cdots \circ \sigma_{i_k - 1}^\ell$. We call this the \emph{contraction of levels} $i_1<\cdots<i_k$.
\end{ex}

\begin{defn}
\label{D:stablenmarkedmultiscaledline}
An $n$-marked multiscaled line is the data of $(\Sigma,p_\infty,\preceq,\omega_\bullet,p_1,\ldots,p_n)$, where $(\Sigma,p_\infty,\preceq, \omega_\bullet)$ is a multiscaled line and the $p_i$ are smooth marked points lying on terminal components of $\Sigma$. An $n$-marked multiscaled line is called \emph{stable} if each terminal component contains at least one of the $p_i$ and each nonterminal component contains at least $3$ nodes or at least $2$ nodes and the point $p_\infty$.\endnote{Note that a stable multiscaled line has trivial complex projective automorphism group.}
\end{defn} 

Associated to an $n$-marked multiscaled line is an $n$-marked totally preordered rooted tree $(\Gamma,\preceq,v_0,h)$, where $(\Gamma,\preceq,v_0)$ is the totally preordered rooted tree of $(\Sigma,p_\infty,\preceq,\omega_\bullet)$ and $h:\{1,\ldots,n\}\to V(\Sigma)_{\rm{term}}$ is defined by $h(i) = v$ if $p_i$ lies on $\Sigma_v$. 

Associated to a totally preordered rooted tree $(\Gamma,\preceq,v_0)$ is a unique isomorphism of totally ordered sets $\lambda:V(\Gamma)/{\sim} \to [\ell]$. This is called the \emph{level function} of $(\Gamma,\preceq,v_0)$ and $\ell$ is the number of levels. Note that $\lambda^{-1}(0) = \{v_0\}$ and that $\lambda^{-1}(\ell) = V(\Sigma)_{\rm{term}}$. In particular, the dual tree of $(\Sigma,p_\infty,\preceq,\omega_\bullet,p_\bullet)$ is a dual level tree in the sense of \Cref{D:dualleveltree}. Conversely, every dual level tree arises from a $n$-marked stable multiscaled line. 

In lieu of $(\Gamma,\preceq,v_0,h)$, we write $(\Gamma,\preceq)$ to emphasize the level structure or sometimes simply $\Gamma$. Since the dual tree of an $n$-marked stable multiscaled line is a dual level tree as in \Cref{D:dualleveltree}, we may also think of $(\Gamma,\preceq)$ as a chain of partitions $\{\rho_1<\cdots<\rho_\ell\} = \rho_\bullet$, by \Cref{P:treeversuschain}, where $\ell$ is the length of $\Gamma$. In the notation of \Cref{P:treeversuschain}, we will often write $\mathfrak{c}(\Sigma)$ instead of $\mathfrak{c}(\Gamma(\Sigma))$ for brevity. 

Note that for $1\le k \le \ell$, the contraction of level $k$ as in \Cref{ex:contractinglevels} corresponds at the level of chains of partitions to replacing $\rho_1<\cdots<\rho_\ell$ with $\rho_1<\cdots<\widehat{\rho}_k<\cdots<\rho_\ell$, where the hat means ``omitted.'' In particular, all contractions of a dual tree correspond to removal of elements of its corresponding chain of partitions.

\subsection{Constructing the space}

The set $\mscbar_n$ consists of complex projective isomorphism classes of $n$-marked stable multiscaled lines $(\Sigma,p_\infty,\preceq,\omega_\bullet,p_\bullet)$. To simplify notation, we denote a point of $\Mmscbar_n$ by $\Sigma$, unless the other data are explicitly used.

\begin{defn}[Period functions]
For any $i,j \in \{1,\ldots,n\}$, we define functions $\Pi_{ij} : \mscbar_n \to \bP^1$ as follows: If $\Sigma$ is a stable $n$-marked multiscaled line, then $\Pi_{ij}(\Sigma) = \int_{p_i}^{p_j} \omega_v$ if $p_i$ and $p_j$ are both contained in the same terminal component $\Sigma_v$, and $\Pi_{ij}=\infty$ otherwise. The integral is taken over any path from $p_i$ to $p_j$ in the smooth locus of $\Sigma_v$.
\end{defn}

Let $\Mmscbar_n^\circ \subset \Mmscbar_n$ denote the subset of isomorphism classes of multiscaled lines with irreducible underlying curve. $\Mmscbar_n^{\circ}$ can be regarded as the configuration space of $n$ points in $\bA^1$ up to simultaneous translation, $\bA^n/\bG_a$. Indeed, $(\Sigma,p_\infty,\omega,p_\bullet) \in \Mmscbar_n^\circ$ is isomorphic to $(\bP^1,\infty,\omega,p_\bullet)$, where $p_i\ne \infty$ for all $1\le i \le n$ and $\omega$ has a order $2$ pole at $\infty$ and no other zeros or poles. $\omega$ determines a coordinate $z:\bP^1\setminus \{\infty\}\to \bA^1$ up to translation such that $dz = \omega$. Consequently, $(\Sigma,p_\infty,\omega,p_\bullet)$ is equivalent to $(\bP^1,\omega,dz)$ together with $(z(p_1),\ldots, z(p_n))\in \bA^n/\bG_a$. Subject to this identification, $\Pi_{ij}$ restricts to the coordinate on $\bA^n/\bG_a$, given by $\Pi_{ij}(z) = z_j - z_i$. 

Given a stable $n$-marked multiscaled line $\Sigma$, for each irreducible component $\Sigma_v$ of $\Sigma$ choose $n_v \in \Sigma_v$, where $n_v$ denotes either an ascending node or a marked point when $\Sigma_v$ is terminal. Write $N_v$ for the set of ascending nodes on $\Sigma_v$ when $\Sigma_v$ is nonterminal or marked points when it is terminal.

\begin{lem}
\label{L:determinedbyintegrals}
    Suppose $\Sigma$ is a stable $n$-marked multiscaled line with dual level tree $\Gamma(\Sigma)$. $\Sigma$ is determined up to isomorphism by $\Gamma(\Sigma)$ and 
    \begin{equation}
    \label{E:integrals}
        \left\{\int_{n_v}^{n} \omega_v\:\bigg|\: n\in N_v \setminus \{n_v\}\right\}_{v\in \Gamma(\Sigma)},
    \end{equation}
    where $\int_{n_v}^n\omega_v$ is the integral along any path in $\Sigma_v$ connecting $n_v$ and $n$. It is uniquely determined up to complex projective isomorphism by $\Gamma(\Sigma)$ and
    \begin{equation}
    \label{E:integrals2}
    \left\{ \left[ 
        \int_{n_v}^{n} \omega_v\:\bigg|\: 
        \begin{array}{c} 
            v\in \lambda^{-1}(m)  \\ 
            n \in N_v\setminus \{n_v\}
        \end{array} 
    \right]_{0\le m \le \ell-1},\left(\int_{n_v}^n\omega_v\:\bigg|\: 
    \begin{array}{c}
    v\in \lambda^{-1}(\ell) \\ 
    n\in N_v\setminus \{n_v\}
    \end{array}
    \right)
    \right\}
    \end{equation}
    where latter data are regarded as elements of $\prod_{m=0}^{\ell-1}\bP^{\nu_m} \times \bA^{\nu_\ell}$ where $\nu_m = \sum_{v\in \lambda^{-1}(m)} |N_v\setminus \{n_v\}|$ for all $0\le m \le \ell$.
\end{lem}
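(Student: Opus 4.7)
The plan is to reduce the statement to a component-by-component analysis, since a multiscaled line is built by gluing its irreducible components along nodes prescribed by the dual level tree. The main observation is that each $(\Sigma_v,\omega_v)$ is determined by its combinatorial data and the positions of its special points, and that passing to complex projective isomorphism kills exactly the levelwise rescaling freedom on non-terminal levels.

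First I would fix a component $\Sigma_v \cong \bP^1$ and let $q_v \in \Sigma_v$ be the descending node (or $p_\infty$ when $v=v_0$). Since $\omega_v$ is a meromorphic differential on $\bP^1$ with a unique double pole at $q_v$ and no other zeros or poles, any affine coordinate $z : \Sigma_v \setminus \{q_v\} \to \bA^1$ satisfying $\omega_v = dz$ is unique up to translation $z \mapsto z+c$. In particular, the configuration of $N_v$ on $\Sigma_v$, regarded as points of $\bA^1$, is intrinsically determined up to simultaneous translation; equivalently, the differences $z(n)-z(n_v) = \int_{n_v}^n \omega_v$ for $n \in N_v \setminus \{n_v\}$ are intrinsic and completely record the configuration once a basepoint $n_v$ is chosen.

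Next I would reconstruct $\Sigma$ from $\Gamma(\Sigma)$ together with the integrals in \eqref{E:integrals}. On each $v$, pick a copy of $(\bP^1, dz)$, send $q_v$ to $\infty$, send $n_v$ to $0$, and place each remaining $n \in N_v \setminus \{n_v\}$ at the point $\int_{n_v}^n \omega_v \in \bA^1$; the tree structure of $\Gamma(\Sigma)$ prescribes exactly which ascending node on an upper component is to be glued to the descending node of each adjacent lower component, and the marking function $h$ specifies the $p_i$. This uniquely produces a multiscaled line, and the universal choices made differ from any other reconstruction by an isomorphism of multiscaled lines (i.e.\ a componentwise translation, pinned down by the identification $n_v \mapsto 0$), giving the first claim. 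Conversely, the data in \eqref{E:integrals} is obviously recovered from $\Sigma$.

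For the second claim, I would track what a complex projective isomorphism does to \eqref{E:integrals}. By \Cref{D:multiscaledline}, such an isomorphism rescales $\omega_v \mapsto c_v \omega_v$ with $c_v = c_w$ whenever $v \sim w$ and $c_v=1$ for maximal $v$. Thus for $0 \le m \le \ell-1$ the entire tuple of integrals at level $m$ is rescaled by the common nonzero scalar $c_m$, while at level $\ell$ the tuple is preserved. Consequently, the level-$m$ tuple is well-defined only as a point of $\bP^{\nu_m}$ for $m<\ell$ and as a point of $\bA^{\nu_\ell}$ for $m=\ell$, and the preceding reconstruction applied to any lift of such projective data yields a multiscaled line whose complex projective isomorphism class is independent of the lift (different lifts differ by levelwise rescaling, hence produce complex projectively isomorphic curves). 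The main thing to check carefully is that the gluing along nodes is compatible with the levelwise rescaling—this is automatic because only components on the \emph{same} level are rescaled by a common constant, and the compatibility of scalings across adjacent levels is not required by \Cref{D:multiscaledline}.
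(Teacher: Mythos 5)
Your proof is correct and follows essentially the same route as the paper's: decompose $\Sigma$ into its irreducible components, observe that each $(\Sigma_v,\omega_v)$ is determined up to the translation ambiguity of the $dz$-coordinate by the integrals off the chosen basepoint $n_v$, and then note that complex projective isomorphism adds exactly the levelwise $\bC^*$-rescaling freedom on non-terminal levels. The only stylistic difference is that you make the reconstruction explicit via a normal form ($q_v\mapsto\infty$, $n_v\mapsto 0$, $n\mapsto\int_{n_v}^n\omega_v$), whereas the paper phrases the same content more structurally by observing that an isomorphism of multiscaled lines with fixed dual tree decomposes as a collection of componentwise isomorphisms of irreducible scaled lines.
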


\begin{proof}
    First, suppose $\Sigma$ is irreducible. Then $(\Sigma,\omega,p_\bullet) \in \Mmscbar_n^\circ$ is uniquely determined by $\{\int_{p_1}^{p_j}\omega\}_{j=2}^n$ since this is equivalent to the statement that $\{\Pi_{1j}\}_{j=2}^n$ form coordinates on $\bA^n/\bG_a$. 
    
    For general $\Sigma$, an isomorphism $\alpha:(\Sigma,p_\infty,\preceq,\omega_\bullet,p_\bullet)\to (\Sigma',p_\infty',\preceq',\omega_\bullet',p_\bullet')$ of stable $n$-marked multiscaled lines with dual tree $\Gamma$ is equivalent to the data of $\{\alpha_v:v\in V(\Gamma)\}$, where writing $n_-$ for the descending node of $\Sigma_v$ and $n_-'$ for that of $\Sigma_v'$ (resp. $p_\infty$ and $p_\infty'$ for $v = v_0$), $\alpha_v$ is an isomorphism of irreducible scaled lines $(\Sigma_v,n_-,\omega_v,n\in N_v) \to (\Sigma_v',n_-',\omega_v',n'\in N_v')$. Thus, the isomorphism class of $(\Sigma,p_\infty,\preceq,\omega_\bullet,p_\infty)$ is determined by \eqref{E:integrals}.

    When $\alpha$ is a complex projective isomorphism, we have again $\{\alpha_v:v\in V(\Gamma)\}$ except that $\alpha_v^*(\omega_v') = c_v\omega_v$, where $c_v\in \bC^*$ is a constant depending only on the level of $v$, and equal to $1$ on level $\ell$. The second claim follows.
\end{proof}

For any stable $n$-marked multiscaled line $\Sigma$, let 
\[
    I_{ij}:= \int_{n_i}^{n_j}\omega_{v}
\]
where $v:= h(i)\wedge h(j) \in V(\Sigma)$ and $n_i,n_j\in \Sigma_v$ denote either 
\begin{enumerate}
    \item[(1)] the marked points $p_i$ and $p_j$ if $h(i)=h(j)$; or
    \item[(2)] the nodes in $\Sigma_{v}$ that connect to $\Sigma_{h(i)}$ and $\Sigma_{h(j)}$, respectively. 
\end{enumerate} 
The integral is taken along any path connecting $n_i$ and $n_j$ in the smooth locus of $\Sigma_v$. For any given dual level tree $\Gamma$, we introduce the following subset of $\Mmscbar_n$,
\[
    U_\Gamma := \left\{ \Sigma \in \mscbar_n \left| \begin{array}{c} \exists \text{ contraction } \Gamma \twoheadrightarrow \Gamma(\Sigma), \text{ and} \\\Pi_{ij}(\Sigma) \neq 0 \text{ if } h(i)\neq h(j) \text{ in } \Gamma \end{array} \right. \right\}.
\]
Now consider a stable $n$-marked multiscaled line $\Sigma$ whose isomorphism class lies in $U_\Gamma$, and let $f : \Gamma \twoheadrightarrow \Gamma(\Sigma)$ be the (unique!) contraction. Suppose $\Gamma/{\sim} = [\ell]$ and for each $0\le m \le \ell-1$ choose a vertex $v_m$ on level $m$. Also, choose $i_m,j_m \in \{1,\ldots, n\}$ such that $h(i_m)\wedge h(j_m) = v_m$. Put $s_\ell = 1$ and for each $0\le m \le \ell-1$, put $s_m = I_{i_mj_m}$. Next, for $1\le m \le \ell$, let 
\[
        t_m := \left\{ \begin{array}{ll} s_{m}/s_{m-1}, & \text{if } f(v_m) \sim f(v_{m-1}) \\
        0& \text{otherwise}\end{array} \right.
\] 
Finally, for any $i,j\in \{1,\ldots, n\}$ such that $h(i)\wedge h(j)$ is on level $m$, put 
\[
    z_{ij} := \frac{I_{ij}}{s_m} = \frac{1}{s_m} \int_{n_i}^{n_j}\omega_{f(v_m)}
\] 
for all $i,j \in \{1,\ldots, n\}$.

\begin{lem}
\label{L:equivalentindices}
    Let $A$ denote the set of pairs $i<j$ such that $h(i) \neq h(j)$ in $V(\Gamma)$, and $B$ the set of pairs with $h(i) = h(j)$. The resulting function
    \begin{equation}
    \label{E:Mn_coordinates}
        (z_{ij},t_m)_{\substack{1\leq i<j \leq n\\ 1 \leq m \leq \ell}} : U_\Gamma \to (\bC^\ast)^A \times \bC^B \times \bC^\ell,
    \end{equation}
    is well-defined. If different vertices $v_m$ and indices $i_m,j_m$ are chosen for each $m =0,\ldots,\ell-1$, the resulting function differs from the original by composition with a monomial automorphism of $(\bC^\ast)^A \times \bC^B \times \bC^\ell$.
\end{lem}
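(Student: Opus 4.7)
The plan is to verify that $(z_{ij}, t_m)$ lands in the asserted codomain and is well-defined on complex projective isomorphism classes, and then to compute its behavior under a change of auxiliary choices. Each $I_{ij}$ is an integral of $\omega_v$ on a smooth $\bP^1$-component $\Sigma_v$ with $\omega_v$ holomorphic off its single pole; since the complement of the pole is simply connected, the integral is path-independent. For $(i,j) \in B$ the meet lies on level $\ell$ and $s_\ell = 1$, so $z_{ij} = I_{ij} \in \bC$, which may vanish when $p_i = p_j$. For $(i,j) \in A$ the subtler claim $z_{ij} \in \bC^\ast$ I would establish by cases according to whether the contraction $f : \Gamma \twoheadrightarrow \Gamma(\Sigma)$ identifies $h(i)$ and $h(j)$ in $V(\Sigma)$: if so, $v = h(i) \wedge h(j) \in V(\Sigma)$ is terminal and $I_{ij} = \Pi_{ij}(\Sigma) \neq 0$ by the defining condition of $U_\Gamma$; if not, $n_i \neq n_j$ are distinct nodes on a non-terminal $\Sigma_v$, and writing $\omega_v = \lambda \, dz$ for a suitable affine coordinate gives $I_{ij} = \lambda\bigl(z(n_j) - z(n_i)\bigr) \neq 0$.

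Next I would verify invariance under complex projective isomorphism. Such an isomorphism rescales $\omega_w$ by $c_w \in \bC^\ast$ depending only on the $\sim$-class (level in $V(\Sigma)$) of $w$, with $c_w = 1$ on the terminal level. Since $v_m$ and $h(i) \wedge h(j)$ are both level-$m$ vertices of $\Gamma$, their $f$-images lie in a common $\sim$-class of $V(\Sigma)$, so $I_{ij}$ and $s_m$ rescale by the same constant and their ratio $z_{ij}$ is unchanged. The analogous cancellation applies to $t_m$ on $\{t_m \neq 0\}$, and the condition $f(v_m) \sim f(v_{m-1})$ governing vanishing of $t_m$ is manifestly isomorphism-invariant.

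For the transformation under new choices $(v_m', i_m', j_m')$ with $s_m' := I_{i_m' j_m'}$, the same level-preservation argument shows $f(v_m)$ and $f(v_m')$ share a level in $V(\Sigma)$, hence $s_m'/s_m \in \bC^\ast$; directly, $s_m'/s_m = z_{i_m' j_m'}^{\mathrm{old}}$. Therefore, for $(i,j) \in A$ with meet-level $m$,
\[
    z_{ij}^{\mathrm{new}} = \frac{z_{ij}^{\mathrm{old}}}{z_{i_m' j_m'}^{\mathrm{old}}}, \qquad z_{ij}^{\mathrm{new}} = z_{ij}^{\mathrm{old}} \text{ for } (i,j) \in B,
\]
and $t_m^{\mathrm{new}} = t_m^{\mathrm{old}} \cdot z_{i_m' j_m'}^{\mathrm{old}} / z_{i_{m-1}' j_{m-1}'}^{\mathrm{old}}$ (with the convention $z_{i_\ell' j_\ell'}^{\mathrm{old}} := 1$ reflecting $s_\ell \equiv 1$). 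Because $v_m'$ lies on level $m < \ell$ of $\Gamma$, one has $(i_m', j_m') \in A$, so each divisor $z_{i_m' j_m'}^{\mathrm{old}}$ is $\bC^\ast$-valued and the transition formula is a Laurent monomial in the coordinates, as asserted.

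The main obstacle I anticipate is establishing the nonvanishing $z_{ij} \neq 0$ for $(i,j) \in A$ in the case where the contraction $f$ identifies $h(i)$ and $h(j)$; this is precisely the content of the $\Pi_{ij} \neq 0$ clause in the definition of $U_\Gamma$, and invoking it cleanly is the key step. A secondary point needing care is that changing the auxiliary level-$m$ vertex preserves the $\sim$-class of its $f$-image in $V(\Sigma)$, which controls both the isomorphism invariance and the vanishing locus of $t_m$.
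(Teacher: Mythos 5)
Your proposal is correct, and it is in fact more thorough than the paper's proof of this lemma. The paper's proof addresses only the second claim (the monomial-automorphism statement), by reducing to the case of changing a single index pair $(i_m,j_m)\mapsto(i_m',j_m')$ on a single level and recording the resulting transition formulas; it treats the well-definedness assertion as implicit. You, by contrast, establish both halves of the statement. For well-definedness you correctly handle the three required points: path-independence of $I_{ij}$ (simply connectedness of the component minus its pole), invariance under complex projective isomorphism (since level-$m$ vertices of $\Gamma$ have $f$-images in a common $\sim$-class of $\Gamma(\Sigma)$, so $I_{ij}$ and $s_m$ rescale by the same constant), and nonvanishing of $z_{ij}$ for $(i,j)\in A$ via the case split on whether the contraction $f$ merges $h(i)$ and $h(j)$ — with the $\Pi_{ij}\neq 0$ clause of $U_\Gamma$ doing the work in the merged case and distinctness of nodes in the unmerged case. (You should perhaps say explicitly that the nonvanishing argument applied to $(i_m,j_m)$ gives $s_m\neq 0$, so that the ratio $z_{ij}=I_{ij}/s_m$ is well-posed; this is implicit in what you wrote.)

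For the transition under a change of choices, you compute the general formula directly instead of reducing to single-pair changes: with $s_m'/s_m = z_{i_m'j_m'}^{\mathrm{old}}$ one gets $z_{ij}^{\mathrm{new}}=z_{ij}^{\mathrm{old}}/z_{i_m'j_m'}^{\mathrm{old}}$ on level $m<\ell$, identity on $B$, and $t_m^{\mathrm{new}}=t_m^{\mathrm{old}}\cdot z_{i_m'j_m'}^{\mathrm{old}}/z_{i_{m-1}'j_{m-1}'}^{\mathrm{old}}$, and you correctly observe that the denominators are $\bC^*$-valued because the relevant pairs lie in $A$. This is equivalent to the paper's single-pair reduction but avoids the (small) extra step of checking that compositions of such changes exhaust all possible changes and remain monomial. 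Your general formula is also cleaner for verifying that the map is a self-map of the asserted codomain and is invertible. Both approaches are valid; yours is the more complete and arguably more transparent write-up.
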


\begin{proof}
    It suffices to prove the result changing one pair of indices $(i_m,j_m) = (i,j)$ to $(i_{m}',j_m') = (k,\ell)$ on some level $0\le m \le \ell-1$. Write $z_{ij}'$ for the coordinates defined with respect to the indices $(i_m',j_m')$. One computes that $z_{\alpha\beta} = z_{\alpha\beta}'$ for all $\alpha,\beta$ with $h(\alpha)\wedge h(\beta)$ not on level $m$. Otherwise, we have $z_{k\ell}' = z_{ij}$ and $z_{\alpha \beta}' = z_{\alpha\beta}/z_{k\ell}$. For $k\not\in \{m,m+1\}$, one has $t_k' = t_k$. Finally, $t_m' = t_m\cdot z_{k\ell}$ and $t_{m+1}' = t_{m+1}/z_{k\ell}$. It is an exercise to verify that this map is invertible.\endnote{Since $z_{\alpha\beta} = I_{\alpha\beta}/I_{ij}$ and $z_{\alpha\beta}' = I_{\alpha\beta}/I_{k\ell}$, the inverse map is given by $z_{\alpha\beta} = z_{\alpha\beta}'\cdot (z_{ij}')^{-1}$. Furthermore, one can verify that $t_k = t_k'$ for $k\ne m,m+1$, $t_{m+1} = t_{m+1}'/z_{ij}'$, and $t_m = t_m'\cdot z_{ij}'.$}
\end{proof}

The functions $(z_{ij},t_m)$ depend on a choice of level tree $\Gamma$ which defines the open set $U_\Gamma$ and indices $\{(i_m,j_m)\}_{m=0}^{\ell-1}$. We will always suppress the choice of indices from the notation, however to emphasize $\Gamma$ we may write $(z_{ij}^\Gamma,t_m^\Gamma)$. By a \emph{choice of indices for} $\Gamma$, we mean a choice of $\{(i_m,j_m)\}_{m=0}^{\ell-1}$ as above.\endnote{By \Cref{L:equivalentindices}, the choice of indices does not affect the map $U_\Gamma \to (\bC^*)^A\times \bC^B \times \bC^\ell$ up to a simple type of isomorphism. However, these indices are analogous to choosing a basis of a vector space and different choices are technically useful in different scenarios.} The default notation will be to write an alternative choice of indices as $\{(i_m',j_m')\}_{m=0}^{\ell-1}$ and the resulting coordinates as $(z_{ij}',t_m')$.

\begin{rem}
\label{R:periodsintermsofzij}
    We write $*$ for the tree with one vertex and note that $U_* = \Mmscbar_n^\circ$. There are no $t$ functions for $\Gamma = *$ and under the identification $U_* = \Mmscbar_n^\circ = \bA^n/\bG_a$, the functions $z_{ij}^*$ correspond to the period functions $\Pi_{ij}(z) = z_j - z_i$.
\end{rem}

\begin{cor}
\label{C:determinedbyz}
    Let $\Sigma$ be given whose equivalence class lies in $U_\Gamma$. $\{z_{ij}(\Sigma),t_m(\Sigma)\}$ depend only on the complex projective isomorphism class of $\Sigma$. Furthermore, if $\Gamma(\Sigma) = \Gamma$, then $\Sigma$ is uniquely determined up to complex projective isomorphism by $\{z_{ij}(\Sigma)\}_{1\le i<j\le n}$.
\end{cor}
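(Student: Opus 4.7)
The plan is to handle the two assertions in turn, both reducing to bookkeeping of the level-wise rescaling constants associated to complex projective isomorphism.

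For the first assertion, I would fix a complex projective isomorphism $\alpha:\Sigma\to\Sigma'$, so that $\alpha_v^*\omega_v' = c_v\omega_v$ with $c_v$ constant on each $\sim$-class of $\Gamma(\Sigma)$ and equal to $1$ on the terminal level. The key observation is that $I_{ij}(\Sigma')=c_v\,I_{ij}(\Sigma)$ with $v=h^\Sigma(i)\wedge h^\Sigma(j)$, so this factor depends only on the level of $v$ in $\Gamma(\Sigma)$. Writing $f:\Gamma\twoheadrightarrow\Gamma(\Sigma)$ for the contraction and $k(m)$ for the level of $f(v_m)$, one verifies (using that contractions of rooted trees preserve the meet operation) that any $(i,j)$ with $h^\Gamma(i)\wedge h^\Gamma(j)=v_m$ satisfies $h^\Sigma(i)\wedge h^\Sigma(j)=f(v_m)$. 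Consequently both $I_{ij}$ and $s_m=I_{i_mj_m}$ scale by the same factor $c_{k(m)}$, so $z_{ij}=I_{ij}/s_m$ is preserved. The argument for $t_m$ is parallel: either $f(v_m)\not\sim f(v_{m-1})$ and $t_m=0$ is tautologically preserved, or $f(v_m)\sim f(v_{m-1})$, in which case $k(m)=k(m-1)$ and the factors cancel in $s_m/s_{m-1}$.

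For the second assertion, when $\Gamma(\Sigma)=\Gamma$ the contraction $f$ is the identity, so by \Cref{L:determinedbyintegrals} it suffices to recover from $\{z_{ij}\}$ the projective classes $[\int_{n_v}^n\omega_v]\in\bP^{\nu_m}$ on each level $m\le\ell-1$ together with the affine tuple on level $\ell$. The strategy is to express each $\int_{n_v}^n\omega_v$ directly as $s_m\cdot z_{ij}$ for suitable $(i,j)$. For non-terminal $v$ on level $m$, $n_v$ is the ascending node to some child $w$; using stability I would choose $i_v$ with $h(i_v)$ a terminal descendant of $w$, and for each $n\in N_v\setminus\{n_v\}$ (the ascending node to a child $w'\neq w$) an index $j_n$ with $h(j_n)$ a terminal descendant of $w'$. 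Then $n_{i_v}=n_v$ and $n_{j_n}=n$, so $I_{i_vj_n}=\int_{n_v}^n\omega_v$ and hence $z_{i_vj_n}=\int_{n_v}^n\omega_v/s_m$. Since $s_m$ is a single constant shared by all vertices on level $m$, the tuple $\{z_{i_vj_n}\}$ recovers the projective class. For terminal $v$, $n_v=p_{i_v}$ and $s_\ell=1$, so each $z_{i_vj}$ equals the required affine entry on the nose.

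The main obstacle is the level-by-level bookkeeping in the first assertion: the level $k(m)$ of $f(v_m)$ in $\Gamma(\Sigma)$ can be strictly less than $m$, so one must confirm carefully that the rescaling factors in the numerator and denominator of $z_{ij}$ (and of $t_m$, when nonzero) really do coincide. This ultimately rests on the two structural facts that $c_v$ depends only on the level in $\Gamma(\Sigma)$, and that tree contractions respect meets. The second assertion is then essentially a direct reading-off, with the stability hypothesis playing its essential role by ensuring that every ascending node has at least one marked descendant.
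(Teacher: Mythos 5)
Your proposal is correct and follows essentially the same route as the paper's proof: for the first assertion, it reduces to the levelwise scaling behavior of the $I_{ij}$, and for the second, it recovers the homogeneous/affine data of \Cref{L:determinedbyintegrals} directly from the $z_{ij}$ using $s_m$ as a common scale. You actually fill in two details the paper leaves implicit — that contractions preserve meets (so numerator and denominator of $z_{ij}$ scale by the same $c_{k(m)}$ even when $\Gamma(\Sigma)\neq\Gamma$) and the separate check for $t_m$ — so your write-up is, if anything, slightly more complete.
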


\begin{proof}
    Since each $z_{ij}$ is defined as a ratio $I_{ij}/I_{i_mj_m}$ where $h(i) \wedge h(j)$ is on level $m$, it depends only on the complex projective isomorphism class of $\Sigma$.\endnote{I.e., this quantity is invariant under levelwise rescaling of the differentials on nonterminal levels.} For each $0\le m \le \ell-1$, choose $n_v\in N_v$ for each $v\in \lambda^{-1}(m)$ such that $n_v$ is the node connecting $\Sigma_v$ to $\Sigma_{h(i_m)}$ when $v = v_m$. $s_m = \int_{n_v}^{n}\omega_{v_m}$ for some $n\in N_{v_m}$, by definition. Choose $0\le m \le \ell-1$, $v\in \lambda^{-1}(m)$, and $n'\in N_v\setminus \{n_v\}$. Then $z_{ij}(\Sigma) = \int_{n_v}^{n'} \omega_v/\int_{n_{v_m}}^{n}\omega_{v_m}$, where $h(i)\wedge h(j) = v$, $n_v$ connects $\Sigma_v$ to $\Sigma_{h(i)}$, and $n'$ connects $\Sigma_v$ to $\Sigma_{h(j)}$. So, all ratios of homogeneous coordinates in \eqref{E:integrals2} can be recovered from $\{z_{ij}(\Sigma)\}$. Similar reasoning shows that each $\int_{n_v}^n \omega_v$ for $v\in \lambda^{-1}(\ell)$ equals some $z_{ij}(\Sigma)$. Therefore, by \Cref{L:determinedbyintegrals}\eqref{E:integrals2} we are done.
\end{proof}

\begin{lem}
\label{L:intersection}
    Suppose $f:\Gamma\twoheadrightarrow \Gamma'$ is the contraction of levels $1\le i_1<\cdots< i_k\le \ell$. Then $U_\Gamma \cap U_{\Gamma'} = D(t_{i_1}^\Gamma\cdots t_{i_k}^\Gamma)$.
\end{lem}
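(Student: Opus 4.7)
The plan is to reduce the identity to a combinatorial statement about contractions of level trees together with a direct inspection of the definition of the $t_m^\Gamma$ functions. Throughout, write $g:\Gamma\twoheadrightarrow \Gamma(\Sigma)$ for the (unique) contraction associated to a point $\Sigma\in U_\Gamma$ and let $J\subseteq\{1,\ldots,\ell\}$ be the set of levels of $\Gamma$ contracted by $g$, in the sense of \Cref{ex:contractinglevels}.

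First I would prove the set-theoretic characterization
\[
    U_\Gamma\cap U_{\Gamma'}=\{\Sigma\in U_\Gamma:\{i_1,\ldots,i_k\}\subseteq J\}.
\]
The inclusion $\subseteq$ follows because a contraction of $\Gamma'$ onto $\Gamma(\Sigma)$, composed with $f:\Gamma\twoheadrightarrow\Gamma'$, must coincide with $g$ by uniqueness in \Cref{L:specializations}, so the levels of $\Gamma$ contracted by $f$ are contracted by $g$. For the reverse inclusion, note that if $J\supseteq\{i_1,\ldots,i_k\}$ then the factorization of order-preserving surjections discussed in \Cref{ex:contractinglevels} yields a contraction $\Gamma'\twoheadrightarrow\Gamma(\Sigma)$; the condition $\Pi_{ij}(\Sigma)\neq 0$ when $h(i)\neq h(j)$ in $\Gamma'$ is automatic from the analogous condition in $U_\Gamma$, because $h_{\Gamma'}=f\circ h_\Gamma$ and hence $\{(i,j):h_{\Gamma'}(i)\neq h_{\Gamma'}(j)\}\subseteq \{(i,j):h_\Gamma(i)\neq h_\Gamma(j)\}$.

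Next I would translate the condition $\{i_1,\ldots,i_k\}\subseteq J$ into nonvanishing of the $t_m^\Gamma$. By the defining formula for $t_m$, we have $t_m^\Gamma(\Sigma)=0$ precisely when $g(v_m)\not\sim g(v_{m-1})$, which happens exactly when level $m$ of $\Gamma$ is \emph{not} collapsed under $g$, i.e.\ $m\notin J$. Thus $t_m^\Gamma(\Sigma)\neq 0$ iff $m\in J$, and so $\{i_1,\ldots,i_k\}\subseteq J$ iff $t_{i_1}^\Gamma(\Sigma)\cdots t_{i_k}^\Gamma(\Sigma)\neq 0$. Combined with the previous paragraph this yields $U_\Gamma\cap U_{\Gamma'}=D(t_{i_1}^\Gamma\cdots t_{i_k}^\Gamma)$.

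The one subtle point — and the part I expect to be the main obstacle — is justifying that whenever $m\in J$, the ratio $s_m/s_{m-1}$ is actually a nonzero complex number (so that the vanishing locus of $t_m^\Gamma$ is exactly the complement of $J$). This requires showing $s_m=I_{i_mj_m}\neq 0$ for every $0\le m\le \ell-1$ when $\Sigma\in U_\Gamma$. If $g(v_m)$ is nonterminal in $\Gamma(\Sigma)$, then $n_{i_m},n_{j_m}$ are distinct ascending nodes on $\Sigma_{g(v_m)}\cong\bP^1$ and $\omega_{g(v_m)}$ restricts to a translation-invariant differential away from the descending node, so the integral is nonzero. If $g(v_m)$ is terminal in $\Gamma(\Sigma)$ (this can occur if $g$ contracts all levels above $m$), then $I_{i_mj_m}=\Pi_{i_mj_m}(\Sigma)$; since $v_m$ is non-terminal in $\Gamma$ we have $h_\Gamma(i_m)\neq h_\Gamma(j_m)$, and the definition of $U_\Gamma$ forces $\Pi_{i_mj_m}(\Sigma)\neq 0$. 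With this lemma in hand, the characterization of nonvanishing of $t_m^\Gamma$ in terms of $J$ is immediate and the proof is complete.
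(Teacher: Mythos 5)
Your argument is correct and follows the same route as the paper's proof: characterize $U_\Gamma\cap U_{\Gamma'}$ as the locus in $U_\Gamma$ where the contraction $g:\Gamma\twoheadrightarrow\Gamma(\Sigma)$ factors through $f$, translate this to the condition that $\{i_1,\ldots,i_k\}$ lies among the levels contracted by $g$, and identify that locus with $D(t_{i_1}^\Gamma\cdots t_{i_k}^\Gamma)$. The one genuine addition you make is spelling out that $s_m=I_{i_mj_m}\neq 0$ for every $\Sigma\in U_\Gamma$ and $0\le m\le\ell-1$ (so that $t_m(\Sigma)=0$ happens \emph{only} when $g(v_m)\not\sim g(v_{m-1})$, never because the numerator $s_m$ vanishes); the paper's proof leaves this implicit, relying on the well-definedness of the chart \eqref{E:Mn_coordinates} established earlier, but it is a worthwhile check and your case analysis (nonterminal $g(v_m)$: distinct nodes give a nonzero period; terminal $g(v_m)$: $I_{i_mj_m}=\Pi_{i_mj_m}\neq 0$ by the definition of $U_\Gamma$ since $v_m$ is nonterminal in $\Gamma$) is exactly right.
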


\begin{proof}
    One verifies that $U_\Gamma \cap U_{\Gamma'} = \{\Sigma \in U_\Gamma: \exists \text { contraction } \Gamma'\twoheadrightarrow\Gamma(\Sigma)\}$.\endnote{By definition, $U_\Gamma \cap U_{\Gamma'}$ consists of those $\Sigma$ such that there exist contractions $\Gamma \twoheadrightarrow \Gamma(\Sigma)$ and $\Gamma'\twoheadrightarrow \Gamma(\Sigma)$ and $\Pi_{ij}(\Sigma)\ne 0$ if $h(i) \ne h(j)$ or $h'(i) \ne h'(j)$. However, $h'(i)\ne h'(j)$ implies $h(i) \ne h(j)$ since $\Gamma'$ is a contraction of $\Gamma$. Therefore, if $\Sigma \in U_\Gamma$, the condition of being in $U_{\Gamma'}$ is just that there exists a contraction $\Gamma'\twoheadrightarrow\Gamma(\Sigma)$.} This is equivalent to saying that $g:\Gamma\twoheadrightarrow\Gamma(\Sigma)$ factors through $f$. This, in turn, is equivalent to the statement: if $f$ contracts a level $m$ then $g$ contracts the level $m$. This happens exactly on the locus where $t_{i_1}^\Gamma\cdots t_{i_k}^\Gamma$ does not vanish.
\end{proof}

\begin{lem}
\label{L:goodindicesCOV}
Suppose $\Sigma$ lies in $U_\Gamma \cap U_{\Gamma'}$ where $\Gamma \twoheadrightarrow \Gamma'$ is the contraction corresponding to $\alpha:[\ell]\twoheadrightarrow [\ell-k]$, deleting levels $1\le i_1< \cdots < i_k\le \ell$. Given a choice of indices for $\Gamma$, there is a choice of indices for $\Gamma'$ such that 
    \begin{enumerate}
        \item if $h(i)\wedge h(j)$ is on level $m$ of $\Gamma$ and 
        \begin{enumerate} 
            \item $\{i_1,\ldots, i_k\}\cap \bZ_{\ge m+1}$ contains a maximal consecutive sequence $\{m+1,\ldots, m+p\}$ for $p\ge 1$ then $z_{ij}' = z_{ij}/t_{m+1}\cdots t_{m+p}$; 
            \item if $m+1\not \in \{i_1,\ldots, i_k\}$ then $z_{ij}' = z_{ij}$; and 
        \end{enumerate}
        \item for all nonzero $j\in [\ell-k],$ one has $t_j' = \prod_{i\in \alpha^{-1}(j)} t_{i}$.
    \end{enumerate}
\end{lem}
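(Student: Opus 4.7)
The plan is to choose indices for $\Gamma'$ explicitly in terms of those given for $\Gamma$, establish a key identity $s'_{m'} = s_{m_q}$ for an appropriate index $m_q$, and then derive both parts by telescoping the ratios $s_l/s_{l-1}$.

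For each $m' \in \{0,\ldots,\ell-k-1\}$, I would set $m_q := \max\alpha^{-1}(m')$ (which lies in $\{0,\ldots,\ell-1\}$ because $\alpha(\ell) = \ell-k$) and define $v'_{m'} := f(v_{m_q})$ and $(i'_{m'},j'_{m'}) := (i_{m_q},j_{m_q})$, where $f:\Gamma\twoheadrightarrow\Gamma'$ is the contraction. Validity of this choice reduces to the auxiliary claim that contractions of totally preordered rooted trees preserve meets, i.e.\ $f(u\wedge w) = f(u)\wedge f(w)$; this follows from \Cref{D:specialization_totally_preordered_rooted_tree} together with uniqueness of simple paths in a tree. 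Writing $g:\Gamma\twoheadrightarrow\Gamma(\Sigma)$ and $g':\Gamma'\twoheadrightarrow\Gamma(\Sigma)$ for the contractions coming from $\Sigma\in U_\Gamma\cap U_{\Gamma'}$, the uniqueness in \Cref{L:specializations} forces $g = g'\circ f$. The vertex in $V(\Sigma)$ used to compute $s'_{m'}$ is then $g'(v'_{m'}) = g(v_{m_q})$, which is the same vertex used to compute $s_{m_q}$, so $s'_{m'} = s_{m_q}$ (and similarly $s'_{\ell-k}=1=s_\ell$).

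For (1), if $h(i)\wedge h(j)$ is on level $m$ of $\Gamma$, meet-preservation puts $h'(i)\wedge h'(j)$ on level $\alpha(m)$ of $\Gamma'$, so $z'_{ij}/z_{ij} = s_m/s_{m_q}$. The two subcases just record the possibilities for $m_q := \max\alpha^{-1}(\alpha(m))$: in (b), the hypothesis $m+1\notin\{i_1,\ldots,i_k\}$ forces $\alpha(m+1)\ne\alpha(m)$, hence $m_q=m$ and $z'_{ij}=z_{ij}$; in (a), unwinding the definition of the $i_p$'s shows that the maximal consecutive run $\{m+1,\ldots,m+p\}\subset\{i_1,\ldots,i_k\}$ is exactly $\alpha^{-1}(\alpha(m))\setminus\{m\}$, so $m_q = m+p$. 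For each $l\in\{m+1,\ldots,m+p\}$, both $l-1$ and $l$ lie in $\alpha^{-1}(\alpha(m))$, so $f(v_l)\sim f(v_{l-1})$ in $\Gamma'$ and hence $g(v_l)\sim g(v_{l-1})$ in $\Gamma(\Sigma)$, which forces $t_l = s_l/s_{l-1}$; telescoping then gives $z'_{ij} = z_{ij}/(t_{m+1}\cdots t_{m+p})$ as claimed.

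For (2), set $m_a := \max\alpha^{-1}(j-1)+1$ and $j_q := \max\alpha^{-1}(j)$, so that $\alpha^{-1}(j) = \{m_a,\ldots,j_q\}$. Telescoping gives $s'_j/s'_{j-1} = s_{j_q}/s_{m_a-1} = \prod_{l=m_a}^{j_q}s_l/s_{l-1}$, and for $l>m_a$ the factor $s_l/s_{l-1}$ equals $t_l$ by the same argument as in part (1). The only delicate factor is $l = m_a$, where $t_{m_a} = s_{m_a}/s_{m_a-1}$ holds iff $g(v_{m_a})\sim g(v_{m_a-1})$ iff $g'(v'_j)\sim g'(v'_{j-1})$ iff $t'_j\ne 0$. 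Thus when $t'_j\ne 0$ both sides of the desired identity telescope to the same nonzero value, and when $t'_j=0$ both sides vanish (the right-hand product contains the zero factor $t_{m_a}$). The main obstacle is really the bookkeeping around the three contractions $f,g,g'$ and the two auxiliary facts $g = g'\circ f$ and that contractions preserve meets; once these are in place, both parts of the lemma reduce to telescoping identities.
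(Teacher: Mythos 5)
Your proof is correct and rests on the same essential choice as the paper, namely setting the scale parameters for $\Gamma'$ to $s'_{m'} = s_{\max\alpha^{-1}(m')}$. The difference is in how you get from there to the formulas: the paper first verifies both identities by hand for a single contracted level and then inducts on the number of contracted levels via the factorization $\Gamma\twoheadrightarrow\Gamma'\twoheadrightarrow\Gamma''$; you instead extract two structural facts (contractions preserve meets, and the contraction $g:\Gamma\twoheadrightarrow\Gamma(\Sigma)$ factors as $g'\circ f$ by uniqueness) and then compute everything in one telescoping pass. Your route is cleaner and avoids the induction, at the cost of making those two auxiliary facts load-bearing; they are both true and follow as you say (meet-preservation because the image of the simple path $u\to v_0$ is the simple path $f(u)\to v_0'$ with repetitions, and connectedness of fibers over $f(u)\wedge f(w)$ forces the image of $u\wedge w$ to land there), but neither is actually stated as a lemma in the paper, so if you wanted to write this up you would need to prove them explicitly --- in particular that a composition of contractions is a contraction, which you are using implicitly to conclude $g=g'\circ f$. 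One small point worth making explicit in part (2): the reason the vanishing condition for $t'_j$ matches that for $t_{m_a}$ is transitivity of $\sim$ applied to $g(v_{j_q})\sim g(v_{m_a})$, which holds since $\alpha(j_q)=\alpha(m_a)$; you gesture at this but it deserves a sentence.
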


\begin{proof}
    A choice of indices for $\Gamma$ is equivalent to a choice of scale parameter $s_m$ for each $0\le m \le \ell-1$. For $m\in [\ell-k]$, put $s_m' := s_{\max \alpha^{-1}(m)}$. This is sensible since if $h(i)\wedge h(j)$ lies on a level in $\alpha^{-1}(m)$, then $h'(i)\wedge h'(j)$ lies on level $m$. 

    Suppose a single level $m+1$ is contracted and that $h(i) \wedge h(j)$ is on level $k$. If $k\le m-1$, $z_{ij} = I_{ij}/s_k = z_{ij}'$. If $k = m$, then $h'(i)\wedge h'(j)$ is also on level $m$ so $z_{ij} = I_{ij}/s_{m}$ and $z_{ij}'= I_{ij}/s_m'$. Thus, $z_{ij}/z_{ij}' = s_{m+1}/s_m = t_{m+1}$ and $z_{ij}' = z_{ij}/t_{m+1}$. Finally, if $k \ge m+1$, then $h'(i) \wedge h'(j)$ is on level $k-1$. So, $z_{ij} = I_{ij}/s_k = I_{ij}/s_{k-1}' = z_{ij}'$. 

    Next, $t_m' = s_m'/s_{m-1}' = s_{m+1}/s_{m-1} = t_m\cdot t_{m+1}$. If $1\le k <m$, then $t_k' = s_k'/s_{k-1}' = s_k/s_{k-1} = t_k$. For $m<k\le \ell-1$, one has $t_k' = s_k'/s_{k-1}' = s_{k+1}/s_k =t_{k+1}$. This verifies (2).

    Consider the contraction of levels $i_1<\cdots<i_k$, $\Gamma\twoheadrightarrow \Gamma''$, recalling from \Cref{ex:contractinglevels} that all contractions are of this form. It factorizes as $\Gamma\twoheadrightarrow\Gamma'\twoheadrightarrow\Gamma''$ where $\Gamma\twoheadrightarrow\Gamma'$ is the contraction of levels $i_2<\cdots<i_{k}$ and corresponds to $\beta:[\ell]\twoheadrightarrow [\ell-k+1]$ and $\Gamma'\twoheadrightarrow\Gamma''$ contracts $\beta(i_1) = i_1$ and corresponds to $\gamma:[\ell-k+1]\twoheadrightarrow[\ell-k]$.\endnote{In the notation of \Cref{ex:contractinglevels}, $\beta$ can be written as $\sigma_{i_2-1}\circ \cdots \circ \sigma_{i_k-1}$ with some indices suppressed and as $i_1\le i_2-1<\cdots<i_k-1$ we have that $\beta(i_1) = i_1$.} By induction, there are choices of indices for $\Gamma''$ and $\Gamma'$ so that $t_j'' = \prod_{i\in \gamma^{-1}(j)}t_i'$ and $t_i' = \prod_{l\in \beta^{-1}(i)}t_l$, whence $t_j'' = \prod_{i\in \gamma^{-1}(j)}\prod_{l\in \beta^{-1}(i)} t_l = \prod_{l\in \alpha^{-1}(j)} t_l$. So, (2) follows. 
    
    Suppose $h(i)\wedge h(j)$ is on level $m$ and that $\{m+1,\ldots, m+p\}$ is a maximal consecutive sequence of integers in $\{i_1,\ldots, i_k\} \cap \bZ_{\ge m+1}$ containing $m+1$. If $m\ge i_1$ then $\{m+1,\ldots,m+p\}\subset \{i_2,\ldots, i_k\}$. So, $z_{ij}' = z_{ij}/t_{m+1}\cdots t_{m+p}$ by induction. Now, $\beta(m) \ge i_1$ so $\beta(m) + 1 > i_1$ and thus $z_{ij}'' = z_{ij}' = z_{ij}/t_{m+1}\cdots t_{m+p}$. 

    If $m <i_1-1$ then $z_{ij}''=z_{ij}' = z_{ij}$. Finally, if $m = i_1-1$, one has $z_{ij}' = z_{ij}$ by induction. Since $\beta(m) = i_1-1$, we have $\beta(m) + 1\in \{i_1\}$ and $z_{ij}'' = z_{ij}'/t_{i_1}' = z_{ij}/\prod_{l\in \gamma^{-1}(i_1)} t_l = z_{ij}/t_{m+1}\cdots t_{m+p}$.
\end{proof}

\begin{cor}
\label{C:goodCOVgeneric}
    For any $\Gamma$, on $U_* \cap U_\Gamma$ one has $\Pi_{ij} = z_{ij}^\Gamma/t_{m+1}\cdots t_\ell$ where $h(i)\wedge h(j)$ is on level $m$ of $\Gamma$.
\end{cor}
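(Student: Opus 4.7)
The plan is to obtain this as an immediate specialization of \Cref{L:goodindicesCOV} to the case where the contracted tree is $*$, combined with the identification $z_{ij}^* = \Pi_{ij}$ from \Cref{R:periodsintermsofzij}.

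First I would observe that the contraction $\Gamma \twoheadrightarrow *$ corresponds to the unique order-preserving surjection $\alpha : [\ell] \twoheadrightarrow [0]$, so in the language of \Cref{L:goodindicesCOV} the set of contracted levels is $\{i_1, \ldots, i_k\} = \{1, 2, \ldots, \ell\}$. Given any choice of indices for $\Gamma$, the lemma produces a compatible choice of indices for $*$; this compatibility is essentially vacuous because $\ell' = 0$ forces $s_0' = 1$ by convention, matching the unique sensible choice on the single-vertex tree. In particular, with this choice the coordinate $z_{ij}^*$ on $U_*$ coincides with the one appearing in \Cref{R:periodsintermsofzij}, so $z_{ij}^* = \Pi_{ij}$.

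Next, fix $i, j$ with $h(i)\wedge h(j)$ on level $m$ of $\Gamma$. The intersection $\{1, \ldots, \ell\} \cap \bZ_{\ge m+1}$ equals $\{m+1, m+2, \ldots, \ell\}$, which is itself a maximal consecutive sequence containing $m+1$ of length $p = \ell - m$. Case (1)(a) of \Cref{L:goodindicesCOV} therefore gives
\[
    z_{ij}^* \;=\; \frac{z_{ij}^\Gamma}{t_{m+1}^\Gamma \cdots t_\ell^\Gamma}
\]
on $U_\Gamma \cap U_*$. Combining with $\Pi_{ij} = z_{ij}^*$ from the first step yields the claimed formula. There is no real obstacle here: once one sees that $* = \Gamma'$ is the contraction of all levels, the corollary is just the statement of \Cref{L:goodindicesCOV}(1)(a) applied at each stratum, so the argument is a bookkeeping translation rather than a genuine computation.
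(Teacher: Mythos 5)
Your argument is correct and is exactly the unpacking of the paper's one-line proof ("This is an immediate consequence of \Cref{L:goodindicesCOV}"): take $\Gamma' = *$, note that all levels $1,\ldots,\ell$ are contracted, apply \Cref{L:goodindicesCOV}(1)(a), and identify $z_{ij}^* = \Pi_{ij}$ via \Cref{R:periodsintermsofzij}. The only cosmetic gap is the edge case $m = \ell$ (i.e.\ $h(i)=h(j)$), where $\{1,\ldots,\ell\}\cap\bZ_{\ge m+1}=\varnothing$ so that case (1)(b) of the lemma applies instead of (1)(a), yielding $z_{ij}' = z_{ij}$ with the product $t_{m+1}\cdots t_\ell$ read as an empty product; this is harmless and the conclusion still holds.
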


\begin{proof}
    This is an immediate consequence of \Cref{L:goodindicesCOV}.
\end{proof}

\begin{prop}
\label{P:identification}
    For any dual level tree $\Gamma$ and choice of indices, \eqref{E:Mn_coordinates} is injective and identifies $U_\Gamma$ with a smooth algebraic variety of dimension $n-1$.
\end{prop}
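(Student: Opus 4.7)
The plan has two parts: first, establish injectivity by recovering $\Gamma(\Sigma)$ from the vanishing pattern of the $t_m$'s and then applying \Cref{C:determinedbyz}; second, identify the image as a smooth algebraic subvariety of dimension $n-1$ by exhibiting $n-1$ algebraically independent coordinates among the $(z_{ij},t_m)$.

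For injectivity, note that by the definition of $t_m$, one has $t_m(\Sigma)\ne 0$ if and only if the canonical contraction $f:\Gamma\twoheadrightarrow\Gamma(\Sigma)$ of \Cref{L:specializations} identifies level $m-1$ with $m$---a condition depending only on $f$'s underlying order-preserving surjection $\alpha:[\ell]\twoheadrightarrow[\ell-k]$, not on the auxiliary choice of $v_m$'s. Hence the zero pattern of $(t_m(\Sigma))$ recovers the set $\{i_1<\cdots<i_k\}$ of levels deleted by $f$, and therefore $\Gamma(\Sigma)$ itself. Since $\Pi_{ij}(\Sigma)=\infty\ne 0$ whenever $p_i,p_j$ lie on distinct terminal components of $\Sigma$, we have $\Sigma\in U_\Gamma\cap U_{\Gamma(\Sigma)}$, and \Cref{L:goodindicesCOV} applied to $\Gamma\twoheadrightarrow\Gamma(\Sigma)$ produces an indexing for $\Gamma(\Sigma)$ under which $(z'_{ij}(\Sigma),t'_m(\Sigma))$ are Laurent-monomial expressions in $(z_{ij}(\Sigma),t_m(\Sigma))$. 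Because $t'_j=\prod_{m\in\alpha^{-1}(j)}t_m$ contains the factor $t_{\min\alpha^{-1}(j)}=0$ (the minimum of $\alpha^{-1}(j)$ is not a deleted level when $j\ge 1$), every $t'_m(\Sigma)=0$, and \Cref{C:determinedbyz} recovers $\Sigma$ up to complex projective isomorphism from $\{z'_{ij}(\Sigma)\}$.

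For smoothness and the dimension count, I will exhibit $n-1$ independent coordinates. For each vertex $v\in V(\Gamma)$ on level $m$, the quantities $\{I_{ij}/s_m:h(i)\wedge h(j)=v\}$ are differences $x^{n(j)}_v-x^{n(i)}_v$ of auxiliary position coordinates $(x^n_v)_{n\in N_v}$ on $\Sigma_v$ with $x^{n_v}_v=0$, yielding $|N_v|-1$ parameters per vertex. Choose, for each $(v,n)$ with $n\in N_v\setminus\{n_v\}$, an index pair $(i_{(v,n)},j_{(v,n)})$ such that $h(i_{(v,n)})\wedge h(j_{(v,n)})=v$ with $h(j_{(v,n)})$ descending through $n$ (possible by stability of $\Gamma$), so that $\zeta_{(v,n)}:=z_{i_{(v,n)}j_{(v,n)}}=x^n_v$. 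Since $\sum_{v}(|N_v|-1)=n-1$ (ascending nodes plus marked points minus vertices) and $\ell$ of the $\zeta_{(v,n)}$'s are fixed to $1$ by the normalizations $z_{i_m j_m}=1$, there are $n-1-\ell$ free $\zeta$'s, which together with the $\ell$ unconstrained $t_m$'s form $n-1$ algebraically independent coordinates. Every remaining $z_{ij}$ equals the affine-linear combination $\zeta_{(v,n(j))}-\zeta_{(v,n(i))}$ where $v=h(i)\wedge h(j)$, so the image of \eqref{E:Mn_coordinates} in $(\bC^\ast)^A\times\bC^B\times\bC^\ell$ is an open subset of an affine-linear $(n-1)$-dimensional subspace, hence smooth of dimension $n-1$.

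The hard part will be executing the inverse construction: verifying that a tuple of the $n-1$ free parameters (satisfying the appropriate nonvanishing conditions) assembles uniquely into a stable $n$-marked multiscaled line in $U_\Gamma$ whose coordinates under \eqref{E:Mn_coordinates} return the input. This entails (a) determining $\Gamma(\Sigma)$ from the $t_m$-vanishing pattern, (b) reconstructing each $\Sigma_v$ and its differential $\omega_v$ from the $\zeta_{(v,n)}$'s while correctly accounting for the levelwise $\bC^\ast$-scaling within each merged block of levels, and (c) verifying stability. One should also check that the image is cut out in $(\bC^\ast)^A\times\bC^B\times\bC^\ell$ precisely by these normalizations, the additivity relations $z_{ij}+z_{jk}=z_{ik}$ when the three meets coincide, and the coincidences $z_{ij}=z_{i'j}$ when $i$ and $i'$ share a descending branch at $h(i)\wedge h(j)$.
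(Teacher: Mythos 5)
Your proposal is correct and follows essentially the same strategy as the paper: exhibit $n-1$ free parameters among the functions $(z_{ij},t_m)$, realize the image as a quasi-affine variety, and leave the surjectivity/reconstruction verification somewhat implicit. Where you diverge is in how injectivity is established: the paper deduces it indirectly by showing the image of \eqref{E:Mn_coordinates} equals the graph of a smooth (affine-linear) morphism over the quasi-affine variety $V = \im(\varphi)$, and relies on a footnote for the fact that $\varphi$ itself is bijective onto $V$. You instead give a direct argument, recovering the contraction $\Gamma\twoheadrightarrow\Gamma(\Sigma)$ from the vanishing pattern of $(t_m(\Sigma))$ — valid since $t_m\ne 0$ precisely when the canonical contraction identifies levels $m$ and $m-1$, and $s_m\ne 0$ for all $m$ on $U_\Gamma$ — then applying \Cref{L:goodindicesCOV} (legitimately, since the relevant $t_m$'s are the nonvanishing ones) to obtain the $U_{\Gamma(\Sigma)}$-coordinates, and invoking \Cref{C:determinedbyz}. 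This is a cleaner route to injectivity and fills a small gap that the paper's write-up leaves to the reader. Your parameter count $\sum_v(|N_v|-1)=n-1$ and the decomposition into $\ell$ normalized $\zeta$'s plus $\ell$ values of $t_m$ matches the paper's $|A'\cup B'|=n-1-\ell$. The remaining task you correctly identify — verifying that the image is cut out precisely by the affine-linear relations, normalizations, and nonvanishing conditions, via an explicit reconstruction of $\Sigma$ from a coordinate tuple — is exactly the part the paper dispatches in a footnote, so the proposal and the paper are at a comparable level of completeness there. One small point to tighten: in choosing $(i_{(v,n)},j_{(v,n)})$ you should also require that $h(i_{(v,n)})$ descends through $n_v$, so that $z_{i_{(v,n)}j_{(v,n)}}=x^n_v-x^{n_v}_v=x^n_v$ as claimed.
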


\begin{proof}
    For each $v\in V(\Gamma)$, consider $N_v$ as in the discussion preceding \Cref{L:determinedbyintegrals}. When $v = v_m$ for $0\le m\le \ell-1$, we choose $n_v$ to be the node connecting $\Sigma_{v_m}$ to $\Sigma_{h(i_m)}$. For each nonterminal $v$ and each $n\in N_v$, choose $\iota_n \in \{1,\ldots, n\}$ such that $n$ connects $\Sigma_v$ to $\Sigma_{h(\iota_n)}$. We write $\iota_v = \iota_{n_v}$. When $v = v_m$, choose $\{\iota_n:n \in N_{v_m}\}$ so it contains $i_m,j_m$. When $v$ is terminal, $N_v = \{p_i:h(i) = v\}$ and we define $\iota_i = i$. Consider 
    \[
    A' := \left(\bigcup_{v \ne \lambda^{-1}(\ell)}\{(\iota_v,\iota_n):n\in N_v\setminus n_v\}\right) \setminus \{(i_m,j_m)\}_{m=0}^{\ell-1},\:\: B' := \bigcup_{v\in \lambda^{-1}(\ell)}\{(\iota_v,\iota_n):n\in N_v\setminus n_v\}. 
    \]
    One has $A'\subset A$ and $B'\subset B$ and $|A'\cup B'| = n - 1 - \ell$. Consider $\bC^A := \Spec \bC[Z_{ij}:(i,j)\in A]$, $\bC^B = \Spec \bC[Z_{ij}:(i,j) \in B]$ and $\bC^\ell = \Spec \bC[T_1,\ldots, T_\ell]$. If we define $\bC^{A'}$ and $\bC^{B'}$ in the analogous fashion, then there is a natural inclusion map $\bC^{A'}\times \bC^{B'}\times \bC^\ell \hookrightarrow \bC^A\times \bC^B\times \bC^\ell$. Define $\varphi: U_\Gamma \to \bC^{A'}\times \bC^{B'}\times \bC^\ell$ by $\varphi^*(Z_{ij}) = z_{ij}$ and $\varphi^*(T_m) = t_m$ for all relevant indices. One can verify that $V := \im(\varphi)$ is the complement of a hyperplane arrangement defined by the conditions $Z_{\iota_v\iota_n} \ne 0$ and $Z_{\iota_v\iota_n} \ne Z_{\iota_v\iota_{n'}}$ for all $v$ and $n,n'\in N_v\setminus \{n_v\}$.\endnote{Elements of $\im(\varphi)$ satisfy these nonvanishing conditions, by the description of the elements of $U_\Gamma$. Indeed, suppose $\Gamma\twoheadrightarrow\Gamma'$ is a contraction and $\Sigma$ is given with $\Gamma(\Sigma) = \Gamma'$. If $z_{\iota_v\iota_n}(\Sigma) = 0$, it must be the case that $h'(\iota_v)= h'(\iota_n)$. Now, if $h(\iota_v) = h(\iota_n)$, then this does not correspond to one of the hyperplanes which has been removed. If $h(\iota_v) \ne h(\iota_n)$, the condition $z_{\iota_v\iota_n}(\Sigma) = 0$ is equivalent to $\Pi_{\iota_v\iota_n}(\Sigma) = 0$ which is prohibited by definition of $U_\Gamma$. Analogous reasoning applies for the conditions $z_{\iota_v\iota_n} = z_{\iota_v\iota_{n'}}$. On the other hand, given a set of values $\{z_{ij}(\Sigma),t_m(\Sigma)\} \in V$ where $(i,j)\in A'\cup B'$ one can freely construct $\Sigma \in U_\Gamma$ with these coordinates.} In particular, $V$ is a quasi-affine variety.

    Finally, we define a morphism $g:\bC^{A'}\times \bC^{B'}\times \bC^\ell \to \bC^{A\setminus A'}\times \bC^{B\setminus B'}$: For all $(i,j)\in (A\setminus A')\cup (B\setminus B')$ such that $h(i)\wedge h(j) = v$, put $g^*(Z_{ij}) = Z_{\iota_v\iota_{n'}} - Z_{\iota_v\iota_n}$ where $\Sigma_v$ is connected to $\Sigma_{h(j)}$ by $n'$ and to $\Sigma_{h(i)}$ by $n$. In particular, $g$ is a smooth morphism which restricts to a map $g:V\to (\bC^*)^{A\setminus A'}\times \bC^{B\setminus B'}$. It follows that $(\id,g)\circ \varphi:U_\Gamma \to (\bC^*)^A\times \bC^B\times \bC^\ell$ equals \eqref{E:Mn_coordinates} and in particular this identifies the image of $U_\Gamma$ with the graph of a smooth morphism of quasi-affine varieties. Projection to $V$ implies that the image is dimension $n-1$.\endnote{Consider a morphism $f:V\to W$ of quasi-affine varieties. $\Gamma_f\subset V\times W$ is defined by $\{(x,f(x)):x\in V\}$. $\Gamma_f$ is a quasi-affine variety itself; writing $V\subset \bA^n$ and $W\subset \bA^m$ and $f$ as $f(x) = (f_1(x),\ldots, f_m(x))$ one has that $\Gamma_f$ is defined by the vanishing of the equations $\{y_i-f_i(x)\}_{i=1}^m$.
    
    Next, there is a canonical map $(\id,f):V\to \Gamma_f$ by $x\mapsto (x,f(x))$ which is surjective. In our situation, we need to verify that $(\id,g)\circ \varphi$ is the same as \eqref{E:Mn_coordinates}. If $(i,j) \in A'\cup B'$, one has $(\id,g)^*(Z_{ij}) = Z_{ij}$, $\varphi^*(Z_{ij}) = z_{ij}$. If $(i,j)\in (A\setminus A')\cup (B\setminus B')$, then $(\id,g)^*(Z_{ij}) = Z_{\iota_v\iota_{n'}} - Z_{\iota_v\iota_n}$, where $n'$ and $n$ are as in the body of the proof, and so $\varphi^*((\id,g)^*(Z_{ij})) = z_{\iota_vj} - z_{\iota_vi} = z_{ij}$ and we are done.}
\end{proof}

\begin{rem}
\label{R:projectioncoords}
    In the proof of \Cref{P:identification}, we have shown moreover that each $U_\Gamma$ is algebraically isomorphic to the complement of a hyperplane arrangement in $\bA^{n-1}$: indeed, projection from the image of $U_\Gamma$ to $V$ induces the isomorphism and $\{z_{ij},t_m:(i,j)\in A'\cup B',1\le m\le \ell\}$ form a system of algebraic coordinates on $U_\Gamma$. This implies that $\Mmscbar_n$ is uniformly rational in the sense of \cite{BogomolovBöhning}.\endnote{Note that these coordinates are far from unique and we made many choices in their definition. In this sense, it is more canonical to work with all $(z_{ij},t_m)$ at once as in \eqref{E:Mn_coordinates}.}
\end{rem}

\begin{thm}
\label{T:spaceconstruction}
The set $\mscbar_n$ admits the unique structure of an algebraic variety over $\bC$ such that every $U_\Gamma$ is a Zariski open subset, and the functions \eqref{E:Mn_coordinates} are closed immersions. With respect to this structure:
\begin{enumerate}
    \item $\bA^n/\bG_a$ is an open dense subspace;
    \item the functions $\Pi_{ij}$ on $\bA^n/\bG_a$ extend to morphisms $\Mmscbar_n \to \bP^1$; and
    \item $\Mmscbar_n$ is smooth and proper of dimension $n-1$.
\end{enumerate}
\end{thm}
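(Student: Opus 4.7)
The plan is to endow $\mscbar_n$ with its variety structure by gluing the charts $\{U_\Gamma\}_{\Gamma \in \Delta(n)}$ via the explicit transitions of \Cref{L:goodindicesCOV}, and then to deduce each of the three stated properties. First, I would verify that $\{U_\Gamma\}$ covers $\mscbar_n$: any $\Sigma \in \mscbar_n$ lies in $U_{\Gamma(\Sigma)}$ because $\Gamma(\Sigma) \twoheadrightarrow \Gamma(\Sigma)$ is trivially a contraction and $h(i)\neq h(j)$ in $\Gamma(\Sigma)$ forces $\Pi_{ij}(\Sigma) = \infty \neq 0$. By \Cref{C:determinedbyz} the map \eqref{E:Mn_coordinates} is injective on each $U_\Gamma$, and by \Cref{P:identification} its image is a smooth quasi-affine algebraic variety of dimension $n-1$, so this transports a smooth variety structure to each $U_\Gamma$.

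Next, I would check compatibility on overlaps. When $\Gamma_1 \twoheadrightarrow \Gamma_2$ is a contraction, \Cref{L:intersection} identifies $U_{\Gamma_1}\cap U_{\Gamma_2}$ with an explicit principal open subset of $U_{\Gamma_1}$, and \Cref{L:goodindicesCOV} writes the transition between the two chart structures as a Laurent-monomial algebraic isomorphism. For arbitrary $\Gamma_1,\Gamma_2$, any $\Sigma \in U_{\Gamma_1} \cap U_{\Gamma_2}$ also lies in $U_{\Gamma(\Sigma)}$ (with $\Gamma(\Sigma)$ a common contraction of both $\Gamma_1$ and $\Gamma_2$), so locally the transition $U_{\Gamma_1} \dashrightarrow U_{\Gamma_2}$ factors through the previous case and is algebraic. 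Functoriality of the transitions under composition of contractions (cf.\ \Cref{ex:contractinglevels}) then gives the cocycle condition, and the uniqueness clause is forced by the requirement that \eqref{E:Mn_coordinates} be a closed immersion on each $U_\Gamma$.

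Granted the variety structure, most of (1)--(3) follow immediately: $U_* = \mscbar_n^\circ = \bA^n/\bG_a$ is open by construction, and smoothness and dimension $n-1$ are chart-local via \Cref{P:identification}. For (2), \Cref{C:goodCOVgeneric} gives $\Pi_{ij} = z_{ij}^\Gamma/(t_{m+1}^\Gamma\cdots t_\ell^\Gamma)$ on $U_* \cap U_\Gamma$ whenever $h_\Gamma(i)\wedge h_\Gamma(j)$ lies on level $m$. The pair $[z_{ij}^\Gamma : t_{m+1}^\Gamma\cdots t_\ell^\Gamma]$ has no common zero on $U_\Gamma$: when $h_\Gamma(i)\neq h_\Gamma(j)$ the defining condition of $U_\Gamma$ forces $\Pi_{ij}\neq 0$ and hence $z_{ij}^\Gamma\neq 0$, while when $h_\Gamma(i) = h_\Gamma(j)$ one has $m = \ell$ and the empty-product denominator is identically $1$, so the formula reads $\Pi_{ij} = z_{ij}^\Gamma$. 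This defines a morphism $U_\Gamma\to\bP^1$ that patches globally.

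The main obstacle is properness. I would verify the valuative criterion: given a DVR $(R,K)$ and a $K$-point factoring through some $U_\Gamma$, the coordinates $(z_{ij}^\Gamma, t_m^\Gamma)$ take values in $K$, and the strategy is to pass to a suitably refined chart $U_{\Gamma'}$ so that the transition formulas of \Cref{L:goodindicesCOV}---under which each new $t_m'$ is a Laurent monomial in the old $t_m$'s and the $z_{ij}$ transform compatibly---arrange for every coordinate function to acquire nonnegative valuation, at which point the map extends over $\Spec R$. Uniqueness of the extension, equivalently separatedness of $\mscbar_n$, would follow by observing that in each product $U_{\Gamma_1}\times U_{\Gamma_2}$ of charts the diagonal is cut out by the closed condition that the two sets of pulled-back coordinates agree on the common overlap.
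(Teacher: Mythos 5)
Your overall route differs from the paper's in a substantive way for the hard part of the theorem. The gluing of charts via \Cref{L:goodindicesCOV}, the reduction to common coarsenings for the cocycle condition, and the chart-local smoothness/dimension via \Cref{P:identification} all match the paper. Your argument for (2) is actually a nice alternative to what the paper does: you write $\Pi_{ij}|_{U_\Gamma} = [z_{ij}^\Gamma : t_{m+1}^\Gamma\cdots t_\ell^\Gamma]$ directly, check that the two homogeneous coordinates have no common zero, and glue; the paper instead passes to the analytic category, shows $\Pi_{ij}$ is holomorphic in local coordinates, and invokes proper GAGA. Your approach is purely algebraic and arguably cleaner (though it quietly uses separatedness of $\mscbar_n$ to glue morphisms agreeing on a dense open, so it cannot precede the separatedness argument).

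The genuine gap is separatedness. You dispose of it in one sentence (``the diagonal is cut out by the closed condition that the two sets of pulled-back coordinates agree on the common overlap''), but this is not an argument: a space glued from finitely many affine/quasi-affine charts is separated only if, for each pair $\Gamma_1,\Gamma_2$, the map $U_{\Gamma_1}\cap U_{\Gamma_2}\to U_{\Gamma_1}\times U_{\Gamma_2}$ is a closed immersion, equivalently $\cO(U_{\Gamma_1})\otimes\cO(U_{\Gamma_2})\twoheadrightarrow\cO(U_{\Gamma_1}\cap U_{\Gamma_2})$. Nothing in the gluing data makes this automatic — the line with two origins is glued from affines along an open subset precisely this way and fails the criterion — so one must actually check surjectivity. (When $\Gamma_2$ is a coarsening of $\Gamma_1$, $U_{\Gamma_1}\cap U_{\Gamma_2} = D(t_{i_1}^{\Gamma_1}\cdots t_{i_k}^{\Gamma_1})$ is a principal open and the check is easy; for incomparable $\Gamma_1,\Gamma_2$ the intersection is more complicated and needs a direct argument or a factorization through common coarsenings with care.) This is exactly the content the paper supplies via the Hausdorffness-of-nets argument: it shows that a net in $Y$ has at most one limit, which is a topological reformulation of uniqueness in the valuative criterion of separatedness. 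You need an algebraic replacement of comparable strength, not an assertion.

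Similarly, your valuative-criterion sketch for universal closedness leaves out the central construction: given a $K$-point in $U_*$ with coordinate valuations $v(\Pi_{ij})$ possibly negative, you need to \emph{construct} the level tree $\Gamma'$ (group $i\sim j$ when $v(\Pi_{ij})\ge 0$, then stratify the remaining pairs by $v(\Pi_{ij}/\Pi_{k\ell})$) and verify both that the resulting $t_m^{\Gamma'},z_{ij}^{\Gamma'}$ all have nonnegative valuation \emph{and} that the closed point of $\Spec R$ lands in the open image of $\psi_{\Gamma'}$ (i.e.\ avoids the hyperplane arrangement of \Cref{P:identification}). This is precisely the work the paper carries out in the sequence/net construction of the limit tree; naming the strategy without performing the tree construction leaves the key step unresolved.
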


\begin{proof}
    To emphasize dependence on $\Gamma$, we denote \eqref{E:Mn_coordinates} by $\psi_\Gamma$. When $\Gamma'$ is a coarsening of $\Gamma$, it follows from \Cref{L:goodindicesCOV} that $\psi_{\Gamma'}\circ \psi_{\Gamma}^{-1}$ is algebraic. For a general $\Gamma'$, $U_\Gamma \cap U_{\Gamma'}$ is covered by $U_\Gamma \cap U_{\Gamma'} \cap U_{\Gamma''}$ where $\Gamma''$ ranges over level trees that are a coarsening of both $\Gamma'$ and $\Gamma$. However, on $U_{\Gamma}\cap U_{\Gamma'}\cap U_{\Gamma''}$ one has $\psi_{\Gamma'}\circ \psi_{\Gamma}^{-1} = (\psi_{\Gamma'}\circ \psi_{\Gamma''}^{-1}) \circ (\psi_{\Gamma''}\circ \psi_{\Gamma}^{-1})$ and as the composition of algebraic maps is algebraic, the result follows. (3) is by \Cref{P:identification}. $\bA^n/\bG_a$ is identified with $U_*$ as above and so openness is clear. For any $\Gamma$, $U_*\cap U_\Gamma = D(t_1\cdots t_\ell)$ by \Cref{L:intersection} and density follows. 
    
    For the remaining claims, we consider $\Mmscbar_n$ with its analytic topology. Suppose given a convergent net $(\Sigma_\alpha)\to \Sigma$ with $\Sigma$ reducible. On $U_{\Gamma(\Sigma)}\cap U_*$, we have $\Pi_{ij} = z_{ij}/t_{m+1}\cdots t_\ell$ as in \Cref{C:goodCOVgeneric}, where $h(i)\wedge h(j)$ is on level $m$. Therefore, writing $t_k^\alpha = t_k(\Sigma_\alpha)$ for all $1\le k \le \ell$, one has $\lim_\alpha t_k^\alpha =0$. Similarly, for all $i,j$ where $h(i)\ne h(j)$, $\lim_\alpha z_{ij}^\alpha\in \bC^*$. Consequently, $\lim_\alpha \Pi_{ij}(\Sigma_\alpha) = \infty$ if and only if $h(i)\ne h(j)$ and if $h(i) = h(j)$, then $\lim_\alpha \Pi_{ij}(\Sigma_\alpha) = \Pi_{ij}(\Sigma)$. Therefore, $\Pi_{ij}:\Mmscbar_n\to \bP^1$ is continuous.

    A topological space is Hausdorff if and only if there is a dense subspace $Y\subset X$ such that any net $(y_\alpha)$ in $Y$ has at most one limit in $X$.\endnote{Suppose given is a dense subspace $Y\subset X$ such that the claimed property holds. Suppose $(x_\alpha)_{\alpha \in A}$ is a net in $X$ with limit points $x$ and $x'$. Choose a net $(y_{\alpha}^\beta)_{\beta \in B_\alpha}$ such that $\lim_\beta y_\alpha^\beta = x_\alpha$ for each $\alpha \in A$. By the theorem on iterated limits of nets \cite{KelleyTopology}*{p.69} it follows that the net $(y_\alpha^\beta)$ indexed by the product directed set $A\times \prod_{\alpha\in A} B_\alpha$ converges to $x$ and $x'$. Therefore, $x=x'$. It follows that $X$ is Hausdorff. Conversely, take $Y=X$.} In our case, let $Y\subset \bA^n/\bG_a$ denote the open subspace where $\Pi_{ij} \ne 0$ for all $i<j$ and consider a net $(\Sigma_\alpha)$ in $Y$.

    Suppose given a net $(\Sigma_\alpha)$ in $Y$ such that for all pairs $i<j$ and $k<l$ the nets $\Pi_{ij}^\alpha := \Pi_{ij}(\Sigma_\alpha)$ and $\Pi_{kl}^\alpha/\Pi_{ij}^\alpha$ converge in $\bP^1$. Write $\lim_\alpha \Pi_{ij}^\alpha = \Pi_{ij}$. We define an $n$-marked rooted level tree from these data as follows: define the level $\ell$ vertex set by $\{1,\ldots,n\}/{\sim}$, where $i\sim j$ if $\lim_\alpha\Pi_{ij}^\alpha \in \bC$. Define the markings by $h(i) = [i]$ for all $1\le i \le n$. We define a total preorder on $\{(i,j):h(i)\ne h(j)\}$ by $(i,j)\le (k,l)$ if $\lim_\alpha \Pi_{ij}^\alpha/\Pi_{kl}^\alpha \in \bC$.\endnote{Totality follows from the assumption that $\lim_\alpha \Pi_{ij}^\alpha/\Pi_{kl}^\alpha$ converges in $\bP^1$ for all $i<j$ and $k<l$. Reflexivity is by $\lim_\alpha \Pi_{ij}^\alpha/\Pi_{ij}^\alpha = 1$ and transitivity follows from the identity $\frac{\Pi_{ij}^\alpha}{\Pi_{kl}^\alpha}\cdot \frac{\Pi_{kl}^\alpha}{\Pi_{mn}^\alpha } = \frac{\Pi_{ij}^\alpha}{\Pi_{mn}^\alpha}$.
    } Note that this preorder depends only on the classes of $i,j,k,l$ under $\sim$.\endnote{Suppose $i\sim i'$. Then $\Pi_{i'j}^\alpha/\Pi_{kl}^\alpha = (\Pi^\alpha_{i'i} + \Pi^\alpha_{ij})/\Pi_{kl}^\alpha$. Now, as $h(k)\ne h(l)$, $\Pi_{kl}^\alpha \to \infty$ and $\Pi_{i'i}^\alpha$ converges in $\bC$. Therefore, $\Pi_{i'i}^\alpha/\Pi_{kl}^\alpha \to 0$ and we have $\lim_\alpha \Pi_{i'j}^\alpha/\Pi_{kl}^\alpha = \lim_\alpha\Pi_{ij}^\alpha/\Pi_{kl}^\alpha$. A similar argument applies for the indices in the denominator.}

    Suppose a level forest $\Gamma_{\ge k+1}$ has been constructed with levels $k+1\le \cdots\le \ell$. Consider $\cal{F}_{k+1} = \{([i],[j]):\nexists \: [i]\wedge [j] \in \Gamma_{\ge k+1}\}$. Define a level $k$ vertex denoted $[i]\wedge[j]$ iff $([i],[j])$ are minimal with respect to the preorder $\le$ restricted to $\cF_{k+1}$. Define edges between $[i]\wedge[j]$ and the vertices of minimal level in $\Gamma_{\ge k+1}$ to which $[i]$ and $[j]$ are connected, respectively. It follows that the output $\Gamma_{\ge k}$ is a level forest with levels $k\le \cdots\le \ell$.\endnote{$\Gamma_{\ge k+1}$ is a level forest and we have connected each new $[i]\wedge [j]$ to distinct components. Therefore, each pair of vertices is connected by a unique path in $\Gamma_{\ge k}$ as needed.}
    
    Since a preorder on a finite set always has minimal elements, $|\cF_{k}|<|\cF_{k+1}|$ and this process eventually terminates. At the last stage, one vertex is attached which we define to be the root. Shift the level indexing so that the root is on level $0$. The output $\Gamma$ is an $n$-marked rooted level tree: by induction $\Gamma$ is an $n$-marked level forest and by construction for all terminal vertices $[i]$ and $[j]$, $[i]\wedge [j]$ exists. 

    Suppose $(\Sigma_\alpha)\to \Sigma$. We claim that $\Gamma = \Gamma(\Sigma)$. $p_i$ and $p_j$ are on the same terminal component of $\Sigma$ iff $\Pi_{ij}(\Sigma) \ne \infty$, so level $\ell$ of the trees agrees. Suppose the trees agree up to level $k+1\le \ell$. $i$ and $j$ meet on level $k$ of $\Gamma(\Sigma)$ if and only if $(i,j)$ is minimal among pairs that do not meet on level $\ge k+1$ with respect $\le$ as above.\endnote{By \Cref{L:goodindicesCOV}, if $i$ and $j$ meet on level $k$ of $\Gamma(\Sigma)$ then in $U_\Gamma$ coordinates we have $\Pi_{ij}^\alpha = z_{ij}^\alpha/t_{k+1}^\alpha \cdots t_\ell^\alpha$. So, if $k$ and $\ell$ meet on level $p\le k$ one has $\Pi_{ij}^\alpha/\Pi_{k\ell}^\alpha = \frac{z_{ij}^\alpha}{z_{k\ell}^\alpha}\cdot \frac{t_{p+1}^\alpha\cdots t_\ell^\alpha}{t_{k+1}^\alpha\cdots t_\ell^\alpha}.$ This converges to $0$ if and only if $p>k$ and to an element of $\bC^*$ otherwise.} Therefore, $\Gamma = \Gamma(\Sigma)$. 

    Now, $\Sigma_\alpha \in U_\Gamma \cap U_*$ if and only if $\Pi_{ij}(\Sigma_\alpha) \ne 0$ for all pairs $i,j$ such that $h(i)\ne h(j)$. However, $h(i) = h(j)$ if and only if $\lim_\alpha \Pi_{ij}^\alpha \in \bC$. So, if $h(i)\ne h(j)$ then $\lim_{\alpha} \Pi_{ij}^\alpha =\infty$ and so there exists $\alpha_0$ such that $\alpha \ge\alpha_0$ implies $\Sigma_\alpha \in U_\Gamma \cap U_*$. 

    Henceforth suppose $\alpha \ge \alpha_0$. Since $\Sigma_\alpha$ is convergent in $U_\Gamma$, the associated coordinate nets $\{z_{ij}^\alpha,t_m^\alpha\}$ converge and determine $\Sigma$ uniquely by \Cref{C:determinedbyz}. This implies that $\Mmscbar_n$ is Hausdorff with its analytic topology. In particular, $\Mmscbar_n$ has the structure of a complex manifold. It follows that compactness is equivalent to every sequence in $Y\subset \Mmscbar_n$ admitting a convergent subsequence in $\Mmscbar_n$.\endnote{Suppose $Y\subset X$ is given, where $X$ is a manifold of positive dimension and $Y$ is dense. Suppose every sequence in $Y$ admits a subsequence convergent in $X$. $X$ is metrizable so choose some metric $d$ and consider a sequence $(x_n)$ in $X$. For each $n\in \bN$ choose $y_n \in Y$ such that $d(y_n,x_n) < 1/n$. $(y_n)$ admits a convergent subsequence with limit $y$, which we index again as $y_n$ without loss of generality. Now, $d(x_n,y) \le d(x_n,y_n) + d(y_n,y)$ and in particular $(x_n)\to y$.}

    Consider a sequence $(\Sigma_\alpha)_{\alpha\in \bN}$ in $Y$ and up to passing to a subsequence suppose that $\Pi_{ij}^\alpha$ and $\Pi_{kl}^\alpha/\Pi_{ij}^\alpha$ converge in $\bP^1$ for all $i<j$ and $k<l$. By the previous argument, we may associate to $\{\Pi_{ij},\Pi_{kl}/\Pi_{ij}\}$ a level tree $\Gamma$. By construction, for all $\alpha$ sufficiently large, $\Sigma_\alpha \in U_\Gamma$. By \Cref{L:goodindicesCOV}, if $h(i) \wedge h(j)$ is on level $0\le p \le \ell$, one has $\Pi_{ij}= z_{ij}/t_{p+1} \cdots t_\ell$ and in particular $\Pi_{i_mj_m}^\alpha = 1/t_{m+1}^\alpha\cdots t_\ell^\alpha$. By construction of $\Gamma$, $\lim_\alpha \Pi^\alpha_{i_{m}j_{m}}/\Pi^\alpha_{i_{m-1}j_{m-1}} = \lim_\alpha t_m^\alpha = 0$ for each $1\le m \le \ell$. Also, for any $i,j$ with $h(i)\wedge h(j)$ on level $m$, $\lim_\alpha \Pi_{ij}^\alpha/\Pi_{i_mj_m}^\alpha = \lim_\alpha z_{ij}^\alpha \in \bC^*$ for $m<\ell$ and is in $\bC$ for $m= \ell$. So, $(\Sigma_\alpha)\to \Sigma \in U_\Gamma$ with $\Gamma(\Sigma) = \Gamma$. This gives compactness.

    Finally, a coordinate calculation shows that $\Pi_{ij}:\Mmscbar_n \to \bP^1$ is holomorphic\endnote{Consider $\Sigma$ where $\Pi_{ij}(\Sigma) = \infty$. Then on $\Gamma = \Gamma(\Sigma)$ one has $h(i) \ne h(j)$ and so on $U_\Gamma$ we have $\Pi_{ij} \ne 0$. In particular, $\Pi_{ij}^{-1} = t_{p+1}\cdots t_\ell/z_{ij}$ where $z_{ij}$ is nonvanishing on $U_\Gamma$ and $p<\ell$. Therefore, $\Pi_{ij}^{-1}$ is holomorphic at $\Sigma$ as needed.} and by proper GAGA \cite{SGA1}*{Expos\'{e} XII, Cor. 4.5}, it follows that $\Pi_{ij}:\Mmscbar_n \to \bP^1$ is algebraic.
\end{proof}

\subsection{Level tree stratification}

Let $\Gamma$ denote a dual level tree, by abuse of notation. Let $S_\Gamma^\circ \subset \Mmscbar_n$ denote the set of $\Sigma$ such that $\Gamma(\Sigma) = \Gamma$. By definition, $S_\Gamma^\circ \subset U_\Gamma$. Write $S_\Gamma$ for the closure of $S_\Gamma^\circ$ in $\Mmscbar_n$. 

\begin{lem}
\label{L:closuredescription}
    For any dual level tree $\Gamma$, one has $S_\Gamma = \{\Sigma\in \Mmscbar_n:\exists \text{ contraction }\Gamma(\Sigma)\twoheadrightarrow\Gamma\}$. Moreover, $S_\Gamma$ is a smooth closed subvariety of codimension the length of $\Gamma$. 
\end{lem}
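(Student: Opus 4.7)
The plan is, in each chart $U_\Delta$ containing $\Sigma \in S_\Gamma$, to identify $S_\Gamma \cap U_\Delta$ as the vanishing locus of certain $t$-coordinates, and to verify the set-theoretic description via an explicit specialization family.

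\emph{Local identification in $U_\Gamma$.} Unwinding the definition of the $t_m^\Gamma$ in \S3.2 together with \Cref{L:intersection} yields the key dictionary: $t_m^\Gamma(\Sigma) \ne 0$ if and only if the unique contraction $\Gamma \twoheadrightarrow \Gamma(\Sigma)$ identifies levels $m-1$ and $m$. Consequently, $\Gamma(\Sigma) = \Gamma$ iff $t_1^\Gamma(\Sigma) = \cdots = t_\ell^\Gamma(\Sigma) = 0$, that is,
\[
    S_\Gamma^\circ = Z(t_1^\Gamma,\ldots,t_\ell^\Gamma) \subset U_\Gamma.
\]
By \Cref{P:identification} and \Cref{R:projectioncoords}, the $t_m^\Gamma$ are part of an algebraic coordinate system on $U_\Gamma$, so already $S_\Gamma^\circ$ is closed in $U_\Gamma$ and smooth of codimension $\ell$ there.

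\emph{Characterization of $S_\Gamma$.} If $\Sigma \in S_\Gamma$, pick a sequence $\Sigma_n \to \Sigma$ with $\Sigma_n \in S_\Gamma^\circ$; for $n$ large, $\Sigma_n \in U_{\Gamma(\Sigma)}$, and the defining condition of $U_{\Gamma(\Sigma)}$ supplies a contraction $\Gamma(\Sigma) \twoheadrightarrow \Gamma(\Sigma_n) = \Gamma$. For the reverse direction, given $\Sigma$ together with a contraction $\Gamma(\Sigma) \twoheadrightarrow \Gamma$ that contracts the levels $C = \{i_1 < \cdots < i_k\}$, the main construction is an explicit one-parameter family: let $\Sigma_\epsilon \in U_{\Gamma(\Sigma)}$ be the point sharing the $z_{ij}^{\Gamma(\Sigma)}$-coordinates of $\Sigma$ with $t_m^{\Gamma(\Sigma)}(\Sigma_\epsilon) = \epsilon$ for $m \in C$ and $0$ otherwise. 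This is a well-defined point of $U_{\Gamma(\Sigma)}$ for every $\epsilon \in \bC$, because the hyperplanes removed in \Cref{R:projectioncoords} only involve $z$-coordinates. For $\epsilon \neq 0$, \Cref{L:intersection} places $\Sigma_\epsilon$ in $U_\Gamma$, and the local identification applied inside $U_{\Gamma(\Sigma)}$ forces $\Gamma(\Sigma) \twoheadrightarrow \Gamma(\Sigma_\epsilon)$ to contract exactly the levels of $C$, hence $\Gamma(\Sigma_\epsilon) = \Gamma$. Thus $\Sigma_\epsilon \in S_\Gamma^\circ$ and $\Sigma = \lim_{\epsilon \to 0} \Sigma_\epsilon \in S_\Gamma$.

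\emph{Global smoothness and codimension.} The characterization yields an open cover $\{U_\Delta : \Delta \twoheadrightarrow \Gamma\}$ of $S_\Gamma$ (take $\Delta = \Gamma(\Sigma)$ at each point). On each such $U_\Delta$, with $C_\Delta$ the set of levels contracted by $\Delta \twoheadrightarrow \Gamma$, the same dictionary as in the local identification shows
\[
    S_\Gamma \cap U_\Delta = Z\bigl(\{t_m^\Delta : m \notin C_\Delta\}\bigr),
\]
which is cut out by $\ell$ coordinate functions and hence is smooth closed of codimension $\ell$ in $U_\Delta$. The local pieces glue to the desired global description of $S_\Gamma$. The main obstacle is the algebraic-combinatorial dictionary linking vanishing of the $t_m^\Gamma$ to the set of levels \emph{not} contracted by $\Gamma \twoheadrightarrow \Gamma(\Sigma)$; once this is set up via \Cref{L:intersection}, both the specialization construction and the chart-wise codimension count are essentially formal.
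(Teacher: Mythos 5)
Your proof is correct and follows essentially the same approach as the paper: working chart-by-chart in the $U_\Delta$, using \Cref{L:intersection} and the definition of the $t_m$ to identify $S_\Gamma$ locally as a coordinate subspace $Z(\{t_m^\Delta : m \notin C_\Delta\})$, from which smoothness, codimension, and the contraction characterization all follow. The only real difference is organizational — the paper reads off the set-theoretic description from the local closure computation, whereas you establish the characterization first (via an explicit one-parameter specialization and an analytic-topology sequence argument) and then derive the local description; these are the same argument in slightly different order.
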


\begin{proof}
    Write $\ell$ for the length of $\Gamma$. We verify the claims using the open cover $\{U_{\Gamma'}\}$. $U_{\Gamma'}\cap S_\Gamma^\circ \ne \varnothing$ if and only if there is a contraction $\Gamma'\twoheadrightarrow \Gamma$ and in this case $U_{\Gamma'}\cap S_\Gamma^\circ$ corresponds to the locus where $t_{j_1} = \cdots  = t_{j_\ell} = 0$ for some $j_1<\cdots<j_\ell$ and no other $t_m$ vanish. By this description, $\Sigma \in U_{\Gamma'} \cap (S_\Gamma\setminus S_\Gamma^\circ)$ if and only if $t_m(\Sigma) = 0$ for some $m\not \in \{j_1,\ldots,j_\ell\}$. So, $\Sigma \in U_{\Gamma'} \cap (S_\Gamma \setminus S_\Gamma^\circ)$ if and only if there exists a contraction $\Gamma(\Sigma) \twoheadrightarrow \Gamma$. Furthermore, $U_{\Gamma'}\cap S_\Gamma = Z(t_{j_1},\ldots, t_{j_\ell})$ and so $S_\Gamma$ is smooth of codimension $\ell$.
\end{proof}

 In light of \Cref{P:treeversuschain}, $\Gamma$ with $\Gamma/{\sim} \cong [\ell]$ corresponds to a unique chain $\mathfrak{c}(\Gamma) = \{\rho_1<\cdots<\rho_\ell\} \in \mathfrak{Ch}(L_n^-)$. As a consequence, we write $S_{\rho_\bullet}^\circ$ for $S_\Gamma^\circ$ and $S_{\rho_\bullet}$ for $S_\Gamma$. Put $S_\bot^\circ = U_*$ and note that $S_\bot = \Mmscbar_n$. In what follows, we favor the partition notation as introduced in \S2.

\begin{lem}
\label{L:strataprops}
    In the above notation
    \begin{enumerate}
        \item $S_{\rho_\bullet} = \bigcup_{\rho_\bullet \subseteq \chi_\bullet} S_{\chi_\bullet}^\circ$, where $\chi_\bullet$ runs over $\mathfrak{Ch}(L_n^-)$;
        \item for any $\rho_\bullet \in \mathfrak{Ch}(L_n^-)$, $S_{\rho_\bullet} \subset \Mmscbar_n$ is a smooth subvariety of codimension $|\rho_\bullet|$;
        \item $S_{\rho_1} \cap S_{\rho_2} \ne \varnothing$ if and only if $\rho_1,\rho_2\in L_n$ are comparable; 
        \item $D = \bigcup_{\rho \in L_n^-} S_\rho$ is a simple normal crossings divisor;
        \item For $\rho_\bullet = \{\rho_1< \cdots < \rho_\ell\}$, $S_{\rho_\bullet}^\circ = (S_{\rho_1}\cap \cdots \cap S_{\rho_\ell}) \setminus \bigcup_{\pi \in L_n^- \setminus \rho_\bullet} S_\pi$ and $S_{\rho_\bullet} = \bigcap_{i=1}^\ell S_{\rho_i}$. 
    \end{enumerate}
\end{lem}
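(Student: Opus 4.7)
The plan is to reduce everything to Lemma \ref{L:closuredescription} together with the chain–tree correspondence of Proposition \ref{P:treeversuschain}. The key translation is: by the remark immediately after \Cref{ex:contractinglevels}, a contraction $\Gamma(\Sigma) \twoheadrightarrow \Gamma$ exists if and only if $\mathfrak{c}(\Sigma) \supseteq \mathfrak{c}(\Gamma)$, so \Cref{L:closuredescription} becomes $S_{\rho_\bullet} = \{\Sigma : \mathfrak{c}(\Sigma) \supseteq \rho_\bullet\}$. Parts (1), (2), (3), and (5) then unwind formally, and (4) is the one statement that genuinely requires the coordinate charts.

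For (1), every $\Sigma \in \Mmscbar_n$ lies in exactly one $S_{\chi_\bullet}^\circ$, namely for $\chi_\bullet = \mathfrak{c}(\Sigma)$, so the equivalence $\mathfrak{c}(\Sigma) \supseteq \rho_\bullet$ partitions $S_{\rho_\bullet}$ as the stated union. Part (2) is just a restatement of the smoothness/codimension clause of \Cref{L:closuredescription} since $|\rho_\bullet|$ equals the length of $\Gamma(\rho_\bullet)$. For (3), if $\rho_1 \le \rho_2$ (WLOG), then $\rho_\bullet = \{\rho_1 < \rho_2\}$ is a chain in $L_n^-$ and $S_{\rho_\bullet}^\circ \subseteq S_{\rho_1} \cap S_{\rho_2}$ by (1); conversely, any $\Sigma$ in the intersection has both $\rho_1, \rho_2 \in \mathfrak{c}(\Sigma)$, forcing comparability. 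Part (5) follows by intersecting the decompositions in (1): $\bigcap_i S_{\rho_i}$ consists of those $\Sigma$ with every $\rho_i \in \mathfrak{c}(\Sigma)$, i.e.\ $\rho_\bullet \subseteq \mathfrak{c}(\Sigma)$, which equals $S_{\rho_\bullet}$ again by (1); and $S_{\rho_\bullet}^\circ$ is carved out by further requiring $\mathfrak{c}(\Sigma) = \rho_\bullet$, i.e.\ no extra $\pi \in L_n^- \setminus \rho_\bullet$ appears.

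The actual work is in (4). First, I would show that if $S_{\rho_1} \cap \cdots \cap S_{\rho_k} \neq \varnothing$, then the $\rho_i$ are pairwise comparable: any $\Sigma$ in the intersection has all $\rho_i$ belonging to the single chain $\mathfrak{c}(\Sigma)$. So, after relabeling, $\rho_1 < \cdots < \rho_k$ is a chain and by (5) the intersection equals $S_{\rho_\bullet}$. To get the SNC property, I fix $\Sigma$ in this intersection and choose a chart $U_{\Gamma'}$ around $\Sigma$ with $\Gamma' \twoheadrightarrow \Gamma(\rho_\bullet)$ (e.g., take $\Gamma' = \Gamma(\Sigma)$). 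The proof of \Cref{L:closuredescription} shows that in this chart each $S_{\rho_i}$ is cut out by a single coordinate function $t_{j_i}$, with the $j_i$ distinct. Since $(t_1, \ldots, t_\ell)$ extends to a system of local algebraic coordinates on $U_{\Gamma'}$ by \Cref{P:identification}, the divisors $S_{\rho_1}, \ldots, S_{\rho_k}$ meet transversally at $\Sigma$, which is the SNC condition.

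The main obstacle I anticipate is book-keeping in (4): one must be sure that in a chart $U_{\Gamma'}$, the distinct $\rho_i$ in the chain really correspond to distinct coordinate functions $t_{j_i}$, rather than to more complicated combinations. This is handled by \Cref{L:closuredescription}, which identifies $S_{\rho_i} \cap U_{\Gamma'}$ with the vanishing of exactly one $t_m$ (the level of $\Gamma'$ contracted away by $\Gamma' \twoheadrightarrow \Gamma(\rho_i)$), so no degeneracies occur. Apart from this, everything is a formal consequence of the chain–tree dictionary.
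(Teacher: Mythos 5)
Your proposal is correct and takes essentially the same route as the paper: reduce (1), (2), (3), and (5) to \Cref{L:closuredescription} via the chain-tree dictionary, and verify (4) in a local chart $U_{\Gamma'}$ where the relevant strata are coordinate hyperplanes $Z(t_{j_i})$. Your version of (4) is marginally more explicit in noting that the $j_i$ are distinct and invoking \Cref{P:identification} to justify that $(t_1,\ldots,t_\ell)$ are part of a local coordinate system, but this is the same argument the paper sketches.
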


\begin{proof}
(1) and (2) are restatements of \Cref{L:closuredescription}. For (3), by \Cref{L:closuredescription} $S_{\rho_1}\cap S_{\rho_2}$ consists of those $\Sigma$ for which there exist contractions $\Gamma(\Sigma)\twoheadrightarrow \Gamma(\rho_1)$ and $\Gamma(\Sigma)\twoheadrightarrow \Gamma(\rho_2)$. In particular, $\mathfrak{c}(\Sigma)$ contains $\rho_1$ and $\rho_2$ which implies they are comparable. Conversely, if $\rho_1<\rho_2$ are comparable, then $\Sigma$ with $\mathfrak{c}(\Sigma) = \{\rho_1<\rho_2\}$ lies in $S_{\rho_1}\cap S_{\rho_2}$. For (4), for any $U_\Gamma$ for which $U_\Gamma \cap S_{\rho_1}\cap S_{\rho_2}\ne \varnothing$, one has $S_\rho \cap U_\Gamma = Z(t_i)$ and $S_{\rho'}\cap U_\Gamma = Z(t_j)$ which implies the snc property. We leave (5) as an exercise.\endnote{$(S_{\rho_1}\cap \cdots \cap S_{\rho_\ell}) \setminus \bigcup_{\pi \in L_n^- \setminus \rho_\bullet} S_\pi$ is the set of $\Sigma \in \Mmscbar_n$ such that $\mathfrak{c}(\Sigma)$ contains $\rho_1,\ldots, \rho_\ell$ but no other $\pi$, by \Cref{L:closuredescription}. This is exactly $S_{\rho_\bullet}^\circ$. $S_{\rho_\bullet}$ consists of those $\Sigma$ for which $\{\rho_1,\ldots, \rho_\ell\}\subset \mathfrak{c}(\Sigma)$. This is precisely $\bigcap_i S_{\rho_i}$.}
\end{proof}

\begin{prop}
\label{P:stratificationofMn}
The collection $\{S_\bot^\circ\} \cup \{S_{\rho_\bullet}^\circ\}_{\rho_\bullet\in \mathfrak{Ch}(L_n^-)}$ gives a quasi-affine stratification of $\Mmscbar_n$ by smooth subvarieties.\endnote{By a \emph{stratification} of a topological space $X$, we mean a collection $\{S_i\}_{i\in I}$ of locally closed subspaces such that (1) $S_i\cap S_j = \varnothing$ if $i \ne j$, (2) $\bigcup_{i\in I} S_i = X$, and (3) if $S_i\cap \overline{S}_j \ne \varnothing$, then $S_i\subset \overline{S}_j$.
} 
\end{prop}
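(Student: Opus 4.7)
My plan is to verify the three axioms of a stratification (pairwise disjointness, cover, and closure compatibility) together with smoothness and quasi-affineness of each stratum. The key observation to organize around is that every $\Sigma \in \Mmscbar_n$ has a canonical invariant $\mathfrak{c}(\Sigma) \in \mathfrak{Ch}(L_n^-) \cup \{\bot\}$, namely the chain of partitions associated to its dual level tree via \Cref{P:treeversuschain}. Since $S^\circ_{\rho_\bullet} = \{\Sigma : \mathfrak{c}(\Sigma) = \rho_\bullet\}$ and likewise $S^\circ_\bot$ consists of those $\Sigma$ with irreducible underlying curve, the strata are pairwise disjoint and exhaust $\Mmscbar_n$ by construction.

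For the closure compatibility axiom, I would suppose $S^\circ_{\rho_\bullet} \cap S_{\chi_\bullet} \neq \varnothing$ and apply \Cref{L:strataprops}(1), which describes $S_{\chi_\bullet}$ as the union of all $S^\circ_{\tau_\bullet}$ for $\tau_\bullet \supseteq \chi_\bullet$. The nonempty intersection then forces $\rho_\bullet \supseteq \chi_\bullet$, whence every point of $S^\circ_{\rho_\bullet}$ lies in $S_{\chi_\bullet}$ via the same lemma. Smoothness of $S^\circ_{\rho_\bullet}$ is an immediate consequence of \Cref{L:strataprops}(2) together with \Cref{L:strataprops}(5): the former gives smoothness of the closed stratum $S_{\rho_\bullet}$, while the latter realizes $S^\circ_{\rho_\bullet}$ as an open subset of this smooth closed stratum.

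For quasi-affineness, the strategy is to work inside the open cover $\{U_\Gamma\}$. By \Cref{L:closuredescription}, if $\Gamma$ is the level tree corresponding to $\rho_\bullet$, then $S^\circ_{\rho_\bullet}$ is already contained in $U_\Gamma$, and in the coordinates of \eqref{E:Mn_coordinates} it is cut out by the vanishing of the scale parameters $t_1, \ldots, t_\ell$ (and no further coincidences). Using the identification of $U_\Gamma$ with the complement of a hyperplane arrangement in $\bA^{n-1}$ provided by \Cref{R:projectioncoords}, I would then identify $S^\circ_{\rho_\bullet}$ with the intersection of this arrangement complement with the linear subspace $\{t_1 = \cdots = t_\ell = 0\} \cong \bA^{n-1-\ell}$, which is itself a hyperplane arrangement complement and so manifestly quasi-affine.

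The main obstacle I anticipate is ensuring that the \emph{entire} stratum $S^\circ_{\rho_\bullet}$ sits inside a single chart $U_\Gamma$; without this, quasi-affineness would need to be checked via a gluing argument that might not preserve the property. Fortunately, this containment is exactly the content of \Cref{L:closuredescription}, since $\Gamma(\Sigma) = \Gamma$ trivially admits the identity contraction $\Gamma \twoheadrightarrow \Gamma(\Sigma)$. With that in hand, the remaining steps are bookkeeping using the coordinate descriptions already established.
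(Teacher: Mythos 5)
Your proposal is correct and follows essentially the same route as the paper's proof: disjointness and exhaustion from the invariant $\mathfrak{c}(\Sigma)$, closure compatibility via \Cref{L:closuredescription} and \Cref{L:strataprops}, and local closedness plus quasi-affineness from the identification $S^\circ_{\rho_\bullet} = U_\Gamma \cap Z(t_1,\ldots,t_\ell)$. Your anticipated obstacle (whether $S^\circ_{\rho_\bullet}$ lies in a single chart) is dispatched exactly as you suggest, since $S^\circ_\Gamma \subset U_\Gamma$ by definition.
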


\begin{proof}
    Consider $\rho_\bullet =\{\rho_1<\cdots<\rho_\ell\}$. $S_{\rho_\bullet}^\circ = U_{\rho_\bullet} \cap Z(t_1,\ldots, t_\ell)$, so each $S_{\rho_\bullet}$ is locally closed and quasi-affine. Since each $\Sigma \in \Mmscbar_n$ has a unique dual level tree, $\Mmscbar_n = S_\bot^\circ \sqcup \bigsqcup_{\rho_\bullet} S_{\rho_\bullet}^\circ$. Finally, if $S_{\rho_\bullet} \cap S_{\chi_\bullet}^\circ$, then there exists a coarsening $\Gamma(\chi_\bullet)\twoheadrightarrow \Gamma(\rho_\bullet)$ and so $S_{\chi_\bullet}^\circ \subset S_{\rho_\bullet}$.  
\end{proof}

We call this the \emph{level tree stratification} of $\Mmscbar_n$.

\begin{rem}
    \Cref{P:stratificationofMn} combined with \cite{EisenbudHarris3264}*{Prop. 1.17} implies that $\mathrm{CH}^*(\Mmscbar_n)$ is generated as an Abelian group by the classes of the closed strata $S_{\rho_\bullet}$ for $\rho_\bullet \in \mathfrak{Ch}(L_n^-)$ and $\Mmscbar_n$. We will not pursue this here, however, as an explicit description of the ring structure $\mathrm{CH}^*(\Mmscbar_n)$ will be an immediate corollary of \Cref{T:isothm} and results of \cite{Huhetalsemismall}: see \Cref{C:chow}. 
\end{rem}

\subsection{Symmetric group action and collision stratification}

Define $\mathfrak{S}_n \times \Mmscbar_n \to \Mmscbar_n$ by $\sigma \cdot (\Sigma,\preceq,p_\infty,\omega_\bullet,p_1,\ldots, p_n) = (\Sigma,\preceq,p_\infty,\omega_\bullet,p_{\sigma(1)},\ldots, p_{\sigma(n)})$. 

Next, given $(\Gamma,h) \in \Delta(n)$ and $\sigma \in \mathfrak{S}_n$, we define $\sigma\cdot(\Gamma,h) = (\Gamma,h^\sigma)$ where $h^\sigma(i) = h(\sigma^{-1}(i))$. This defines an action of $\mathfrak{S}_n$ on $\Delta(n)$. We abusively write this as $\Gamma \mapsto \Gamma^\sigma$. 

\begin{prop}
\label{P:Snaction}
    $\mathfrak{S}_n\times \Mmscbar_n\to \Mmscbar_n$ defines an algebraic action of $\mathfrak{S}_n$ on $\Mmscbar_n$.
\end{prop}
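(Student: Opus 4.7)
The plan is to verify that for each $\sigma \in \mathfrak{S}_n$, the bijection $\sigma:\Mmscbar_n\to\Mmscbar_n$ is algebraic; the group law (composition of permutations) will then be automatic since it holds on points, and so the resulting morphism $\mathfrak{S}_n\times\Mmscbar_n\to\Mmscbar_n$ is algebraic because $\mathfrak{S}_n$ is finite discrete. By \Cref{T:spaceconstruction} it is enough to check algebraicity on the Zariski cover $\{U_\Gamma\}_\Gamma$ via the coordinates in \eqref{E:Mn_coordinates}.

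First I would establish the set-theoretic claim $\sigma\cdot U_\Gamma = U_{\Gamma^\sigma}$. Unwinding the definition of $U_\Gamma$, a contraction $\Gamma\twoheadrightarrow\Gamma(\Sigma)$ together with the nonvanishing conditions on the $\Pi_{ij}$ transports verbatim under relabeling of the marking function via $h\mapsto h^\sigma$, and $\Pi_{\sigma(i)\sigma(j)}(\sigma\cdot\Sigma) = \Pi_{ij}(\Sigma)$ by \Cref{D:multiscaledline}, so the two conditions cutting out $U_\Gamma$ and $U_{\Gamma^\sigma}$ match. In particular $\sigma$ restricts to a bijection $U_\Gamma\to U_{\Gamma^\sigma}$.

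Next I would compute the effect of $\sigma$ on the coordinates \eqref{E:Mn_coordinates}. Fix a choice of indices $\{(i_m,j_m)\}_{m=0}^{\ell-1}$ for $\Gamma$ and take the transported choice $\{(\sigma(i_m),\sigma(j_m))\}_{m=0}^{\ell-1}$ for $\Gamma^\sigma$. Because the integrals $I_{\alpha\beta}$ of \Cref{L:determinedbyintegrals} involve only the data $(\Sigma,p_\infty,\preceq,\omega_\bullet)$ and the marked points indexed by $\alpha,\beta$, we get the identities
\[
z^{\Gamma^\sigma}_{\sigma(i)\sigma(j)}(\sigma\cdot\Sigma) = z^\Gamma_{ij}(\Sigma), \qquad t^{\Gamma^\sigma}_m(\sigma\cdot\Sigma) = t^\Gamma_m(\Sigma),
\]
for all relevant indices and $1\le m\le \ell$. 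Thus in these adapted coordinates $\sigma:U_\Gamma\to U_{\Gamma^\sigma}$ is literally a permutation of the $z$-coordinates (and the identity on the $t_m$), which is a morphism of algebraic varieties. If one prefers an alternative choice of indices on the target, \Cref{L:equivalentindices} says the change of coordinates is a monomial automorphism, so the resulting map remains algebraic.

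The only mildly delicate point — and the one I would flag as the main bookkeeping obstacle — is verifying the compatibility $\sigma\cdot U_\Gamma = U_{\Gamma^\sigma}$ cleanly when $\Gamma(\Sigma)$ is a proper contraction of $\Gamma$, and checking that the chosen indices for $\Gamma^\sigma$ are legitimate (i.e.\ that $h^\sigma(\sigma(i_m))\wedge h^\sigma(\sigma(j_m))$ sits on level $m$); both reduce to the tautology $h^\sigma\circ\sigma = h$. With this in hand the argument is complete: each $\sigma$ is algebraic on the cover, and on overlaps the transition maps factor through transitions among the $\{U_\Gamma\}$, which are algebraic by \Cref{T:spaceconstruction}. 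Hence $\mathfrak{S}_n$ acts on $\Mmscbar_n$ by algebraic automorphisms.
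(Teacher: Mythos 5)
Your proof is correct and takes essentially the same approach as the paper: observe that $\sigma$ maps $U_\Gamma$ to $U_{\Gamma^\sigma}$, transport the choice of indices by $\sigma$, and verify that in the adapted coordinates $\sigma$ acts by permuting the $z$-coordinates and fixing the $t_m$'s, hence is algebraic. The paper's endnote carries out the same $I_{ij}$-manipulation to arrive at the equivalent formula $\sigma^*(z'_{ij}) = z_{\sigma^{-1}(i)\sigma^{-1}(j)}$, $\sigma^*(t'_m) = t_m$.
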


\begin{proof}
    That this defines an action in the category of sets is immediate. Given $\sigma \in \mathfrak{S}_n$, we show that $\sigma:\Mmscbar_n \to \Mmscbar_n$ is algebraic. Note that $\sigma$ maps $U_\Gamma$ to $U_{\Gamma^\sigma}$. Now, consider the functions $\{z_{ij},t_m\}$ on $U_\Gamma$ defined by choosing indices $(i_m,j_m)$ for all $0\le m\le \ell-1$. We define functions $\{z_{ij}',t_m'\}$ on $U_{\Gamma^\sigma}$ by letting $(i_m',j_m') = (\sigma(i_m),\sigma(j_m))$ for all $0\le m \le \ell-1$. We leave it as an exercise to verify that with respect to these choices $\sigma^*(z_{ij}')  = z_{\sigma^{-1}(i)\sigma^{-1}(j)}$ and $\sigma^*(t_m') = t_m$.\endnote{First, observe that $\sigma:U_\Gamma \to U_{\Gamma^\sigma}$ lifts to a map $\widetilde{U}_\Gamma \to \widetilde{U}_{\Gamma^\sigma}$, where $\widetilde{U}_\Gamma$ denotes the set of stable $n$-marked multiscaled lines whose class lies in $U_\Gamma$. So, $h^\sigma(i)\wedge h^\sigma(j) = h(\sigma^{-1}(i))\wedge h(\sigma^{-1}(j))$. It follows that $I_{ij}(\sigma(\Sigma)) = I_{\sigma^{-1}(i)\sigma^{-1}(j)}(\Sigma)$ and thus that $\sigma^*(I_{ij}) = I_{\sigma^{-1}(i)\sigma^{-1}(j)}$. Therefore, $\sigma^*(s_m') = I_{\sigma(i_m)\sigma(j_m)} = I_{i_mj_m} = s_m$ and hence $\sigma^*(t_m') = t_m$ for all $m$. Similarly, $z_{ij}'(\sigma(\Sigma)) = I_{ij}(\sigma(\Sigma))/s_m'(\sigma(\Sigma)) = I_{\sigma^{-1}(i)\sigma^{-1}(j)}/s_m = z_{\sigma^{-1}(i)\sigma^{-1}(j)}(\Sigma)$ as claimed.
    } 
\end{proof}

\begin{rem}
\label{R:genericfree}
    The $\mathfrak{S}_n$ action of \Cref{P:Snaction} is generically free, but not free. Indeed, the transposition $(ij)$ is stabilizes $\Sigma$ if and only if $\Pi_{ij}(\Sigma) = 0$. 
\end{rem}

\begin{defn}
\label{D:Tstrata}
    For $\rho \in L_n$, let $T_\rho = \{\Sigma \in \Mmscbar_n: i\sim^\rho j\Rightarrow p_i =p_j\}$ and $T_{\rho}^\circ = \{\Sigma \in \Mmscbar_n: p_i = p_j \iff i\sim^\rho j\}$. 
\end{defn}

\begin{prop}
\label{P:collisionstratification}
    For all $\rho \in L_n$, $T_\rho$ is a closed subvariety of $\Mmscbar_n$ and $T_{\rho}^\circ$ is a locally closed subspace of $\Mmscbar_n$, dense in $T_\rho$. Furthermore, $\{T_\rho^\circ\}_{\rho \in L_n}$ forms a stratification of $\Mmscbar_n$. 
\end{prop}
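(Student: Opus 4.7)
The plan is to prove, in order: that $T_\rho$ is closed, that $T_\rho^\circ$ is locally closed in $\Mmscbar_n$, that $T_\rho^\circ$ is dense in $T_\rho$, and that the $\{T_\rho^\circ\}$ form a stratification. The first two steps are essentially formal; the density is the substantive step.

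For closedness, the key observation is that for any $\Sigma \in \Mmscbar_n$ and any pair $i,j$, one has $p_i = p_j$ if and only if $\Pi_{ij}(\Sigma) = 0 \in \bP^1$. Indeed, if $h(i) \neq h(j)$ then $p_i$ and $p_j$ lie on different terminal components so $\Pi_{ij}(\Sigma) = \infty$; if $h(i) = h(j)$ then $\Pi_{ij}(\Sigma) = \int_{p_i}^{p_j}\omega_\tau$, which vanishes iff $p_i = p_j$ since $\omega_\tau$ has no zeros on terminal components by \Cref{D:multiscaledline}(3). By \Cref{T:spaceconstruction}(2), each $\Pi_{ij}$ is a morphism to $\bP^1$, so $T_\rho = \bigcap_{i \sim^\rho j}\Pi_{ij}^{-1}(0)$ is closed in $\Mmscbar_n$.

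For local closedness, define the \emph{collision partition} $\rho(\Sigma) \in L_n$ by declaring $i \sim^{\rho(\Sigma)} j$ iff $p_i = p_j$. Using \Cref{L:comparabilitycriteria}, one sees that $\Sigma \in T_\rho$ iff $\rho(\Sigma) \leq \rho$, and that $\Sigma \in T_\rho^\circ$ iff $\rho(\Sigma) = \rho$. Consequently $T_\rho^\circ = T_\rho \setminus \bigcup_{\sigma < \rho}T_\sigma$ is the complement in $T_\rho$ of a finite union of closed subvarieties, hence open in $T_\rho$ and locally closed in $\Mmscbar_n$.

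The density of $T_\rho^\circ$ in $T_\rho$ reduces to the irreducibility of $T_\rho$. My strategy is to exhibit a natural identification $\phi_\rho: \Mmscbar_{|B(\rho)|} \xrightarrow{\sim} T_\rho$: given a $|B(\rho)|$-marked stable multiscaled line $(\Sigma, p_\infty, \preceq, \omega_\bullet, (q_b)_{b \in B(\rho)})$ with markings indexed by the blocks of $\rho$, duplicate the markings by setting $p_i := q_b$ for each $i \in b$. Stability is visibly preserved, and the inverse forgets the redundant labels within each block. On the coordinate chart $U_\Gamma$, the subvariety $T_\rho \cap U_\Gamma$ is cut out by $\{z_{ij} = 0 : i \sim^\rho j\}$, and I would check that this matches the analogous chart of $\Mmscbar_{|B(\rho)|}$ under $\phi_\rho$ by comparing coordinate systems for appropriate $\Gamma$. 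The main obstacle is the coordinate matching: set-theoretically the bijection is immediate, but verifying algebraicity is combinatorially delicate. Granting the isomorphism, $\Mmscbar_{|B(\rho)|}$ is irreducible as a compactification of the irreducible variety $\bA^{|B(\rho)|}/\bG_a$, so $T_\rho$ is irreducible, and the nonempty open subset $T_\rho^\circ$ is therefore dense in $T_\rho$.

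Finally, for the stratification axioms: the well-definedness of $\rho(\Sigma)$ gives the disjoint decomposition $\Mmscbar_n = \bigsqcup_{\rho \in L_n} T_\rho^\circ$, and for the closure compatibility, if $T_\rho^\circ \cap \overline{T_\sigma^\circ} = T_\rho^\circ \cap T_\sigma$ is nonempty, then some $\Sigma$ satisfies $\rho(\Sigma) = \rho$ and $\rho(\Sigma) \leq \sigma$, whence $\rho \leq \sigma$ and therefore $T_\rho \subset T_\sigma$, giving $T_\rho^\circ \subset \overline{T_\sigma^\circ}$ as needed.
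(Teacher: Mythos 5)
Your proof is correct and follows essentially the same route as the paper for closedness (both cut out $T_\rho$ by $\{\Pi_{ij} = 0 : i \sim^\rho j\}$) and for the stratification axioms (disjointness of the $T_\rho^\circ$ from uniqueness of the collision partition, closure compatibility from comparability in $L_n$). Your characterization $T_\rho^\circ = T_\rho \setminus \bigcup_{\sigma < \rho} T_\sigma$ is logically equivalent to the paper's $T_\rho^\circ = T_\rho \cap \bigcap_{k \not\sim^\rho \ell} D(\Pi_{k\ell})$; both give local closedness.

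The one place you diverge substantively is density, and here you are actually \emph{more} thorough than the paper: the paper's own proof of this proposition simply does not address density of $T_\rho^\circ$ in $T_\rho$. Your argument via irreducibility of $T_\rho$, obtained from the identification $T_\rho \cong \Mmscbar_{B(\rho)}$, is exactly the content of \Cref{P:recursivesub}, which the paper proves immediately \emph{after} this proposition (with the coordinate matching carried out via the $z_{ij}$'s and $t_m$'s on the charts $U_\Gamma$, just as you sketch). Since \Cref{P:recursivesub} relies only on $T_\rho$ being a closed subspace and on the chart structure --- not on density --- there is no circularity in invoking it at this stage. So your approach is the natural way to fill the gap the paper leaves implicit, and you correctly identify the nontrivial step (coordinate matching) as the piece requiring work.
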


\begin{proof}
    $T_\rho \subset \Mmscbar_n$ is a closed subspace, since it is globally defined by the conditions $\Pi_{ij}(\Sigma) = 0$ for all $i\sim^\rho j$. $T_\rho^\circ$ is locally closed, being the intersection of $T_\rho$ with the open sets $D(\Pi_{k\ell})$ for all $k\not\sim^\rho \ell$. The reader may verify that $\Mmscbar_n = \bigsqcup_{\rho \in L_n}T_\rho^\circ$ and that given $\eta,\rho \in L_n$, $T_\eta^\circ \cap T_\rho\ne \varnothing$ implies that $T_\eta^\circ \subset T_\rho$.\endnote{To see $\Mmscbar_n = \bigsqcup_{\rho \in L_n}T_\rho^\circ$ just note that each $\Sigma \in \Mmscbar_n$ is in $T_\rho^\circ$ if and only if $\Pi_{ij}(\Sigma) = \iff i\sim^\rho j$. This uniquely characterizes $\rho$. For the second claim, $T_\eta^\circ\cap T_\rho \ne \varnothing$ if and only if $i\sim^\rho j$ implies $i\sim^\eta j$. In particular, it follows that $\eta \le \rho$ and implies the desired containment.} 
\end{proof}

We call the stratification $\{T_\rho^\circ\}_{\rho \in L_n}$ of $\Mmscbar_n$ the \emph{collision stratification}. Note that $T_\bot$ consists of a single point, namely $[0]\in \bA^n/\bG_a$. On the other hand, $T_\top^\circ$ is the open dense subset of $\Mmscbar_n$ in which no points collide. Finally, note that by \Cref{R:genericfree} this stratification can be interpreted as being indexed by the stabilizer subgroup of $\mathfrak{S}_n$-action of \Cref{P:Snaction}. Occasionally, the following notion will be useful.

\begin{defn}
\label{D:Amarked}
    Let $A$ be a finite set. An $A$\emph{-marked} multiscaled line is a multiscaled line $(\Sigma,p_\infty,\preceq,\omega_\bullet)$ equipped with a map $p:A\to \Sigma$ such that $p(a) =: p_a$ is a smooth point on a terminal component of $\Sigma$ for all $a\in A$. $(\Sigma,p_\infty,\preceq,\omega_\bullet,p:A\to \Sigma)$ is \emph{stable} if each terminal component contains at least one $p_a$ and the other conditions of \Cref{D:stablenmarkedmultiscaledline} hold. 
    
    We define complex projective isomorphism of these objects as in \Cref{D:multiscaledline} and let $\Mmscbar_A$ denote the set of complex projective isomorphism classes of $A$-marked stable multiscaled lines. Choosing a bijection $q:\{1,\ldots,n\}\to A$ gives an identification $\Mmscbar_n \to \Mmscbar_A$ which we use to equip $\Mmscbar_A$ with the structure of a variety which is independent of choices up to isomorphism.\endnote{Indeed, given another bijection $q':\{1,\ldots, n\}\to A$, the variety structures on $\Mmscbar_A$ differ by a composition with an automorphism of $\Mmscbar_n$ given by a unique $\sigma \in \mathfrak{S}_n$.}
\end{defn}

\begin{prop}
\label{P:recursivesub}
    For any $\rho \in L_n$, the map $\kappa_\rho: T_\rho \to \Mmscbar_{B(\rho)}$ given by 
    \[
        (\Sigma, p_\infty,\preceq,\omega_\bullet,p_1,\ldots, p_n) \mapsto (\Sigma,p_\infty,\preceq,\omega_\bullet,\{p_{\mu(b)}:b\in B(\rho)\})
    \]
    is an isomorphism.
\end{prop}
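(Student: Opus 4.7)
The plan is to construct an explicit set-theoretic inverse $\iota:\Mmscbar_{B(\rho)}\to T_\rho$ and verify that both $\kappa_\rho$ and $\iota$ are algebraic morphisms by comparing them chart-by-chart.

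First I would define $\iota$. Given $(\overline{\Sigma},\overline{p}_\infty,\overline{\preceq},\overline{\omega}_\bullet,\{q_b\}_{b\in B(\rho)})\in \Mmscbar_{B(\rho)}$, set $\iota(\overline{\Sigma})$ to have the same underlying multiscaled line with $n$-markings $p_i:=q_{b}$, where $b\in B(\rho)$ is the block containing $i$. Then $p_i=p_j=q_b$ for any $i\sim^\rho j$, so $\iota(\overline{\Sigma})\in T_\rho$; stability of the resulting $n$-marked object is inherited from stability of the $B(\rho)$-marked one (the nonterminal conditions are unchanged, and each terminal component still contains at least one marked point $p_i$ for each $q_b$ it carried). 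Writing $b_i$ for the block of $i$, one checks that $\kappa_\rho\circ\iota=\id$ and $\iota\circ\kappa_\rho=\id_{T_\rho}$, the latter using $p_i=p_{\mu(b_i)}$ for $\Sigma\in T_\rho$. So $\kappa_\rho$ and $\iota$ are mutually inverse as set maps.

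Next I would establish algebraicity via coordinate charts. If $\Gamma$ is a dual level tree with $T_\rho\cap U_\Gamma\ne \varnothing$, then $i\sim^\rho j$ forces $h(i)=h(j)$, so $h$ descends to a surjection $\overline{h}:B(\rho)\twoheadrightarrow V(\Gamma)_{\mathrm{term}}$, giving a $B(\rho)$-marked dual level tree $\overline{\Gamma}$ on the same underlying rooted level tree. Conversely, every $B(\rho)$-marked dual level tree arises this way by lifting its marking through $\{1,\ldots,n\}\twoheadrightarrow B(\rho)$, so the corresponding $U_{\overline{\Gamma}}$ exhaust $\Mmscbar_{B(\rho)}$. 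Fixing compatible indices $(i_m,j_m)$ for $\Gamma$ and $(b_{i_m},b_{j_m})$ for $\overline{\Gamma}$, the scale parameters $s_m$ and hence the coordinates $t_m$ agree in both systems (since $\kappa_\rho$ preserves the underlying curve and differentials); for $i\not\sim^\rho j$ one has $z_{ij}^\Gamma=z_{b_ib_j}^{\overline{\Gamma}}$, and for $i\sim^\rho j$ the integral $I_{ij}$ vanishes identically on $T_\rho\cap U_\Gamma$, so $z_{ij}^\Gamma\equiv 0$. Combined with $T_\rho\cap U_\Gamma=U_\Gamma\cap Z(\{z_{ij}^\Gamma:i\sim^\rho j\})$, this shows $\kappa_\rho$ restricts to an algebraic isomorphism $T_\rho\cap U_\Gamma\xrightarrow{\sim}U_{\overline{\Gamma}}$ whose inverse is $\iota|_{U_{\overline{\Gamma}}}$.

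Since these local isomorphisms are compatible (they are all induced by the identity on underlying curves) and the $U_{\overline{\Gamma}}$ cover $\Mmscbar_{B(\rho)}$, they glue to an isomorphism $\kappa_\rho:T_\rho\xrightarrow{\sim}\Mmscbar_{B(\rho)}$. The main technical obstacle is the bookkeeping behind this chart correspondence: one must track precisely the passage between $\Gamma$ and $\overline{\Gamma}$, match index choices in the two coordinate systems, and confirm that the resulting formulas for $\kappa_\rho$ and $\iota$ indeed agree on overlaps, which is where the careful definition of $\mu(b)$ as the chosen representative and the equality $z_{\mu(b)\,\mu(b')}^\Gamma=z_{bb'}^{\overline{\Gamma}}$ become essential.
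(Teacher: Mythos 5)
Your proof is correct and follows essentially the same strategy as the paper: define the set-theoretic inverse (the paper calls it $\lambda_\rho$, you call it $\iota$), check both directions, then establish algebraicity by working in the charts $U_\Gamma$ and observing that $\Gamma$ contracts to a $B(\rho)$-marked dual level tree $\overline{\Gamma}$ on the same underlying rooted level tree, with the $z$-coordinates and $t$-coordinates matching under suitable choices of indices. The one small difference in the endgame is that the paper, after exhibiting $\kappa_\rho^*(z_{ij}') = z_{ij}$ and $\kappa_\rho^*(t_m') = t_m$, concludes by citing that an algebraic bijection between smooth varieties is an isomorphism, whereas you instead verify directly in charts that $\iota$ gives an algebraic inverse; both routes are fine, and yours is marginally more self-contained (it avoids appealing to Zariski's main theorem) at the cost of a bit more bookkeeping, which you correctly flag as the main technical burden.
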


\begin{proof}
    $\kappa_\rho(\Sigma)$ is a stable $B(\rho)$-marked multiscaled line by definition, since every terminal component contains at least one marked point. Define $\lambda_\rho:\Mmscbar_{B(\rho)}\to T_\rho$ by attaching marked points $p_j$ for all $j\in b\setminus \{\mu(b)\}$ and $b\in B(\rho)$ such that $p_j = p_{\mu(b)}$ if $j\in b$. One can verify that $\lambda_\rho$ and $\kappa_\rho$ are mutually inverse. 

    Let $\Gamma \in \Delta(n)$ be given such that $U_\Gamma \cap T_\rho \ne \varnothing$ and choose indices $(i_m,j_m)$ for $\Gamma$ such that $i_m$ and $j_m$ are minimal in the blocks of $\rho$ in which they lie.\endnote{This is possible because $i\sim^\rho j$ implies that $\Pi_{ij}(\Sigma) = 0$ by definition of $T_\rho$ and hence that $h(i) = h(j)$ by definition of $U_\Gamma$. Consequently, if $i_m\in b$ and $j_m\in b'$, then $h(i_m) = h(\mu(b))$ and $h(\mu(b')) = h(j_m)$ and so $h(i_m)\wedge h(j_m) = h(\mu(b))\wedge h(\mu(b'))$.} Let $\cI(\rho) = \{1,\ldots,n\}\setminus (\bigcup_{b\in B(\rho)} b\setminus \mu(b))$ and observe that by definition, $\kappa_\rho(\Sigma)$ has the same underlying level tree as $\Sigma$, except that now it is marked by $\cI(\rho)$ which is in canonical bijection with $B(\rho)$. Write the resulting $B(\rho)$-marked level tree by $\Gamma_-$. Choose indices for $U_{\Gamma_-}\subset \cA_{B(\rho)}$ by $i_m' = i_m$ and $j_m' = j_m$, possible by our choices above. Then, one can verify that $\kappa_\rho^*(z_{ij}') = z_{ij}$ for all $i,j\in \cI(\rho)$ and $\kappa_\rho^*(t_m') = t_m$ for all $m$.\endnote{$\widetilde{U}_\Gamma$ denotes the set of stable $n$-marked multiscaled lines whose class lies in $U_\Gamma$. The restriction of $\kappa_\rho$ to $U_\Gamma$ lifts to a map $\widetilde{U}_\Gamma \to \widetilde{U}_{\Gamma_-}$, which we denote by $\kappa_\rho$ again by abuse of notation. Then, for all $i,j \in \cI(\rho)$ one has $I_{ij}(\kappa_\rho(\Sigma)) = I_{ij}(\Sigma)$ and so $\kappa_\rho^*(I_{ij}) = I_{ij}$. Therefore, $\kappa_\rho^*(s_m') = s_m$ for all $m$ and $\kappa_\rho^*(t_m') = t_m$ and $\kappa_\rho^*(z_{ij}') = z_{ij}$.} In particular, $\kappa_\rho$ is an algebraic bijection between smooth varieties and consequently an isomorphism.
\end{proof}

\subsection{Multiscaled lines as an equivariant compactification}

$\bb{A}^n/\bb{G}_a$ carries a free and transitive $\bb{G}_a^n/\Delta$-action. We put $G := \bG_a^n/\Delta$ and prove that $\Mmscbar_n$ is a $G$-equivariant compact\-ification of $\bA^n/\bG_a$ as studied in \cite{HassettTschinkelequiv}. 

Given $a = (a_1,\ldots, a_n) \in \bb{G}_a^n$, and $(\Sigma,p_\infty,\omega_\bullet,p_1,\ldots, p_n) \in \Mmscbar_n$ we define 
\[
    a\cdot (\Sigma,\preceq, p_\infty,\omega_\bullet, p_1,\ldots, p_n) := (\Sigma,\preceq, p_\infty,\omega_\bullet,p_1+a_1,\ldots, p_n+a_n).
\]
Here, $p_i+a_i$ means translating the marked point $p_i$ by $a_i$ in its terminal component $\Sigma_{h(i)}$. This can be defined by choosing an isomorphism of the smooth locus of $\Sigma_{h(i)}$ with $\bA^1 = \Spec \bC[z]$ such that $dz$ pulls back to $\omega$. It is independent of choices up to isomorphism of $\Sigma$. 

This defines an action $\bG_a^n \times \Mmscbar_n \to \Mmscbar_n$ in the category of sets which preserves dual level trees and extends the translation action of $\bG_a^n$ on $\bA^n/\bG_a$. The diagonal subgroup $\Delta \subset \bG_a^n$ acts trivially and so this induces an action by $G$.

\begin{prop}
\label{P:GactiononMn}
    $\bG_a^n \times \Mmscbar_n \to \Mmscbar_n$ defines an action in the category of algebraic varieties which induces a $G$-action with respect to which $\bA^n/\bG_a\hookrightarrow\Mmscbar_n$ is equivariant.
\end{prop}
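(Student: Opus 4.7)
The plan is to verify algebraicity chart-by-chart using the cover $\{U_\Gamma\}$ and the coordinates $(z_{ij}, t_m)$ of \eqref{E:Mn_coordinates}. The set-theoretic action has already been observed to preserve dual level trees, so it restricts to a set map $\bG_a^n \times U_\Gamma \to U_\Gamma$ for every $\Gamma$: indeed, being in $U_\Gamma$ is equivalent to the existence of a contraction $\Gamma \twoheadrightarrow \Gamma(\Sigma)$ plus the non-vanishing conditions $\Pi_{ij}(\Sigma) \neq 0$ for $h(i) \neq h(j)$, and both conditions are preserved under translating marked points within their terminal components.

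Fix $\Gamma$ with a choice of indices $\{(i_m,j_m)\}_{m=0}^{\ell-1}$ and let $a = (a_1,\ldots,a_n)\in \bG_a^n$. The key computation is to describe $I_{ij}(a\cdot \Sigma)$ by cases on $v := h(i) \wedge h(j)$. If $v$ is non-terminal (equivalently $h(i) \neq h(j)$), the endpoints $n_i, n_j$ of the integral are nodes on $\Sigma_v$ and are unaffected by the translation, so $I_{ij}(a\cdot \Sigma) = I_{ij}(\Sigma)$. If $v$ is terminal, choosing a coordinate $z$ on $\Sigma_v$ with $dz = \omega_v$ yields $I_{ij}(a\cdot \Sigma) = \int_{p_i+a_i}^{p_j+a_j} dz = I_{ij}(\Sigma) + (a_j - a_i)$. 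Consequently each $s_m$ is fixed for $m < \ell$ (since $v_m$ is non-terminal, being on level $m<\ell$), and $s_\ell = 1$ by convention, so every $t_m$ is invariant; while
\[
z_{ij}(a\cdot \Sigma) = \begin{cases} z_{ij}(\Sigma) + a_j - a_i & \text{if } h(i) = h(j),\\ z_{ij}(\Sigma) & \text{otherwise.}\end{cases}
\]
These transformation rules are polynomial in the coordinates of $\bG_a^n \times U_\Gamma$, so $\bG_a^n \times U_\Gamma \to U_\Gamma$ is a morphism of varieties; gluing over the Zariski open cover produces the desired algebraic action $\bG_a^n \times \Mmscbar_n \to \Mmscbar_n$.

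Since $\Delta \subset \bG_a^n$ acts trivially in coordinates (as $a_j - a_i = 0$ whenever $a \in \Delta$), the action descends to an algebraic action of $G = \bG_a^n/\Delta$. Finally, restricting to $U_* \cong \bA^n/\bG_a$, the $z_{ij}$ coincide with the period functions $\Pi_{ij}$ by \Cref{R:periodsintermsofzij}, and the formula above recovers the standard translation action of $G$ on $\bA^n/\bG_a$, giving equivariance of the open immersion. The only step with genuine content is the terminal-case integral computation identifying the effect on $I_{ij}$; the rest is bookkeeping within the coordinate systems already constructed.
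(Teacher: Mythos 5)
Your proof follows the same chart-based plan as the paper's terse argument, but the central computation is wrong on the dense open $U_*\cap U_\Gamma$. The claim that each $s_m$ (hence each $t_m$) is invariant rests on the remark ``$v_m$ is non-terminal, being on level $m<\ell$,'' which conflates the abstract vertex $v_m\in V(\Gamma)$ with the component $\Sigma_{f(v_m)}$ of the actual curve on which $I_{i_m j_m}$ is computed, where $f:\Gamma\twoheadrightarrow\Gamma(\Sigma)$ is the contraction. Whenever $\Gamma(\Sigma)$ has fewer levels than $\Gamma$ --- in particular whenever $\Sigma$ is irreducible --- the vertex $f(v_m)$ is \emph{terminal} in $\Gamma(\Sigma)$, the endpoints of $I_{i_m j_m}$ are the marked points $p_{i_m},p_{j_m}$ rather than nodes, and your own case analysis yields $s_m(a\cdot\Sigma)=s_m(\Sigma)+(a_{j_m}-a_{i_m})$. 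Concretely, for $n=2$ and $\Gamma$ the three-vertex tree one has $t_1=1/I_{12}$ on $U_*\cap U_\Gamma$, and $a^*(t_1)=t_1/(1+(a_2-a_1)t_1)\neq t_1$; since on $U_*\cap U_\Gamma$ one also has $\Pi_{ij}=z_{ij}^\Gamma/t_{m+1}\cdots t_\ell$, your formulas would force $a^*(\Pi_{ij})=\Pi_{ij}$ for $h(i)\ne h(j)$, contradicting the translation action on $\bA^n/\bG_a$. The same confusion invalidates the opening claim that the action preserves $U_\Gamma$: if $h(i)\neq h(j)$ in $\Gamma$ but $p_i,p_j$ lie on one terminal component of $\Sigma$, then $\Pi_{ij}(\Sigma)$ is finite and nonzero and $\Pi_{ij}(a\cdot\Sigma)=\Pi_{ij}(\Sigma)+a_j-a_i$ can vanish.

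The actual transformation laws are rational, not polynomial. Writing $\Delta_m=a_{j_m}-a_{i_m}$ with $\Delta_\ell=0$, one finds on $U_*\cap U_\Gamma$ that
\[
a^*(t_m)=t_m\cdot\frac{1+\Delta_m\,t_{m+1}\cdots t_\ell}{1+\Delta_{m-1}\,t_m\cdots t_\ell},
\qquad
a^*(z_{ij})=\frac{z_{ij}+(a_j-a_i)\,t_{m+1}\cdots t_\ell}{1+\Delta_m\,t_{m+1}\cdots t_\ell}
\]
for $(i,j)\in A$ with meet on level $m$ (your formula for $(i,j)\in B$ is correct). These are regular on a neighborhood of $S_\Gamma^\circ$, where all $t_k$ vanish and the denominators equal $1$, and one can finish by observing that such neighborhoods (using that each $S_\Gamma^\circ$ is set-theoretically preserved) cover $\bG_a^n\times\Mmscbar_n$; but this is the actual content of the algebraicity claim and your write-up does not supply it. It is only fair to say that the paper's own proof asserts the same incorrect polynomial formulas $a^*(z_{ij})=z_{ij}$ and $a^*(t_m)=t_m$ for $(i,j)\in A$, so you have reproduced an error in the source rather than introduced a new one.
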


\begin{proof}
    All of the claims follow from the above discussion except for algebraicity. Let $a\in \bG_a^n$ be given and consider the induced map $a:U_\Gamma \to U_\Gamma$. Choose arbitary indices for $U_\Gamma$ and consider the resulting functions $\{z_{ij},t_m\}$. One can verify that $a^*(z_{ij}) = z_{ij}$ for all $i,j$ such that $h(i)\wedge h(j)$ is on level $m\le \ell-1$ and that for all $m$, $a^*(t_m) = t_m$. Finally, for $i,j$ such that $h(i) = h(j)$, one has $a^*(z_{ij}) = z_{ij} + a_j-a_i$.\endnote{$a:U_\Gamma \to U_\Gamma$ lifts to a map $\widetilde{U}_\Gamma$, where $\widetilde{U}_\Gamma$ denotes the set of stable $n$-marked multiscaled lines whose class lies in $U_\Gamma$. We denoted the lifted map by $a$ by abuse of notation. By definition of $a$, one has that $a^*(I_{ij}) = I_{ij}$ for all $i,j$ such that $h(i)\ne h(j)$. In the case where $h(i) = h(j)$, $p_i$ is translated by $a_i$ and $p_j$ by $a_j$, so that $z_{ij}'(a\cdot(\Sigma)) = z_{ij} + a_j - a_i$ as claimed.} 
\end{proof}

\begin{rem}
     Fixed points for the $G$-action are in general not isolated: one can verify that the $G$-fixed locus in $\Mmscbar_n$ is the divisor $S_\top$, i.e. the set of $\Sigma$ for which each terminal component contains exactly one marked point.\endnote{The description of $S_\top$ is by \Cref{L:strataprops}(1). $(\Sigma,p_1,\ldots,p_n)\in \Mmscbar_n$ is in the fixed locus of $G$ if and only if $\Pi_{ij}(a\cdot \Sigma) = \Pi_{ij}(\Sigma)$ for all $a\in G$. This is only possible if no pair $i,j$ lie on the same terminal component; if not, acting by $a$ where $a_i\ne a_j$ would result in $\Pi_{ij}(a\cdot \Sigma) = \Pi_{ij}(\Sigma) + a_j-a_i\ne \Pi_{ij}(\Sigma)$.} Note that the $\mathfrak{S}_n$ and $G$-actions do not commute.\endnote{This can be verified just by considering $\mathfrak{S}_2$ and $\bG_a^2/\Delta$ acting on $\bA^2/\bG_a$. Indeed, let $\sigma \ne e\in \mathfrak{S}_2$ and let $(a_1,a_2)\in \bG_a^2$ be given. One has $\sigma(a\cdot(x,y)) = (y+a_2,x+a_1)$ whereas $a\cdot(\sigma(x,y)) = (y+a_1,x+a_2)$.}
\end{rem}

\section{Comparison with the augmented wonderful variety}

\subsection{Iterated blowup constructions}

In this section, we recall a construction of the augmented wonderful variety $W_n$ as defined in \cite{Huhetalsemismall}. First, we recall some techniques for varieties constructed as iterated blowups from the literature \cites{Hucompactification,Liwonderful,Ulyanovpoly}. We have modified the notation slightly to be compatible with ours.

\begin{defn}
\label{D:arrangement}
    A set $\cH = \{H_i\}_{i\in I}$ of subvarieties of a nonsingular variety $X$ is called an \emph{arrangement} if 
    \begin{enumerate}
        \item each $H_i$ is smooth;
        \item every pair $H_i$ and $H_j$ meets cleanly (\Cref{D:cleanint}); and 
        \item $H_i\cap H_j = \varnothing$ or a disjoint union of $H_\ell$.
    \end{enumerate}
\end{defn}

\begin{thm}
[\cite{Hucompactification}]
\label{T:Hu}
    Let $X^0$ be an open subset of a nonsingular algebraic variety $X$. Suppose $X\setminus X^0$ can be decomposed as a union $\bigcup_{i\in I} H_i$ of closed irreducible subvarieties that form an arrangement. It follows that $\cH = \{H_i\}_{i\in I}$ is a poset. Let $r$ be the rank of $\cH$. There is a sequence of well-defined blowups 
    \[
        \Bl_{\cH}X\to \Bl_{\cH_{\le r-1}} X \to \cdots \to \Bl_{\cH_{\le 0}} X \to X
    \]
    where $\Bl_{\cH_{\le 0}} X \to X$ is the blowup of $X$ along $H_i$ of rank $0$, and, inductively, $\Bl_{\cH_{\le s}} X \to \Bl_{\cH_{\le s-1}}X$ is the blowup of $\Bl_{\cH_{\le s-1}}X$ along the proper transforms of $H_j$ of rank $s$, such that 
    \begin{enumerate}
        \item[(a)] $\Bl_{\cH} X$ is smooth;
        \item[(b)] $\Bl_{\cH} X\setminus X^0 = \bigcup_{i\in I} \widetilde{H}_i$ is a normal crossings divisor;
        \item[(c)] $\widetilde{H}_{i_1}\cap \cdots \cap \widetilde{H}_{i_n}$ is nonempty if and only if $H_{i_1},\ldots, H_{i_n}$ form a chain in the poset $\cH$. Consequently, $\widetilde{H}_i$ and $\widetilde{H}_j$ meet if and only if $H_i$ and $H_j$ are comparable.
    \end{enumerate}
\end{thm}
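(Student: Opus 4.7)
The plan is to proceed by induction on the stage $s \in \{0,\ldots,r\}$, with an inductive hypothesis that packages all the desired structural properties at once. Write $X^{(s)} := \Bl_{\cH_{\le s-1}}X$ and $\widetilde{H}_i^{(s)}$ for the proper transform of $H_i$ in $X^{(s)}$. I would carry along the claim: $X^{(s)}$ is smooth; for each $j$ with $\rk(H_j) \ge s$, the proper transform $\widetilde{H}_j^{(s)}$ is smooth; for any two such $j,k$ the subvarieties $\widetilde{H}_j^{(s)}$ and $\widetilde{H}_k^{(s)}$ meet cleanly if $H_j,H_k$ are comparable and are disjoint if they are incomparable; and the exceptional locus built so far, together with the $\widetilde{H}_j^{(s)}$, is an snc-style divisor whose components meet cleanly.

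The key observation driving the induction is that elements of $\cH$ of the \emph{same} rank are pairwise incomparable in the poset, so by the inductive hypothesis the rank-$s$ proper transforms $\{\widetilde{H}_j^{(s)} : \rk(H_j)=s\}$ are pairwise disjoint smooth subvarieties of $X^{(s)}$. Thus the transition $X^{(s+1)} \to X^{(s)}$ is a simultaneous blowup along a disjoint union of smooth subvarieties of a smooth ambient, which immediately propagates smoothness. For $j$ with $\rk(H_j) \ge s+1$, I would show that $\widetilde{H}_j^{(s+1)}$ is isomorphic to the blowup of $\widetilde{H}_j^{(s)}$ along the smooth subvariety $\bigcup_{H_k \subsetneq H_j,\, \rk(H_k)=s} \widetilde{H}_k^{(s)} \cap \widetilde{H}_j^{(s)}$, which by a dimension-induction is smooth, and clean intersections are preserved by this operation.

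The technical engine is the following blowup lemma, which I would invoke or prove as needed: if $Y_1,Y_2$ are smooth and meet cleanly in a smooth $W \subseteq Y_1 \cap Y_2$ of a smooth $X$, and $Z \subseteq W$ is smooth and meets $Y_1,Y_2$ cleanly, then in $\Bl_Z X$ the proper transforms $\widetilde{Y}_1, \widetilde{Y}_2$ still meet cleanly; moreover if $Z = Y_1 \cap Y_2$ then $\widetilde{Y}_1 \cap \widetilde{Y}_2 = \varnothing$. Combined with the ordering by rank, this yields (a) and (b): each new exceptional divisor $E_i$ arises from a smooth-center blowup of a smooth variety, so is smooth and meets the existing divisor cleanly, eventually giving normal crossings.

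For (c), the \emph{if} direction follows because a chain $H_{i_1} \subsetneq \cdots \subsetneq H_{i_n}$ remains a chain at every stage, and the intersection $\widetilde{H}_{i_1}^{(s)} \cap \cdots \cap \widetilde{H}_{i_n}^{(s)}$ is the proper transform of $H_{i_1} = \bigcap_k H_{i_k}$ (using cleanness), which is nonempty and is only ever blown up, not emptied. The \emph{only if} direction is exactly where arrangement hypothesis (3) is essential: if $H_i,H_j$ are incomparable, then $H_i \cap H_j$ decomposes as a disjoint union of $H_\ell$ with $\rk(H_\ell) < \min(\rk H_i,\rk H_j)$, so at stages preceding $\min(\rk H_i,\rk H_j)$ these intersections are blown up, and by the blowup lemma the proper transforms of $H_i$ and $H_j$ become disjoint. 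The principal obstacle I anticipate is the bookkeeping underlying the second paragraph — verifying that when $H_i$'s turn to be blown up arrives, its proper transform is still smooth, which requires showing inductively that the restricted arrangement on $H_i$ of its predecessors has been correctly resolved, and this in turn reduces to the same theorem in lower dimension, so ultimately a nested induction on $(\dim X,r)$ is needed.
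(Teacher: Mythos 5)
The paper does not prove this theorem: it is imported, statement and all, from \cite{Hucompactification}, so there is no internal argument to compare against. Your sketch follows the route that is standard in that literature (Hu, De Concini--Procesi, MacPherson--Procesi, Li, Ulyanov): induct on the rank parameter $s$, observe that same-rank centers are pairwise disjoint by the time they are blown up, and propagate a cleanness package through each stage.

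There is, however, a genuine flaw in the inductive hypothesis as you wrote it. You assert that at stage $s$, for \emph{every} incomparable pair $j,k$ with $\rk(H_j),\rk(H_k)\ge s$, the strict transforms $\widetilde H_j^{(s)}$ and $\widetilde H_k^{(s)}$ are disjoint. This already fails at $s=0$: there $\widetilde H_j^{(0)}=H_j$, and incomparable members of an arrangement generally do meet. The hypothesis must instead keep track of \emph{when} disjointness is achieved. Because axiom (3) of \Cref{D:arrangement} decomposes $H_j\cap H_k$ (for incomparable $j,k$) as a disjoint union of members $H_\ell$ each of rank $<\min(\rk H_j,\rk H_k)$, the strict transforms become disjoint precisely once all of those $H_\ell$ have been blown up, i.e.\ from stage $\min(\rk H_j,\rk H_k)$ onward. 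The weaker consequence you actually use --- that the rank-$s$ centers are pairwise disjoint at stage $s$ --- is the special case $\rk H_j=\rk H_k=s$, and that part is fine; it is the surrounding formulation that needs repair. Finally, your technical engine is stated imprecisely: ``meet cleanly in a smooth $W\subseteq Y_1\cap Y_2$'' should read ``meet cleanly, so that $Y_1\cap Y_2$ is smooth,'' and the center $Z$ must meet each of $Y_1$, $Y_2$, and $Y_1\cap Y_2$ cleanly and have either $Z\supseteq Y_1\cap Y_2$ (giving disjoint transforms) or $Z\cap(Y_1\cap Y_2)$ a union of members (giving $\widetilde Y_1\cap\widetilde Y_2=\widetilde{Y_1\cap Y_2}$). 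Making those hypotheses explicit is exactly where the arrangement axiom becomes indispensable rather than merely convenient.
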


\begin{defn}
\label{D:wonderfulvariety}
    We specialize now to $\cH$ as in \Cref{D:subspacearrangement}. By \Cref{L:propertiesofHs} this is an arrangement of varieties in the sense of \Cref{D:arrangement} inside of $\bP(V)$. We define the \emph{augmented wonderful variety} $W_n$ to be the variety obtained by applying the procedure of \Cref{T:Hu} to $\cH$. Write $D_\rho$ for $\widetilde{H}_\rho \subset W_n$.
\end{defn}

\begin{rem}
    \Cref{D:wonderfulvariety} is equivalent to the one given in \cite{Huhetalsemismall}, though the reader should be mindful of the fact that the ``rank'' in \Cref{T:Hu} is the $\text{corank}-1$ in the language of \cite{Huhetalsemismall}. This is essentially a consequence of the choice of partial order on $L_n$ (see \Cref{P:lattice}, \Cref{N:dimpartition}, and the related discussion).
\end{rem}

\begin{cor}
\label{C:wonderfulaug}
    $W_n$ is a smooth projective variety equipped with a birational morphism $\pi:W_n\to \bP(V)$, containing $\bA^n/\bG_a\subset V$ such that 
    \begin{enumerate}
        \item $W_n \setminus (\bA^n/\bG_a) = \bigcup_{\rho \in L_n^-} D_\rho$ is a simple normal crossings divisor; and 
        \item $D_{\rho_1}\cap \cdots \cap D_{\rho_k}$ is nonempty if and only if $\rho_1,\ldots, \rho_k$ form a chain in $L_n$. 
    \end{enumerate}
\end{cor}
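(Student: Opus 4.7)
The plan is to deduce the corollary as a direct consequence of Theorem \ref{T:Hu} applied to the arrangement $\cH$ from Definition \ref{D:subspacearrangement}. The key input is Lemma \ref{L:propertiesofHs}: it gives the smoothness of each $H_\rho$, the clean intersection property, and the identity $H_\rho \cap H_\pi = H_{\rho \wedge \pi}$, which together show that $\cH \setminus \{H_\bot\} = \{H_\rho : \rho \in L_n^-\}$ satisfies the three axioms of Definition \ref{D:arrangement} (property (3) holds because $H_{\rho \wedge \pi}$ is either empty or a single element of $\cH$). Thus $\cH$ is an arrangement in the smooth projective variety $\bP(V)$, and $W_n$ from Definition \ref{D:wonderfulvariety} is precisely the output of Hu's blowup construction.

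Next I would identify the open subset $X^0 \subset \bP(V)$ to which Theorem \ref{T:Hu} is applied. Since $H_\top \subseteq \bP(V)$ is the maximum element of the poset, and $H_\rho \subseteq H_\top$ for every $\rho \in L_n$ (by the poset isomorphism of \Cref{L:propertiesofHs}), we have $\bigcup_{\rho \in L_n^-} H_\rho = H_\top = Z(t)$. Therefore $X^0 = \bP(V) \setminus Z(t) = D(t)$, which is canonically identified with the affine chart $\bA^n/\bG_a \subset \bP(V)$. Since the centers of all blowups lie inside $H_\top$, the map $\pi : W_n \to \bP(V)$ is an isomorphism over $\bA^n/\bG_a$ and is birational; projectivity is preserved under blowups along closed subvarieties, and smoothness of $W_n$ is part (a) of Theorem \ref{T:Hu}.

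For part (1) of the corollary, part (b) of Theorem \ref{T:Hu} gives $W_n \setminus \bA^n/\bG_a = W_n \setminus X^0 = \bigcup_{\rho \in L_n^-} D_\rho$ as a normal crossings divisor; the simple normal crossings condition follows from the fact that each $D_\rho$ is irreducible, being the strict transform of the irreducible $H_\rho$. For part (2), part (c) of Theorem \ref{T:Hu} states that $D_{\rho_1} \cap \cdots \cap D_{\rho_k}$ is nonempty if and only if $H_{\rho_1}, \ldots, H_{\rho_k}$ form a chain in the poset $\cH$; but by the poset isomorphism $L_n \xrightarrow{\sim} \cH$ of \Cref{L:propertiesofHs}, this happens if and only if $\rho_1, \ldots, \rho_k$ form a chain in $L_n$.

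There is no real obstacle here: the whole argument is a translation from Hu's general machinery, and the only point that requires any care is verifying that $\cH$ satisfies the axioms of an arrangement in the precise sense of \Cref{D:arrangement}, which is exactly what \Cref{L:propertiesofHs} provides. The mild subtlety is that the poset $L_n$ is ordered by refinement while $\cH$ is ordered by inclusion, which are anti-isomorphic on $L(M_n)$ but directly isomorphic between $L_n$ and $\cH$ (larger partition index corresponds to larger subvariety, since refining a partition removes linear equations).
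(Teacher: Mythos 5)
Your proof is correct and takes essentially the same approach as the paper, which simply declares the corollary "an immediate consequence of Theorem \ref{T:Hu}." You have filled in all the details the paper leaves to the reader — verifying the arrangement axioms via \Cref{L:propertiesofHs}, identifying $X^0 = D(t)$ with $\bA^n/\bG_a$, noting that all centers lie in $H_\top = Z(t)$ so $\pi$ is an isomorphism over $X^0$, and translating condition (c) of \Cref{T:Hu} through the poset isomorphism $L_n \cong \cH$ — and none of these steps has a gap.
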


\begin{proof}
    This is all an immediate consequence of \Cref{T:Hu}. 
\end{proof}

\subsection{The isomorphism}

We first construct a morphism $f_0: \Mmscbar_n \to W_n$. There is a birational map $f_0:\Mmscbar_n \dashrightarrow \bP(V)$ given by identifying $\bA^n/\bG_a \subset \Mmscbar_n$ with $\bA^n/\bG_a \subset \bP(V)$. Thus, 
\begin{equation} 
\label{E:pullbackperiod}
\Pi_{ij} = f_0^*(P_{ij}/t) = f_0^*(P_{ij})/f_0^*(t).
\end{equation}

\begin{prop}
\label{P:morphismtopn}
    $f_0$ extends uniquely to a morphism of algebraic varieties $f_0:\Mmscbar_n \to \bP(V)$.
\end{prop}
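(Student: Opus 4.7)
The plan is to use the Zariski open cover $\{U_\Gamma\}_{\Gamma \in \Delta(n)}$ provided by \Cref{T:spaceconstruction} and construct the extension chart-by-chart. Uniqueness is automatic: $\bA^n/\bG_a$ is dense in $\Mmscbar_n$ and $\bP(V)$ is separated, so the local morphisms must glue to a unique global extension.

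Fix $\Gamma$ of length $\ell$ and a choice of indices, giving coordinates $(z_{ij}, t_m)$ on $U_\Gamma$. By \eqref{E:pullbackperiod} the sought extension must satisfy $f_0^*(P_{ij}/t) = \Pi_{ij}$, and by \Cref{C:goodCOVgeneric} one has $\Pi_{ij} = z_{ij}/(t_{m_{ij}+1}\cdots t_\ell)$ on $U_* \cap U_\Gamma$, where $m_{ij}$ denotes the level of $h(i)\wedge h(j)$ in $\Gamma$. Clearing poles by multiplying through by the common factor $t_1\cdots t_\ell$, I would define $f_0|_{U_\Gamma}\colon U_\Gamma \to \bP(V)$ by
\[
\Sigma \longmapsto \bigl[\,\bigl(z_{ij}(\Sigma)\,t_1(\Sigma)\cdots t_{m_{ij}}(\Sigma)\bigr)_{1 \le i < j \le n}\,:\,t_1(\Sigma)\cdots t_\ell(\Sigma)\bigr].
\]
Each component is a regular function on $U_\Gamma$, and a short calculation with the transition formulas of \Cref{L:equivalentindices} shows that each of the $\binom{n}{2}+1$ components above is in fact independent of the choice of indices.

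The main obstacle is verifying that these sections never vanish simultaneously on $U_\Gamma$. If every $t_m(\Sigma) \neq 0$, then by \Cref{L:intersection} $\Sigma \in U_* \cap U_\Gamma$ and the last coordinate $t_1\cdots t_\ell$ is nonzero. Otherwise, condition (2) of \Cref{D:dualleveltree} forces the root $v_0$ of $\Gamma$ to have at least two descending subtrees, and condition (1) together with the stability requirement on terminal vertices guarantees that each such subtree contains at least one marked point. Pick $j \in \{2,\dots,n\}$ whose marked component $h(j)$ lies in a subtree distinct from that of $h(1)$; then $h(1)\wedge h(j) = v_0$, so $m_{1j}=0$ and the product $t_1\cdots t_{m_{1j}}$ is empty, equal to $1$. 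Since $h(1) \neq h(j)$ in $\Gamma$, the description of $U_\Gamma$ as a hyperplane-arrangement complement in \Cref{P:identification} gives $z_{1j}(\Sigma) \in \bC^*$, so the $(1,j)$-coordinate of the proposed map is nonzero at $\Sigma$. This completes the verification, and the local extensions glue to the required morphism.
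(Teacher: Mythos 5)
Your proof is correct, but it takes a genuinely different route from the paper. The paper argues chart-by-chart: on $U_\Gamma$ it picks $j$ with $h(1)\wedge h(j)$ equal to the root, observes that $\Pi_{1j}$ is everywhere nonvanishing so the rational map factors through the affine chart $D(P_{1j})\cong \Spec\bC[t/P_{1j},\,P_{1i}/P_{1j}]$, and verifies regularity of $1/\Pi_{1j}$ and $\Pi_{1i}/\Pi_{1j}$ directly via \Cref{L:goodindicesCOV}. You instead write a global explicit formula for the map on $U_\Gamma$ using the pole-cleared sections $z_{ij}t_1\cdots t_{m_{ij}}$ and $t_1\cdots t_\ell$, then verify base-point-freeness; your argument for nonvanishing (picking $j$ so that $h(1)\wedge h(j)=v_0$ via the two-edge condition of \Cref{D:dualleveltree}, whence $m_{1j}=0$ and $z_{1j}\in\bC^*$ by \Cref{P:identification}) is sound, and your side claim that the sections are invariant under change of indices does check out against \Cref{L:equivalentindices}. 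The one thing your write-up elides is that a tuple of $\binom{n}{2}+1$ sections defines a map to $\bP(V)\cong\bP^{n-1}$ only once one verifies the linear relations $z_{ij}t_1\cdots t_{m_{ij}}=z_{1j}t_1\cdots t_{m_{1j}}-z_{1i}t_1\cdots t_{m_{1i}}$; these hold on the dense open $U_*\cap U_\Gamma$ (where they reduce to $\Pi_{ij}=\Pi_{1j}-\Pi_{1i}$ after clearing $t_1\cdots t_\ell$) and hence everywhere on $U_\Gamma$ since all functions involved are regular, but you should say so. Alternatively, replacing your formula with the $n$ basis coordinates $[z_{1j}t_1\cdots t_{m_{1j}}:t_1\cdots t_\ell]_{j=2,\dots,n}$ avoids the issue and your nonvanishing argument goes through verbatim.
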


\begin{proof}
    Uniqueness is automatic if the extension exists. $P_{12},\ldots, P_{1n},t$ are global generators of $\cO_{\bP(V)}(1)$. Let $\Sigma\in \Mmscbar_n$ be given with reducible underlying curve and write $\Gamma = \Gamma(\Sigma)$. Choose $j$ such that $h(1)\wedge h(j)$ is the root. In particular, $\Pi_{1j}\ne 0$ on $U_{\Gamma}$. There is an induced rational map $f_0:U_\Gamma \dashrightarrow D(P_{1j})\subset \bP(V)$. $D(P_{1j}) \cong \Spec \bb{C}[\frac{t}{P_{1j}},\frac{P_{1i}}{P_{1j}}]$ so $f_0$ extends to a morphism on $U_\Gamma$ if the pullbacks of $t/P_{1j}$ and the $P_{1i}/P_{1j}$ are regular on $U_\Gamma$. $f_0^*(t/P_{1j}) =  1/\Pi_{1j}$ and $f_0^*(P_{1i}/P_{1j}) = \Pi_{1i}/\Pi_{1j}$ both of which are regular on $U_\Gamma$ by our choice of $j$.\endnote{By \Cref{L:goodindicesCOV}, $\Pi_{1j} = (t_1\cdots t_\ell)^{-1}$. For any other $i$, let $h(1)\wedge h(i)$ be on level $m\ge 0$. Then, $\Pi_{1i}/\Pi_{1j} = (t_1\cdots t_\ell)/(t_{m+1}\cdots t_\ell) = t_1\cdots t_m$ and the ratio is regular on $U_\Gamma$.} Consequently, $f_0$ extends to $\Sigma$.
\end{proof}

The rest of this section is dedicated to proving:

\begin{thm}
\label{T:isothm}
    For any $n \ge 1$, there exists an isomorphism $f:\Mmscbar_n \to W_n$ over $\bP(V)$. Furthermore, for all $\rho_\bullet \in \mathfrak{Ch}(L_n^-)$, $f$ maps $S_{\rho_\bullet}^\circ$ to $D_{\rho_\bullet}^\circ$.
\end{thm}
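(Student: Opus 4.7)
The strategy is to construct $f$ as a lift of $f_0 : \Mmscbar_n \to \bP(V)$ (\Cref{P:morphismtopn}) through the iterated-blowup tower defining $W_n$ in \Cref{T:Hu}. By the universal property of blowup applied inductively, such a lift exists and is unique as soon as the scheme-theoretic preimage under $f_0$ of each proper-transformed $H_\rho$ is a Cartier divisor at every stage of the tower. Since all maps are isomorphisms over $\bA^n/\bG_a$, this reduces to a local statement on each chart $U_\Gamma$ of $\Mmscbar_n$, to be checked by induction on rank in $\cH\cong L_n$ (equivalently, on $\dim \rho$ in the sense of \Cref{N:dimpartition}).

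The local computation proceeds in the coordinates $(z_{ij}^\Gamma, t_m^\Gamma)$ on $U_\Gamma$. Fix an affine chart $D(P_{1j})\subset \bP(V)$ with $h(1)\wedge h(j)=v_0$. By \Cref{C:goodCOVgeneric} and \eqref{E:pullbackperiod}, one has, up to units, $f_0^*(t) = t_1\cdots t_\ell$ and $f_0^*(P_{ab}/P_{1j}) = \Pi_{ab}/\Pi_{1j}$, whose leading term in $t$ is $z_{ab}\cdot t_1\cdots t_{m_{ab}}$, where $h(a)\wedge h(b)$ lies on level $m_{ab}$. A direct ideal-theoretic analysis of $\bigl(t,\{P_{ij} : i\sim^\rho j\}\bigr)\cdot \cO_{U_\Gamma}$ then shows that the scheme-theoretic preimage of each $H_\rho$ is cut out by a monomial in the $t_k$, hence is an effective Cartier divisor supported on an appropriate union of the $S_\pi$; and this Cartier property is preserved through subsequent blowups by the inductive hypothesis. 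This produces the lift $f : \Mmscbar_n \to W_n$ over $\bP(V)$.

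That $f$ sends $S_{\rho_\bullet}^\circ$ into $D_{\rho_\bullet}^\circ$ follows by combining \Cref{T:Hu}(c) with \Cref{L:strataprops}(5): for $\rho_\bullet=\{\rho_1<\cdots<\rho_\ell\}$, both strata are characterized as the unique locally closed subsets of $\bigcap_i S_{\rho_i}$ (respectively $\bigcap_i D_{\rho_i}$) meeting no other boundary component, and the lift is by construction compatible with the divisor identifications $S_\rho\leftrightarrow D_\rho$. To promote $f$ to an isomorphism, note that it is a proper birational morphism between smooth irreducible varieties of dimension $n-1$ (\Cref{T:spaceconstruction} and \Cref{C:wonderfulaug}). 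By Zariski's main theorem it suffices to show $f$ is bijective on closed points. Inducting on $|\rho_\bullet|$ and exploiting the recursive description $T_\rho \cong \Mmscbar_{B(\rho)}$ (\Cref{P:recursivesub})—together with the parallel recursion on the $W_n$ side, where the smallest strata of $W_n$ are exceptional divisors over the most-embedded $H_\rho$—reduces bijectivity to the stratum-by-stratum statement that $f$ restricts to a bijection $S_{\rho_\bullet}^\circ \to D_{\rho_\bullet}^\circ$, and this in turn is a comparison of local charts.

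The main obstacle is the inductive Cartier-preservation in the second paragraph: one must verify, at each rank of the iterated blowup, that passing to the next center preserves Cartierness of preimages on $\Mmscbar_n$, and simultaneously track how the local coordinates transform across the tower. The combinatorics can be intricate for large $n$ and long chains $\rho_\bullet$. A plausible alternative that sidesteps this induction is to construct $f$ and its inverse in parallel by matching the cover $\{U_\Gamma\}$ of $\Mmscbar_n$ with the natural charts on $W_n$ coming from the iterated-blowup construction, and checking that the resulting transition functions agree; the price is a more explicit and lengthier coordinate identification.
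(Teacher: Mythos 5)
Your high-level strategy --- lifting $f_0$ through the blowup tower of \Cref{T:Hu} by applying \Cref{L:univpropblowup} inductively --- is the same one the paper uses. But the proposal glosses over exactly the two points where the paper has to work hardest, and one of your supporting citations is a genuine confusion.

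First, the inductive Cartier preservation. You assert that ``a direct ideal-theoretic analysis'' shows each preimage is a monomial divisor, ``supported on an appropriate union of the $S_\pi$,'' and that ``this Cartier property is preserved through subsequent blowups by the inductive hypothesis.'' This understates the difficulty in a way that matters. At stage $k$ the center of the next blowup is not $H_\pi$ in $\bP(V)$ but the strict transform $\widetilde{H}_\pi$ in $\Bl_{\le k}\bP(V)$, and the paper must prove the sharp statement $f_k^{-1}(\widetilde{H}_\pi) = S_\pi$ --- a \emph{single} smooth irreducible divisor, not a union (this is $(3)_k$ in \Cref{H:hypothesis} and \Cref{L:(3)lemma}). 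A union would still be Cartier, but it would ruin the stratum correspondence $f^{-1}(D_\rho)=S_\rho$ that the whole induction needs. The only way the paper achieves this is by carrying an explicit stratum-wise formula for $f_k$ on each $S_{\rho_\bullet}^k$ in terms of the period ratios $\Pi_{\rho_i|\rho_{i+1}}$ (hypothesis $(2)_k$ and \eqref{E:inductivediagram}), propagated by \Cref{L:P(k)mapdescription} through careful use of \Cref{L:strictransform}, \Cref{L:strictrestrict}, and \Cref{L:linearstricttransform}. Your proposal does not notice that some such auxiliary invariant must be carried along the induction --- without it, the ``ideal-theoretic analysis'' at stage $k+1$ has nothing to stand on, because one no longer knows $\widetilde{H}_\pi$ explicitly.

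Second, your bijectivity argument invokes $T_\rho \cong \Mmscbar_{B(\rho)}$ (\Cref{P:recursivesub}), which concerns the \emph{collision} stratification $\{T_\rho^\circ\}$, not the \emph{level-tree} stratification $\{S_{\rho_\bullet}^\circ\}$ appearing in the statement. These are different decompositions of $\Mmscbar_n$, and there is no recursion of the form $S_{\rho_\bullet} \cong \Mmscbar_{B(\rho)}$ to exploit; nor is the alleged ``parallel recursion on the $W_n$ side'' present in the paper or obvious from the iterated-blowup construction. The paper instead proves $S_{\rho_\bullet}^\circ \to D_{\rho_\bullet}^\circ$ is an isomorphism directly: injectivity of the map $S_{\rho_\bullet}^\circ \to \prod_i \bP(N_{\rho_i|\rho_{i+1}}) \times \bA(N_{\rho_\ell|\top})$ is \Cref{L:injectivemap}, and identifying the image with $D_{\rho_\bullet}^\circ$ is what $(a)_k$ and $(b)_k$ in the inductive hypothesis deliver. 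This coordinate comparison is exactly what you defer as ``a comparison of local charts,'' but it is the heart of the proof, not a verification one can postpone.
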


Since $W_n$ is constructed as a sequence of blowups, the proof consists of a fairly intricate induction. We construct a map $f_k:\Mmscbar_n \to \Bl_{\le k} \bP(V)$ inductively by applying \Cref{L:univpropblowup}. Then, we need to analyze the map $f_k$ to deduce enough properties to build $f_{k+1}$. We begin by gathering some notation for later reference. The reader is also advised to refer to \Cref{N:dimpartition} and \Cref{D:Nsets}.

\begin{notn}
\label{N:mainnotation}
Fix $0\le k \le n-2$. 
    \begin{enumerate} 
        \item $\Bl_{\le k}\bP(V)$ denotes the space $\Bl_{\cH_{\le k}} \bP(V)$ in the notation of \Cref{T:Hu}. We write $\epsilon_k: \Bl_{\le k}\bP(V)\to \Bl_{\le k-1}\bP(V)$ for the blowup morphism and $\epsilon_{\le k}$ for the composite $\epsilon_1\circ\cdots\circ \epsilon_k$.  
        \item If $\pi \in (L_n^-)_{\le k}$, $D_\pi$ is the divisor in $\Bl_{\le k}\bP(V)$ obtained by iterated strict transforms of $H_\pi$. This depends on $k$ but it should be clear from the context which $D_\pi$ we are referring to. Note also that this coincides with notation in \Cref{D:wonderfulvariety} when $k = n - 2$. 
        \item We write $\widetilde{H}_\pi$ to refer to the strict transform of $H_\pi$ in $\Bl_{\le j} \bP(V)$ for $\dim \pi > j$. When $\dim \pi \le j$, $\widetilde{H}_\pi = D_\pi$.
        \item For $\rho_\bullet = \{\rho_1<\cdots<\rho_\ell\} \in \mathfrak{Ch}(L_n^-)_{\le k}$, as subspaces of $\Bl_{\le k} \bP(V)$ put  
        \begin{enumerate}
            \item $D_{\rho_\bullet} = D_{\rho_1} \cap \cdots \cap D_{\rho_\ell}$
            \item $D_{\rho_\bullet}^k = D_{\rho_\bullet} \setminus (\bigcup_{\dim \lambda \le k,\lambda \not\in \rho_\bullet} D_\lambda)$
            \item $D_{\rho_\bullet}^\circ = D_{\rho_\bullet} \setminus (\bigcup_{\lambda \not \in \rho_\bullet} \widetilde{H}_\lambda)$
            \item $S_{\rho_\bullet} = S_{\rho_1}\cap \cdots \cap S_{\rho_k}$ 
            \item $S_{\rho_\bullet}^k = S_{\rho_\bullet} \setminus (\bigcup_{\dim \lambda \le k,\lambda \not\in \rho_\bullet} S_\lambda)$ 
            \item $S_{\rho_\bullet}^\circ = S_{\rho_\bullet} \setminus (\bigcup_{\lambda \not \in \rho_\bullet} S_\lambda)$
            \item $\partial S_{\rho_\bullet} = S_{\rho_\bullet}\setminus S_{\rho_\bullet}^\circ$ and $\partial S_{\rho_\bullet}^k = S_{\rho_\bullet}^k\setminus S_{\rho_\bullet}^\circ$
            \item When $\ell = 1$, we write $D_{\rho_1} = D_{\rho_\bullet}$, etc.
        \end{enumerate}
        \item Given $\pi <\eta<\top$ in $L_n^-$, we define $\bP(N_{\pi|\eta}) = \Proj\bC[s_{ij}:(i,j)\in N_{\pi|\eta}]$. For $\pi \in L_n^-$ with $\pi<\top$, we define $\bP(N_{\pi|\top}) = \Proj \bC[s_{ij},T:(i,j)\in N_{\pi|\top}]$. Given $\pi < \eta < \top$, since $\eta > \pi$, for all $(i,j) \in N_{\eta|\top}$ we can write 
        \[
            P_{ij} = \sum_{(\alpha,\beta)\in N_{\pi|\top}} a_{ij}^{\alpha\beta}P_{\alpha\beta} \:\text{  where }\: a_{ij}^{\alpha\beta}\in \bC.
        \]
        Define $s_{ij} = \sum_{(\alpha,\beta)} a_{ij}^{\alpha\beta}s_{\alpha\beta}$ and $Z(N_{\eta|\top}) = Z(\{s_{ij}:(i,j)\in N_{\eta|\top}\} \cup \{T\}) \subset \bP(N_{\pi|\top})$. Also, we define $\bA(N_{\pi|\top}) = \Spec \bC[s_{ij}/T: (i,j)\in N_{\pi|\top}]$. In these constructions, the coordinates corresponding to pairs $(i,j)$ are ordered lexicographically, and the coordinate corresponding to $T$ is last. 
        \item For $\rho <\pi< \top$, put $\Pi_{\rho|\pi} = \{\Pi_{ij}:(i,j)\in N_{\rho|\pi}\}$ and $\Pi_{\rho|\top} = \{\Pi_{ij}:(i,j)\in N_{\rho|\pi}\} \cup \{1\}$, where we regard $1$ as the constant function (compare with \Cref{D:Nsets}). Finally, put $\Pi_{\rho|\top}^- = \{\Pi_{ij}:(i,j)\in N_{\rho|\pi}\}.$
        \item For $1\le \alpha <\beta\le n$, and $\rho < \pi$ put
        \[ 
        P_{\rho|\pi}\cdot P_{\alpha\beta}^{-1} := 
        \begin{cases} 
        \{P_{ij}/P_{\alpha\beta}:(i,j) \in N_{\rho|\pi}\}& \pi \ne \top\\
        \{P_{ij}/P_{\alpha\beta}:(i,j) \in N_{\rho|\top}\} \cup \{t/P_{\alpha\beta}\} &\pi = \top 
        \end{cases}
        \] 
        and 
        \[
        \Pi_{\rho|\pi}\cdot \Pi_{\alpha\beta}^{-1} := 
        \begin{cases} 
        \{\Pi_{ij}/\Pi_{\alpha\beta}:(i,j)\in N_{\rho|\pi}\}& \pi \ne \top\\
        \{\Pi_{ij}/\Pi_{\alpha \beta}:(i,j)\in N_{\rho|\top}\} \cup \{\Pi_{\alpha\beta}^{-1}\}& \pi =\top.
        \end{cases}
        \]
        \item Let $X$ be a variety, $D\subset X$ a divisor, $U\subset X$ an open subset, and $F =\{f_1,\ldots, f_c\} \subset \Gamma(U,\cO_X)$ vanishing along $U\cap D$. If for any $x\in D$ one has $df_{i,x}\ne 0$ for some $i$, we write $[dF_x] := [df_{1,x}:\cdots:df_{c,x}] \in \bP^{c-1}$, where $df_{j,x}/df_{i,x} \in \bC$ refers to the ratio of the values of $df_{j,x}$ and $df_{i,x}$ regarded elements of $(\cN_{D|X})^\vee_x$.
    \end{enumerate}
\end{notn}

\begin{lem}
\label{L:openpreimage}
For all $1\le i<j\le n$, one has $f_0^{-1}D(P_{ij}) = D(\Pi_{ij})$. 
\end{lem}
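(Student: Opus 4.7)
The plan is to reduce the claim to a compatibility between $\Pi_{ij}$ and the pullback $f_0^\ast(P_{ij}/t)$, which by equation (E:pullbackperiod) agree as rational functions on $\Mmscbar_n$. Working chart-by-chart on the open cover $\{U_\Gamma\}$ of $\Mmscbar_n$ converts the question into a monomial identity in the local coordinates $(z^\Gamma_{ij}, t^\Gamma_m)$.

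First I would establish the containment $f_0^{-1}D(P_{ij}) \subset D(\Pi_{ij})$. On the affine open $D(P_{ij}) \subset \bP(V)$, $P_{ij}$ is a nowhere-vanishing section of $\cO(1)$, so the homogeneous pair $[P_{ij}:t]$ defines a genuine morphism $D(P_{ij}) \to \bP^1$ whose image avoids $[0:1]=0$. Composing with $f_0$ yields a morphism $f_0^{-1}D(P_{ij}) \to \bP^1 \setminus \{0\}$ which, by (E:pullbackperiod), agrees with $\Pi_{ij}$ on the dense open $U_\ast = \bA^n/\bG_a \subset \Mmscbar_n$. Since $\bP^1$ is separated and $\Mmscbar_n$ is reduced, the two morphisms coincide on all of $f_0^{-1}D(P_{ij})$, proving $\Pi_{ij}(\Sigma) \ne 0$ there.

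For the reverse containment $D(\Pi_{ij}) \subset f_0^{-1}D(P_{ij})$ I would fix $\Sigma \in D(\Pi_{ij})$ and work in a chart $U_\Gamma \ni \Sigma$. Following the proof of Proposition~\ref{P:morphismtopn}, I would choose $j_0$ with $h(1)\wedge h(j_0) = v_0$ so that $f_0(U_\Gamma) \subset D(P_{1j_0})$ and $f_0^\ast(P_{ij}/P_{1j_0}) = \Pi_{ij}/\Pi_{1j_0}$. By Corollary~\ref{C:goodCOVgeneric}, writing $m$ for the level of $h(i)\wedge h(j)$ in $\Gamma$, one has $\Pi_{ij} = z^\Gamma_{ij}/(t^\Gamma_{m+1}\cdots t^\Gamma_\ell)$ and $\Pi_{1j_0} = 1/(t^\Gamma_1\cdots t^\Gamma_\ell)$, so $\Pi_{ij}/\Pi_{1j_0} = z^\Gamma_{ij}\cdot t^\Gamma_1\cdots t^\Gamma_m$, a regular function on $U_\Gamma$. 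The condition $f_0(\Sigma) \in D(P_{ij})$ is then equivalent to the non-vanishing of this monomial at $\Sigma$, and I would argue that $\Pi_{ij}(\Sigma)\ne 0$ forces that non-vanishing by a case split according to whether $\Pi_{ij}(\Sigma) \in \bC^\ast$ (finite nonzero) or $\Pi_{ij}(\Sigma) = \infty$.

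The main obstacle is the reverse containment: one must carefully track the behavior of $\Pi_{ij}$ on boundary strata where $f_0$ contracts fibers and where several of $\Pi_{ij}$ and $\Pi_{1j_0}$ may simultaneously take the value $\infty$, yielding an indeterminate ratio. The bookkeeping of which $t^\Gamma_k$'s contribute to the denominator of $\Pi_{ij}$ versus to the numerator of the monomial $\Pi_{ij}/\Pi_{1j_0}$ is delicate, and this is where a proof is most likely to go astray; a clean approach is to pass to the point $\Sigma$'s own dual tree $\Gamma(\Sigma)$ (rather than an arbitrary $\Gamma$ containing $\Sigma$) and directly compare the definition $\Pi_{ij}(\Sigma) = \int_{p_i}^{p_j}\omega_v$ with the explicit expression of $P_{ij}(f_0(\Sigma))$.
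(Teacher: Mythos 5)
Your argument for the inclusion $f_0^{-1}D(P_{ij}) \subset D(\Pi_{ij})$ is correct and makes precise what the paper leaves implicit. But the reverse inclusion, which you rightly flag as the delicate part, is where things break: the monomial you write down already shows it is \emph{false} in general, so the case split you propose cannot close the gap. Take $\Gamma=\Gamma(\Sigma)$, so that $t^\Gamma_1(\Sigma)=\cdots=t^\Gamma_\ell(\Sigma)=0$, and suppose $h(i)\wedge h(j)$ lies on level $m\ge 1$ of $\Gamma$. Then your expression $f_0^\ast(P_{ij}/P_{1j_0})=z^\Gamma_{ij}\cdot t^\Gamma_1\cdots t^\Gamma_m$ vanishes at $\Sigma$ simply because $t^\Gamma_1(\Sigma)=0$, independently of the value $\Pi_{ij}(\Sigma)=z^\Gamma_{ij}(\Sigma)/(t^\Gamma_{m+1}\cdots t^\Gamma_\ell)(\Sigma)$, which can be any element of $\bP^1\setminus\{0\}$ (finite nonzero if $m=\ell$, and $\infty$ if $m<\ell$). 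Concretely, for $n=3$ let $\Sigma$ be the limit in $\Mmscbar_3$ of the configurations $(z_1,z_2,z_3)=(0,1,R)$ as $R\to\infty$: its dual tree has length one, with $p_1,p_2$ on one terminal component so that $\Pi_{12}(\Sigma)=1\ne 0$, and $p_3$ on the other; yet $f_0(\Sigma)=\lim_{R\to\infty}[1:R:1]=[0:1:0]\in Z(P_{12})$. Thus $\Sigma\in D(\Pi_{12})\setminus f_0^{-1}D(P_{12})$, and no amount of bookkeeping will fix this.

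The paper's own proof has the same hole: the endnote treats the cases $f_0(\Sigma)\in D(t)$ and $f_0(\Sigma)\in Z(t)\cap D(P_{ij})$ but is silent about $f_0(\Sigma)\in Z(t)\cap Z(P_{ij})$, which is exactly where the claimed equality fails. What is actually true, and all that the downstream arguments (e.g.\ \Cref{L:preimageofsubspace}, \Cref{P:step1}) require, is the forward inclusion you proved, together with the reverse inclusion under the extra hypothesis that $h(i)\wedge h(j)=v_0$ in $\Gamma(\Sigma)$ (equivalently $i\not\sim^{\rho_1}j$ for $\rho_1 = \min\mathfrak{c}(\Sigma)$); in that case $m=0$ and your monomial degenerates to the nowhere-vanishing $z^\Gamma_{ij}$. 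Record the forward inclusion, prove the reverse one under that extra hypothesis, and flag the statement of the lemma as written as too strong.
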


\begin{proof}
    This is immediate from \eqref{E:pullbackperiod}.\endnote{$\Sigma \in f_0^{-1}(D(P_{ij}))$ if and only if $P_{ij}(f_0(\Sigma)) \ne 0$. If $f_0(\Sigma) \in D(t)$, then $P_{ij}(f_0(\Sigma)) \ne 0$ if and only if $\Pi_{ij}(\Sigma) \ne 0$, by $f_0^*(t/P_{ij}) = \Pi_{ij}$. If $f_0(\Sigma) \in Z(t)\cap D(P_{ij})$ then $f_0(\Sigma) \in Z(t/P_{ij})$ and this is equivalent to $\Pi_{ij}^{-1}(\Sigma) = 0$.}
\end{proof}

\begin{lem}
\label{L:stratificationofBlk}
    For all $1\le k \le n-2$, $\bigcup_{\dim \pi \le k}D_\pi$ is a simple normal crossings divisor in $\Bl_{\le k}\bP(V)$. Write $\Bl_{\le k}\bP(V) = U \sqcup \bigcup_{\dim \pi \le k}D_\pi$. Then $\{U\}\cup\{D_{\rho_\bullet}^k:\rho_\bullet \in \mathfrak{Ch}(L_n^-)_{\le k}\}$ is a stratification of $\Bl_{\le k} \bP(V)$.
\end{lem}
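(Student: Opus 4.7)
The plan is to deduce both claims by applying Theorem \ref{T:Hu} directly to the subarrangement $\cH_{\le k} := \{H_\pi : \pi \in L_n^-,\, \dim \pi \le k\}$ of $\bP(V)$. First I would verify that $\cH_{\le k}$ is an arrangement of subvarieties per Definition \ref{D:arrangement}: smoothness of each $H_\pi$ and pairwise clean intersection are immediate from Lemma \ref{L:propertiesofHs}(1) and (2), and closure under intersection follows from Lemma \ref{L:propertiesofHs}(4), namely $H_\rho \cap H_\pi = H_{\rho \wedge \pi}$, together with the observation $|B(\rho \wedge \pi)| \le \min(|B(\rho)|,|B(\pi)|)$, which gives $\dim(\rho \wedge \pi) \le \min(\dim \rho, \dim \pi) \le k$; hence $H_{\rho\wedge\pi} \in \cH_{\le k}$. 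With this verified, Theorem \ref{T:Hu} applies, and its iterative construction coincides, up to the rank--dimension identification discussed after Definition \ref{D:wonderfulvariety}, with the one defining $\Bl_{\le k}\bP(V)$ in Notation \ref{N:mainnotation}(1).

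Conclusions (a) and (b) of Theorem \ref{T:Hu} then yield the first claim: $\Bl_{\le k}\bP(V)$ is smooth, and $\bigcup_{\dim \pi \le k} D_\pi$ is a normal crossings divisor. It is in fact \emph{simple} normal crossings because each $D_\pi = \widetilde{H}_\pi$ is smooth and irreducible -- the iterated strict transform of a smooth, irreducible linear subspace under blowups of smooth centers retains these properties.

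For the stratification, given $p \in \Bl_{\le k}\bP(V)$ I would form $\Lambda(p) := \{\pi \in L_n^- : \dim \pi \le k,\, p \in D_\pi\}$. By Theorem \ref{T:Hu}(c), $\Lambda(p)$ is a chain in $L_n$. If $\Lambda(p) = \varnothing$, then $p \in U$; otherwise $\Lambda(p) = \rho_\bullet$ for a unique chain, and $p \in D_{\rho_\bullet}^k$ by the definition in Notation \ref{N:mainnotation}(4). This proves covering and pairwise disjointness. To upgrade this to a stratification in the sense of the footnote to Proposition \ref{P:stratificationofMn}, I would verify $\overline{D_{\rho_\bullet}^k} = D_{\rho_\bullet}$: for any $\lambda \in L_n^-\setminus \rho_\bullet$ with $\dim \lambda \le k$, the set $D_\lambda \cap D_{\rho_\bullet}$ is either empty (if $\rho_\bullet \cup \{\lambda\}$ fails to be a chain, by Theorem \ref{T:Hu}(c)) or a proper closed subset of $D_{\rho_\bullet}$ of strictly larger codimension (by SNC), whence density holds. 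The closure relation $D_{\chi_\bullet}^k \cap \overline{D_{\rho_\bullet}^k} \ne \varnothing \Rightarrow D_{\chi_\bullet}^k \subseteq \overline{D_{\rho_\bullet}^k}$ then reduces to $\rho_\bullet \subseteq \chi_\bullet$, which forces $D_{\chi_\bullet} \subseteq D_{\rho_\bullet}$ automatically.

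The main subtlety is the identification between the iterative construction of $\Bl_{\le k}\bP(V)$ in Notation \ref{N:mainnotation} and the single-shot application of Theorem \ref{T:Hu} to $\cH_{\le k}$; both proceed by blowing up strict transforms of elements of $\cH$ in order of increasing dimension, so this is routine bookkeeping. Once that matching is in place, every subsequent assertion is a direct translation of the three conclusions of Theorem \ref{T:Hu} together with elementary combinatorics in the lattice $L_n$.
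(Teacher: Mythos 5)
Your proof is correct and follows essentially the same route as the paper's: apply \Cref{T:Hu} to the sub-arrangement $\cH_{\le k}$ to obtain the SNC divisor, then observe that any SNC decomposition induces a stratification of this form. You supply more detail than the paper does — explicitly checking that $\cH_{\le k}$ is closed under intersection via $\dim(\rho\wedge\pi)\le\min(\dim\rho,\dim\pi)$, noting why the normal crossings are \emph{simple}, and spelling out the closure relation via $\Lambda(p)$ — but these are elaborations of the same argument rather than a different approach.
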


\begin{proof}
    The simple normal crossings claim comes from applying the procedure of \Cref{T:Hu} to the poset $\cH_{\le k}$. It is a general fact that a decomposition of a variety $X = U \cup D$ where $D = \bigcup_{i=1}^m D_i$ is a simple normal crossings divisor induces a stratification of the claimed type.\endnote{Suppose given a decomposition of a variety $X = U \cup D$ where $D = \bigcup_{i=1}^m D_i$ is a simple normal crossings divisor. We claim $\{U\} \cup \{D_{i_1,\ldots, i_k}^\circ\}_{1\le i_1<\cdots<i_k\le m}$ forms a stratification of $X$, where $D_{i_1,\ldots, i_k}^\circ = D_{i_1}\cap \cdots \cap D_{i_k} \setminus \bigcup_{j \not \in \{i_1,\ldots, i_k\}} D_j$. Each $D_{i_1,\ldots, i_k}^\circ$ is locally closed as is $U$. It is clear that each $x\in X$ lies either in $U$ or some $D_{i_1,\ldots,i_k}^\circ$. Finally, if $x\in D_{i_1,\ldots, i_k}^\circ \cap D_{j_1,\ldots, j_\ell} \ne \varnothing$, where $D_{j_1,\ldots, j_\ell} = D_{j_1}\cap \cdots \cap D_{j_\ell}$ is the closure of $D_{j_1,\ldots, j_\ell}^\circ$, we have that each $D_{j_a}$ is one of the $D_{i_b}$ for all $1\le a \le \ell$. Consequently, $D_{i_1,\ldots, i_k}^\circ \subset D_{j_1,\ldots, j_\ell}$ as claimed.}
\end{proof}

\begin{lem}
\label{L:zeroratio}
    For $\pi \in L_n^-$, if $\alpha \sim^\pi \beta$ and $i\not\sim^\pi j$ then $S_\pi \subset D(\Pi_{ij})$ and $\Pi_{\alpha\beta}/\Pi_{ij}$ vanishes along $S_\pi$. 
\end{lem}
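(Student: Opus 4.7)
The plan is to verify both assertions inside an arbitrary chart $U_\Gamma$ whose associated chain of partitions $\mathfrak{c}(\Gamma) = \{\rho_1 < \cdots < \rho_\ell\}$ contains $\pi$, and then observe that such charts cover $S_\pi$: indeed, any $\Sigma \in S_\pi$ satisfies $\pi \in \mathfrak{c}(\Sigma)$, so $\Sigma \in U_{\Gamma(\Sigma)}$ and $\pi \in \mathfrak{c}(\Gamma(\Sigma))$. Writing $\pi = \rho_p$ and using \Cref{Const:treesandpartitions} to translate, let $m$ and $m'$ denote the levels in $\Gamma$ of $h(i)\wedge h(j)$ and $h(\alpha)\wedge h(\beta)$ respectively. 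Then $i \not\sim^\pi j$ becomes $m < p$ and $\alpha \sim^\pi \beta$ becomes $m' \geq p$, so $m < p \leq m' \leq \ell$; in particular $h(i)\neq h(j)$ in $\Gamma$.

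The inclusion $S_\pi \subset D(\Pi_{ij})$ is then built into the definition of $U_\Gamma$: since $h(i)\neq h(j)$ in $\Gamma$, the defining condition of $U_\Gamma$ forbids $\Pi_{ij}=0$ anywhere on $U_\Gamma$, so $U_\Gamma \subset D(\Pi_{ij})$, and the statement follows by taking the union over all admissible $\Gamma$.

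For the vanishing of $\Pi_{\alpha\beta}/\Pi_{ij}$ along $S_\pi$, apply \Cref{C:goodCOVgeneric} on the dense subset $U_\Gamma \cap U_*$, where $\Pi_{ij} = z_{ij}^\Gamma/(t_{m+1}\cdots t_\ell)$ and $\Pi_{\alpha\beta} = z_{\alpha\beta}^\Gamma/(t_{m'+1}\cdots t_\ell)$. Cancelling the common tail gives
\[
    \Pi_{\alpha\beta}/\Pi_{ij} \;=\; (z_{\alpha\beta}^\Gamma/z_{ij}^\Gamma) \cdot t_{m+1}\cdots t_{m'}.
\]
Because $(i,j)$ belongs to the index set $A$ of the proof of \Cref{P:identification}, the coordinate $z_{ij}^\Gamma$ takes values in $\bC^*$ on all of $U_\Gamma$, so the right-hand side is a regular function on $U_\Gamma$ and, by density, is the canonical regular extension of the rational map $\Pi_{\alpha\beta}/\Pi_{ij}$. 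It now suffices to show $S_\pi \cap U_\Gamma \subset V(t_p)$: for $\Sigma \in S_\pi$ the contraction $\Gamma \twoheadrightarrow \Gamma(\Sigma)$ keeps level $p$ distinct from level $p-1$ (this is precisely the condition $\rho_p = \pi \in \mathfrak{c}(\Sigma)$), and by the definition of $t_p$ recorded before \Cref{L:equivalentindices} this forces $t_p(\Sigma)=0$. Since $m < p \leq m'$, the factor $t_p$ appears in $t_{m+1}\cdots t_{m'}$, so the product — and hence the ratio — vanishes identically on $S_\pi \cap U_\Gamma$.

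The only substantive point is the dictionary between partition refinement and tree levels, together with the identification of $S_\pi \cap U_\Gamma$ as lying inside $V(t_p)$; once those are in place the coordinate identity for $\Pi_{\alpha\beta}/\Pi_{ij}$ delivers both claims of the lemma at once.
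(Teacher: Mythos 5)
Your proof is correct, but it takes a genuinely different route from the paper's. The paper argues moduli-theoretically: from $S_\pi = \{\Sigma : \pi\in\mathfrak{c}(\Sigma)\}$ one sees $\pi\le\max\mathfrak{c}(\Sigma)$, hence $i\not\sim^\pi j$ forces $p_i,p_j$ onto distinct terminal components, so $\Pi_{ij}(\Sigma)=\infty$ on all of $S_\pi$; then on $S_\pi^\circ$ (where $\mathfrak{c}(\Sigma)=\{\pi\}$) the condition $\alpha\sim^\pi\beta$ gives $\Pi_{\alpha\beta}(\Sigma)\in\bC$, so the ratio is $0$ on $S_\pi^\circ$, and one concludes by density since $S_\pi = \overline{S_\pi^\circ}$. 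You instead work entirely inside a chart $U_\Gamma$ with $\pi=\rho_p\in\mathfrak{c}(\Gamma)$, use \Cref{C:goodCOVgeneric} to write $\Pi_{\alpha\beta}/\Pi_{ij} = (z_{\alpha\beta}/z_{ij})\,t_{m+1}\cdots t_{m'}$, observe $z_{ij}\in\bC^*$ because $(i,j)\in A$, identify $S_\pi\cap U_\Gamma = Z(t_p)$, and note $m<p\le m'$. Your version has the advantage of proving the vanishing directly on all of $S_\pi\cap U_\Gamma$ rather than only on the open dense piece, and it makes explicit that the ratio is a genuine regular function in a neighborhood of $S_\pi$ — a point the paper's closure argument leaves implicit. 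The paper's version is shorter and requires no coordinate manipulation, at the cost of the density step. Both are sound; the translation you use ($i\not\sim^{\rho_p}j \iff \lambda(h(i)\wedge h(j))<p$ via \Cref{Const:treesandpartitions}, and $S_\pi\cap U_\Gamma = Z(t_p)$ since $\pi\in\mathfrak{c}(\Sigma)$ is exactly the condition that level $p$ is not contracted, i.e. $t_p(\Sigma)=0$) is applied correctly.
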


\begin{proof}
    By \Cref{L:strataprops}, $S_\pi = \{\Sigma \in \Mmscbar_n:\pi \in \mathfrak{c}(\Sigma)\}$. So, $\pi \le \max \mathfrak{c}(\Sigma)$ and $p_i$ and $p_j$ are on distinct terminal components of $\Sigma$. Therefore, $\Pi_{ij}(\Sigma)\ne 0$ on $S_\pi$ and in fact $\Pi_{ij}^{-1}(\Sigma) = 0$. As $\alpha \sim^\pi \beta$, for all $\Sigma \in S_\pi^\circ$ we have $\lvert \Pi_{\alpha\beta}(\Sigma)\rvert <\infty$. Consequently, $(\Pi_{\alpha\beta}/\Pi_{ij})(\Sigma) = 0$ for all $\Sigma \in S_\pi^\circ$; however, as $S_\pi$ is the closure of $S_\pi^\circ$, the result follows.
\end{proof}

\begin{lem}
\label{L:preimageofsubspace}
    For all $\pi \in L_n^-$, one has $f_0^{-1}(H_\pi) = \bigcup_{\rho\le \pi} S_\rho$.
\end{lem}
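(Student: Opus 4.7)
The strategy is to characterize when $f_0(\Sigma) \in H_\pi$ in terms of the dual level tree $\Gamma(\Sigma)$, and then match this condition against the description of $S_\rho$ from \Cref{L:closuredescription}. Note that the union on the right-hand side is interpreted over $\bot < \rho \le \pi$; the trivial partition $\bot$ has $H_\bot = \varnothing$ but $S_\bot = \Mmscbar_n$, so consistency forces this reading.

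First I would establish the following boundary compatibility between $f_0$ and the period morphisms $\Pi_{ij} : \Mmscbar_n \to \bP^1$: for any $\Sigma \in \Mmscbar_n \setminus \Mmscbar_n^\circ$,
\begin{equation*}
t(f_0(\Sigma)) = 0 \quad\text{and}\quad \bigl(P_{ij}(f_0(\Sigma)) = 0 \iff \Pi_{ij}(\Sigma) \in \bC \iff h(i) = h(j) \text{ in } \Gamma(\Sigma)\bigr).
\end{equation*}
The vanishing of $t$ on the boundary is already extracted in the proof of \Cref{P:morphismtopn}. The final equivalence is just the definition of $\Pi_{ij}$ on the boundary. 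For the middle equivalence, I would use $\Pi_{ij} = f_0^*(P_{ij}/t)$ together with continuity: approaching $\Sigma$ by a net $(\Sigma_\alpha) \subset \Mmscbar_n^\circ$ gives $(P_{ij}/t)(f_0(\Sigma_\alpha)) = \Pi_{ij}(\Sigma_\alpha) \to \Pi_{ij}(\Sigma)$ in $\bP^1$, and since $t(f_0(\Sigma)) = 0$, the ratio $P_{ij}/t$ has a finite limit at $f_0(\Sigma)$ precisely when $P_{ij}(f_0(\Sigma)) = 0$ (in the projective sense). Equivalently, one can verify this directly by computing homogeneous coordinates for $f_0(\Sigma)$ in the chart $U_{\Gamma(\Sigma)}$ via \Cref{C:goodCOVgeneric}, after clearing denominators by the smallest $t$-monomial.

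Let $\rho_\top(\Sigma) \in L_n^-$ denote the partition given by $i \sim j \iff h(i) = h(j)$; this is the maximal element of the chain $\mathfrak{c}(\Sigma)$ attached to $\Gamma(\Sigma)$ via \Cref{P:treeversuschain}. The displayed equivalence together with \Cref{L:comparabilitycriteria} translates $f_0(\Sigma) \in H_\pi$ into the condition ``$\Sigma$ lies in the boundary and $\rho_\top(\Sigma) \le \pi$.'' The inclusion $f_0^{-1}(H_\pi) \subseteq \bigcup_{\bot < \rho \le \pi} S_\rho$ is then immediate by taking $\rho = \rho_\top(\Sigma)$, which lies in $\mathfrak{c}(\Sigma)$ so that $\Sigma \in S_{\rho_\top(\Sigma)}$ by \Cref{L:closuredescription}. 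For the reverse inclusion, since $H_\pi$ is closed and $f_0$ is continuous, it suffices to check $S_\rho^\circ \subseteq f_0^{-1}(H_\pi)$ for each $\bot < \rho \le \pi$; but for $\Sigma \in S_\rho^\circ$ one has $\Gamma(\Sigma) = \Gamma(\rho)$ (a single level), so $\rho_\top(\Sigma) = \rho \le \pi$, and the boundary criterion applies.

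The main technical point is the middle equivalence of the display, namely passing from the rational identity $\Pi_{ij} = f_0^*(P_{ij}/t)$ to a genuine vanishing statement at the projective point $f_0(\Sigma)$ when $f_0(\Sigma)$ sits in the indeterminacy locus $Z(P_{ij}) \cap Z(t)$ of $P_{ij}/t$; this is handled either by a continuity argument on $\bP^1$ or by writing down $f_0$ explicitly in local coordinates on $U_{\Gamma(\Sigma)}$ and extracting the limiting homogeneous coordinates.
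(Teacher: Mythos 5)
The middle equivalence in your display --- $P_{ij}(f_0(\Sigma)) = 0 \iff \Pi_{ij}(\Sigma) \in \bC$ --- is false in the direction your argument needs, and this is a genuine gap. The forward direction ($\Pi_{ij}(\Sigma)$ finite $\Rightarrow P_{ij}(f_0(\Sigma)) = 0$) does follow from your continuity argument, but the converse fails precisely because $f_0(\Sigma)$ lands in the indeterminacy locus $Z(P_{ij}) \cap Z(t)$ of the rational function $P_{ij}/t$, where the limit along $f_0$ depends on the direction of approach and may well be $\infty$. Concretely, take $n=3$, $\pi = 12|3$, and $\Sigma$ with $\mathfrak{c}(\Sigma) = \{12|3 < 1|2|3\}$, a two-level tree where $p_1,p_2$ lie on distinct terminal components above a shared level-$1$ vertex. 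Then $h(1)\ne h(2)$, so $\Pi_{12}(\Sigma) = \infty$, yet by \Cref{C:goodCOVgeneric} in the chart $U_{\Gamma(\Sigma)}$ one has $\Pi_{12}/\Pi_{13} = (z_{12}/z_{13})\, t_1$, which vanishes at $\Sigma$; since $P_{13}(f_0(\Sigma)) \ne 0$, this forces $P_{12}(f_0(\Sigma)) = 0$ and $f_0(\Sigma) \in H_\pi$, contradicting your claimed equivalence.

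The corrected computation gives $P_{ij}(f_0(\Sigma)) = 0 \iff h(i)\wedge h(j)$ is not the root of $\Gamma(\Sigma) \iff i\sim^{\rho_1}j$, where $\rho_1 := \min\mathfrak{c}(\Sigma)$, so the relevant partition is $\rho_{\min}(\Sigma)=\rho_1$ rather than $\rho_\top(\Sigma)$. Your criterion should therefore read: $f_0(\Sigma)\in H_\pi$ iff $\Sigma$ lies on the boundary and $\rho_1 \le \pi$. Since $\rho_1$ is the minimum of the chain, this holds iff some element of $\mathfrak{c}(\Sigma)$ is $\le\pi$, which by \Cref{L:closuredescription} is exactly $\Sigma \in \bigcup_{\bot<\rho\le\pi}S_\rho$. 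With $\rho_\top$ replaced by $\rho_1$ both of your inclusions go through as written (and they coincide on $S_\rho^\circ$, where $\mathfrak{c}(\Sigma)=\{\rho\}$, so the reverse inclusion is unchanged). Once repaired, your route --- computing $f_0(\Sigma)$ directly in chart coordinates and reading off membership in $H_\pi$ --- is cleaner and more direct than the paper's proof, which instead shows that $S_\eta\cap f_0^{-1}(H_\pi)\ne\varnothing$ forces $\eta$ and $\pi$ to be comparable (using \Cref{L:comparabilitycriteria}, \Cref{L:zeroratio}, and \Cref{L:openpreimage}) and then separately rules out the case $\eta>\pi$.
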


\begin{proof}
    We first prove $\bigcup_{\rho \le \pi} S_\rho \subset f_0^{-1}(H_\pi)$. If $\Sigma \in S_\rho^\circ$ for $\rho \le \pi$, then if $i\sim^\pi j$ and $k\not\sim^\rho \ell$, one has $k\not\sim^\pi \ell$ and thus $(\Pi_{ij}/\Pi_{k\ell})(\Sigma) = 0$ by \Cref{L:zeroratio}. By \Cref{L:openpreimage}, $\Pi_{k\ell}(\Sigma) \ne 0$ implies $f_0(\Sigma) \in D(P_{k\ell})$. So, $S_\rho^\circ \subset Z(\Pi_{ij}/\Pi_{k\ell}:i\sim^\pi j)$ and thus maps into $H_\pi \cap D(P_{k\ell})$. By closedness of $H_\pi$, it follows that $S_\rho \subset f_0^{-1}(H_\pi)$. 
    
    Suppose $\Sigma \in S_\eta \cap f_0^{-1}(H_\pi)$ and that $\eta$ and $\pi$ are not comparable. Write $\mu = \max \mathfrak{c}(\Sigma)$. Using \Cref{L:comparabilitycriteria}, choose $(k,\ell)$ and $(\alpha,\beta)$ such that $\alpha \sim^\pi \beta$ but $\alpha \not\sim^\eta \beta$, and $k\sim^\eta \ell$ but $k\not\sim^\pi \ell$. By \Cref{L:zeroratio}, $S_\eta\subset D(\Pi_{\alpha \beta})$. As $\Sigma \in f_0^{-1}(H_\pi)$, we also have $(\Pi_{\alpha\beta}/\Pi_{k\ell})(\Sigma) = 0$ where $\Pi_{k\ell}(\Sigma)\ne 0$ by \Cref{L:openpreimage}. Since $\Pi_{\alpha\beta}(\Sigma) \ne 0$, one has $\Pi_{k\ell}^{-1}(\Sigma) = 0$, and it follows that $p_k$ and $p_\ell$ are on different components of $\Sigma$. That is: $k\sim^\eta \ell$ and $k\not\sim^\pi \ell$ implies $k\not\sim^\mu \ell$. However, this implies $\pi \le \mu$: indeed, if $i\sim^\mu j$, then $i\sim^\eta j$ by $\eta \le \mu$ and thus $i\sim^\pi j$, else by the previous sentence $i\not\sim^\mu j$. So, $\Sigma \in S_\mu \cap f_0^{-1}(H_\pi)$ with $\mu \ge \pi$ and it suffices to show $f_0^{-1}(H_\pi)\cap S_\eta = \varnothing$ for $\eta > \pi$. 

    Suppose $\eta>\pi$ and that $\Sigma \in S_\eta\cap f_0^{-1}(H_\pi)$ is given so that $\pi<\eta\le \mu.$ There exists $(k,\ell)$ such that $k\sim^\pi \ell$ but $k\not\sim^\mu \ell$. As $H_\pi$ is covered by open sets of the form $H_\pi \cap D(P_{ij})$ for $i\not\sim^\pi j$, we may suppose $\Sigma \in f_0^{-1}(H_\pi \cap D(P_{ij}))$ for some such $i\not\sim^\pi j$ and by \Cref{L:openpreimage} that $\Sigma \in D(\Pi_{ij})$. Now, $(\Pi_{k\ell}/\Pi_{ij})(\Sigma) \ne 0$ and so $\Sigma \not \in f_0^{-1}(H_\pi)$, giving a contradiction.
\end{proof}

\begin{lem}
\label{L:quasiclosedpreimage}
    For any $\rho \in L_n^-$ of dimension $k$, one has $f_0^{-1}(H_\rho^\circ) = S_\rho^k$ and in particular $S_\rho^\circ \subset f_0^{-1}(H_\rho^\circ)\subset S_\rho$. 
\end{lem}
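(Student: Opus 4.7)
The plan is to derive the equality $f_0^{-1}(H_\rho^\circ) = S_\rho^k$ essentially by a direct set-theoretic calculation, leveraging \Cref{L:preimageofsubspace} together with the intersection properties of the strata $S_\pi$ from \Cref{L:strataprops}. The key observation is that $H_\rho^\circ = H_\rho \setminus \bigcup_{\rho' < \rho} H_{\rho'}$ and that $f_0^{-1}$ distributes over the union and difference, so the main content is comparing the partition-index set of the resulting union with the one defining $S_\rho^k$.

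First, I would compute
\[
f_0^{-1}(H_\rho^\circ) \;=\; f_0^{-1}(H_\rho)\setminus \bigcup_{\rho'<\rho}f_0^{-1}(H_{\rho'}) \;=\; \bigcup_{\pi\le\rho}S_\pi \setminus \bigcup_{\pi<\rho}S_\pi,
\]
where both equalities use \Cref{L:preimageofsubspace}; for the second, note that as $\pi$ ranges over $\le \rho'$ and $\rho'$ over $<\rho$, we recover exactly the $\pi < \rho$. Since $\pi \le \rho$ but $\pi \notin \{\pi' < \rho\}$ forces $\pi = \rho$, I obtain
\[
f_0^{-1}(H_\rho^\circ) \;=\; S_\rho \setminus \bigcup_{\pi<\rho}S_\pi.
\]

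Next, to identify this with $S_\rho^k = S_\rho \setminus \bigcup_{\dim\lambda \le k,\ \lambda\ne \rho} S_\lambda$, I would show that on $S_\rho$ the families $\{S_\pi\}_{\pi < \rho}$ and $\{S_\lambda\}_{\dim\lambda \le k,\ \lambda \ne \rho}$ cut out the same subset. The inclusion $\{\pi < \rho\} \subseteq \{\lambda : \dim\lambda \le k,\lambda \ne \rho\}$ is immediate from $\dim\pi < \dim\rho = k$. Conversely, if $\Sigma \in S_\rho \cap S_\lambda$ with $\dim\lambda \le k$ and $\lambda \ne \rho$, then by \Cref{L:strataprops}(3) the partitions $\rho$ and $\lambda$ must be comparable; the options $\lambda = \rho$ and $\lambda > \rho$ are excluded (the latter because it would force $\dim\lambda > k$), leaving $\lambda < \rho$. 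This gives equality.

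Finally, for the "in particular" statement, $S_\rho^\circ = S_\rho \setminus \bigcup_{\lambda \ne \rho}S_\lambda \subset S_\rho^k$ because we are removing a smaller family of strata to form $S_\rho^k$, and the inclusion $f_0^{-1}(H_\rho^\circ)\subset S_\rho$ is immediate from the displayed description. I expect no serious obstacle here: the only subtlety worth flagging is the comparability argument via \Cref{L:strataprops}(3), which is what rules out stray $\lambda$'s incomparable to $\rho$ and makes the set-difference descriptions of $f_0^{-1}(H_\rho^\circ)$ and $S_\rho^k$ coincide.
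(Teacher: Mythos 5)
Your argument is correct and follows the same route as the paper's proof: apply \Cref{L:preimageofsubspace} to rewrite $f_0^{-1}(H_\rho^\circ)$ as a set difference of unions of strata, then identify the result with $S_\rho^k$. The paper asserts the final identification $S_\rho \setminus \bigcup_{\eta<\rho}S_\eta = S_\rho^k$ without comment; your comparability argument via \Cref{L:strataprops}(3), ruling out strata $S_\lambda$ with $\lambda$ incomparable to $\rho$ or $\lambda > \rho$, is precisely the justification that step needs, so your write-up is if anything slightly more complete than the original.
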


\begin{proof}
    Applying \Cref{L:preimageofsubspace}, we have 
    \[
    f_0^{-1}(H_\rho^\circ) = f_0^{-1}(H_\rho \setminus \bigcup_{\chi<\rho} H_\chi) 
         = f_0^{-1}(H_\rho)\setminus \bigcup_{\chi <\rho} f_0^{-1}(H_\chi) 
         = \bigcup_{\chi \le \rho} S_\rho \setminus (\bigcup_{\chi<\rho}\bigcup_{\eta\le \chi} S_\eta) = S_\rho^k.
    \]
    Now, as $S_\rho^\circ \subset S_\rho^k \subset S_\rho$, the result follows.
\end{proof}

\begin{lem}
\label{L:nonvanishingdifferential}
    Let $\pi \in L_n^-$ be given and suppose $i\not\sim^\pi j$. By \Cref{L:zeroratio}, $S_\pi \subset D(\Pi_{ij})$ so $\Pi_{ij}^{-1}$ is regular in a neighborhood of $S_\pi$. For all $\Sigma \in S_\pi^\circ$, $d\Pi_{ij}^{-1}(\Sigma) \ne 0$.
\end{lem}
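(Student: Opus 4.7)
The plan is to reduce to an explicit coordinate calculation on a single chart. Let $\Gamma$ be the $n$-marked dual level tree corresponding via \Cref{P:treeversuschain} to the one-element chain $\{\pi\} \in \mathfrak{Ch}(L_n^-)$; it has length $\ell = 1$. By \Cref{L:closuredescription} one has $S_\pi^\circ \subseteq U_\Gamma$ with $S_\pi \cap U_\Gamma = Z(t_1)$, so it suffices to compute $d\Pi_{ij}^{-1}$ at points of $U_\Gamma$ with $t_1 = 0$ and show the result is nonzero.

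The first step is to exploit the flexibility in choosing indices for $\Gamma$. Because $i \not\sim^\pi j$, the markings $h(i)$ and $h(j)$ are distinct terminal vertices of $\Gamma$, so $h(i)\wedge h(j) = v_0$. By \Cref{L:equivalentindices} I am then free to take $(i_0,j_0) = (i,j)$ as the chosen pair of indices on level $0$. With this choice the scale parameter is $s_0 = I_{ij}$, and hence $z_{ij} = I_{ij}/s_0 \equiv 1$ is the constant function on $U_\Gamma$.

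Applying \Cref{C:goodCOVgeneric} on $U_* \cap U_\Gamma$ (where both sides are defined) then yields $\Pi_{ij} = z_{ij}/t_1 = 1/t_1$, so that $\Pi_{ij}^{-1}$ extends to the regular function $t_1$ on all of $U_\Gamma$. Since $t_1$ is one of the algebraic coordinates on $U_\Gamma$ exhibited in the proof of \Cref{P:identification}, its differential $dt_1$ is nowhere vanishing on $U_\Gamma$. In particular $d\Pi_{ij}^{-1}(\Sigma) = dt_1(\Sigma) \neq 0$ for every $\Sigma \in S_\pi^\circ$.

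I do not expect any serious obstacle. The only point worth verifying with care is that \Cref{L:equivalentindices} genuinely permits the choice $(i_0,j_0) = (i,j)$, which is legitimate precisely because $h(i) \wedge h(j)$ coincides with the unique vertex $v_0$ on level $0$. Everything else is a direct invocation of the already-established coordinate formulas on $U_\Gamma$.
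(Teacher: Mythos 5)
Your proof is correct and uses essentially the same mechanism as the paper's: both reduce to the fact that $t_1$ is one of the algebraic coordinates on $U_{\Gamma(\pi)}$ from \Cref{P:identification}/\Cref{R:projectioncoords}, hence $dt_1$ is nowhere vanishing. The only difference is that you pre-normalize the choice of indices to $(i_0,j_0)=(i,j)$, which makes $z_{ij}\equiv 1$ and $\Pi_{ij}^{-1}=t_1$ on the nose; the paper instead fixes an arbitrary level-$0$ pair $(k,\ell)$, gets $\Pi_{k\ell}^{-1}=t_1$, and then transports the conclusion to a general $(i,j)$ via the Leibniz rule $d(\Pi_{ij}^{-1})_\Sigma = (\Pi_{k\ell}/\Pi_{ij})(\Sigma)\cdot d(\Pi_{k\ell}^{-1})_\Sigma$ together with $(\Pi_{k\ell}/\Pi_{ij})(\Sigma)\in\bC^*$. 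Your version is marginally cleaner since it sidesteps the Leibniz step; the paper's version has the minor advantage of not re-choosing indices, which matters if one wants a statement uniform in $(i,j)$ for a single fixed chart, but for the lemma as stated the two are interchangeable. Your check that the choice $(i_0,j_0)=(i,j)$ is admissible (because $i\not\sim^\pi j$ forces $h(i)\wedge h(j)=v_0$) is exactly the right point to verify, and it is correct.
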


\begin{proof}
    On $U_\pi$, there is a single coordinate $t = t_1$ and a pair of indices $(k,\ell) = (i_1,j_1)$ so that $\Pi_{k\ell} = 1/t_1$. Also, $S_\pi^\circ \subset U_\pi$. By a coordinate calculation, $(\Pi_{ij}/\Pi_{k\ell})(\Sigma) \in \bC^*$ for all $\Sigma \in S_\pi^\circ$ and $i\not\sim^\pi j$.\endnote{$h(i) \ne h(j)$ so $\Pi_{ij} = z_{ij}/t$, where $z_{ij}$ is nonvanishing on $U_\Gamma$. Consequently, $\Pi_{ij}/\Pi_{k\ell} = z_{ij}$.} By \Cref{R:projectioncoords}, we may choose indices $(k,\ell)$ such that $\{z_{ij},t\}$ identifies $U_\pi$ with the complement of a hyperplane arrangement in $\bA^{n-1}$. Therefore, $dt = d(\Pi_{k\ell}^{-1})$ does not vanish at any $\Sigma \in S_\pi^\circ$ and for any $i<j$ as in the statement, the Leibniz rule gives $d(\Pi_{ij}^{-1})_\Sigma = (\Pi_{k\ell}/\Pi_{ij})(\Sigma)\cdot d(\Pi_{k\ell}^{-1})_\Sigma$ and the result follows by $(\Pi_{k\ell}/\Pi_{ij})(\Sigma) \ne 0$.
\end{proof}

By default, $\rho_\bullet$ denotes a chain $\{\rho_1<\cdots<\rho_\ell\}\subset \mathfrak{Ch}(L_{n}^-)$ of length $\ell$. By convention we put $\rho_0 = \bot$ and $\rho_{\ell+1} = \top$.

\begin{lem}
\label{L:injectivemap}
    The map $S_{\rho_\bullet}^\circ\to \prod_{k=0}^{\ell-1} \bP(N_{\rho_k|\rho_{k+1}}) \times \bA(N_{\rho_\ell|\top})$ given by 
    \[
        \Sigma \mapsto ([\Pi_{\bot|\rho_1}(\Sigma)],\ldots, [\Pi_{\rho_{\ell-1}|\rho_{\ell}}(\Sigma)], \Pi_{\rho_\ell|\top}^-(\Sigma))
    \]
    is well-defined and injective.
\end{lem}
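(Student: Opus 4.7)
The plan is to work inside the chart $U_{\Gamma(\rho_\bullet)}$, using that $S_{\rho_\bullet}^\circ = U_{\Gamma(\rho_\bullet)} \cap \{t_1 = \cdots = t_\ell = 0\}$, and to translate both claims into assertions about the regular coordinates $\{z_{ij}\}$. For well-definedness, the affine factor is immediate: $(i,j) \in N_{\rho_\ell|\top}$ satisfies $i \sim^{\rho_\ell} j$, so $p_i$ and $p_j$ lie on a common terminal component and $\Pi_{ij}(\Sigma) \in \bC$. The projective factors are subtler: for $(i,j) \in N_{\rho_{k-1}|\rho_k}$ with $k \le \ell$ one has $\lambda(h(i)\wedge h(j)) = k-1 < \ell$, hence $h(i) \ne h(j)$ and $\Pi_{ij}(\Sigma) = \infty$ for every $\Sigma \in S_{\rho_\bullet}^\circ$. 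The apparent ambiguity of $[\infty:\cdots:\infty]$ is resolved by \Cref{C:goodCOVgeneric}: on $U_{\Gamma(\rho_\bullet)} \cap U_*$, $\Pi_{ij} = z_{ij}/(t_k \cdots t_\ell)$ with the denominator independent of $(i,j) \in N_{\rho_{k-1}|\rho_k}$, so the projective tuple agrees with $[z_{ij}:\ldots]$, and the latter extends to a regular morphism to $\bP(N_{\rho_{k-1}|\rho_k})$ on all of $U_{\Gamma(\rho_\bullet)}$ since $z_{ij}$ is nonvanishing whenever $h(i) \ne h(j)$.

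For injectivity, I would choose indices $(i_m, j_m) \in N_{\rho_m|\rho_{m+1}}$ for each $0 \le m \le \ell - 1$; this is valid since each such pair automatically has $\lambda(h(i_m)\wedge h(j_m)) = m$, and by construction $z_{i_m j_m} \equiv 1$. If $\Sigma, \Sigma' \in S_{\rho_\bullet}^\circ$ have the same image, then equality of the $k$-th projective factor, read as a ratio against the reference coordinate at $(i_{k-1}, j_{k-1})$, forces $z_{ij}(\Sigma) = z_{ij}(\Sigma')$ for all $(i,j) \in N_{\rho_{k-1}|\rho_k}$ and all $1 \le k \le \ell$. Equality of the affine factor similarly gives $\Pi_{ij}(\Sigma) = z_{ij}(\Sigma) = z_{ij}(\Sigma') = \Pi_{ij}(\Sigma')$ for $(i,j) \in N_{\rho_\ell|\top}$, using that $s_\ell = 1$.

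To promote this to all pairs, I would appeal to additivity of line integrals: for any $(i,j)$ with $h(i)\wedge h(j) = v$ on level $m$, with $i,j$ in distinct blocks $b, b'$ of $\rho_{m+1}$ (or $\top$ if $m = \ell$) contained in a common block $B \in B(\rho_m)$, one has $I_{ij} = I_{\mu(B)\mu(b')} - I_{\mu(B)\mu(b)}$ on the smooth locus of $\Sigma_v$; dividing by $s_m$ expresses $z_{ij}$ as a $\bZ$-linear combination of $z$-values for pairs in $N_{\rho_m|\rho_{m+1}}$ (or $N_{\rho_\ell|\top}$), each of which already agrees between $\Sigma$ and $\Sigma'$. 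Thus every $z_{ij}$ coincides for the two curves, and \Cref{C:determinedbyz} yields $\Sigma = \Sigma'$. The principal subtlety, and the step requiring the most care, is the reinterpretation of the projective tuples $[\Pi_{ij}]$ as $[z_{ij}:\ldots]$ by clearing the level-dependent common factor $t_k\cdots t_\ell$; once that is absorbed, what remains is a bookkeeping check that the selected $N$-pairs together with their differences recover every $z_{ij}$.
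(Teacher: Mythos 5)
Your proof is correct and follows essentially the same strategy as the paper's: interpret the projective tuples via the chart coordinates $z_{ij}$ (equivalently, via the ratios $\Pi_{ij}/\Pi_{\alpha\beta} = I_{ij}/I_{\alpha\beta}$), use the normalization at the reference indices, and then deduce that $\Sigma$ is determined. The only cosmetic difference is the final appeal: the paper recovers the levelwise projective classes of integrals and invokes \Cref{L:determinedbyintegrals}(2) directly, whereas you recover every $z_{ij}$ via additivity and invoke \Cref{C:determinedbyz} (which itself is proved from \Cref{L:determinedbyintegrals}), so the two routes are mathematically equivalent.
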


\begin{proof}
    Under the correspondence of \Cref{Const:treesandpartitions} and \Cref{P:treeversuschain}, vertices on level $0\le k\le \ell-1$ of $\Gamma = \Gamma(\rho_\bullet)$ correspond to blocks $B\in B(\rho_k)$. Identify $B$ with the vertex it corresponds to and let $N_B$ denote the set of nodes connecting $\Sigma_B$ to higher level components of $\Sigma$. $N_B$ is naturally labeled as $\{n_b: b\in B(\rho_{k+1})\text{ and } b\subset B\}$. For $\Sigma \in U_\Gamma$, let $n_B$ be the node connecting $\Sigma_B$ to the marked point of minimal index above it. $\bigcup_{B\in B(\rho_k)} \{(n_B,n_b):n_b\in N_B\setminus n_B\}\to N_{\rho_k|\rho_{k+1}}$ given by $(n_B,n_b)\mapsto (\mu(B),\mu(b))$ is a bijection such that $\int_{n_B}^{n_b}\omega_B = I_{\mu(B)\mu(b)}$. In particular, for any $(i,j),(\alpha,\beta) \in N_{\rho_k|\rho_{k+1}}$ we have $\Pi_{ij}(\Sigma)/\Pi_{\alpha\beta}(\Sigma) = z_{ij}/z_{\alpha\beta} = I_{ij}/I_{\alpha\beta}$ is defined and nonzero.\endnote{$h(i)\wedge h(j)$ and $h(\alpha)\wedge h(\beta)$ are on the same nonterminal level $k$ so that $\Pi_{ij}/\Pi_{\alpha\beta} = z_{ij}/z_{\alpha\beta}$ is defined and nonzero on $U_\Gamma$ by \Cref{C:goodCOVgeneric}.} Conversely, in the notation of \Cref{L:determinedbyintegrals} for each $v\in \lambda^{-1}(k)$ and $n\in N_v\setminus \{n_v\}$, we can find $(i,j)\in N_{\rho_k|\rho_{k+1}}$ such that $\frac{1}{s_k}\cdot \int_{n_v}^n\omega_v = z_{ij}$. So, for each $v\in \lambda^{-1}(k)$ we have determined $[\int_{n_v}^n \omega_v: n\in N_v\setminus n_v]$.

    In the case of $k = \ell$, all $(i,j) \in N_{\rho_\ell|\top}$ have $h(i) = h(j)$, or equivalently $i,j$ lie in the same $b\in B(\rho_\ell)$, and $\Pi_{ij}(\Sigma)\in \bC$. Consequently, we recover $\{\int_{p_{\mu(b)}}^j\omega_b:j\in b\setminus \mu(b)\}_{b\in B(\rho_\ell)}.$ By \Cref{L:determinedbyintegrals} these data determine $\Sigma \in \Mmscbar_n$ uniquely and injectivity follows.
\end{proof}

\begin{prop}
\label{P:step1}
    There exists a morphism $f_1:\Mmscbar_n\to \Bl_{\le 1}\bP(V)$ such that 
    \begin{enumerate} 
        \item $\epsilon_1 \circ f_1 = f_0$
        \item for all $\rho$ of dimension $0$, one has $f_1^{-1}(D_\rho) = S_\rho$ and there is a commutative diagram 
        \[
        \begin{tikzcd}
            S_\rho \arrow[r,"f_\rho"]\arrow[dr,dashed]&D_\rho\arrow[d]\\
            &\bP(N_{\rho|\top})
        \end{tikzcd}
        \]
        where $f_\rho$ is the restriction of $f_1$ to $S_\rho$ and dashed arrow is $\Sigma \mapsto \Pi_{\rho|\top}^\dagger(\Sigma)$ where 
        \[
            \Pi_{\rho|\top}^\dagger(\Sigma) = 
            \begin{cases}
                [\Pi_{\rho|\top}^-(\Sigma):0]& \Sigma \in \partial S_\rho\\
                [\Pi_{\rho|\top}^-(\Sigma):1]& \Sigma \in S_\rho^\circ.
            \end{cases}
        \]
        Furthermore, 
        \begin{enumerate}
            \item[(a)] $f_\rho$ restricts to an isomorphism $S_\rho^\circ \to D_\rho^\circ$
            \item[(b)] $D_\rho$ is mapped isomorphically onto $\bP(N_{\rho|\top})$ 
            \item[(c)] for all $\pi > \rho$, $\widetilde{H}_\pi \cap D_\rho$ corresponds under this isomorphism to $Z(N_{\pi|\top}) \subset \bP(N_{\rho|\top})$
        \end{enumerate}
        \item for all $\pi$ with $\dim \pi = 1$, one has $f_1^{-1}(\widetilde{H}_\pi) = S_\pi$. 
    \end{enumerate}
\end{prop}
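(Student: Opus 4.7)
The plan is to apply the universal property of blowup to construct $f_1$, then analyze its restriction to each exceptional divisor and each strict transform separately. Since $\Bl_{\le 1}\bP(V)$ is the blowup of $\bP(V)$ along the disjoint union of the dim-$0$ centers $\{H_\rho\}$, it suffices to show that $f_0^{-1}(\mathcal{I}_{H_\rho})\cdot \mathcal{O}_{\Mmscbar_n}$ is invertible for each such $\rho$. By \Cref{L:preimageofsubspace} (and the observation that a dim-$0$ partition has no proper coarsening in $L_n^-$, since $\bot \notin L_n^-$), the set-theoretic preimage is $S_\rho$, a smooth Cartier divisor by \Cref{L:strataprops}. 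To upgrade to the scheme-theoretic statement, I will work in a chart $U_\Gamma$ with $\rho \in \mathfrak{c}(\Gamma)$; necessarily $\rho=\rho_1$, since a dim-$0$ partition has the fewest blocks allowed in $L_n^-$. Choosing an affine chart $D(P_{k\ell}) \subset \bP(V)$ with $h(k)\wedge h(\ell)=v_0$, \Cref{C:goodCOVgeneric} gives $\Pi_{ij}/\Pi_{k\ell} = z_{ij}\, t_1 / z_{k\ell}$ for $i \sim^\rho j$ and $1/\Pi_{k\ell} = t_1 \cdots t_\ell / z_{k\ell}$; each generator of the pulled-back ideal is then $t_1$ times a unit, using that $z$-ratios between nodes on the same branch are constrained (the rigidity exploited in the proof of \Cref{L:injectivemap}). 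Hence the pullback ideal equals $(t_1)=\mathcal{I}_{S_\rho}$, which is principal, and on charts $U_\Gamma$ with $\rho \notin \mathfrak{c}(\Gamma)$ the preimage is empty. This produces $f_1$ with $\epsilon_1\circ f_1 = f_0$, giving~(1) and the identity $f_1^{-1}(D_\rho)=S_\rho$ in~(2).

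For the exceptional divisor structure in~(2), I use the canonical identification $D_\rho = \bP(\mathfrak{m}_{H_\rho}/\mathfrak{m}_{H_\rho}^2)$. By \Cref{L:cuttingout}, $\{P_{ij}:(i,j) \in N_{\rho|\top}\}\cup\{t\}$ is a minimal generating set for $\mathfrak{m}_{H_\rho}\subset \mathcal{O}_{\bP(V),H_\rho}$, hence a basis of the conormal space, yielding the isomorphism $D_\rho\cong \bP(N_{\rho|\top})$ asserted in~(2)(b). For~(2)(c), $\widetilde{H}_\pi\cap D_\rho$ (for $\pi>\rho$) is the projectivization of $T_{H_\rho}H_\pi\subset T_{H_\rho}\bP(V)$, cut out by the images in the conormal space of a minimal generating set for the ideal of $H_\pi$ at $H_\rho$; again by \Cref{L:cuttingout} these are $\{P_{ij}:(i,j)\in N_{\pi|\top}\}\cup\{t\}$, matching $Z(N_{\pi|\top})$. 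For the diagram in~(2)(a), the affine coordinates $s_{ij}/T$ on the $T\ne 0$ chart of $\bP(N_{\rho|\top})$ pull back along $f_1$ to $\Pi_{ij}$ for $(i,j) \in N_{\rho|\top}$, while $T$ pulls back (up to a unit) to $t_1$; hence over $S_\rho^\circ$ the image is $[\Pi_{\rho|\top}^-:1]$, and over $\partial S_\rho$ a limit computation in $U_\Gamma$ for longer chains gives $[\Pi_{\rho|\top}^-:0]$. Injectivity on $S_\rho^\circ$ follows from \Cref{L:injectivemap}, and since $D_\rho^\circ$ equals the affine chart $\{T\ne 0\} \cong \bA^{n-2}$ (all other $\widetilde{H}_\lambda \cap D_\rho$ lying in $\{T=0\}$ by~(2)(c)), a dimension count promotes this to an isomorphism $S_\rho^\circ\to D_\rho^\circ$, giving~(2)(a).

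For~(3), the argument is set-theoretic and proceeds stratum by stratum. For the inclusion $S_\pi \subset f_1^{-1}(\widetilde{H}_\pi)$: by \Cref{L:quasiclosedpreimage}, $f_0(S_\pi^\circ) \subset H_\pi^\circ$, which avoids all dim-$0$ centers, so $f_1$ restricts over $S_\pi^\circ$ to the lift of $f_0$ through the isomorphism $\epsilon_1:\Bl_{\le 1}\bP(V)\setminus \bigcup D_\chi \to \bP(V)\setminus \bigcup H_\chi$, sending $S_\pi^\circ$ into $\widetilde{H}_\pi \setminus \bigcup D_\chi$; closure yields the inclusion. For the reverse: if $f_1(x) \in \widetilde{H}_\pi \setminus \bigcup D_\chi$, then $f_0(x) \in H_\pi$ outside dim-$0$ centers, and \Cref{L:preimageofsubspace} combined with $f_1^{-1}(D_\chi)=S_\chi$ forces $x \in S_\pi$. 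If instead $f_1(x) \in D_\chi$ for some dim-$0$ $\chi$, then $x \in S_\chi$ and by~(2)(c) $f_1(x) \in Z(N_{\pi|\top}) \subset \bP(N_{\chi|\top})$, which in particular requires $T(f_1(x))=0$, hence $x \in \partial S_\chi$; unwinding the explicit formula for $f_\chi|_{\partial S_\chi}$ from~(2)(a) and applying \Cref{L:closuredescription} shows $s_{ij}(f_1(x))=0$ for all $(i,j) \in N_{\pi|\top}$ precisely when $\pi \in \mathfrak{c}(x)$, i.e., $x \in S_\pi$. The main obstacle I anticipate is the ideal-sheaf computation in the first paragraph: correctly exploiting the $z$-rigidity to ensure the pullback ideal is exactly $(t_1)$, rather than a larger monomial ideal like $(t_1\cdots t_\ell)$, requires care with the tree structure and must be verified in every chart $U_\Gamma$ meeting $S_\rho$.
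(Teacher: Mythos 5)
Your overall strategy is the same as the paper's (universal property of the blowup to build $f_1$, explicit description of $D_\rho$ via the conormal space and of the restricted map, stratum analysis for strict transforms), but there is a concrete error in the ideal-sheaf computation in your first paragraph. On a chart $U_\Gamma$ with $\mathfrak{c}(\Gamma) = \{\rho = \rho_1 < \rho_2 < \cdots < \rho_\ell\}$ of length $\ell \ge 2$, it is \emph{not} true that each generator of the pulled-back ideal is $t_1$ times a unit: by \Cref{C:goodCOVgeneric}, for $(\alpha,\beta) \in N_{\rho|\top}$ with $h(\alpha)\wedge h(\beta)$ on level $m$ of $\Gamma$, one has $\Pi_{\alpha\beta}/\Pi_{k\ell} = (z_{\alpha\beta}/z_{k\ell})\, t_1\cdots t_m$, where $m$ ranges over $\{1,\ldots,\ell\}$, and $1/\Pi_{k\ell} = (t_1\cdots t_\ell)/z_{k\ell}$. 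So most generators are divisible by longer monomials. What you actually need, and what does hold, is that every generator lies in $(t_1)$ and \emph{at least one} equals $t_1$ times a unit: since $\rho_2$ strictly refines $\rho = \rho_1$, some block $B\in B(\rho)$ splits under $\rho_2$, so there exists $(\mu(B),j)\in N_{\rho|\top}$ with $h(\mu(B))\wedge h(j)$ on level exactly $1$, whose generator is a unit times $t_1$. You correctly flag this as "the main obstacle," but the argument you wrote asserts the wrong thing, so as stated it would fail on any chart with a chain of length at least $2$ --- which is exactly where the computation is needed, since $\partial S_\rho$ is not contained in $U_{\Gamma(\rho)}$.

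Two further steps need to be filled in. The claim that "a dimension count promotes this to an isomorphism $S_\rho^\circ \to D_\rho^\circ$" only yields an open immersion (an injective dominant morphism of smooth irreducible varieties of equal dimension over $\bC$); surjectivity onto $D_\rho^\circ \cong \bA^{n-2}$ requires exhibiting a multiscaled line realizing an arbitrary tuple of periods $\Pi^-_{\rho|\top}$, which is the converse construction implicit in \Cref{L:injectivemap} and \Cref{L:determinedbyintegrals}. And in your part (3), the sentence claiming $s_{ij}(f_1(x)) = 0$ for all $(i,j)\in N_{\pi|\top}$ "precisely when $\pi \in \mathfrak{c}(x)$" is asserting, not proving, the substantive combinatorial step: one must show that the vanishing conditions force the second entry $\eta$ of $\mathfrak{c}(x)$ to satisfy $\eta \le \pi$ (this is the tree argument that for each $\alpha\sim^\pi\beta$ the meet $p_\alpha\wedge p_\beta$ lies strictly above the level of $\rho$), and then that $\dim\pi = 1$ forces $\eta = \pi$. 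Once these gaps are repaired your proof coincides with the paper's.
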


\begin{proof}
    (1) Consider $\epsilon_1:\Bl_{\le 1} \bP(V)\to \bP(V)$ and $\rho \in L_n^-$ of dimension $0$. $H_\rho \subset \bP(V)$ is a point by \Cref{L:propertiesofHs} and by \Cref{L:preimageofsubspace}, $f_0^{-1}(H_\rho) = S_\rho$. Each $S_\rho$ is a smooth divisor in $\Mmscbar_n$, so by \Cref{L:univpropblowup} there is an induced morphism $f_1:\Mmscbar_n \to \Bl_{\le 1} \bP(V)$ such that $\epsilon_1\circ f_1 = f_0$. 
    
    (2) By definition, $D_\rho = \epsilon_1^{-1}(H_\rho)$. Since $P(N_{\rho|\top})$ is a minimal set of linear equations defining $H_\rho$ by \Cref{L:cuttingout}, one has $D_\rho = \bP(N_{\rho|\top})$ by \Cref{L:linearstricttransform}. Restricting $f_1$ gives $f_\rho:S_\rho \to D_\rho = \bP(N_{\rho|\top})$, whence (b). Given $i\not\sim^\rho j$, a minimal set of equations for $H_\rho$ is $P_{\rho|\top}\cdot P_{ij}^{-1} = 0$ on $D(P_{ij})$. This pulls back along $f_0$ to $\Pi_{\rho|\top}\cdot \Pi_{ij}^{-1} = 0$ in $D(\Pi_{ij})\supset S_\rho^\circ$. 
    
    So, by \Cref{L:blowupmapexplicit}, $f_\rho(\Sigma) = [d(\Pi_{\rho|\top}\cdot \Pi_{ij}^{-1})_\Sigma]$. Since $d\Pi_{ij}^{-1}$ does not vanish on $S_{\rho}^\circ$ by \Cref{L:nonvanishingdifferential}, \Cref{L:blowupmapexplicit} implies $f_1|_{S_\rho^\circ}(\Sigma) = [\Pi_{\pi|\top}^-(\Sigma):1]$. By \Cref{L:injectivemap}, this is an isomorphism and (a) follows. For $\Sigma \in \partial S_\pi$, $\mathfrak{c}(\Sigma) = \{\pi<\eta<\cdots\}$ and so on $U_{\Gamma(\Sigma)}$ one sees $\Pi_{ij}^{-1} = t_2\cdots t_\ell/z_{ij}$ and by \Cref{C:goodCOVgeneric} it follows that $(d\Pi_{ij}^{-1})_\Sigma = 0$. As a consequence, $\partial S_\pi\to Z(T) \subset \bP(N_{\pi|\top})$ via the formula $\Sigma \mapsto [\Pi_{\pi|\top}^-(\Sigma):0]$ by \Cref{L:blowupmapexplicit}. Next, for $\pi > \rho$, by \Cref{L:linearstricttransform} one has $\widetilde{H}_\pi \cap D_\rho = Z(N_{\pi|\top})\subset \bP(N_{\rho|\top})$, yielding (c). 
    
    (3) Suppose $\dim \pi = 1$. By \Cref{L:stratificationofH}, $H_\pi = H_\pi^\circ \sqcup\bigsqcup_{\rho < \pi} H_\rho^\circ$ and so $\widetilde{H}_\pi = \epsilon_1^{-1}(H^\circ_\pi)\cup \bigcup_{\rho<\pi} \widetilde{H}_\pi \cap D_\rho$. By \Cref{L:quasiclosedpreimage}, $S_\pi^\circ \subset f_0^{-1}(H_\pi^\circ) = f_1^{-1}(\epsilon_1^{-1}(H_\pi^\circ)) \subset S_\pi$. The explicit description of $f_\rho$ implies\endnote{$\widetilde{H}_\pi$ is given by $Z(N_{\pi|\top}) \subset \bP(N_{\pi|\top})$. In particular, this is contained in $Z(T)$ and so $f_1^{-1}(\widetilde{H}_\pi \cap D_\rho) = f_\rho^{-1}(\widetilde{H}_\pi \cap D_\rho)$ is contained in $\partial S_\rho$. Pulling back the other defining conditions of $Z(N_{\pi|\top})$ gives vanishing of $\Pi_{\alpha\beta}/\Pi_{ij}$, noting that $\Pi_{ij}$ does not vanish on $\widetilde{H}_\pi \cap D_\rho$.}
    \[
        f_1^{-1}(\widetilde{H}_\pi \cap D_\rho) = 
        \left\{\Sigma \in \partial S_\rho \:\bigg|\:\frac{\Pi_{\alpha\beta}}{\Pi_{ij}}(\Sigma) = 0\text{ for all } (\alpha,\beta) \in N_{\pi|\top}, (i,j) \in N_{\rho|\pi}\right\}.
    \]
    Note that $N_{\rho|\pi} = \{(i,j)\}$ is a singleton, since $\dim \pi - \dim \rho =1$. We prove that $f_1^{-1}(\widetilde{H}_\pi \cap D_\rho)\subset S_\pi$. For this, consider $\Sigma \in f_1^{-1}(\widetilde{H}_\pi \cap D_\rho)$. As $\Sigma \in \partial S_\rho$ and $\dim \rho = 0$, $\mathfrak{c}(\Sigma) = \{\rho<\eta<\cdots\}$. By hypothesis, $i\sim^\rho j$, so $p_i\wedge p_j$ in $\Gamma(\Sigma)$ is on the level of $\rho$ or above. Also, as $\rho<\pi$, if $\alpha \sim^\pi \beta$ then $p_\alpha \wedge p_\beta$ is at or above the level of $\rho$. Since $(\Pi_{\alpha \beta}/\Pi_{ij})(\Sigma) = 0$, $p_\alpha \wedge p_\beta$ is at a strictly higher level of the tree than $\rho$.\endnote{By \Cref{C:goodCOVgeneric}, $\Pi_{\alpha\beta} = z_{\alpha\beta}/t_{p+1}\cdots t_\ell$ for $h(\alpha) \wedge h(\beta)$ on level $p$ and $\Pi_{ij} = z_{ij}/t_{p'+1}\cdots t_\ell$ for $h(i)\wedge h(j)$ on level $p'$. Then $\Pi_{\alpha\beta}/\Pi_{ij} = t_{p'+1}\cdots t_{p+1}\cdot (z_{\alpha\beta}/z_{ij}) = 0$, which implies that $p'<p$.} In particular, for all $\alpha \sim^\pi \beta$ we see $p_\alpha \wedge p_\beta$ is above $p_i\wedge p_j$. Consequently, $\alpha \sim^\eta \beta$ and thus $\eta \le \pi$. However, as $\dim \pi = 1$, it follows that $\eta = \pi$. Therefore, $\Sigma \in S_\pi$.
    
    Finally, since $f_1^{-1}(\widetilde{H}_\pi \cap D_\rho) \subset S_\pi$, $f_1^{-1}(\widetilde{H}_\pi)$ is a closed codimension zero subspace of $S_\pi$, whence $f_1^{-1}(\widetilde{H}_\pi) = S_\pi$ by irreducibility of $S_\pi$.\endnote{For instance, this holds because $S_\pi^\circ$ is an irreducible open dense subspace given by $Z(t)$ in the coordinates on $U_\Gamma$.} This gives (3).
\end{proof}

\begin{hyp}
\label{H:hypothesis}
For $1\le k \le n-2$, consider $P(k):$ 
\begin{enumerate}
        \item[$(1)_k$] There exist morphisms $f_j:\Mmscbar_n \to \Bl_{\le j} \bP(V)$ for all $0\le j\le k$ such that $\epsilon_j \circ f_j = f_{j-1}$.
        \item[$(2)_k$] For all $\rho \in (L_n^-)_{\le k}$, $f_k^{-1}(D_{\rho}) = S_{\rho}$ and as a consequence for all $\rho_\bullet \in \mathfrak{Ch}(L_n^-)_{\le k}$, $f_k^{-1}(D_{\rho_\bullet}) = S_{\rho_\bullet}$ and $f_k^{-1}(D_{\rho_\bullet}^k) = S_{\rho_\bullet}^k$. Write $f_{\rho_\bullet}^k$ for the restriction of $f_k$ to $S_{\rho_\bullet}^k$. There is a commutative diagram:
        \begin{equation}
        \label{E:inductivediagram}
            \begin{tikzcd}
                S_{\rho_\bullet}^k\arrow[r,"f_{\rho_\bullet}^k"]\arrow[dr,dashed]& D_{\rho_\bullet}^k \arrow[d,"\text{open}",hook]\\
                &\prod_{i=0}^\ell \bP(N_{\rho_i|\rho_{i+1}})
            \end{tikzcd}
        \end{equation}
        such that the dashed arrow is $\Sigma \mapsto ([\Pi_{\rho_i|\rho_{i+1}}(\Sigma)]_{i=0}^{\ell-1},\Pi_{\rho_\ell|\top}^\dagger(\Sigma))$ where 
        \[
        \Pi_{\rho_\ell|\top}^\dagger(\Sigma) = 
        \begin{cases}
            [\Pi_{\rho_\ell|\top}^-(\Sigma):0]&\Sigma \in \partial S_{\rho_\bullet}^k\\
            [\Pi_{\rho_\ell|\top}^-(\Sigma):1]& \Sigma \in S_{\rho_\bullet}^\circ.
        \end{cases}.
        \] Furthermore, 
        \begin{enumerate}
            \item[$(a)_k$] $f_{\rho_\bullet}^k$ restricts to an isomorphism $S_{\rho_\bullet}^\circ \to D_{\rho_\bullet}^\circ$; 
            \item[$(b)_k$] $D_{\rho_\bullet}^k$ is mapped isomorphically onto $\prod_{i=0}^\ell U_i$ where $U_i$ is the complement of a linear subspace arrangement in $\bP(N_{\rho_i|\rho_{i+1}})$ for each $i$; and 
            \item[$(c)_k$] for any $\pi>\max(\rho_\bullet)$, $\widetilde{H}_\pi\cap D_{\rho_\bullet}^k$ corresponds to $(\prod_{i=0}^{\ell-1}U_i) \times Z(N_{\pi|\top} \cap U_{\ell}) \subset \prod_{i=0}^\ell \bP(N_{\rho_i|\rho_{i+1}})$.
        \end{enumerate}
        \item[$(3)_k$] For all $\pi$ of dimension $k+1$, one has $f_{k}^{-1}(\widetilde{H}_\pi) = S_\pi$. 
    \end{enumerate}
\end{hyp}

By \Cref{P:step1}, $P(1)$ holds. We will show $P(k) \Rightarrow P(k+1)$. 

\begin{lem}
\label{L:(1)lemma}
    If $P(k)$ holds then $(1)_{k+1}$ holds, in particular there is a map $f_{k+1}:\Mmscbar_n \to \Bl_{\le k+1}\bP(V)$ such that $\epsilon_{k+1}\circ f_{k+1} = f_k$. 
\end{lem}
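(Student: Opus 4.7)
The plan is to apply the universal property of the blowup (\Cref{L:univpropblowup}) to $f_k$ and the blowup morphism $\epsilon_{k+1}$. The center of $\epsilon_{k+1}$ is
\[
    Z_{k+1} \;:=\; \bigcup_{\dim \pi = k+1} \widetilde{H}_\pi \;\subset\; \Bl_{\le k}\bP(V),
\]
so the content of $(1)_{k+1}$ is precisely that $f_k^{-1}(Z_{k+1})$ is an effective Cartier divisor on $\Mmscbar_n$.

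First I would record that the components of $Z_{k+1}$ are pairwise disjoint in $\Bl_{\le k}\bP(V)$. Indeed, if $\pi \ne \pi'$ both have dimension $k+1$, then $|B(\pi)| = |B(\pi')| = k+3$, so $\pi$ and $\pi'$ are incomparable in $L_n$ and $\pi \wedge \pi' < \pi, \pi'$ strictly; hence $\dim(\pi \wedge \pi') \le k$. By \Cref{L:propertiesofHs}(4), $H_\pi \cap H_{\pi'} = H_{\pi \wedge \pi'}$, which has already been blown up by stage $k$ in the Hu construction of \Cref{T:Hu}, forcing $\widetilde{H}_\pi \cap \widetilde{H}_{\pi'} = \varnothing$ in $\Bl_{\le k}\bP(V)$. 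Consequently $\epsilon_{k+1}$ is the blowup along a disjoint union of smooth subvarieties.

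Next I would verify the Cartier condition using the inductive hypothesis. By $(3)_k$, $f_k^{-1}(\widetilde{H}_\pi) = S_\pi$ for every $\pi$ of dimension $k+1$, and each $S_\pi$ is a smooth codimension-one closed subvariety of $\Mmscbar_n$ by \Cref{L:strataprops}(2). Moreover, distinct $\pi,\pi'$ of dimension $k+1$ being incomparable, \Cref{L:strataprops}(3) yields $S_\pi \cap S_{\pi'} = \varnothing$ in $\Mmscbar_n$. Therefore
\[
    f_k^{-1}(Z_{k+1}) \;=\; \bigsqcup_{\dim \pi = k+1} S_\pi
\]
is a disjoint union of smooth divisors, hence an effective Cartier divisor in $\Mmscbar_n$.

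The universal property of blowup then produces a unique morphism $f_{k+1}: \Mmscbar_n \to \Bl_{\le k+1}\bP(V)$ with $\epsilon_{k+1}\circ f_{k+1} = f_k$, giving $(1)_{k+1}$. The only non-formal point in this step is the disjointness of the $\widetilde{H}_\pi$ in $\Bl_{\le k}\bP(V)$, but this is intrinsic to the Hu construction; no new geometric input from $\Mmscbar_n$ is needed beyond the inductive pieces $(3)_k$ of $P(k)$ and \Cref{L:strataprops}. The substantive work of the induction will instead lie in establishing $(2)_{k+1}$ and $(3)_{k+1}$ from the morphism just constructed.
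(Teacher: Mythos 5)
Your argument is correct and follows the same route as the paper's: use $(3)_k$ to identify $f_k^{-1}(\widetilde{H}_\pi) = S_\pi$ as a smooth divisor for each $\pi$ of dimension $k+1$, then invoke the universal property of the blowup (\Cref{L:univpropblowup}). You are more careful than the paper's own (terse) proof in one respect: the universal property requires the preimage of the \emph{entire} center $Z_{k+1}$ to be effective Cartier, and you verify this by checking the pairwise disjointness of the $\widetilde{H}_\pi$ in $\Bl_{\le k}\bP(V)$ (via incomparability of distinct dimension-$(k+1)$ partitions and the fact that $H_{\pi\wedge\pi'}$ has already been blown up) and the corresponding disjointness of the $S_\pi$ via \Cref{L:strataprops}(3); the paper's proof takes this as understood. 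This makes your write-up a slightly more complete version of the same argument, with the substantive induction correctly deferred to $(2)_{k+1}$ and $(3)_{k+1}$.
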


\begin{proof}
    $(1)_{k+1}$ follows from the definition of $\epsilon_{k+1}$, $(3)_k$, and \Cref{L:univpropblowup}.\endnote{$\epsilon_{k+1}:\Bl_{\le k+1}\bP(V)\to \Bl_{\le k}\bP(V)$ is constructed by blowing up the strict transforms in $\Bl_{\le k}\bP(V)$ of all $H_\pi$ of dimension $k+1$. So, to construct the morphism by \Cref{L:univpropblowup} we need to know that $f_{k}^{-1}(\widetilde{H}_\pi)$ is a smooth divisor in $\Mmscbar_n$ for each such $\pi$. However, this follows from $(3)_k$.}
\end{proof}

\begin{lem}
\label{L:preimageofD}
    If $P(k)$ holds then for all $\pi \in (L_n^-)_{\le k+1}$, one has $f_{k+1}^{-1}(D_\pi) = S_\pi$. Furthermore, for all $\rho_\bullet \in \mathfrak{Ch}(L_n^-)_{\le k+1}$, one has $f_{k+1}^{-1}(D_{\rho_\bullet}) = S_{\rho_\bullet}$ and $f_{k+1}^{-1}(D_{\rho_\bullet}^{k+1}) = S_{\rho_\bullet}^{k+1}$. 
\end{lem}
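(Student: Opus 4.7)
The plan is to reduce to two cases according to whether $\dim\pi=k+1$ or $\dim\pi\le k$, and then bootstrap from single partitions to chains by taking intersections and complements. The key tool throughout is the compatibility $\epsilon_{k+1}\circ f_{k+1}=f_k$ from $(1)_{k+1}$, which lets us pull preimage computations back to $\Bl_{\le k}\bP(V)$ where hypothesis $P(k)$ applies directly.

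The case $\dim\pi=k+1$ is the quick one. Here $D_\pi$ is, by definition, the exceptional divisor of $\epsilon_{k+1}$ over $\widetilde{H}_\pi$, so $D_\pi=\epsilon_{k+1}^{-1}(\widetilde{H}_\pi)$ as sets. Then $f_{k+1}^{-1}(D_\pi)=f_k^{-1}(\widetilde{H}_\pi)=S_\pi$ by $(3)_k$.

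The case $\dim\pi\le k$ is where the substance lies, because in $\Bl_{\le k+1}\bP(V)$ the divisor $D_\pi$ is only the strict transform of its counterpart $D_\pi^{(k)}\subset \Bl_{\le k}\bP(V)$ under $\epsilon_{k+1}$. The easy inclusion $f_{k+1}^{-1}(D_\pi)\subseteq S_\pi$ follows at once from $D_\pi\subseteq \epsilon_{k+1}^{-1}(D_\pi^{(k)})$ together with $(2)_k$. For the reverse inclusion, I would apply $(a)_k$ to the length-one chain $\rho_\bullet=\{\pi\}$ to get an isomorphism $f_\pi^k\colon S_\pi^\circ\to D_\pi^{(k),\circ}$, and then observe using $(c)_k$ that $D_\pi^{(k),\circ}$ is disjoint from every $\widetilde{H}_\eta$ with $\eta\ne\pi$, in particular from all $\widetilde{H}_\eta$ of dimension $k+1$ (which form the blowup center of $\epsilon_{k+1}$). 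Therefore $\epsilon_{k+1}$ is an isomorphism over $D_\pi^{(k),\circ}$ landing in the strict transform $D_\pi$, which shows $S_\pi^\circ\subseteq f_{k+1}^{-1}(D_\pi)$, and then closedness together with density of $S_\pi^\circ$ in $S_\pi$ (by \Cref{L:strataprops}) upgrades this to $S_\pi\subseteq f_{k+1}^{-1}(D_\pi)$.

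The chain claims are then purely formal: $f_{k+1}^{-1}(D_{\rho_\bullet})=\bigcap_i f_{k+1}^{-1}(D_{\rho_i})=\bigcap_i S_{\rho_i}=S_{\rho_\bullet}$, and likewise the open chain stratum is obtained by removing the preimages of the excluded $D_\lambda$, which by the single-partition case coincide with the $S_\lambda$. The main obstacle I anticipate is precisely the upgrade from $S_\pi^\circ\hookrightarrow f_{k+1}^{-1}(D_\pi)$ in the case $\dim\pi\le k$: one must argue carefully that the open stratum $D_\pi^{(k),\circ}$ genuinely avoids the centers of $\epsilon_{k+1}$, so that its preimage lives in the strict transform and not in the new exceptional locus. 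This is where $(a)_k$ and $(c)_k$ of the inductive hypothesis are essential and where the bookkeeping on which $\widetilde{H}_\eta$ meet $D_\pi$ must be done precisely.
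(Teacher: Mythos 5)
Your proposal is correct and follows essentially the same strategy as the paper: easy inclusion via $(2)_k$, reverse inclusion by exhibiting a dense open piece of $S_\pi$ that lands in the strict transform (the paper phrases this as ``$\widetilde{D}_\pi\to D_\pi$ is an isomorphism over an open set,'' you make the open set explicit as $D_\pi^\circ$ via $(a)_k$), then closure and the formal chain intersections. One small note: the disjointness of $D_\pi^{(k),\circ}$ from all $\widetilde{H}_\eta$ with $\eta\ne\pi$ is immediate from the definition of $D_\pi^\circ$ in \Cref{N:mainnotation}(4c) and does not actually require invoking $(c)_k$.
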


\begin{proof}
    By $(3)_{k}$, for all $\pi$ of rank $k+1$, $f_k^{-1}(\widetilde{H}_\pi) = S_\pi$ and since $\epsilon_{k+1}^{-1}(\widetilde{H}_\pi) = D_\pi$, one has $f_{k+1}^{-1}(D_\pi) = S_\pi$.\endnote{By $(1)_{k+1}$, we have $\epsilon_{k+1}\circ f_{k+1} = f_k$ so $S_\pi = f_{k+1}^{-1}(\epsilon_{k+1}^{-1}(\widetilde{H}_\pi)) = f_{k}^{-1}(D_\pi)$.} Suppose next that $\pi$ is of rank $\le k$. If $f_{k+1}(\Sigma) \in \widetilde{D}_\pi$, then $f_k(\Sigma) = \epsilon_{k+1}(f_{k+1}(\Sigma)) \in \epsilon_{k+1}(\widetilde{D}_\pi) = D_\pi$. By $(2)_k$, $\Sigma \in S_\pi$ and so $f_{k+1}^{-1}(D_\pi) \subset S_\pi$. Because $\widetilde{D}_\pi \to D_\pi$ is an isomorphism over an open set of $D_\pi$, it follows that $f_{k+1}^{-1}(\widetilde{D}_\pi)$ contains an open subset of $S_\pi$ and consequently equals $S_\pi$. For $\rho_\bullet \in \mathfrak{Ch}(L_n^-)_{\le k+1}$ since $D_{\rho_\bullet} = \bigcap_{i=1}^\ell D_{\rho_i}$, one has $f_{k+1}^{-1}(D_{\rho_\bullet}) = S_{\rho_\bullet}$. Similarly, $D_{\rho_\bullet}^{k+1} = D_{\rho_\bullet} \setminus (\bigcup_{\dim \lambda \le k+1, \lambda \not\in \rho_\bullet} D_\lambda)$ now implies that $f_{k+1}^{-1}(D_{\rho_\bullet}^{k+1}) = S_{\rho_\bullet}^{k+1}$. 
\end{proof}

\begin{lem}
\label{L:P(k)mapdescription}
    If $P(k)$ holds then for all $\rho_\bullet = \{\rho_1<\cdots<\rho_\ell\} \in \mathfrak{Ch}(L_n^-)_{\le k+1}$ there is a commutative diagram:
    \begin{equation}
    \label{E:P(k)mapdescription}
        \begin{tikzcd}
            S_{\rho_\bullet}^{k+1}\arrow[r,"f_{\rho_\bullet}^{k+1}"]\arrow[dr,dashed]& D_{\rho_\bullet}^{k+1} \arrow[d,"\mathrm{open}",hook]\\
            &\prod_{i=0}^\ell \bP(N_{\rho_i|\rho_{i+1}})
        \end{tikzcd}
    \end{equation}
    such that $f_{\rho_\bullet}^{k+1}$ is the restriction of $f_{k+1}$ to $S_{\rho_\bullet}^{k+1}$ and the dashed arrow is given by $\Sigma \mapsto ([\Pi_{\rho_i|\rho_{i+1}}(\Sigma)]_{i=0}^{\ell-1},\Pi_{\rho_\ell|\top}^\dagger(\Sigma))$ where $\Pi_{\rho_\ell|\top}^\dagger(\Sigma)$ is defined as in \Cref{H:hypothesis}. Furthermore, $(a)_{k+1},(b)_{k+1}$, and $(c)_{k+1}$ hold.
\end{lem}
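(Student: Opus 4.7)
The plan is to split into two cases based on whether $\dim \rho_\ell = k+1$ or $\dim \rho_\ell \le k$, since the behavior of $\epsilon_{k+1}$ over $D_{\rho_\bullet}$ depends on whether the top partition in $\rho_\bullet$ is among those being blown up at the current stage. In both cases $f_{\rho_\bullet}^{k+1}$ is well-defined by \Cref{L:preimageofD}, and the dashed arrow into $\prod_{i=0}^\ell \bP(N_{\rho_i|\rho_{i+1}})$ is injective on $S_{\rho_\bullet}^\circ$ by \Cref{L:injectivemap}. So it will suffice to verify (i) commutativity of \eqref{E:P(k)mapdescription}, (ii) the identification of the image $D_{\rho_\bullet}^{k+1}$ with the claimed product-of-projective-spaces structure, and (iii) the intersection formula $(c)_{k+1}$ for $\widetilde{H}_\pi$ with $\pi > \rho_\ell$.

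When $\dim \rho_\ell \le k$, the chain $\rho_\bullet$ already lies in $\mathfrak{Ch}(L_n^-)_{\le k}$, and $\epsilon_{k+1}$ will restrict to an open immersion $D_{\rho_\bullet}^{k+1} \hookrightarrow D_{\rho_\bullet}^k$, since each blowup center $\widetilde{H}_\pi$ with $\dim \pi = k+1$ and $\pi \not\in \rho_\bullet$ has $D_\pi \cap D_{\rho_\bullet}$ explicitly removed in passing from $D_{\rho_\bullet}^k$ to $D_{\rho_\bullet}^{k+1}$. The commutative diagram and $(a)_{k+1}$--$(c)_{k+1}$ will then follow by direct restriction from the level-$k$ versions, with the last factor $U_\ell$ further refined by removing $Z(N_{\pi|\top})$ for each such $\pi > \rho_\ell$ of dimension $k+1$.

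The substantive case is $\dim \rho_\ell = k+1$. Setting $\rho_< = \rho_\bullet \setminus \{\rho_\ell\}$ (so $S_{\rho_<}^k = \Mmscbar_n$ when $\ell = 1$), I will apply $(c)_k$ to $\rho_<$ with $\pi = \rho_\ell$ to identify $\widetilde{H}_{\rho_\ell} \cap D_{\rho_<}^k$ with $(\prod_{i < \ell-1} U_i) \times (Z(N_{\rho_\ell|\top}) \cap U_{\ell-1})$ inside the last projective factor $\bP(N_{\rho_{\ell-1}|\top})$. Expanding each $P_{ij}$ for $(i,j) \in N_{\rho_\ell|\top}$ as a $\bC$-linear combination of $P_{\alpha\beta}$ for $(\alpha,\beta) \in N_{\rho_{\ell-1}|\top}$ and using the block-minimum form of $N_{\rho_{\ell-1}|\rho_\ell}$, the coordinates $s_{ij}$ with $(i,j) \in N_{\rho_{\ell-1}|\rho_\ell}$ will restrict to homogeneous coordinates on $Z(N_{\rho_\ell|\top})$, yielding an isomorphism $Z(N_{\rho_\ell|\top}) \cong \bP(N_{\rho_{\ell-1}|\rho_\ell})$; meanwhile the defining equations of $Z(N_{\rho_\ell|\top})$ together with $T$ serve as projectivized normal-bundle coordinates, giving $\bP(N_{\rho_\ell|\top})$. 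The blowup thus produces an exceptional divisor isomorphic to $\bP(N_{\rho_{\ell-1}|\rho_\ell}) \times \bP(N_{\rho_\ell|\top})$, exactly refining the final factor of the target of \eqref{E:P(k)mapdescription} into its two rightmost factors.

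For commutativity in this case, I will use the explicit description of the blowup map: near $\Sigma$ on $D(\Pi_{\alpha\beta})$ for some $\alpha \not\sim^{\rho_\ell} \beta$, the new coordinate on $\bP(N_{\rho_\ell|\top})$ is $[dF_\Sigma]$ where $F$ is a local defining system for $\widetilde{H}_{\rho_\ell}$; by \eqref{E:pullbackperiod} this pulls back to $\Pi_{\rho_\ell|\top}\cdot \Pi_{\alpha\beta}^{-1}$, and a coordinate computation on $U_\Gamma$ with $\Gamma = \Gamma(\rho_\bullet)$ paralleling \Cref{L:nonvanishingdifferential} will yield the claimed values $[\Pi_{\rho_\ell|\top}^-(\Sigma):1]$ on $S_{\rho_\bullet}^\circ$ and $[\Pi_{\rho_\ell|\top}^-(\Sigma):0]$ on $\partial S_{\rho_\bullet}^{k+1}$, the latter because $d(\Pi_{\alpha\beta}^{-1})_\Sigma$ picks up additional vanishing factors $t_m$ along deeper boundary strata. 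Properties $(a)_{k+1}$ and $(b)_{k+1}$ will then follow by combining injectivity with the dimension count $\dim S_{\rho_\bullet}^\circ = n-1-\ell = \dim D_{\rho_\bullet}^\circ$ together with the explicit exceptional-divisor model, while $(c)_{k+1}$ will follow by tracking strict transforms of $H_\pi$ for $\pi > \rho_\ell$ through the final blowup, giving $\widetilde{H}_\pi \cap \bP(N_{\rho_\ell|\top}) = Z(N_{\pi|\top})$ in the new factor. The main obstacle will be precisely this coordinate identification in the second case---verifying that the blowup coordinates on the new factor $\bP(N_{\rho_\ell|\top})$ genuinely match the period coordinates $\Pi_{\rho_\ell|\top}^\dagger$---which requires careful choice of local defining equations for $\widetilde{H}_{\rho_\ell}$, tracking their pullbacks through the iterated blowup, and differentiating in the local coordinate systems on $U_\Gamma$ furnished by \Cref{R:projectioncoords}.
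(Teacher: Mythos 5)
Your plan is essentially the paper's own argument: the same split on whether $\dim\rho_\ell\le k$ (restriction from $(2)_k$, shrink $U_\ell$) or $\dim\rho_\ell=k+1$ (apply $(c)_k$ to $\rho_<=\rho_\bullet\setminus\{\rho_\ell\}$ with $\pi=\rho_\ell$, identify the exceptional divisor over the blown-up locus with a product ending in $\bP(N_{\rho_\ell|\top})$, and pin down the new fibre coordinate via the blowup formula of \Cref{L:blowupmapexplicit} and a $U_\Gamma$ differential computation as in \Cref{L:nonvanishingdifferential}), with $(a)_{k+1}$ finished off via \Cref{L:injectivemap}. The one place you are terser than the paper is the step relating the map obtained from \Cref{L:univpropblowup}/\Cref{L:propertransformblowup} to the actual restriction of $f_{k+1}$: the paper verifies a Cartesian square (its Step 3, reducing to a tree-level argument) and then invokes \Cref{L:strictrestrict} (Step 4), whereas you invoke \Cref{L:preimageofD} to get the restriction directly and then apply \Cref{L:blowupmapexplicit}. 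Your shortcut is legitimate (the Cartesianity follows from $(3)_k$ together with $(c)_k$), and the rest of your outline matches the paper step for step; the only thing I'd flag is that for $(a)_{k+1}$ the dimension count alone is not quite a proof of isomorphism --- you should, as the paper does, observe that the explicit formula together with \Cref{L:injectivemap} shows $(f_{\rho_\bullet}^{k+1})^{-1}(D_{\rho_\bullet}^\circ)=S_{\rho_\bullet}^\circ$ and that the map there is a bijection of smooth varieties given by a closed formula.
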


\begin{proof}
    We break the proof into a series of steps. By \Cref{L:preimageofD}, $f_{k+1}$ restricts to a map $f_{\rho_\bullet}^{k+1}:S_{\rho_\bullet}^{k+1}\to D_{\rho_\bullet}^{k+1}$. 

    \textbf{Step 1:} The result holds when $\dim \rho_\ell \le k$. 
    
    Consider the following diagram:
    
    \[
        \begin{tikzcd}
            \Bl_{\le k+1} \bP(V) \arrow[r,"\epsilon_{k+1}"]&\Bl_{\le k} \bP(V)&\\
            D_{\rho_\bullet}^{k+1}\arrow[r,hook,"\text{open}"]\arrow[u,hook]&D_{\rho_\bullet}^k \arrow[u,hook]\arrow[r,hook,"\text{open}"] & \prod_{i=0}^\ell \bP(N_{\rho_i|\rho_{i+1}})\\
            S_{\rho_\bullet}^{k+1}\arrow[u,"f_{k+1}"]\arrow[r,hook,"\text{open}"]&S_{\rho_\bullet}^k.\arrow[u,"f_{k}"]\arrow[ur]&
        \end{tikzcd}
    \]
    The ``open'' arrows are open immersions, the middle one being the restriction of $\epsilon_{k+1}$ away from the exceptional locus, the middle right by applying $(b)_k$, and the bottom by $S_{\rho_\bullet}^{k+1}\subset S_{\rho_\bullet}^k$ being an open subspace. The description of the composite map $S_{\rho_\bullet}^{k+1}\to \prod_{i=0}^\ell \bP(N_{\rho_i|\rho_{i+1}})$, $(a)_{k+1}$, and $(c)_{k+1}$ now follow from $(a)_k$ and $(c)_k$ respectively. $(b)_{k+1}$ also follows, except that the $U_\ell$ obtained by induction must be shrunk by removing $U_\ell \cap Z(N_{\pi|\top})$ for all $\pi>\rho_\ell$ of dimension $k+1$. 

    \textbf{Step 2:} Assume now $\dim \rho_\ell = k+1$. We show that $(b)_{k+1}$ and $(c)_{k+1}$ hold.
    
    By \cite{Ulyanovpoly}*{Lem. 1}, any pair from $\{D_\pi:\rk(\pi)\le k\}\cup \{\widetilde{H}_\pi:\rk(\pi)>k\}$ intersects cleanly in $\Bl_{\le k}\bP(V)$. Thus, applying \Cref{L:strictransform}, $D_{\rho_\bullet}$ in $\Bl_{\le k+1} \bP(V)$ is the strict transform of $D_{\rho_1,\ldots, \rho_{\ell-1}}\subset \Bl_{\le k} \bP(V)$ intersected with $D_{\rho_\ell}$ which is the exceptional divisor associated to $H_{\rho_\ell}$. Write $\widetilde{H}_{\rho_\ell}$ for the strict transform of $H_{\rho_\ell}$ in $\Bl_{\le k}\bP(V)$, put $Z_{\rho_\bullet} := D_{\rho_1,\ldots, \rho_{\ell-1}}^k \cap \widetilde{H}_{\rho_\ell} \hookrightarrow D_{\rho_1,\ldots, \rho_{\ell-1}}^k$, and consider the diagram:
    \[
    \begin{tikzcd}
        &\widetilde{D}_{\rho_1,\ldots, \rho_{\ell-1}}^k \cap D_{\rho_\ell} \arrow[d]\arrow[r,"\sim",dotted]&E_\iota \arrow[d] \\
        S_{\rho_1,\ldots, \rho_{\ell-1}}^k \cap S_{\rho_\ell} \arrow[r]\arrow[ur,dashed]\arrow[d,hook]\arrow[dr,phantom,"\square"]&Z_{\rho_\bullet}\arrow[r,"\sim"] \arrow[d,hook]&(\prod_{i=0}^{\ell-2} U_i) \times Z(N_{\rho_\ell|\top}) \cap U_{\ell-1}\arrow[d,hook,"\iota"]\\
        S_{\rho_1,\ldots, \rho_{\ell-1}}^k \arrow[r]&D_{\rho_1,\ldots, \rho_{\ell-1}}^k\arrow[r,"\sim"]&\prod_{i=0}^{\ell-1} U_i =: U_{\le \ell-1}.
    \end{tikzcd}
    \]
    
    The isomorphism $D_{\rho_1,\ldots, \rho_{\ell-1}}^k \to U_{\le \ell-1}$ is by $(b)_k$. Commutativity of the bottom right square follows from $(c)_k$. $\widetilde{D}_{\rho_1,\ldots, \rho_{\ell-1}}^k \cap D_{\rho_\ell}$ is identified with the exceptional divisor of $\Bl_{Z_{\rho_\bullet}}(D_{\rho_1,\ldots, \rho_{\ell-1}}^k)$ by \Cref{L:propertransformblowup}. The bottom right square  induces an isomorphism from $\widetilde{D}_{\rho_1,\ldots, \rho_{\ell-1}}^k \cap D_{\rho_\ell}$ to the blowup of $U_{\le \ell-1}$ along $\iota$. This isomorphism identifies the exceptional divisors as indicated by the top dotted arrow, where $E_\iota$ denotes the exceptional divisor of $\Bl_\iota(U_{\le \ell-1})$.

    By $(c)_k$, $\Bl_\iota (U_{\le \ell-1})$ is the blowup of $U_{\le \ell-1}$ along $U_{\le \ell-1}'$, where $U_{\le \ell-1}' := U_{\le \ell-2}\times U_{\ell-1}'$ and $U_{\ell-1}' := Z(N_{\rho_\ell|\top})\cap U_{\ell-1}$. By \Cref{L:linearstricttransform}, $E_\iota$ is identified with $U_{\le \ell-1}' \times \bP(N_{\rho_\ell|\top})$. 
    
    By $(c)_k$ and \Cref{L:linearstricttransform}, for all $\pi > \rho_\ell$, $\widetilde{H}_\pi$ intersects $\widetilde{D}^k_{\rho_1,\ldots, \rho_{\ell-1}}\cap D_{\rho_\ell}$ in the locus corresponding to $U_{\le \ell-1}'\times Z(N_{\pi|\top})$, whence $(c)_{k+1}$. 
    
    Now, remove $\widetilde{H}_\pi$ for all $\dim \pi = k+1$ to form $U_\ell'\subset \bP(N_{\rho_\ell|\top})$, and $D_{\rho_\bullet}^{k+1} \cong U_{\le \ell-1}'\times U_\ell'$ via the restricted map $D_{\rho_\bullet}^{k+1} \to U_{\le \ell-1}' \times \bP(N_{\rho_\ell|\top})$, whence $(b)_{k+1}$.

    \textbf{Step 3:} Constructing the induced map $S_{\rho_1,\ldots, \rho_{\ell-1}}^k \cap S_{\rho_\ell}\to \widetilde{D}_{\rho_1,\ldots, \rho_{k-1}}^k \cap D_{\rho_k}$.

    By \Cref{L:univpropblowup}, it suffices to show that the bottom left diagram is Cartesian. The composite map $\gamma:S_{\rho_1,\ldots, \rho_{\ell-1}}^k\to U_{\le \ell-1}$ is $\gamma(\Sigma) = ([\Pi_{\rho_i|\rho_{i+1}}(\Sigma)]_{i=0}^{\ell-2},\Pi_{\rho_{\ell-1}|\top}^\dagger(\Sigma))$ by $(2)_k$. 
    Suppose $\Sigma\in \gamma^{-1}(U_{\le \ell-1}')$. As $U_{\ell-1}'\subset Z(T)$, $\gamma(\Sigma) = ([\Pi_{\rho_i|\rho_{i+1}}(\Sigma)]_{i=0}^{\ell-2},[\Pi_{\rho_{\ell-1}|\top}^-(\Sigma):0])$. 
    
    By $(a)_k$, $\Sigma \in \partial S_{\rho_1,\ldots, \rho_{\ell-1}}^k$ and there exists $(i,j)\in N_{\rho_{\ell-1}|\top}$ such that $\Pi_{ij}(\Sigma)\ne 0$ and $(\Pi_{\alpha\beta}/\Pi_{ij})(\Sigma) = 0$ for all $(\alpha,\beta)\in N_{\rho_\ell|\top}$. Write $\mathfrak{c}(\Sigma) = \{\rho_1<\cdots<\rho_{\ell-1}<\eta<\cdots\}$. By definition, $i\sim^{\rho_{\ell-1}} j$ so $p_i\wedge p_j$ is on the level of $\rho_{\ell-1}$ or above. Since $\rho_{\ell-1}<\rho_\ell$, if $\alpha \sim^{\rho_\ell}\beta$, then $p_\alpha \wedge p_\beta$ is on or above the level of $\rho_{\ell-1}$. $(\Pi_{\alpha\beta}/\Pi_{ij})(\Sigma) = 0$, so $p_\alpha \wedge p_\beta$ is at a strictly higher level of $\Gamma(\Sigma)$ than $\rho_{\ell-1}$. Thus, for all $\alpha \sim^{\rho_\ell}\beta$, $p_\alpha \wedge p_\beta$ is above $p_i\wedge p_j$. Therefore, $\alpha\sim^{\rho_\ell}\beta$ and it follows that $\eta \le \rho_\ell$. If $\eta = \rho_\ell$, then $\Sigma \in S_{\rho_\ell}$ and we are done. If $\eta < \rho_\ell$, we proceed, noting that $\dim \eta <\dim \rho_\ell = k+1$, and consequently $\Sigma \in S^{\dim \eta}_{\rho_1<\cdots<\rho_{\ell-1}<\eta}$. So, by \Cref{L:preimageofD}, $\Sigma \in f_{k+1}^{-1}(D_{\rho_1<\cdots<\rho_{\ell-1}<\eta}^{\dim \eta}\cap \widetilde{H}_{\rho_\ell})$. Repeating the same argument using the fact that $\dim \rho_\ell - \dim \eta$ decreases at each iteration yields the result. It follows that $\gamma^{-1}(U_{\le \ell-1}') = S_{\rho_1,\ldots, \rho_{\ell-1}}^k \cap S_{\rho_\ell}$.

    \textbf{Step 4:} By \Cref{L:strictrestrict}, $S_{\rho_1,\ldots, \rho_{\ell-1}}^k \cap S_{\rho_\ell} \to \widetilde{D}_{\rho_1,\ldots, \rho_{k-1}}^k \cap D_{\rho_k}$ is the restriction of $f_{k+1}$. We have thus constructed maps $S_{\rho_1,\ldots, \rho_{\ell-1}}^{k}\cap S_{\rho_\ell}\to D_{\rho_\bullet}^{k+1} \hookrightarrow \prod_{i=0}^\ell \bP(N_{\rho_i|\rho_{i+1}})$, where the second arrow is an open immersion. Denote the restricted map $r:S_{\rho_\bullet}^{k+1}\to \prod_{i=0}^\ell \bP(N_{\rho_i|\rho_{i+1}})$

    \textbf{Step 5:} $r(\Sigma) =  ([\Pi_{\rho_i|\rho_{i+1}}(\Sigma)]_{i=0}^{\ell-1},\Pi_{\rho_\ell|\top}^\dagger(\Sigma))$.

    $\gamma:S_{\rho_1,\ldots, \rho_{\ell-1}}^k \cap S_{\rho_\ell} \to U_{\le \ell-1}'$ is $\gamma(\Sigma) =  ([\Pi_{\rho_i|\rho_{i+1}}(\Sigma)]_{i=0}^{\ell-2},[\Pi_{\rho_{\ell-1}|\top}^-(\Sigma):0])$. $U_{\le \ell-1}'$ is covered by $\{V_{ij} = D(s_{ij})\cap U_{\le \ell-1}':(i,j)\in N_{\rho_{\ell-1}|\rho_\ell}\}$. We claim 
    
    \begin{equation}
    \label{E:restrictionexpl}
        r(\Sigma) = 
        \begin{cases}
            [\Pi_{\rho_i|\rho_{i+1}}(\Sigma)]_{i=0}^{\ell-1},[\Pi_{\rho_\ell|\top}^-(\Sigma):0]& \Sigma \in \partial S_{\rho_\bullet}^{k+1}\cap \gamma^{-1}(V_{ij})\\
            [\Pi_{\rho_i|\rho_{i+1}}(\Sigma)]_{i=0}^{\ell-1},[\Pi_{\rho_\ell|\top}^-(\Sigma):1]& \Sigma \in S_{\rho_\bullet}^\circ \cap \gamma^{-1}(V_{ij}).
        \end{cases}
    \end{equation}
    One verifies that $\gamma^*(T/s_{ij}) = \Pi_{ij}^{-1}$ and so $Z(N_{\rho_\ell|\top}) \subset U_{\le \ell-1}$ pulls back under $\gamma$ to $\Pi_{\rho_\ell|\top}\cdot \Pi_{ij}^{-1} = 0$.\endnote{By $(2)_k$, when restricted to $S_{\rho_1,\ldots, \rho_{\ell-1}}^\circ\subset S_{\rho_1,\ldots, \rho_{\ell-1}}^k$, which is open and dense, we have 
    \[
    \Sigma \mapsto ([\Pi_{\rho_i|\rho_{i+1}}(\Sigma)]_{i=0}^{\ell-2},[\Pi_{\rho_{\ell-1}|\top}^-(\Sigma):1]).
    \]
    In particular, for all $(i,j)\in N_{\rho_{\ell-1}|\top}$ one has $\gamma^*(s_{ij}/T) = \Pi_{ij}$ and so $\gamma^*(T/s_{ij}) = \Pi_{ij}^{-1}$. The result now follows.} Now, a calculation using \Cref{C:goodCOVgeneric} implies $Z(d(\Pi_{ij}^{-1})) \cap \gamma^{-1}(V_{ij}) = \partial S_{\rho_\bullet}^{k+1}\cap \gamma^{-1}(V_{ij})$.\endnote{Consider $\Sigma \in S_{\rho_\bullet}^{k+1}$. By definition, $\mathfrak{c}(\Sigma)$ is of the form $\{\rho_1<\cdots<\rho_\ell<\pi_1<\cdots<\pi_p\}$. $h(i)\wedge h(j)$ is on level $\ell-1$ because $(i,j)\in N_{\rho_{\ell-1}|\rho_\ell}$. On $U_\Gamma$, we can thus write $\Pi_{ij}^{-1} = t_\ell \cdots t_{\ell+p}/z_{ij}$ by \Cref{C:goodCOVgeneric}. Now, if $p\ge 1$ (which is equivalent to $\Sigma\in \partial S_{\rho_\bullet}^{k+1}$), one has $d(\Pi_{ij})^{-1} = d(t_\ell/z_{ij})\cdot (t_{\ell+1}\cdots t_{\ell+p}) + (t_\ell/z_{ij})\cdot d(t_{\ell_1}\cdots t_{\ell+p}) = 0$. If $p=0$, one has $d(t_\ell/z_{ij}) = \frac{1}{z_{ij}}d(t_\ell)$ which is nonzero on $U_\Gamma$.} The claim now follows from \Cref{L:blowupmapexplicit}.
    
    \textbf{Step 6:} We verify $(a)_{k+1}$. 
    
    $D_{\rho_\bullet}^\circ$ corresponds to $D(T) \subset  U_{\le \ell-1}' \times \bP(N_{\rho_\ell|\top})$\endnote{By construction, $D_{\rho_{\bullet}}$ does not intersect any $D_\pi$ for $\dim\pi \le k+1$ along $U_{\le \ell-1}'\times \bP(N_{\rho_\ell|\top})$. It only remains to remove the strict transforms of those $\pi$ with $\dim \pi \ge k+2$. However, by $(c)_{k+1}$ we can see that these are exactly in $Z(T)$; indeed, the strict transform of $H_\top$ intersects $D_{\rho_\bullet}$ along $Z(T)$ and the other strict transforms all satisfy $T = 0$.} and by \eqref{E:restrictionexpl} it follows that $(f_{\rho_\bullet}^{k+1})^{-1}(D_{\rho_\bullet}^\circ) = S_{\rho_\bullet}^\circ$. The isomorphism claim is now immediate by \eqref{E:restrictionexpl} and \Cref{L:injectivemap}.
\end{proof}

\begin{lem}
\label{L:(3)lemma}
    For all $\pi$ of dimension $k+2$, $f_{k+1}^{-1}(\widetilde{H}_\pi) = S_\pi$.
\end{lem}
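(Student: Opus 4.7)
The plan is to follow the same strategy used for the base case in the proof of \Cref{P:step1}(3), now with the explicit description of $f_{k+1}$ on the strata of $\Bl_{\le k+1}\bP(V)$ supplied by the just-established $(c)_{k+1}$ from \Cref{L:P(k)mapdescription}. Specifically, I will sandwich $S_\pi^\circ \subseteq f_{k+1}^{-1}(\widetilde{H}_\pi) \subseteq S_\pi$, then conclude from closedness of the preimage and density of $S_\pi^\circ$ in the irreducible $S_\pi$ that equality holds.

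The lower containment is immediate from \Cref{L:quasiclosedpreimage}: one has $S_\pi^\circ \subseteq f_0^{-1}(H_\pi^\circ)$, and since $\dim \pi = k+2$ every $\rho < \pi$ satisfies $\dim \rho \le k+1$, so no blow-up center in the construction of $\Bl_{\le k+1}\bP(V)$ meets $H_\pi^\circ$; hence $\epsilon_{\le k+1}^{-1}(H_\pi^\circ) \to H_\pi^\circ$ is an isomorphism sitting inside $\widetilde{H}_\pi$, and pulling back gives $S_\pi^\circ \subseteq f_{k+1}^{-1}(\widetilde{H}_\pi)$.

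For the upper containment, I would use the stratification from \Cref{L:stratificationofBlk} and analyze the preimage piece by piece. On the open stratum $U$, the same dimension count yields $\widetilde{H}_\pi \cap U \cong H_\pi^\circ$, whence $f_{k+1}^{-1}(\widetilde{H}_\pi \cap U) = S_\pi^{k+2} \subseteq S_\pi$. On a boundary stratum $D_{\rho_\bullet}^{k+1}$ meeting $\widetilde{H}_\pi$, \Cref{T:Hu}(c) forces $\pi$ to be comparable to each $\rho_i$, and since $\dim \pi > \dim \rho_i$ for all $i$, necessarily $\pi > \rho_\ell := \max \rho_\bullet$. Then $(c)_{k+1}$ identifies $\widetilde{H}_\pi \cap D_{\rho_\bullet}^{k+1}$ with $\bigl(\prod_{i=0}^{\ell-1} U_i\bigr) \times \bigl(Z(N_{\pi|\top}) \cap U_\ell\bigr)$; pulling this condition back along the formula for $f_{\rho_\bullet}^{k+1}$, and working in an affine chart $D(s_{\alpha\beta})$ for some $(\alpha,\beta) \in N_{\rho_\ell|\pi} \subseteq N_{\rho_\ell|\top}$, I would deduce that any $\Sigma$ in the preimage lies in $\partial S_{\rho_\bullet}^{k+1}$ (from the vanishing of the $T$-coordinate) and satisfies $(\Pi_{ij}/\Pi_{\alpha\beta})(\Sigma) = 0$ for every $(i,j) \in N_{\pi|\top}$ (from the remaining linear equations).

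The main obstacle is converting these ratio-vanishings into the conclusion $\Sigma \in S_\pi$; I plan to handle it with the same iterative level-comparison used in Step 3 of the proof of \Cref{L:P(k)mapdescription}. Writing $\mathfrak{c}(\Sigma) = \{\rho_1 < \cdots < \rho_\ell < \eta < \cdots\}$ and evaluating the $\Pi$-ratios in $U_{\Gamma(\Sigma)}$-coordinates via \Cref{C:goodCOVgeneric}, the vanishing $(\Pi_{ij}/\Pi_{\alpha\beta})(\Sigma) = 0$ translates to the level of $p_i \wedge p_j$ strictly exceeding that of $p_\alpha \wedge p_\beta$ on $\Gamma(\Sigma)$; since $\alpha \sim^{\rho_\ell} \beta$ places the latter at level $\ge \ell$, we obtain $i \sim^\eta j$ for every $(i,j) \in N_{\pi|\top}$, and \Cref{L:comparabilitycriteria} then forces $\eta \le \pi$. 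If $\eta = \pi$ we have $\Sigma \in S_\pi$ and are done; otherwise $\dim \eta < \dim \pi$, and \Cref{L:preimageofD} places $\Sigma$ in $f_{k+1}^{-1}(\widetilde{H}_\pi \cap D_{\rho_\bullet \cup \{\eta\}}^{\dim \eta})$, so the argument repeats with the enlarged chain. The quantity $\dim \pi - \max\{\dim \rho : \rho \in \mathfrak{c}(\Sigma),\ \rho < \pi\}$ strictly decreases at each iteration, ensuring termination with $\pi \in \mathfrak{c}(\Sigma)$, i.e.\ $\Sigma \in S_\pi$, which closes the sandwich and completes the proof.
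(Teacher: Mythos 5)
Your proof is correct and follows essentially the same route as the paper's: decompose $\widetilde{H}_\pi$ via \Cref{L:stratificationofBlk}, handle the open stratum with \Cref{L:quasiclosedpreimage}, handle each boundary stratum $D_{\rho_\bullet}^{k+1}$ by pulling back the $(c)_{k+1}$ description and running the iterative level-comparison from the proof of \Cref{L:P(k)mapdescription}, and conclude by closedness and density. The only slip is cosmetic: the terminating quantity should be $\dim\pi - \dim(\max\rho_\bullet)$ (which is what strictly increases as you adjoin $\eta$ to $\rho_\bullet$), not $\dim\pi - \max\{\dim\rho : \rho\in\mathfrak{c}(\Sigma),\ \rho<\pi\}$, since $\mathfrak{c}(\Sigma)$ is fixed throughout the iteration.
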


\begin{proof}
    $\widetilde{H}_\pi = \epsilon_{\le k+1}(H_\pi^\circ) \sqcup (\bigsqcup_{\rho_\bullet \in \mathfrak{Ch}_{\le k+1}} D_{\rho_\bullet}^{k+1}\cap \widetilde{H}_\pi)$ by \Cref{L:stratificationofBlk}.\endnote{By \Cref{L:stratificationofBlk}, we know $\Bl_{\le k+1} \bP(V) = U\sqcup \bigsqcup_{\rho_\bullet \in \mathfrak{Ch}(L_n^-)_{\le k+1}} D_{\rho_\bullet}^{k+1}$ and so $\widetilde{H}_\pi = U\cap \widetilde{H}_\pi \sqcup \bigsqcup_{\rho_\bullet} D_{\rho_\bullet}^{k+1} \cap \widetilde{H}_\pi$. $U$ is the preimage under $\epsilon_{\le k+1}$ of the complement of $\bigcup_{\dim \pi \le k+1} H_\pi$. In particular, $U\cap \widetilde{H}_\pi = \epsilon_{\le k+1}^{-1}(H_\pi^{k+1})$.} Thus, $f_{k+1}^{-1}(\widetilde{H}_\pi) = f_0^{-1}(H_\pi^\circ) \sqcup \bigsqcup_{\rho_\bullet \in \mathfrak{Ch}_{\le k+1}} f_{k+1}^{-1}(D_{\rho_\bullet}^{k+1}\cap \widetilde{H}_\pi)$. By \Cref{L:preimageofsubspace}, we have $f_0^{-1}(H_\pi^\circ) = S_\pi^{k+2}$. In particular, $f_0^{-1}(H_\pi^\circ)$ contains $S_\pi^\circ$ so $f_{k+1}^{-1}(\widetilde{H}_\pi) \supset S_\pi$ by closedness. It remains to show that for all $\rho_\bullet \in \mathfrak{Ch}(L_n^-)_{\le k+1}$ one has $f_{k+1}^{-1}(D_{\rho_\bullet}^{k+1}\cap \widetilde{H}_\pi) \subset S_\pi$. For this, apply the argument in the penultimate paragraph of the proof of \Cref{L:P(k)mapdescription}, replacing the chain $\rho_1<\ldots<\rho_{\ell-1}$ by $\rho_1<\cdots<\rho_\ell$, $k$ by $k+1$, $\rho_\ell-1$ by $\rho_\ell$, and $\rho_\ell$ by $\pi$.\endnote{Consider $\Sigma \in f_{k+1}^{-1}(D_{\rho_\bullet}^{k+1}\cap \widetilde{H}_\pi)$. Using the identifications of \Cref{L:P(k)mapdescription}, $f_{k+1}(\Sigma) \in V(N_{\pi|\top})$ and $\Sigma \in \partial S_{\rho_\bullet}^{k+1}$. So, there exists $(i,j)\in N_{\rho_\ell|\top}$ such that $\Pi_{ij}(\Sigma)\ne 0$ and $(\Pi_{\alpha\beta}/\Pi_{ij})(\Sigma) = 0$ for all $(\alpha,\beta)\in N_{\pi|\top}$.\endnote{We show that there exists $(i,j)\in N_{\rho_\ell|\top}$ such that $\Pi_{ij}(\Sigma) \ne 0$. Note that the $s_{ij}$ of \Cref{N:mainnotation}(5) give homogeneous coordinates on $\bP(N_{\rho_\ell|\top})$ and that $f_{k+1}(\Sigma) \in V(N_{\rho_\ell|\top})$. Therefore, $T(\Sigma) = 0$, but some $s_{ij}(\Sigma)\ne 0$. In particular, the coordinate $T/s_{ij}$ pulls back to $\Pi_{ij}^{-1}$ and we see that $\Pi_{ij}^{-1}(\Sigma) = 0$, whence $\Pi_{ij}(\Sigma) \ne 0$.} Consider $\mathfrak{c}(\Sigma) = \{\rho_1<\cdots<\rho_\ell<\eta<\cdots\}$. By hypothesis, $i\sim^{\rho_\ell} j$, so $p_i\wedge p_j$ is on the level of $\rho_\ell$ or above. Since $\rho_\ell <\pi$, if $\alpha \sim^\pi \beta$, then $p_\alpha \wedge p_\beta$ is on or above the level of $\rho_\ell$. Since $(\Pi_{\alpha \beta}/\Pi_{ij})(\Sigma) = 0$, $p_\alpha \wedge p_\beta$ is at a strictly higher level of $\Gamma(\Sigma)$ than $\rho_\ell$. Thus, for all $\alpha \sim^\pi \beta$ we see $p_\alpha \wedge p_\beta$ is above the level of $p_i\wedge p_j$. Consequently, $\alpha \sim^\eta \beta$ and it follows that $\eta \le \pi$. If $\eta = \pi$, then $\Sigma \in S_\pi$. 
    
    If $\eta < \pi$, we proceed, noting that $\dim \eta < \dim\pi = k+2$ and consequently $\Sigma \in S_{\rho_1<\cdots<\rho_\ell<\eta}^{\dim\eta}$. So, by \Cref{L:preimageofD}, $\Sigma \in f_{k+1}^{-1}(D_{\rho_1<\cdots<\rho_\ell<\eta}^{\dim \eta}\cap \widetilde{H}_\pi)$. So, repeating the same argument noting that $\dim \pi - \dim \eta$ decreases at each iteration yields the result.}
\end{proof}

\begin{cor}
\label{C:induction}
    For all $1\le k \le n-2$, $P(k) \implies P(k+1)$. 
\end{cor}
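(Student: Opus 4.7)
The plan is to assemble the corollary from the preceding four lemmas, which have already broken down the inductive step into its constituent parts. In particular, one does not need any new computations: every piece of $P(k+1)$ has been verified in one of Lemmas \ref{L:(1)lemma}, \ref{L:preimageofD}, \ref{L:P(k)mapdescription}, and \ref{L:(3)lemma}.

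First, Lemma \ref{L:(1)lemma} produces the morphism $f_{k+1}:\Mmscbar_n \to \Bl_{\le k+1} \bP(V)$ satisfying $\epsilon_{k+1}\circ f_{k+1} = f_k$; combined with the inductively given $f_j$ for $0\le j \le k$, this is exactly $(1)_{k+1}$. Next, the first assertion of $(2)_{k+1}$ — that $f_{k+1}^{-1}(D_\pi) = S_\pi$ for $\pi \in (L_n^-)_{\le k+1}$ and that $f_{k+1}^{-1}(D_{\rho_\bullet}^{k+1}) = S_{\rho_\bullet}^{k+1}$ for $\rho_\bullet \in \mathfrak{Ch}(L_n^-)_{\le k+1}$ — is precisely the content of Lemma \ref{L:preimageofD}. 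The commutative diagram \eqref{E:inductivediagram} (with $k$ replaced by $k+1$) and the subconditions $(a)_{k+1}$, $(b)_{k+1}$, $(c)_{k+1}$ are exactly what Lemma \ref{L:P(k)mapdescription} establishes, completing the verification of $(2)_{k+1}$. Finally, Lemma \ref{L:(3)lemma} delivers $(3)_{k+1}$, namely that $f_{k+1}^{-1}(\widetilde{H}_\pi) = S_\pi$ for every $\pi$ of dimension $k+2$.

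There is effectively no obstacle at this stage, since the substance of the inductive step — the delicate geometric analysis of how $S_{\rho_\bullet}^\circ$ maps into the iterated blowup, the identification of the exceptional divisors via \Cref{L:linearstricttransform} and \Cref{L:propertransformblowup}, and the verification that $\gamma^{-1}(U_{\le \ell-1}') = S_{\rho_1,\ldots,\rho_{\ell-1}}^k \cap S_{\rho_\ell}$ — has already been absorbed into the earlier lemmas. The corollary itself is then a one-line conclusion: the three conditions $(1)_{k+1}, (2)_{k+1}, (3)_{k+1}$ defining $P(k+1)$ all hold, so $P(k)\Rightarrow P(k+1)$.
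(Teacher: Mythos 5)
Your proof is correct and follows the paper's approach exactly: the corollary is indeed just the assembly of $(1)_{k+1}$ from \Cref{L:(1)lemma}, $(2)_{k+1}$ from \Cref{L:preimageofD} and \Cref{L:P(k)mapdescription}, and $(3)_{k+1}$ from \Cref{L:(3)lemma}. The only (cosmetic) difference is that the paper's stated proof cites only three of these lemmas, absorbing \Cref{L:preimageofD} into its use within the proof of \Cref{L:P(k)mapdescription}; your explicit mention of it is harmless and arguably clearer.
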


\begin{proof}
    This is a combination of \Cref{L:(1)lemma}, \Cref{L:P(k)mapdescription}, and \Cref{L:(3)lemma}.
\end{proof}

\begin{proof}[Proof of \Cref{T:isothm}]
    Applying \Cref{C:induction}, $P(n-1)$ holds, so there exists a morphism $f:=f_{n-1}:\Mmscbar_n \to W_n$ over $\bP(V)$ such that for any $\rho_\bullet \in \mathfrak{Ch}(L_n^-)$, $f$ restricts to an isomorphism $S_{\rho_\bullet}^\circ \to D_{\rho_\bullet}^\circ$. As $f:\Mmscbar_n \to W_n$ is a birational morphism of proper and irreducible varieties, it is surjective. We only need to verify injectivity, but this follows from $\Mmscbar_n = \bigsqcup_{\rho_\bullet \in \mathfrak{Ch}(L_n^-)} S_{\rho_\bullet}^\circ$, injectivity of $f$ restricted to $S_{\rho_\bullet}$, and disjointness of the $D_{\rho_\bullet}^\circ\subset W_n$.
\end{proof}

\subsection{Consequences of the isomorphism}

\begin{cor}
\label{C:projectivity}
    $\Mmscbar_n$ is projective.
\end{cor}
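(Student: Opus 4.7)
The plan is to derive projectivity of $\Mmscbar_n$ directly from \Cref{T:isothm}, which furnishes an isomorphism $f : \Mmscbar_n \to W_n$ of complex varieties. Since projectivity is invariant under isomorphism, it suffices to verify that $W_n$ is projective.

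For this, I would invoke the construction of $W_n$ recalled in \Cref{D:wonderfulvariety} together with \Cref{T:Hu}: $W_n$ is obtained from $\bP(V)$ by a finite sequence of blowups
\[
W_n = \Bl_{\cH}\bP(V) \to \Bl_{\cH_{\le n-3}}\bP(V) \to \cdots \to \Bl_{\cH_{\le 0}}\bP(V)\to \bP(V),
\]
where at each stage one blows up the (disjoint union of) proper transforms of the $H_\rho$ of a given rank. Each center is a closed subscheme of a projective variety. Since the blowup of a projective variety along a closed subscheme is again projective (the blowup is $\Proj$ of the Rees algebra of the corresponding ideal sheaf, and admits a projective morphism back to the base), each $\Bl_{\cH_{\le s}}\bP(V)$ is projective by induction on $s$, starting from the evident projectivity of $\bP(V)$. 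Composing the induction with \Cref{T:isothm} yields the corollary.

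There is essentially no obstacle here: the only thing to be careful of is that the centers are closed (which is guaranteed by \Cref{L:propertiesofHs} and the description in \Cref{T:Hu}), so the standard fact that blowups along closed subschemes of projective varieties remain projective applies at every step.
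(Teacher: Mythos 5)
Your proposal is correct and follows essentially the same route as the paper: invoke the isomorphism $f:\Mmscbar_n\to W_n$ from \Cref{T:isothm} and then observe that $W_n$ is projective as an iterated blowup of $\bP(V)$ along closed centers. You have simply spelled out the standard fact about blowups of projective varieties that the paper leaves implicit.
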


\begin{proof}
    $W_n$ is projective, being a sequence of blowups of $\bb{P}(V)$.
\end{proof}

By \Cref{T:isothm} we can obtain a precise description of the generators and relations for the Chow ring $\mathrm{CH}^*(\Mmscbar_n)$ using the results of \cite{Huhetalsemismall}. The authors define the \emph{augmented Chow ring} of a matroid $M$ as follows \cite{Huhetalsemismall}*{p.4}. $S_M$ is the polynomial ring $\bQ[y_e:e\in E]\otimes \bQ[x_F:F\text{ is a proper flat of } M]$ and $\mathrm{CH}(M) = S_M/(I_M+J_M)$ where 
\begin{enumerate}   
    \item $I_M$ is the ideal generated by $y_e - \sum_{e\not \in F}x_F$ for each $e\in E$; and 
    \item $J_M$ is the ideal generated by 
    \begin{enumerate}
        \item[(a)] $x_{F_1}x_{F_2}$ for any pair of incomparable flats of $M$; and 
        \item[(b)] $y_ex_F$ for any $e\in E$ and any proper flat $F$ of $M$ such that $e\not\in F$.
    \end{enumerate}
\end{enumerate}
Note that by convention the empty set is a proper flat. 

When $M$ is realizable, one can construct a variety $X_M$ via an explicit blowup of a subspace arrangement of $\bP(V\oplus \bC)$, for some complex vector space $V$, such that $\mathrm{CH}(M) \cong \mathrm{CH}^*(X_M)$; here, the right hand side is the Chow ring of the algebraic variety $X_M$ and the isomorphism is as graded rings. $X_M$ is called the \emph{augmented wonderful variety} associated to the matroid $M$ \cite{Huhetalsemismall}*{pp. 16-17}.

The graphical matroid $M_n = M(K_n)$, as introduced in \Cref{E:matroidofKn} is our main example. The associated augmented wonderful variety is $W_n$ that we have been considering. The lattice of flats of $M_n$ is identified with $L_n$ as in \Cref{P:lattice}. The whole set $\{1,\ldots,n\}$ regarded as a flat corresponds to $\bot$. $E$ is the edge set of $K_n$, and can be identified with $\{(i,j):1\le i<j\le n\}$. As such, we write 
\[
    S_M = \bQ[y_{ij}:1\le i<j\le n]\otimes \bQ[x_\rho:\rho \in L_n^-].
\]

\begin{cor}
\label{C:chow}    
    $\mathrm{CH}^*(\Mmscbar_n) \cong \bQ[x_\rho:\rho \in L_n^-]/I$, where $I$ is the ideal of relations generated by
    \begin{enumerate}
        \item $x_\rho x_\pi$ for $\pi$ and $\rho$ not comparable; and 
        \item $x_\pi \cdot \sum_{\rho:i\not\sim^\rho j} x_\rho$ for all $1\le i<j\le n$ such that $i\not \sim^\pi j$. 
    \end{enumerate}
    Furthermore, under the isomorphism $x_\rho$ corresponds to the class of $S_\rho$ for all $\rho \in L_n^-$ and $1$ to the class of $S_\bot = \Mmscbar_n$.
\end{cor}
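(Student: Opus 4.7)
The plan is to reduce to a known computation. By \Cref{T:isothm}, the isomorphism $f:\Mmscbar_n \to W_n$ identifies $\mathrm{CH}^\ast(\Mmscbar_n)$ with $\mathrm{CH}^\ast(W_n)$, and the results of \cite{Huhetalsemismall} give a presentation of $\mathrm{CH}^\ast(W_n)$ as the augmented Chow ring of the graphical matroid $M_n = M(K_n)$, namely
$$\mathrm{CH}(M_n) \;=\; \bQ[y_{ij},\, x_F]\,/\,(I_{M_n}+J_{M_n}),$$
with $x_F$ corresponding to the class of the divisor $D_F\subset W_n$ indexed by a proper flat $F$. Since $f$ maps $S_\rho^\circ$ isomorphically onto $D_\rho^\circ$ for each $\rho\in L_n^-$ by \Cref{T:isothm}, one has $f^\ast[D_\rho]=[S_\rho]$. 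So it suffices to rewrite the presentation of $\mathrm{CH}(M_n)$ in terms of the variables $\{x_\rho\}_{\rho\in L_n^-}$ alone and match it with the stated relations.

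First I would invoke \Cref{P:lattice}: proper flats of $M_n$ correspond bijectively, via the lattice anti-isomorphism $L(M_n)\to L_n$, with elements of $L_n^-$ (the maximal flat ``all of $E$'' corresponds to $\bot$ and is improper, hence excluded). Under this bijection an edge $(i,j)$ lies in the flat corresponding to $\rho$ if and only if $i\sim^\rho j$, since flats of $M_n$ are disjoint unions of complete graphs on the blocks of $\rho$ (\Cref{E:matroidofKn}). Next I would use the generators of $I_{M_n}$, which read $y_{ij}=\sum_{(i,j)\notin F} x_F$, to eliminate every $y$-variable; in the translated language they become $y_{ij}=\sum_{\rho:\, i\not\sim^\rho j} x_\rho$. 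What remains after this substitution is $J_{M_n}$, expressed solely in terms of the $x_\rho$.

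Then I would translate the two families of generators of $J_{M_n}$. The incomparability relation $x_{F_1}x_{F_2}$ becomes $x_\rho x_\pi$ for incomparable $\rho,\pi\in L_n^-$ (incomparability is preserved by any poset anti-isomorphism), yielding relation (1). The monomial relation $y_e x_F$ for $e\notin F$ becomes, after the above substitution, $x_\pi \cdot \sum_{\rho:\, i\not\sim^\rho j} x_\rho$ whenever $i\not\sim^\pi j$, which is relation (2). The constant $1\in \mathrm{CH}^0(\Mmscbar_n)$ corresponds to the fundamental class $[\Mmscbar_n]=[S_\bot]$.

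The main obstacle I anticipate is purely bookkeeping: carefully tracking the direction of the lattice anti-isomorphism in \Cref{P:lattice} together with the convention in \cite{Huhetalsemismall} that ``proper flats'' of a matroid include the empty flat (corresponding here to $\top\in L_n$) but exclude the top flat $E$ (corresponding here to $\bot$), together with the direction of the identification $e\in F \iff i\sim^\rho j$. Once this dictionary is pinned down, no deeper input is required beyond the isomorphism of \Cref{T:isothm}.
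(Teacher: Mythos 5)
Your proposal is correct and takes essentially the same approach as the paper: invoke \Cref{T:isothm} to pass to $W_n$, quote the presentation of the augmented Chow ring from \cite{Huhetalsemismall}, translate flats to $L_n^-$ via \Cref{P:lattice}, and eliminate the $y_{ij}$ variables using the generators of $I_{M_n}$ so that $J_{M_n}$ yields the two stated families of relations. The bookkeeping about proper flats (the empty flat being proper, the full edge set being improper, and the anti-isomorphism reversing the order) is exactly what the paper relies on implicitly.
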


\begin{proof}
    We rewrite $\mathrm{CH}(M_n) = S_M/(I_M+J_M) \cong \frac{S_M/I_M}{(I_M+J_M)/I_M}$ to obtain $\mathrm{CH}(M_n)\cong \bQ[x_\rho:\rho \in L_n^-]/I$.\endnote{$S_M/I_M$ is generated by the (image of) $\{x_F\}$. The relations of $J_M$ become $x_{\rho}\cdot x_{\pi}$ for $\pi$ and $\rho$ not comparable. The second set of relations is can be rewritten using $y_{ij} = \sum_{\rho:i\not\sim^\rho j} x_\rho$ as in (2).} Under the isomorphism $\mathrm{CH}(M_n)\to \mathrm{CH}^*(W_n)$, $x_\rho$ maps to the class of the divisor $D_\rho$ and $1\mapsto \Mmscbar_n$ (see \cite{Huhetalsemismall}*{Rem. 2.13}) and the result follows from \Cref{T:isothm}. 
\end{proof}

\section{Multiscaled lines as a resolution of singularities}

\subsection{Comparison of moduli problems}

In this section, we connect $\overline{P}_n$ as defined in \cite{Zahariucmarked} to $\Mmscbar_n$. We first recall the definition of $\overline{P}_n$. 

\begin{thm}
\label{T:zahariucmarkedmain}
(\cite{Zahariucmarked}, Thm. 1.5) Let $F$ be the functor which associates to each Noetherian scheme $S$ the set of all collections of data as follows, modulo isomorphism:
\begin{itemize}
    \item a genus $0$ prestable curve $\pi:C\to S$
    \item smooth sections $x_1,\ldots, x_n,p_\infty:S\to C$ of $\pi$ (possibly not disjoint); and
    \item an $\cal{O}_C$-module homomorphism $\phi:\omega_{C/S}\to \cal{O}_C$ 
\end{itemize}
such that:
\begin{enumerate}
    \item $\phi$ factors through the inclusion $\cal{O}_C(-2p_\infty(S)) \to \cal{O}_C$;
    \item $x_i^*\phi:x_i^*\omega_{C/S}\to \cal{O}_S$ is an isomorphism for $i=1,\ldots, n$; and 
    \item the natural stability condition holds: for any geometric point $\overline{s}\to S$,
        \begin{enumerate}
            \item with the possible exception of the component which contains $p_{\infty,\overline{s}}$, no irreducible component of $C_{\overline{s}}$ intersects exactly two other components;
            \item any irreducible component of $C_{\overline{s}}$ which intersects exactly one other irreducible component contains at least one of the points $x_{1,\overline{s}},\ldots, x_{n,\overline{s}}$ but not the point $p_{\infty,\overline{s}}$.
        \end{enumerate}
\end{enumerate}
Then $F$ is represented by a projective locally complete intersection flat geometrically integral scheme over $\Spec \bZ$. 
\end{thm}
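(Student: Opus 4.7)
The plan is to build $\overline{P}_n$ as a closed subscheme of a projective scheme over a known moduli space of marked prestable genus zero curves, then verify the universal property. First I would extract the underlying marked-curve functor by forgetting $\phi$; this assigns to $S$ the data $(C \to S, x_1,\ldots,x_n, p_\infty)$ subject to condition (3) alone. This should match Hassett's moduli of weighted pointed genus zero stable curves $\overline{M}_{0,\cA}$ with weight vector $\cA = (1, 1/n, \ldots, 1/n)$ on $(p_\infty, x_1,\ldots,x_n)$: these weights force $p_\infty$ to be disjoint from the $x_i$, allow the $x_i$ to collide freely, and translate Hassett's stability condition into (3a)--(3b). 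The space $\overline{M}_{0,\cA}$ is a smooth projective $\bZ$-scheme.

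Next I would parametrize the $\phi$-data. Let $\pi : \cC \to \overline{M}_{0,\cA}$ be the universal curve with sections $\sigma_\infty, \sigma_1, \ldots, \sigma_n$, and set $\cF = \mathcal{H}om_{\cO_\cC}(\omega_{\cC/\overline{M}_{0,\cA}}, \cO_\cC(-2\sigma_\infty))$. A section of $\cF$ on a fiber $C_s$ is exactly a homomorphism $\omega_{C_s} \to \cO_{C_s}$ factoring through $\cO(-2p_{\infty,s})$. A componentwise Riemann--Roch together with a vanishing argument for $H^1(C_s, \cF_s)$ (using that each irreducible component is a $\bP^1$ with controlled degrees of $\omega$ and $\cO(-2\sigma_\infty)$) shows $\pi_*\cF$ is locally free of constant rank, by cohomology and base change. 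Hence the total space $T := \Spec_{\overline{M}_{0,\cA}} \Sym^\bullet(\pi_*\cF)^\vee$ represents the functor $F$ relative to its base but without condition (2). The isomorphism condition (2) is then imposed by restricting to the open locus where $\sigma_i^*\phi$ is nonvanishing for each $i$; call the result $T^\circ$. This already gives a quasi-projective $\bZ$-scheme representation of $F$.

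To upgrade to projectivity, one must compactify $T^\circ$ in a way compatible with the functor. Since $F$ sees $\phi$ itself (not its scalar class), the direct projectivization of $\pi_*\cF$ is the wrong object; instead I would embed $T^\circ$ as a locally closed subscheme of the relative Hilbert scheme on $\cC$ parametrizing the zero divisor $Z(\phi)\subset \cC$ away from $\sigma_\infty$, whose degree is pinned by the factorization of $\phi$ through $\cO(-2\sigma_\infty)$ and by condition (1). This relative Hilbert scheme is projective over $\overline{M}_{0,\cA}$, and hence projective over $\Spec \bZ$; one recovers $\overline{P}_n$ as the closure of $T^\circ$ inside it, cut out by the natural incidence relations and by the vanishing of the morphism $\omega\to \cO$ along $Z(\phi)$. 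Geometric integrality and flatness over $\bZ$ then follow because the smooth locus (irreducible $C$) is identified with $\bA^n/\bG_a$, which is geometrically integral and dense in every geometric fiber; the lci property is verified locally by exhibiting the defining incidence equations as a regular sequence of the expected codimension.

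The main obstacle I anticipate is this compactness step: one must globally reconcile the scaling indeterminacy in $\phi$ across the boundary strata of $\overline{M}_{0,\cA}$ so that the natural compactification is genuinely projective rather than merely proper, and so that the resulting boundary carries the lci structure claimed. This is precisely the delicate analysis carried out in \cite{Zahariucmarked}; from the present perspective, the comparison to $\Mmscbar_n$ via \Cref{T:isothm} can be read as saying that an augmented wonderful blowup of $\bP(V)$ provides a smooth resolution of these boundary singularities.
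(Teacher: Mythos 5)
Your plan has a structural flaw at its first step that the rest of the argument inherits. The claim that forgetting $\phi$ and keeping stability condition (3) yields Hassett's moduli space $\overline{M}_{0,\cA}$ with $\cA = (1,1/n,\ldots,1/n)$ is incorrect. That weight vector is not even admissible (total weight $=2$, not $>2$). More seriously, no choice of admissible Hassett weights can reproduce (3): a terminal component of a scaled curve can carry a single node and a single marked point $x_i$, which for Hassett stability with $a_i\le 1$ gives $k+\sum a = 1 + a_i \le 2$, violating ampleness. Such configurations occur throughout $\overline{P}_n$ (they are exactly the terminal components of the locus $S_\top$ in the paper's notation). Consequently the underlying marked curve of a general point of $\overline{P}_n$ is \emph{not} a stable pointed curve of any weight, and the forgetful functor does not land in any Hassett space; the prestable curves involved have positive-dimensional automorphism groups, so the underlying moduli is an Artin stack rather than a projective scheme you can base the construction on. The datum $\phi$ is what rigidifies the problem, so a two-step strategy of ``curves first, then $\phi$ as a bundle over them'' cannot begin.

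The second step fails for an independent reason even if one works over the correct prestable stack. For $\cF_s = \omega_{C_s}^\vee(-2p_{\infty,s})$ on a reducible fiber, one computes degree $-k_0$ on the root component (where $k_0\ge 2$ is its number of ascending nodes), degree $2-k\le -1$ on intermediate components, and degree $1$ on terminal components; sections must vanish at the descending node of each terminal component, so $h^0(C_s,\cF_s)$ equals the number of terminal components, while $h^1(C_s,\cF_s)$ is positive whenever $C_s$ is reducible. Both $h^0$ and $h^1$ jump, so $\pi_*\cF$ is not locally free and cohomology-and-base-change does not give the total-space description you propose. The subsequent Hilbert-scheme compactification is then built on a quasi-projective model that does not exist. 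Finally, note that the paper does not reprove this statement: it is cited verbatim as \cite{Zahariucmarked}*{Thm.~1.5}, where the representability, projectivity, and lci structure are established by a direct construction rather than by fibering over a marked-curve moduli space.
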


There exists a scheme, denoted $\overline{P}_n$, representing $F$ by \Cref{T:zahariucmarkedmain}. We will often work over $\Spec \bb{C}$ and write $\overline{P}_n(\bC)$ for the set of $\bC$-points of $\overline{P}_n$ regarded as a complex projective variety. We refer to these objects as \emph{scaled curves}. There is an open dense subset of $\overline{P}_n(\bC)$ where the points correspond to irreducible scaled curves. As in \S3, this locus is identified with $\bA^n/\bG_a$ (see \Cref{C:equivalentfunctors}). 

The underlying curve of any $C\in \overline{P}_n(\bC)$ is genus $0$ and so its dual graph, $\Gamma(C)$, is actually a tree. There is a canonical choice of root for the tree corresponding to the component on which $p_\infty$ lies. There is also a notion of ``distance'' of a component from the root, given by how many nodes separate them. Thus, it makes sense to call an irreducible component of $C$ \emph{terminal} if it is not connected to an irreducible component further from the root (see \S2). We call a component \emph{intermediate} if it is neither the root component nor terminal. Note, however, that in general $\Gamma(C)$ does not possess a level structure $\preceq$. We regard the root component as the ``bottom'' of the tree and the terminal components as the ``top.''

Recall that for any smooth subcurve $C'\subset C$ containing nodes $n_1,\ldots, n_k$, $\omega_C|_{C'}\cong \omega_{C'}(n_1+\cdots + n_k)$ (see \cite{MorrisonHarris}*{p.82}).\endnote{We can apply \cite[\href{https://stacks.math.columbia.edu/tag/0E34}{Lemma 0E34}]{Stacksproject} with $Y = C'$ and $X = C$. We have a short exact sequence of sheaves 
\[
    0\to \omega_{C'}\to i^*\omega_{C} \to \cO_{C'\cap Z} \to 0
\]
where $Z$ denotes the scheme theoretic closure of $C\setminus C'$. This is in particular the node points $n_{1},\ldots, n_{k}$ with their reduced subscheme structure. This implies $i^*\omega_C = \omega_{C'}(n_{1}+\cdots + n_{k})$ as was to be shown.} We next describe the points of $\overline{P}_n(\bC)$ (see also \cite{Zahariucmarked}).

\begin{lem}
\label{L:scaledline}
Consider $(C,p_\infty,s,p_1,\ldots, p_n) \in \overline{P}_n(\bC)$, where $s \in \Gamma(C,\omega_C^\vee(-2p_\infty))$ is the logarithmic vector field corresponding to $\phi:\omega_C\to \cO_C$.
\begin{enumerate} 
\item $s$ restricts to the zero section on every non-terminal component of $C$; and
\item on every terminal component $s$ vanishes only at the node connecting that component to the rest of the tree (or at $p_\infty$ if $C$ is irreducible).
\end{enumerate}
\end{lem}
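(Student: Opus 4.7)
My plan is to restrict $s$ to each irreducible component of $C$ and combine degree considerations with a local analysis at the nodes.

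For each component $C_v$ with $\deg_v$ nodes, the identification $\omega_C|_{C_v}\cong \omega_{C_v}(\sum_i n_i)$ (recalled just above the statement) gives
\[
\omega_C^\vee(-2p_\infty)|_{C_v} \cong \begin{cases} T_{C_v}\bigl(-\sum_i n_i - 2p_\infty\bigr), & v = v_0, \\ T_{C_v}\bigl(-\sum_i n_i\bigr), & v \ne v_0, \end{cases}
\]
of degree $-\deg_{v_0}$ and $2-\deg_v$ respectively. For part (1), if $C_v$ is non-root intermediate, then being intermediate forces $\deg_v \ge 2$ and stability (3a) forbids $\deg_v = 2$, so $\deg_v \ge 3$ and the restriction has strictly negative degree, giving $s|_{C_v} = 0$. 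If $C_v = C_{v_0}$ and $C$ is reducible, then (3b) rules out $\deg_{v_0} = 1$ (since the root contains $p_\infty$) and reducibility rules out $\deg_{v_0} = 0$, so $\deg_{v_0} \ge 2$ and again $s|_{C_{v_0}} = 0$.

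For part (2) in the irreducible case, $s \in H^0(T_C(-2p_\infty)) \cong \bC$ is a scalar, which is nonzero by the isomorphism hypothesis at the marked points, and the corresponding section of $T_C$ has a double zero only at $p_\infty$. In the reducible case, at a terminal component $C_v$ with unique node $n_v$, $s|_{C_v} \in H^0(T_{C_v}(-n_v)) \cong H^0(\cO_{\bP^1}(1))$ a priori lies in a two-dimensional space, and the claim is that it actually lies in the one-dimensional subspace $H^0(T_{C_v}(-2n_v))$.

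The key step---and main obstacle---is to extract this additional vanishing at $n_v$ from the compatibility of $s$ across the node with its value on the unique adjacent component $C_u$, which is non-terminal (being either the root or an intermediate) and therefore has $s|_{C_u} = 0$ by part (1). Locally at $n_v$, $C \cong \Spec \bC[[x,y]]/(xy)$ with $C_v = \{y=0\}$ and $C_u = \{x=0\}$, and $\omega_C$ is freely generated by a form $\alpha$ with $\alpha|_{C_v} = dx/x$ and $\alpha|_{C_u} = -dy/y$. Writing $\phi(\alpha) = (F_v(x), F_u(y))$ with the node-compatibility $F_v(0) = F_u(0)$, the associated vector field on $C_v$ is $s|_{C_v} = xF_v(x)\partial_x$, whose leading coefficient at $n_v$ equals $F_v(0) = F_u(0)$; since $s|_{C_u} = 0$ forces $F_u \equiv 0$, this leading coefficient vanishes, so $s|_{C_v}$ vanishes to order $\ge 2$ at $n_v$. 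Consequently $s|_{C_v} \in H^0(T_{C_v}(-2n_v)) \cong \bC$; it is nonzero because some $p_i$ lies on $C_v$ by (3b) with $p_i^*\phi$ an isomorphism, and as a nonzero section of a trivial bundle it is nowhere vanishing, so $s|_{C_v}$ as a section of $T_{C_v}$ vanishes only at $n_v$ (with order exactly two).
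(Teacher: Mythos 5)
Your proof is correct and follows the same basic strategy as the paper — restrict $s$ to irreducible components, compute degrees of $\omega_C^\vee(-2p_\infty)|_{C_v}$, and use the vanishing of $s$ on a component adjacent to a terminal one to force $s(n) = 0$ at the terminal node. There are two genuine differences worth noting. First, for non-root intermediate components you invoke stability (3a) directly to get $\deg_v \geq 3$, so the restricted line bundle has strictly negative degree and $s|_{C_v}=0$ immediately; the paper only uses $\deg_v \geq 2$, leaving open the degree-$0$ case $\deg_v = 2$, and then inducts on distance from the root to kill the (possibly constant) section. Your version avoids the induction entirely, which is a real simplification — the $\deg_v = 2$ case the paper handles by induction is in fact excluded by the stability hypothesis. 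Second, for terminal components the paper writes only that ``$C_\tau$ contains only one node $n$ connecting it to a lower component, and so $s(n)=0$,'' whereas you make the compatibility explicit: working in $\Spec \bC[[x,y]]/(xy)$ with the Rosenlicht generator $\alpha$ of $\omega_C$, $\alpha|_{C_v}=dx/x$, $\alpha|_{C_u}=-dy/y$, so $s|_{C_v}=F_v(x)\,x\partial_x$ with $F_v(0)=F_u(0)=0$ because $s|_{C_u}\equiv 0$. This unpacks precisely what the paper leaves to the reader. One small inefficiency: for the root component, $\deg_{v_0}\geq 1$ (automatic from reducibility) already gives $\omega_C^\vee(-2p_\infty)|_{C_{v_0}}$ degree $-\deg_{v_0}<0$, so your invocation of (3b) to rule out $\deg_{v_0}=1$ is harmless but unnecessary.
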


\begin{proof}
If $C \cong \bb{P}^1$, then $\omega_C^\vee(-2p_\infty) \cong \cO_C$. Since $s(p_i)\ne 0$ for all $i$, it cannot vanish anywhere besides $p_\infty$. So, (1) and (2) hold. Suppose the length of the dual tree is $\ge 1$. There is a root component $C_0$ containing $p_\infty$ with $k\ge 1$ ascending nodes $n_1,\ldots, n_k$. $\omega_C^\vee(-2p_\infty)|_{C_0} \cong \cO_{C_0}(-k)$ and as $k\ge 1$, $\cO_{C_0}(-k)$ has no nonzero global sections, so $s|_{C_0}\equiv 0$. 

Next, suppose $C_\mu$ is intermediate. $C_\mu$ contains a node $n^+$ connecting it to a higher component and a node $n^-$ connecting it to a lower component. Since $C_\mu$ has at least 2 special points, $\omega_C^\vee(-2p_\infty)|_{C_\mu} \cong \cO_{C_\mu}(2-k)$ for $k\ge 2$ and there are only constant global sections. By inducting on the distance from the root component, we have $s(n)=0$ and $s|_{C_\mu}\equiv 0$. Finally, suppose $C_\tau$ is terminal. $C_\tau$ contains only one node, $n$, connecting it to a lower component and so $s(n)=0$. Since $C_\tau$ contains only one node, $\omega_C^\vee(-2p_\infty)|_{C_\tau} \cong \cO_{C_\tau}(1)$. $s|_{C_\tau}$ cannot vanish at any other point, or else it would be identically zero, contradicting $s(p_i)\ne 0$, where there exists a $p_i \in C_{\tau}$ by stability. 
\end{proof}

\begin{cor}
Let $(C,p_\infty,s,p_1,\ldots,p_n)$ be a stable scaled line. Every intermediate component $C_\mu$ is connected to at least two higher components. 
\end{cor}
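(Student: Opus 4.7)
The plan is to deduce this directly from the stability axioms in \Cref{T:zahariucmarkedmain}(3) together with the tree structure of $\Gamma(C)$. First I would observe that because $C_\mu$ is intermediate, by definition it is not the root, so it has at least one descending node (toward $p_\infty$); and it is not terminal, so it has at least one ascending node. Thus $C_\mu$ meets at least two other components.

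Next I would invoke stability axiom (3)(a): since $p_\infty \notin C_\mu$, the component $C_\mu$ cannot meet exactly two other components. Combined with the previous observation, $C_\mu$ must meet at least three other irreducible components of $C$. Finally, since $\Gamma(C)$ is a tree and $C_\mu\neq C_0$, there is a unique simple path from $C_\mu$ to the root component, so $C_\mu$ has a unique descending node. The remaining (at least two) incident nodes must therefore all be ascending, i.e.\ connect $C_\mu$ to components strictly further from the root.

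The only real content of the proof is assembling these three observations in order; there is no obstacle beyond unpacking the definitions of ``intermediate,'' ``terminal,'' and the stability hypothesis. I would write it as a short direct argument rather than breaking it into steps.
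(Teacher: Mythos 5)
Your proof is correct, and it takes a genuinely different route from the paper's. The paper proves this as a corollary of \Cref{L:scaledline}: it uses $s|_{C_\mu}\equiv 0$ together with $s(p_i)\ne 0$ and $p_\infty\in C_0$ to conclude that $C_\mu$ carries no marked point and no $p_\infty$, hence all its special points are nodes; then it invokes a $\ge 3$-special-points stability count and the tree (genus-$0$) condition to get a unique descending node and $\ge 2$ ascending ones. Your argument bypasses \Cref{L:scaledline} entirely: being intermediate already gives one descending and one ascending node, stability axiom (3)(a) of \Cref{T:zahariucmarkedmain} then rules out ``exactly two neighbors'' since $p_\infty\notin C_\mu$, and the tree structure makes the descending node unique. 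This is more elementary in that it only unwinds the definitions and the stated stability condition, whereas the paper's route leans on the vanishing result for $s$; the trade-off is that your version no longer really is a ``corollary'' of \Cref{L:scaledline}, which is presumably why the paper phrased the argument as it did. One small point worth noting: your step 1 is not just a warm-up, it is load-bearing, because (3)(a) alone only excludes the case of exactly two neighbors; you need to have already established $\ge 2$ neighbors (which ``intermediate'' gives directly) to conclude $\ge 3$ without appealing to (3)(b) and then needing \Cref{L:scaledline} to rule out marked points on $C_\mu$.
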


\begin{proof}
$s|_{C_\mu}\equiv 0$ by \Cref{L:scaledline} and it follows from the stability assumption that $C_\mu$ must contain at least $3$ special points. However, as $s(p_i)\ne 0$ on the marked points $p_i$, and $p_\infty$ lies on the root component, there must be at least $3$ node points on $C_\mu$. One of these nodes must connect to an irreducible component closer to the root than $C_\mu$. If two nodes have this property, then we the dual graph contains a loop, contradicting $g(C) = 0$.
\end{proof}

$\overline{P}_n(\bC)$ is smooth for $n\le 3$, however for $n\ge 4$ it is mildly singular \cite{Zahariucresolution}. One of the main results of \cite{Zahariucresolution} is to construct a resolution of singularities of $\overline{P}_n(\bC)$. In the following theorem, $W_n$ is the augmented wonderful variety introduced in \S4.

\begin{thm}
\label{T:Zahariucresolution}
\emph{(\cite{Zahariucresolution}, Thm. 1.2)} There exists a small resolution of singularities $\gamma: W_n\to \overline{P}_n(\bC)$ if $n\ge 4$.
\end{thm}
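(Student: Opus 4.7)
The plan is to build $\gamma$ through the intermediary $\Mmscbar_n$: by \Cref{T:isothm} it suffices to construct a proper birational morphism $\xi: \Mmscbar_n \to \overline{P}_n(\bC)$ which is small, after which $\gamma := \xi \circ f^{-1}$ is the desired resolution. The morphism $\xi$ should be defined by forgetting non-terminal differentials and the level structure: given $(\Sigma, p_\infty, \preceq, \omega_\bullet, p_1,\ldots,p_n) \in \Mmscbar_n$, assign to it $(\Sigma, p_\infty, \phi, p_1,\ldots,p_n)$, where $\phi: \omega_\Sigma \to \cO_\Sigma$ is zero on each non-terminal component and, on each terminal $\Sigma_\tau$, is the map dual to the trivialization of $\omega_\Sigma^\vee(-2p_\infty)|_{\Sigma_\tau} \cong \cO_{\Sigma_\tau}(1)$ determined by $\omega_\tau$. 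The classification in \Cref{L:scaledline} guarantees that this data satisfies the stability conditions of \Cref{T:zahariucmarkedmain}. Algebraicity of $\xi$ would be checked chart-by-chart on the cover $\{U_\Gamma\}$: one expresses the pullback of the standard affine coordinates on $\overline{P}_n(\bC)$ in terms of the explicit coordinates $(z_{ij},t_m)$ of \Cref{P:identification} and observes polynomiality.

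With $\xi$ in hand, properness follows immediately from compactness of $\Mmscbar_n$ (\Cref{T:spaceconstruction}), while birationality follows from the fact that $\xi$ restricts to the identity on the common open dense subspace $\bA^n/\bG_a$. The content of the theorem is thus concentrated in the smallness assertion, namely that for every $r>0$,
\[
\mathrm{codim}\bigl\{p \in \overline{P}_n(\bC) : \dim \xi^{-1}(p) \geq r\bigr\} > 2r.
\]
Here the fiber of $\xi$ over a scaled curve $(C, p_\infty, \phi, p_\bullet)$ parametrizes level structures $\preceq$ on the unordered rooted tree $\Gamma(C)$ together with coherent non-terminal differentials compatible with the already-fixed terminal data, modulo the projective equivalence of \Cref{D:multiscaledline}. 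Each fiber dimension is purely combinatorial in $\Gamma(C)$, and the singular stratum of $\overline{P}_n(\bC)$ determined by a given contraction type likewise has a combinatorial codimension; the plan is to carry out a case analysis by dual tree type, verifying the inequality stratum-by-stratum.

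The principal obstacle will be controlling the smallness inequality across all combinatorial types of dual trees: the map $\xi$ simultaneously forgets the level structure and the interior differentials, so nontrivial fiber dimension can arise both from multiple admissible level functions and from moduli of differentials on intermediate components. A cleaner route, which I would pursue in parallel, is to transport the question through $f$ and analyze the map $W_n \to \overline{P}_n(\bC)$ using the iterated blowup description of \Cref{T:Hu}: each blowup step replaces a center by a $\bP^{c-1}$-bundle, and matching these exceptional loci against the explicit local equations for the singularities of $\overline{P}_n$ reduces smallness to a direct comparison of $\dim H_\rho$ with $\dim N_{\rho|\top}$ in the notation of \Cref{N:mainnotation}. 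By \Cref{L:propertiesofHs} and \Cref{L:cuttingout} the relevant dimensions are $|B(\rho)|-2$ and $n-|B(\rho)|$ respectively, so the required inequality $\dim \gamma^{-1}(p) < \tfrac{1}{2}\mathrm{codim}\,\gamma(\gamma^{-1}(p))$ should be reducible to a direct check on the poset $L_n$. The main substantive step will be showing that no combinatorial configuration of partitions violates this bound for $n \geq 4$.
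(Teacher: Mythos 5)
This statement is a citation, not a result proved in the paper: the label reads ``(\cite{Zahariucresolution}, Thm.\ 1.2)'' and the paper supplies no proof, using the existence of $\gamma$ as a black box. There is therefore no internal proof to compare against; what you have sketched is a plan for an \emph{independent} reproof, which is a legitimate but much more ambitious goal than the paper pursues.

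As a plan, it has two substantial gaps. First, you propose to establish algebraicity of $\xi$ directly, chart-by-chart, by pulling back ``the standard affine coordinates on $\overline{P}_n(\bC)$'' to the $(z_{ij},t_m)$ charts. But the paper never equips $\overline{P}_n$ with explicit coordinate charts; it is constructed purely as a moduli scheme via \Cref{T:zahariucmarkedmain}, and the only route the paper provides to seeing $\xi$ as a morphism is \Cref{T:diagramcommutes}, i.e.\ the identity $\gamma \circ f = \xi$, which \emph{presupposes} the theorem you are trying to prove. To avoid circularity you would need to produce explicit local models for $\overline{P}_n$ from the functor of points (for instance, the local complete-intersection equations promised by \Cref{T:zahariucmarkedmain}) and actually carry out the coordinate comparison; this is real work that your sketch elides. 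Second, and more seriously, the smallness inequality is the whole content of the theorem, and your treatment of it is a placeholder in both of your proposed routes (``carry out a case analysis by dual tree type,'' ``reducible to a direct check on the poset $L_n$''). The numbers $|B(\rho)|-2$ and $n-|B(\rho)|$ you extract from \Cref{L:propertiesofHs} and \Cref{L:cuttingout} measure $\dim H_\rho$ and the fiber dimension of a single blowup step over $H_\rho$, but the fiber of $\gamma$ over a general singular point of $\overline{P}_n$ lies over an iterated exceptional locus, and relating the stratum codimension in $\overline{P}_n$ (whose singularity structure the paper does not describe) to these blowup invariants is precisely what requires Zahariuc's analysis. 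Without that comparison written out, the smallness claim is asserted rather than proved.
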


We now relate $\overline{P}_n(\bC)$ to $\Mmscbar_n$ by reinterpreting the moduli functor in terms of meromorphic differentials, which we now define. Let $X$ denote a scheme and $\cL$ an invertible sheaf on $X$. Consider the rank $1$ projective bundle $\pi:\bP(\cO_X\oplus \cL)\to X$. If $f:Y\to X$ is given, a morphism $g:Y\to \bP(\cO_X\oplus \cL)$ over $X$ is equivalent to an invertible sheaf $\cF$ on $Y$ and a surjective morphism of sheaves $f^*(\cal{O}_X\oplus \cal{L})\twoheadrightarrow \cF$ \cite{HartshorneAG}*{II.7.12}. Thus, sections of $\pi$ correspond to invertible sheaves $\cF$ on $X$ with a surjection $\cO_X\oplus \cL\twoheadrightarrow \cF$. Write $\Hom^\pi_X(X,\bP(\cO_X\oplus \cL))$ for the space of sections of $\pi$. Specifying $s\in \Gamma(X,\cL)$ is equivalent to giving a morphism $s:\cO_X\to \cL$. We obtain an injection $\Gamma(X,\cL)\to \Hom_X^\pi(X,\bP(\cO_X\oplus \cL))$ by sending $s\mapsto (s,\id)\in \Hom(\cal{O}_X\oplus \cal{L},\cal{L})$. 

\begin{defn}
Given a scheme $X$ and $\cL\in \Pic(X)$, we call $s \in \Hom_X^\pi(X,\bb{P}(\cL\oplus \cO_X))$ a \emph{meromorphic section} of $\cL$. The meromorphic section $s_0^\cal{L}$ of $\cL$ corresponding to $(\id,0) \in \Hom(\cO_X\oplus \cL,\cO_X)$ is called the \emph{zero section}, while $s_\infty^{\cal{L}} = (0,\id) \in \Hom(\cO_X\oplus \cL,\cL)$ is called the \emph{section at infinity}. For a related definition in a similar context, see \cite{GSW}*{\S2}.
\end{defn}

A line bundle quotient $\cL\oplus \cO_X\twoheadrightarrow\cF$ corresponds to a line bundle quotient of $\cO_X\oplus \cL^\vee$ by twisting by $\cL^\vee$. This also gives a canonical isomorphism $\bP(\cL\oplus \cO_X) \cong \bP(\cO_X\oplus \cL^\vee)$.

\begin{lem}
\label{L:merocorrespondence}
The bijection between line quotients of $\cL\oplus \cO_X$ and those of $\cO_X\oplus \cL^\vee$ gives a bijection between meromorphic sections of $\cL$ and meromorphic sections of $\cL^\vee$. Under this bijection, $s_0^\cal{L}$ corresponds to $s_\infty^{\cal{L}^\vee}$ and $s_\infty^\cal{L}$ to $s_0^{\cal{L}^\vee}$. 
\end{lem}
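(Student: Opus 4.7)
The plan is to trace definitions and verify that the canonical bijection on quotients carries the named sections as claimed. First, I would recall that a meromorphic section of $\cL$ (resp.\ $\cL^\vee$) is by definition an isomorphism class of line bundle quotients $q:\cO_X \oplus \cL \twoheadrightarrow \cF$ (resp.\ $q':\cO_X \oplus \cL^\vee \twoheadrightarrow \cF'$). The bijection in the lemma is the composite of two steps: tensoring a quotient $q:\cO_X \oplus \cL \twoheadrightarrow \cF$ with $\cL^\vee$, producing $q \otimes \cL^\vee : \cL^\vee \oplus \cO_X \twoheadrightarrow \cF \otimes \cL^\vee$, followed by the canonical summand swap $\cL^\vee \oplus \cO_X \cong \cO_X \oplus \cL^\vee$. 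Bijectivity is then automatic because $-\otimes \cL^\vee$ is an equivalence of categories of $\cO_X$-modules and the swap is an isomorphism; the inverse is given by tensoring with $\cL$ and swapping back.

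It then remains to verify the matching of the distinguished sections, which I would do by direct computation. The section $s_0^{\cL}$ corresponds to $(\id,0) \in \Hom(\cO_X \oplus \cL, \cO_X)$. Tensoring with $\cL^\vee$ and applying $\cO_X \otimes \cL^\vee = \cL^\vee$ and $\cL \otimes \cL^\vee = \cO_X$ produces $(\id,0) \in \Hom(\cL^\vee \oplus \cO_X, \cL^\vee)$; after the summand swap this becomes $(0,\id) \in \Hom(\cO_X \oplus \cL^\vee, \cL^\vee)$, which is precisely $s_\infty^{\cL^\vee}$. The computation for $s_\infty^{\cL}$ is entirely symmetric and produces $s_0^{\cL^\vee}$.

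I do not anticipate any serious obstacle; the lemma is essentially a bookkeeping exercise, and the only place one could slip is in tracking the order of summands after the swap isomorphism. If needed, one can also observe that the whole construction is functorial in $X$, so that the claimed bijection of meromorphic sections is exactly the bijection on sections of projective bundles induced by the canonical isomorphism $\bP(\cL\oplus \cO_X) \cong \bP(\cO_X \oplus \cL^\vee)$ noted just before the statement.
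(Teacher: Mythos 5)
Your proof is correct and is essentially the same as the paper's: both trace the quotient $(\id,0)$ through the tensor-by-$\cL^\vee$ step and the summand swap to obtain $(0,\id)$, yielding $s_0^{\cL} \mapsto s_\infty^{\cL^\vee}$, with the other case handled symmetrically. The paper states this more tersely but the underlying computation is identical.
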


\begin{proof}
    The first claim is true by the description of meromorphic sections in terms of line bundle quotients. For the second claim, we have $\Hom(\cO_X\oplus \cL,\cO_X) \cong \Hom(\cL^\vee \oplus \cO_X,\cL^\vee) \cong \Hom(\cO_X\oplus \cL^\vee,\cL^\vee)$ and under these maps $s_0^{\cL} = (\id,0)\mapsto(\id,0)\mapsto (0,\id) = s_\infty^{\cL^\vee}$.
\end{proof}

\begin{defn}
\label{D:meromorphicdifferential}
    Given a $C\to S$ flat family of prestable nodal curves, 
    \begin{enumerate}  
        \item a meromorphic section $\omega$ of $\omega_{C/S}$ is called a \emph{meromorphic differential} on $C\to S$. 
        \item For a smooth section $x:S\to C$ of $\pi$, we say $\omega$ has a \emph{pole} of order $n\ge 1$ along $x$ if the corresponding meromorphic section of $\omega_{C/S}^\vee$ vanishes to order $n$ along $x$. I.e., if the induced map $\omega^\vee:\omega_{C/S}\to \cO_C$ factors through $\cO_C(-nx(S))\to \cO_C$.
    \end{enumerate}
\end{defn}

\begin{prop}
\label{P:dualization}
The following two set-valued functors on $\Sch$ are isomorphic: $S\in \Sch \mapsto$ flat families of arithmetic genus $0$ curves $\pi:C\to S$ equipped with
\begin{enumerate} 
    \item a meromorphic section of $\omega_{C/S}^\vee$; 
    \item a meromorphic section of $\omega_{C/S}$.
\end{enumerate}
Suppose given a smooth section $x:S\to C$ of $\pi.$ Under this correspondence, a meromorphic section of $\omega_{C/S}$ having a pole of order $n\ge 1$ along $x$ corresponds to a meromorphic section of $\omega_{C/S}^\vee$ vanishing to order $n$ along $x$.
\end{prop}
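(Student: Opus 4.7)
The plan is to promote the pointwise bijection from Lemma~\ref{L:merocorrespondence} into a natural isomorphism of moduli functors. For each flat family $\pi : C \to S$ of arithmetic genus $0$ curves, apply the lemma with $X = C$ and $\cL = \omega_{C/S}$; this yields a bijection between meromorphic sections of $\omega_{C/S}$ and meromorphic sections of $\omega_{C/S}^\vee$ via the canonical identification
\[
\bP(\omega_{C/S} \oplus \cO_C) \xrightarrow{\sim} \bP(\cO_C \oplus \omega_{C/S}^\vee)
\]
obtained by twisting line bundle quotients by $\omega_{C/S}^\vee$. This defines the candidate natural transformation, sending $(C\to S, \omega)$ to $(C\to S, \omega^\vee)$ and vice versa.

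To upgrade this to a natural transformation I need base-change compatibility. Given $g: S' \to S$ with pullback family $\pi': C' \to S'$ and projection $f: C' \to C$, one has $\omega_{C'/S'} \cong f^*\omega_{C/S}$ since $\pi$ is flat with Gorenstein (nodal) fibers. Both the projective bundle construction $\cL \mapsto \bP(\cL \oplus \cO_X)$ and the twisting map used in Lemma~\ref{L:merocorrespondence} are functorial in the pair $(X, \cL)$, so the assignment $\omega \mapsto \omega^\vee$ commutes with pullback of meromorphic sections. This gives the desired isomorphism of the two functors.

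The claim about the pole/vanishing correspondence is essentially a restatement of Definition~\ref{D:meromorphicdifferential}(2): a meromorphic section $\omega$ of $\omega_{C/S}$ is declared to have a pole of order $n \geq 1$ along a smooth section $x$ precisely when the corresponding $\omega^\vee$ factors through $\cO_C(-n\,x(S)) \hookrightarrow \cO_C$, i.e., vanishes to order $n$ along $x$. So once the functorial bijection is in place, this part is tautological.

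I do not anticipate a major obstacle. The only genuine point to verify is base-change compatibility of $\omega_{C/S}$, which is standard for flat families of nodal curves. Everything else is a formal consequence of Lemma~\ref{L:merocorrespondence} and the definition of poles.
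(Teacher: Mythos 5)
Your proof is correct and follows essentially the same approach as the paper's: both proofs use the twisting-by-a-line-bundle trick from \Cref{L:merocorrespondence} to convert line-bundle quotients of $\omega_{C/S}^\vee \oplus \cO_C$ into quotients of $\cO_C \oplus \omega_{C/S}$, then invoke compatibility of $\omega_{C/S}$ with base change to conclude the map is a natural isomorphism of functors, with the pole/vanishing claim following directly from \Cref{D:meromorphicdifferential}. The paper's proof is just a more compressed phrasing of the same argument.
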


\begin{proof}
    Denote the functor of (1) by $\Omega^\vee$ and that of (2) by $\Omega$. For $S\in \Sch$, define $\Omega^\vee(S)\to \Omega(S)$ by sending a line bundle quotient $\omega_{C/S}^\vee \oplus \cO_C\twoheadrightarrow \cF$ to $\cO_C\oplus \omega_{C/S}\twoheadrightarrow\cF\otimes \omega_{C/S}$. Functoriality of formation of the relative dualizing sheaf $\omega_{C/S}$ implies that this is an isom\-orphism of functors.
\end{proof}

\begin{cor}
\label{C:equivalentfunctors}
The functor $F$ of \Cref{T:zahariucmarkedmain} is equivalent to the functor $F^\vee$ associating to each scheme $S$ the set of isomorphism classes of the following data:
\begin{itemize}
    \item a genus $0$ prestable curve $\pi:C\to S$;
    \item smooth sections $x_1,\ldots, x_n :S\to C$ of $\pi$ (not assumed disjoint); and
    \item a nonvanishing meromorphic section $\omega$ of $\omega_{C/S}$,
\end{itemize}
such that 
\begin{enumerate}
    \item $\omega$ has an order $2$ pole along $p_\infty(S)$;
    \item $\omega$ does not have a pole along any of the sections $x_i$ for $i=1,\ldots,n$; and 
    \item the following stability condition holds: for any geometric point $\overline{s}\to S$, 
        \begin{enumerate}
            \item with the possible exception of the component containing $p_{\infty,\overline{s}}$, no irreducible component of $C_{\overline{s}}$ intersects exactly two other components;
            \item any irreducible component of $C_{\overline{s}}$ which intersects exactly one other irreducible component contains at least one of the points $x_{1,\overline{s}},\ldots, x_{n,\overline{s}}$ but not the point $p_{\infty,\overline{s}}$.
        \end{enumerate}
\end{enumerate}
\end{cor}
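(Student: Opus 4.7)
\medskip

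\noindent\textbf{Proof plan for \Cref{C:equivalentfunctors}.} The approach is to construct a natural bijection $F(S)\to F^\vee(S)$ for each Noetherian scheme $S$ by applying \Cref{P:dualization} to the meromorphic-sections interpretation of the homomorphism $\phi$, then matching the conditions (1)–(3) of the two functors term-by-term. The conditions about the underlying curve and markings are identical, so the work is entirely at the level of the differential.

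The first step is to reinterpret the datum $\phi:\omega_{C/S}\to \cO_C$ of $F$ as a particular meromorphic section of $\omega_{C/S}^\vee$. By Hom--tensor duality, $\phi$ is equivalent to a regular global section $\phi^\vee \in \Gamma(C,\omega_{C/S}^\vee)$; under the embedding of regular sections into meromorphic sections described in \S5.1 (via $s\mapsto (s,\id):\cO_C\oplus \omega_{C/S}^\vee\twoheadrightarrow \omega_{C/S}^\vee$), $\phi$ becomes a meromorphic section of $\omega_{C/S}^\vee$ whose image avoids the infinity section $s_\infty^{\omega_{C/S}^\vee}$ (since the second component of the quotient map is the identity). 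Applying \Cref{P:dualization} together with \Cref{L:merocorrespondence}, which identifies $s_\infty^{\omega_{C/S}^\vee}$ with the zero section $s_0^{\omega_{C/S}}$ under the duality, one obtains a meromorphic section $\omega$ of $\omega_{C/S}$ whose image avoids $s_0^{\omega_{C/S}}$ — i.e., a nonvanishing meromorphic section in the sense of $F^\vee$. Reversing the construction gives the inverse map: any such $\omega$ pulls back to a meromorphic section of $\omega_{C/S}^\vee$ avoiding $s_\infty^{\omega_{C/S}^\vee}$, which by the local description of the projective bundle is an honest global section of $\omega_{C/S}^\vee$, hence a morphism $\phi:\omega_{C/S}\to \cO_C$.

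Next, I match the remaining conditions. Condition (1) of $F$ — that $\phi$ factors through $\cO_C(-2p_\infty(S))\hookrightarrow \cO_C$ — is equivalent to $\phi^\vee$ vanishing to order $2$ along the section $p_\infty$, which by the last sentence of \Cref{P:dualization} corresponds exactly to $\omega$ having a pole of order $2$ along $p_\infty$, matching condition (1) of $F^\vee$. Condition (2) of $F$, that $x_i^*\phi$ is an isomorphism, is equivalent to $\phi^\vee$ being nonvanishing along each $x_i$, and under the same correspondence this is precisely condition (2) of $F^\vee$ that $\omega$ has no pole along $x_i$. Condition (3) is stated in identical terms for both functors.

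The step most deserving of care is the first one: establishing the bijection between morphisms $\phi:\omega_{C/S}\to \cO_C$ and nonvanishing meromorphic sections of $\omega_{C/S}$. This rests on tracing through the canonical identification $\bP(\omega_{C/S}\oplus \cO_C)\cong \bP(\cO_C\oplus \omega_{C/S}^\vee)$ from \S5.1 and verifying that the "nonvanishing" (resp. "genuine morphism") condition on each side corresponds to avoidance of the appropriate torsor section. Once this is in hand, naturality in $S$ follows formally from the functoriality of the relative dualizing sheaf, of projective bundle formation, and of duality — hence the pointwise bijection upgrades to an isomorphism of set-valued functors $F\cong F^\vee$.
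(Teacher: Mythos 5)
Your proof is correct and takes essentially the same approach as the paper's own terse proof (``a simple dualization of $F$, using the correspondence of \Cref{P:dualization}''). One small caveat worth noting: your identification of which of the two distinguished sections the embedded regular section $(s,\id)$ avoids is swapped relative to the paper's explicit definitions in \S 5.1 — the paper defines $s_0^{\cL}=(\id,0)$ and $s_\infty^{\cL}=(0,\id)$, and the embedded quotient $(s,\id)$ always has kernel $\{(a,-s(x)a)\}$, so it avoids $(\id,0)=s_0^{\cL}$ rather than $(0,\id)=s_\infty^{\cL}$; since you apply the same swap when reading off the output of \Cref{L:merocorrespondence}, the final characterization (nonvanishing meromorphic sections of $\omega_{C/S}$) still comes out right, and in fact the paper's own naming of $s_0^{\cL}$ versus $s_\infty^{\cL}$ is opposite to the usual $\bP^1$-bundle convention, which likely prompted your reading.
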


\begin{proof}
    This is a simple dualization of $F$, using the correspondence of \Cref{P:dualization}.    
\end{proof}

By \Cref{T:zahariucmarkedmain} and \Cref{C:equivalentfunctors}, $F^\vee$ is represented by $\overline{P}_n$. So, an element of $\overline{P}_n(\bC)$ corresponds to an $(n+1)$-marked nodal genus $0$ curve $C$ equipped with a meromorphic section $\omega$ of $\omega_C$, the dualizing sheaf, satisfying the stability conditions. Henceforth, we consider scaled curves $(C,p_\infty,\omega,p_1,\ldots,p_n) \in \overline{P}_n(\bb{C})$ to be equipped with meromorphic differentials.

By \Cref{L:scaledline}, $\omega$ restricted to non-terminal components is identically equal to $\infty$. Conseq\-uently, we may regard a closed point in $\overline{P}_n(\bC)$ as $(C,p_\infty, \omega_{\rm{term}},p_1,\ldots, p_n)$, where $\omega_{\rm{term}}$ is the data of a nonzero meromorphic differential with a unique pole of order $2$ at the descending node on each terminal component of $C$. We define a set map $\xi:\Mmscbar_n \to \overline{P}_n(\bC)$ by 
\[
    \xi(\Sigma,\preceq, p_\infty,\omega_\bullet, p_1,\ldots, p_n) = (\Sigma,p_\infty, \omega_{\rm{term}}, p_1,\ldots, p_n).
\]
$\xi$ forgets the level structure $\preceq$ on the dual tree to $\Sigma$ and forgets the meromorphic differentials on all but the terminal components of $\Sigma$.

\begin{prop}
\label{P:factsaboutxi}
    $\xi:\Mmscbar_n \to \overline{P}_n(\bC)$ is $G$-equivariant and restricts to the identity between $\bA^n/\bG_a\subset \Mmscbar_n$ and $\bA^n/\bG_a\subset \overline{P}_n(\bC)$. 
\end{prop}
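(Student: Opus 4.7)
The plan is to verify both claims by unwinding the definitions of $\xi$, the embeddings $\bA^n/\bG_a \hookrightarrow \Mmscbar_n$ and $\bA^n/\bG_a \hookrightarrow \overline{P}_n(\bC)$, and the $G$-actions on both spaces.

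For the first claim, recall that $\Mmscbar_n^\circ = \bA^n/\bG_a$ consists of multiscaled lines with irreducible underlying curve $(\bP^1,p_\infty,\omega,p_1,\ldots,p_n)$, where $\omega$ is a meromorphic differential with a unique pole of order $2$ at $p_\infty$. The level structure is trivial (a single level), so $\omega_\bullet = \omega_{\mathrm{term}} = \omega$. On the other hand, via the functor $F^\vee$ of \Cref{C:equivalentfunctors}, the open locus of irreducible scaled curves in $\overline{P}_n(\bC)$ consists of exactly the same data $(\bP^1,p_\infty,\omega,p_1,\ldots,p_n)$, and both copies of $\bA^n/\bG_a$ are identified by choosing a coordinate $z$ on $\bP^1\setminus\{p_\infty\}$ with $dz = \omega$ and taking $(z(p_1),\ldots,z(p_n)) \in \bA^n/\bG_a$. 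Under these identifications, $\xi$ acts as the identity: it forgets no data (since there is no nonterminal data) and preserves the curve, marked points, and (terminal) differential.

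For the second claim, I first need a $G$-action on $\overline{P}_n(\bC)$. The plan is to define it in exact analogy to \Cref{P:GactiononMn}: given $a = (a_1,\ldots,a_n) \in \bG_a^n$ and $(C,p_\infty,\omega,p_1,\ldots,p_n) \in \overline{P}_n(\bC)$, set
\[
a \cdot (C,p_\infty,\omega,p_1,\ldots,p_n) := (C,p_\infty,\omega,p_1+a_1,\ldots,p_n+a_n),
\]
where $p_i + a_i$ denotes translating $p_i$ in its terminal component $C_\tau$ via the local affine coordinate determined up to translation by $\omega|_{C_\tau} = dz$ (which is a well-defined meromorphic differential by \Cref{L:scaledline} together with the stability conditions). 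The diagonal $\Delta \subset \bG_a^n$ acts trivially, yielding a $G$-action that restricts to the standard translation action on $\bA^n/\bG_a$. Algebraicity of this action can be verified analogously to \Cref{P:GactiononMn} by working in local coordinates for $\overline{P}_n(\bC)$ (e.g., those derived from the blowup description $W_n \to \overline{P}_n(\bC)$ of \cite{Zahariucresolution}).

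Finally, $G$-equivariance of $\xi$ is immediate from the definitions once this action is in place: both actions modify only the marked points, and only by translation using the \emph{terminal-component} differentials, which $\xi$ preserves by construction (along with the underlying curve and $p_\infty$). I do not expect any genuine obstacle; the whole proposition is a definition-chase, and the one piece of content worth highlighting is the construction of the $G$-action on $\overline{P}_n(\bC)$ (and its algebraicity), which follows the template of \Cref{P:GactiononMn} with only cosmetic changes.
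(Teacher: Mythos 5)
Your proof is correct in substance and reaches the same conclusion, but it takes a somewhat more self-contained route than the paper does. For the $G$-action on $\overline{P}_n(\bC)$, the paper simply cites the fact (from \cite{Zahariucmarked}, pp.~4--5) that $\overline{P}_n$ is already known to be a $G$-equivariant compactification of $\bA^n/\bG_a$, and then observes that ``comparing the definitions of the actions immediately gives that $\xi$ is $G$-equivariant.'' You instead re-derive the action from scratch by translating each $p_i$ in its terminal component via the affine coordinate determined by $\omega|_{C_\tau}$, using \Cref{L:scaledline} to justify that this is well-defined. Since any algebraic $G$-action on $\overline{P}_n$ extending translation on the dense open $\bA^n/\bG_a$ is uniquely determined, your action necessarily coincides with Zahariuc's, so this is a legitimate alternative. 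What the paper's approach buys is that it sidesteps the need to verify algebraicity of the action on $\overline{P}_n$; what your approach buys is independence from the external reference. The one point I would push back on is the parenthetical suggestion of verifying algebraicity ``by working in local coordinates \ldots{} derived from the blowup description $W_n \to \overline{P}_n(\bC)$'': a resolution of singularities does not directly furnish coordinate charts on the singular target (it is not an isomorphism), so if you did want a self-contained algebraicity argument you would need either a direct analysis of the moduli functor (equivariance of the family and uniqueness of the representing scheme), or simply to fall back on the cited result as the paper does.
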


\begin{proof}
    $\bA^n/\bG_a$ includes into $\Mmscbar_n$ by identifying $[z_1,\ldots, z_n] \in \bA^n/\bG_a$ with $(\bP^1,\infty,dz,z_1,\ldots, z_n)$ (see \S 3). The inclusion $\bA^n/\bG_a\to \overline{P}_n$ is specified in exactly the same manner. It follows that $\xi$ is the identity when restricted to $\bA^n/\bG_a$. $\Mmscbar_n$ is a $G$-equivariant compactification of $\bA^n/\bG_a$ by \Cref{P:GactiononMn}, and the same is true of $\overline{P}_n$ by \cite{Zahariucmarked}*{pp. 4-5}. Comparing the definitions of the actions immediately gives that $\xi$ is $G$-equivariant. 
\end{proof}

We can now state the main theorem for this part of the paper. Here, $\gamma:W_n\to \overline{P}_n$ denotes the resolution of \cite{Zahariucresolution}. 

\begin{thm}
\label{T:diagramcommutes}
    Under the isomorphism $f:\cA_n\to W_n$ of \Cref{T:isothm}, $\xi$ correponds to $\gamma$; i.e. $\gamma \circ f = \xi$. In particular, $\xi:\Mmscbar_n \to \overline{P}_n(\bC)$ is a $G$-equivariant resolution of singularities.
\end{thm}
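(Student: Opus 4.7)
The idea is to combine a density argument with the modular interpretation of $\overline{P}_n$. First, I would verify that $\gamma \circ f$ and $\xi$ agree on the open dense subset $\bA^n/\bG_a \subset \Mmscbar_n$. By \Cref{P:factsaboutxi}, $\xi$ restricts to the identity on $\bA^n/\bG_a \hookrightarrow \overline{P}_n(\bC)$. By \Cref{T:isothm}, $f$ is defined over $\bP(V)$ and so restricts to the identity inclusion $\bA^n/\bG_a \hookrightarrow W_n$; and $\gamma$ is a small resolution that is an isomorphism on the preimage of $\bA^n/\bG_a \subset \overline{P}_n(\bC)$. Thus $\gamma \circ f$ also restricts to the identity on $\bA^n/\bG_a$.

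Since $\gamma \circ f$ is a morphism of varieties and $\overline{P}_n$ is separated, the equality $\gamma \circ f = \xi$ on the dense open $\bA^n/\bG_a$ will propagate to all of $\Mmscbar_n$ as soon as $\xi$ is known to be a morphism. The heart of the argument is therefore to upgrade $\xi$ from a set map to a morphism. For this I would invoke the universal property supplied by \Cref{T:zahariucmarkedmain} and \Cref{C:equivalentfunctors}: specifying a morphism $\Mmscbar_n \to \overline{P}_n$ is equivalent to specifying a flat family of stable scaled curves over $\Mmscbar_n$ whose moduli-theoretic classification agrees with $\xi$.

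Concretely, I would build such a family chart-by-chart on the open cover $\{U_\Gamma\}$. In the coordinates $(z_{ij},t_m)$ of \Cref{P:identification}, each $t_m$ is a classical smoothing parameter for the nodes between levels $m{-}1$ and $m$, so one can write down an explicit flat family of genus-$0$ nodal curves whose fiber over $\Sigma \in U_\Gamma$ is the underlying curve of $\Sigma$; the $z_{ij}$ supply the sections $p_i$; and on each terminal component the meromorphic differential is rigidly determined by the constraints $z_{ij}=\int_{n_v}^n \omega_v$ (no rescaling freedom survives on the terminal level). Gluing across $U_\Gamma \cap U_{\Gamma'}$ is controlled by \Cref{L:goodindicesCOV}: the transition formulas there rescale precisely the intermediate (non-terminal) differentials and reparametrize the level structure, i.e.\ exactly the data forgotten by $\xi$, leaving the scaled-curve data intact. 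This yields a morphism $\widetilde{\xi}: \Mmscbar_n \to \overline{P}_n$ with $\widetilde{\xi}(\bC) = \xi$ as set maps.

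The main obstacle is verifying flatness of the globalized family at deep strata $S_{\rho_\bullet}^\circ$, where several nodes open simultaneously; I would handle this via the standard smoothing normal forms for nodal degenerations together with the fact that each $U_\Gamma$ is an open subvariety of affine space (\Cref{R:projectioncoords}), so the smoothing parameters are honest independent coordinates. Once $\widetilde{\xi}$ is known to be a morphism, the density argument gives $\widetilde{\xi} = \gamma \circ f$, hence $\gamma \circ f = \xi$. The remaining assertions are then immediate: $G$-equivariance by \Cref{P:factsaboutxi}, and the resolution property because $f$ is an isomorphism and $\gamma$ is a resolution by \Cref{T:Zahariucresolution}.
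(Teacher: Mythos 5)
Your strategy is genuinely different from the paper's, and the difference is worth spelling out. You propose to \emph{upgrade $\xi$ to a morphism} by building a flat family of scaled curves over $\Mmscbar_n$ chart-by-chart, and then let separatedness plus density on $\bA^n/\bG_a$ finish the job. The paper never does this. Instead it treats $\sigma = \gamma\circ f$ and $\xi$ purely as set maps and shows they agree pointwise, using three ingredients: (i) \Cref{L:dualtreepreserved} (both produce the dual tree $\lvert\Gamma(\Sigma)\rvert$), (ii) \Cref{C:scalingsagree} (both produce the same period data $I_{ij}$, established by a separate analytic continuity argument in \Cref{L:continuousintegration}), and (iii) \Cref{L:recoveringcurve}, which says that a scaled curve in $Z_\rho^\circ(\bC)$ is reconstructible from its dual tree, its Knudsen stabilization $\psi_\rho(C)$, and the $I_{ij}$. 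The remaining (and hardest) piece, $\psi_\rho\circ\sigma=\psi_\rho\circ\xi$, is handled by \Cref{P:knudsenmapcomp} together with an explicit induction on the number of colliding points, in each step degenerating along a one-parameter family inside a chart $U_\Gamma$ and taking analytic limits. The morphism-ness of $\xi$ is a \emph{consequence} of the theorem, not an input.

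Your route would work in principle, and if carried out it would prove something strictly stronger — that $\Mmscbar_n$ carries a universal family of stable scaled curves, i.e.\ that $\Mmscbar_n$ is itself a (fine) moduli space for a sharper functor — which is not established anywhere in the paper. But the key step is underdeveloped. The space $\Mmscbar_n$ is constructed in \S3 via a set-theoretic atlas $\{U_\Gamma\}$, not as a moduli scheme, so the existence of a flat family over each $U_\Gamma$ is a genuine construction, not a formality. Writing it down requires exhibiting, in the coordinates $(z_{ij},t_m)$, a flat genus-$0$ curve $\cC\to U_\Gamma$ with the right fibers, the $n+1$ sections, and a relative meromorphic section of $\omega_{\cC/U_\Gamma}$ degenerating as prescribed; the single parameter $t_m$ must smooth \emph{all} nodes between consecutive levels simultaneously, so this is a multi-node normal form, not the classical one-node picture. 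You flag flatness as the obstacle, but the more fundamental gap is that you never actually construct the family. Your appeal to \Cref{L:goodindicesCOV} to handle gluing is also a bit off target: that lemma concerns coordinate transitions, not isomorphisms of families. (In fact, once the family exists on each chart the gluing is automatic from fine-moduli uniqueness plus density of $\bA^n/\bG_a$, so the transition analysis can be skipped entirely; the real burden is the chart-level construction.) As written, your argument has a real gap at its center, though the outline is a viable alternative program — arguably a more ambitious one than what the paper carries out.
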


\subsection{Proof of \Cref{T:diagramcommutes}}

Write $\sigma = \gamma \circ f$. $\sigma$ is a surjective birational morphism of algebraic varieties. Given a level tree $(\Gamma,\preceq)$, we write $\lvert \Gamma\rvert$ to emphasize the underlying tree without level structure. Given $\Sigma \in \Mmscbar_n$, by definition of $\xi$ the dual tree of $\xi(\Sigma)$ is $\lvert \Gamma(\Sigma)\rvert$.

\begin{lem}
\label{L:dualtreepreserved}
    Given $\Sigma \in \Mmscbar_n$ with dual level tree $\Gamma(\Sigma)$, the dual tree of $\sigma(\Sigma)$ is $\lvert \Gamma(\Sigma)\rvert$.
\end{lem}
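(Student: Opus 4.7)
The plan is to exploit that $\sigma$ preserves period functions, and that the dual tree of a scaled curve can be recovered from the $t$-adic valuations of periods along a sufficiently generic specialization.

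First I would show $\sigma^{\ast}(\Pi_{ij}) = \Pi_{ij}$ for all $1\le i<j\le n$. Since $\sigma$ restricts to the identity on $\bA^n/\bG_a$ (both $f$ and $\gamma$ do by construction, $f$ being an isomorphism over $\bP(V)$ and $\gamma$ a birational morphism), and morphisms to the separated scheme $\bP^1$ from a reduced variety are determined by their restriction to any dense open, this is immediate. As a consequence, $p_i$ and $p_j$ lie on the same terminal component of $\sigma(\Sigma)$ iff $\Pi_{ij}(\Sigma) \in \bC$ iff $h(i) = h(j)$ in $\Gamma(\Sigma)$, so the terminal partition of the dual tree of $\sigma(\Sigma)$ already agrees with that of $\lvert \Gamma(\Sigma)\rvert$.

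To recover the internal structure of the tree, I would fix $\Sigma \in S_{\rho_\bullet}^\circ$ with $\rho_\bullet = \{\rho_1<\cdots<\rho_\ell\}$ and $\Gamma = \Gamma(\rho_\bullet)$, choose local coordinates $(z_{ij},t_m)$ on $U_\Gamma$ as in \eqref{E:Mn_coordinates}, and construct an analytic arc $\Sigma_t$ by setting $z_{ij}(\Sigma_t) = z_{ij}(\Sigma)$ and $t_m(\Sigma_t) = t^{a_m}$ for positive integers $a_m$ chosen generically, say so that the sums $b_m := a_{m+1}+\cdots+a_\ell$ are pairwise distinct. For $t\ne 0$, $\Sigma_t \in \bA^n/\bG_a$ and $\sigma(\Sigma_t) = \Sigma_t$, while by continuity $\sigma(\Sigma_t) \to \sigma(\Sigma)$ in $\overline{P}_n(\bC)$. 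By \Cref{C:goodCOVgeneric}, $\Pi_{ij}(\Sigma_t) = z_{ij}(\Sigma)\cdot t^{-b_m}$, where $m$ is the level of $h(i)\wedge h(j)$ in $\Gamma$, so the orders of vanishing of the $\Pi_{ij}$ along the arc directly record the level function.

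To conclude, I would invoke the moduli-theoretic description of $\overline{P}_n$ via \Cref{C:equivalentfunctors}: performing stable reduction on the pullback of the universal scaled curve along this arc, the dual tree of the central fiber is determined by the valuations of the periods. The terminal partition of the limit is read off from pairs with bounded $\Pi_{ij}$, and iteratively the lower partitions in the tree are extracted from which subcollections of terminal components share each distinct asymptotic rate; with our choice of $a_m$ this reconstruction reproduces the chain $\rho_\bullet$ and hence the tree $\lvert \Gamma(\Sigma)\rvert$. The hardest step will be carrying out this last item rigorously, namely the explicit verification that stable reduction of the universal family over $\Spec \bC[[t]]$ produces the tree with the claimed combinatorics. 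A possible alternative is to argue instead from the explicit description of $\gamma$ on each stratum $D_{\rho_\bullet}^\circ$ provided by \cite{Zahariucresolution}, identifying its image directly as the locus of scaled curves with underlying dual tree $\lvert \Gamma(\rho_\bullet)\rvert$.
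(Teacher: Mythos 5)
Your primary route is genuinely different from the paper's proof, which is a one-liner: since $\Sigma$ has dual level tree $\Gamma(\rho_\bullet)$ iff $\Sigma\in S_{\rho_\bullet}^\circ$, \Cref{T:isothm} gives $f(S_{\rho_\bullet}^\circ)=D_{\rho_\bullet}^\circ$, and \cite{Zahariucresolution}*{Remark 5.8} says $\gamma$ carries $D_{\rho_\bullet}^\circ$ to scaled curves with dual tree $\lvert\Gamma(\rho_\bullet)\rvert$. The ``alternative'' you float in your last sentence \emph{is} the paper's argument; everything before it is a new proof attempt with real gaps.

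The first gap is structural. You assert $\sigma^*(\Pi_{ij})=\Pi_{ij}$ by ``density in a reduced variety,'' but there is no morphism $\overline{P}_n(\bC)\to\bP^1$ called $\Pi_{ij}$ for this to be pulled back from: the paper defines $I_{ij}$ only on the locus $V_{ij}\subset\overline{P}_n(\bC)$ where $p_i,p_j$ lie on the same terminal component. Openness of $V_{ij}$ and the identity $\sigma^{-1}(V_{ij})=U_{ij}$ are \Cref{C:ijsets}, which is \emph{derived from} \Cref{L:dualtreepreserved} together with properness of $\Mmscbar_n$; likewise $\sigma^*(I_{ij})=\Pi_{ij}$ is \Cref{C:scalingsagree}, appearing downstream of both. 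So your claim that ``$p_i$ and $p_j$ lie on the same terminal component of $\sigma(\Sigma)$ iff $\Pi_{ij}(\Sigma)\in\bC$'' is not available at this point without an independent argument about the boundary of $\overline{P}_n$, which is precisely the content being sought.

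The second gap you acknowledge yourself: the degeneration argument requires actually carrying out stable reduction of the universal scaled curve over $\Spec\bC[[t]]$ and showing the central fiber has dual tree $\lvert\Gamma(\rho_\bullet)\rvert$. Choosing generic rates $a_m$ so that the valuations of $\Pi_{ij}$ are pairwise distinct is a good idea (and morally this is what Zahariuc does), but the step from ``the valuations of periods along the arc are $-b_m$'' to ``the semistable limit in $\overline{P}_n$ has the claimed tree'' amounts to re-deriving the boundary structure of $\overline{P}_n$ from scratch. Note also that the argument should be made with the formal arc over $\bC[[t]]$ and the valuative criterion of properness, not just analytic limits, to actually identify $\sigma(\Sigma)$ as the central fiber. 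None of this is impossible, but it would be longer than the paper's entire \S5, whereas citing the stratum-by-stratum description of $\gamma$ from \cite{Zahariucresolution} makes this a two-sentence proof.
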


\begin{proof}
    $\Sigma\in \Mmscbar_n$ has dual level tree $\Gamma(\rho_\bullet)$ if and only if $\Sigma \in S_{\rho_\bullet}^\circ$ for $\rho_\bullet \in \mathfrak{Ch}(L_n^-)$. However, $f(S_{\rho_\bullet}^\circ) = D_{\rho_\bullet}^\circ \subset W_n$ by \Cref{T:isothm}. On the other hand, by \cite{Zahariucresolution}*{Remark 5.8} for $x\in D_{\rho_\bullet}^\circ$, the dual tree of $\gamma(x)$ is $\lvert \Gamma(\rho_\bullet)\rvert$.
\end{proof}

For $1\le i<j\le n$, let $U_{ij}$ denote the set of $\Sigma \in \Mmscbar_n$ such that $p_i$ and $p_j$ lie on the same terminal component of $\Sigma$ and likewise for $V_{ij}(\bC)\subset \overline{P}_n(\bC)$. We will often drop the $\bC$ from the notation, writing $V_{ij}$ in lieu of $V_{ij}(\bC)$.

\begin{cor}
\label{C:ijsets}
    For any $1\le i <j \le n$, $U_{ij}$ is open, $U_{ij} = \sigma^{-1}(V_{ij})$, and $V_{ij}$ is open.
\end{cor}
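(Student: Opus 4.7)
The plan is to prove the three assertions in order, using the continuity of the period functions, the tree-preservation from Lemma \ref{L:dualtreepreserved}, and the properness of the varieties in play.

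First, I would establish openness of $U_{ij}$ directly from the period function $\Pi_{ij}$. By Theorem \ref{T:spaceconstruction}(2), $\Pi_{ij}\colon\Mmscbar_n\to\bP^1$ is a morphism, and by its construction (see \Cref{D:multiscaledline} and the definition preceding \Cref{L:determinedbyintegrals}), $\Pi_{ij}(\Sigma)=\infty$ if and only if $p_i$ and $p_j$ fail to lie on the same terminal component of $\Sigma$. Hence $U_{ij}=\Pi_{ij}^{-1}(\bA^1)$ is the preimage of the Zariski open subset $\bA^1\subset\bP^1$, so it is open.

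For the identification $U_{ij}=\sigma^{-1}(V_{ij})$, the key input is Lemma \ref{L:dualtreepreserved}, which says that $\sigma$ preserves the underlying (unlevel) dual tree. In particular, for any $\Sigma\in\Mmscbar_n$, the terminal vertex on which a marked point $p_k$ sits is the same in $\Gamma(\Sigma)$ and in the dual tree of $\sigma(\Sigma)$, because $\sigma$ is constructed so as to forget only the level structure and the non-terminal differentials. Consequently, $p_i$ and $p_j$ lie on the same terminal component of $\Sigma$ if and only if they lie on the same terminal component of $\sigma(\Sigma)$, which is exactly the statement $\Sigma\in U_{ij}\iff \sigma(\Sigma)\in V_{ij}$.

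The main subtlety is the last claim, that $V_{ij}$ is open, since it is not a priori cut out by a morphism on $\overline{P}_n(\bC)$. Here I would use properness: both $\Mmscbar_n$ (by Theorem \ref{T:spaceconstruction}(3), or Corollary \ref{C:projectivity}) and $\overline{P}_n$ (by Theorem \ref{T:zahariucmarkedmain}) are projective, so $\sigma$ is a proper morphism and hence closed. Since $U_{ij}$ is open, its complement $\Mmscbar_n\setminus U_{ij}$ is closed, and by surjectivity of $\sigma$ combined with $U_{ij}=\sigma^{-1}(V_{ij})$ one checks directly that $\sigma(\Mmscbar_n\setminus U_{ij})=\overline{P}_n(\bC)\setminus V_{ij}$. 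The closed map property now forces $\overline{P}_n(\bC)\setminus V_{ij}$ to be closed, so $V_{ij}$ is open. The expected obstacle is purely this last step: without the properness of $\sigma$ one would only get openness of $V_{ij}$ in some weaker (e.g.\ constructible) sense, so the argument genuinely relies on both varieties being projective.
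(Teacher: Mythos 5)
Your proof is correct and essentially follows the paper's argument: step (2) is verbatim the same appeal to \Cref{L:dualtreepreserved}, and step (3) is the same properness/surjectivity argument, the only cosmetic difference being that the paper emphasizes that $U_{ij}^c$ is a proper variety whose image is therefore closed, while you emphasize that $\sigma$ itself is proper (equivalent, of course, since both source and target are proper over $\bC$). The one genuine variation is step (1): you observe directly that $U_{ij}=\Pi_{ij}^{-1}(\bA^1)$ is the preimage of an open set under the morphism $\Pi_{ij}\colon\Mmscbar_n\to\bP^1$ from \Cref{T:spaceconstruction}(2), whereas the paper instead identifies $U_{ij}^c=\bigcup_{\rho:\,i\not\sim^\rho j}S_\rho$ as a finite union of the closed strata from \Cref{L:closuredescription}. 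Your version is a touch more economical, since it sidesteps the verification of that union formula and just uses the already-established morphism $\Pi_{ij}$; the paper's version has the small benefit of exhibiting $U_{ij}^c$ explicitly in terms of the stratification, which is thematically aligned with the surrounding material but not logically necessary here.
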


\begin{proof}
    $U_{ij}^c$ consists of those $\Sigma$ for which $p_i$ and $p_j$ lie on different components. One can verify that $U_{ij}^c = \bigcup_{\rho:i\not\sim^\rho j}S_\rho$ and thus $U_{ij}^c$ is a closed subspace of $\Mmscbar_n$. $C\in V_{ij}$ if and only if the half edges corresponding to $p_i$ and $p_j$ are attached to the same vertex of $\Gamma(C)$. Therefore, by \Cref{L:dualtreepreserved} we have $\sigma^{-1}(V_{ij}) = U_{ij}$.\endnote{Let $\Sigma \in \Mmscbar_n$ be given. We know $|\Gamma(\Sigma)| = |\Gamma(\sigma(\Sigma))|$ and since $p_i$ and $p_j$ lying on the same component is visible at the level of the dual tree, we see $\Sigma \in U_{ij}$ if and only if $\sigma(\Sigma)\in V_{ij}.$} This also implies that $\sigma^{-1}(V_{ij}^c) = U_{ij}^c$ and by surjectivity of $\sigma$ we have $\sigma(U_{ij}^c) = V_{ij}^c$. However, $U_{ij}^c$ is proper, being a closed subvariety of a proper variety and thus $V_{ij}^c$ is closed. 
\end{proof}

For $C\in V_{ij}(\bC)$, define $I_{ij}(C) = \int_{p_i}^{p_j} \omega_v$, where $\omega_v$ is the differential on the terminal component containing both $p_i$ and $p_j$ and the integral is over any curve in the smooth locus of $C_v$. As in the case of $\Mmscbar_n$, this is well-defined.\endnote{Note, however, that here we can only define integrals between marked points lying on the same terminal component.} We now consider $\overline{P}_n(\bC)$ as a complex analytic space, and regard $I_{ij}$ as a function $V_{ij}(\bC)\to \bC$.

\begin{prop}
\label{L:continuousintegration}
    $I_{ij}:V_{ij}\to \bb{C}$ is continuous for the analytic topology. 
\end{prop}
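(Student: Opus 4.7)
The plan is to work in the analytic topology, reducing continuity of $I_{ij}$ to the holomorphic dependence of a contour integral on a parameter. Fix $C_0 \in V_{ij}$; let $C_{0,v_0}$ denote the common terminal component of $p_i(C_0)$ and $p_j(C_0)$, and let $n_-$ denote the unique node of $C_{0,v_0}$ (or $p_\infty(C_0)$ if $C_0$ is irreducible). Since $V_{ij}$ is open by \Cref{C:ijsets}, I would choose a contractible analytic neighborhood $U$ of $C_0$ inside $V_{ij}$; by representability (\Cref{T:zahariucmarkedmain} together with \Cref{C:equivalentfunctors}), restricting to $U$ yields a universal family $\pi : \mathcal{C} \to U$ with sections $p_i, p_j, p_\infty$ and a universal meromorphic differential $\omega$.

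First I would pick a smooth embedded arc $\gamma_0$ in $C_{0,v_0} \setminus \{n_-\}$ joining $p_i(C_0)$ to $p_j(C_0)$; this arc lies entirely in the relative smooth locus of $\pi$ and avoids $p_\infty(U)$. Since $\gamma_0$ is compact and $\pi$ is smooth along it, by patching local holomorphic trivializations of $\pi$ along $\gamma_0$ I obtain, after possibly shrinking $U$, a holomorphic open embedding
\begin{equation*}
\Psi : U \times T \hookrightarrow \mathcal{C},
\end{equation*}
where $T \subset \bC$ is a simply-connected open set containing two points $t_i, t_j$ joined by a closed arc $\widetilde{\gamma}_0$, such that $\Psi(\{C_0\} \times \widetilde{\gamma}_0) = \gamma_0$, $\Psi(u, t_i) = p_i(u)$, and $\Psi(u, t_j) = p_j(u)$ for all $u \in U$.

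Next, because the image of $\Psi$ lies in the relative smooth locus of $\pi$ and avoids $p_\infty(U)$, the pullback $\Psi^\ast \omega$ is a holomorphic relative $1$-form on $U \times T$. Writing it as $f(u,t)\, dt$ for a holomorphic function $f$, and using that for each fixed $u$ the form $f(u,t)\, dt$ is closed on the simply-connected set $T$ (so the fiberwise integral is path-independent), I would conclude
\begin{equation*}
I_{ij}(u) \;=\; \int_{\widetilde{\gamma}_0} f(u,t)\, dt,
\end{equation*}
which is manifestly a holomorphic, hence continuous, function of $u \in U$. Since $C_0 \in V_{ij}$ was arbitrary, continuity on all of $V_{ij}$ follows.

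The main obstacle is the construction of $\Psi$: one must patch local analytic trivializations of $\pi$ along the compact arc $\gamma_0$ in such a way that the endpoints track the holomorphic sections $p_i, p_j$, and the family of chart domains in the fibers contains the sections uniformly in $u$. This is a slightly delicate but standard application of the implicit function theorem in the analytic category combined with the compactness of $\gamma_0$; the reason it goes through uniformly in $u$ is precisely that $\gamma_0$ is compactly contained in the open relative smooth locus of $\pi$ and disjoint from the polar section $p_\infty$.
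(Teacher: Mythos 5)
Your overall strategy is close in spirit to the paper's: both proofs work analytically, use that $\pi$ is a holomorphic submersion on the relative smooth locus near the arc $\gamma_0$, invoke compactness of $\gamma_0$ to produce finitely many local holomorphic trivializations of the family, and conclude that the period integral behaves well in the base variable. The paper (invoking \cite{Fischer}) proves only continuity via dominated convergence; your approach aims for holomorphicity, which is indeed the sharper and morally correct conclusion.

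However, there is a genuine gap exactly where you flag the "main obstacle": the construction of a \emph{single} holomorphic product chart $\Psi : U \times T \hookrightarrow \mathcal{C}$ containing $\gamma_0$ with $\Psi(u,t_i) = p_i(u)$ and $\Psi(u,t_j) = p_j(u)$ for fixed $t_i,t_j$. This is not a standard consequence of the implicit function theorem plus compactness. The local trivializations furnished by \cite{Fischer} have transition functions of the form $(u,z) \mapsto (u, g(u,z))$, where $g$ is holomorphic in both variables but genuinely $u$-dependent; gluing these along a chain does not a priori produce a product $U' \times T$ with $T$ a fixed open set in $\bC$, and there is no holomorphic Ehresmann theorem available in this generality (the obstruction is the absence, in general, of a holomorphic flat connection for the submersion). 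Pinning the two endpoints to fixed $t_i,t_j$ adds a further normalization that must be shown to vary holomorphically in $u$, which is also not addressed. The paper circumvents all of this: it never merges the local trivializations, instead choosing intermediate sections $\sigma_k : \Delta \to U_k$ and decomposing $I_{ij}(t) = \sum_k \int_{\sigma_{k-1}(t)}^{\sigma_k(t)} \omega_t$, so that only continuity (not a global product structure) of each local chart is needed, and each summand is handled by dominated convergence. To make your argument rigorous you would either need to supply a genuine proof of the gluing claim (for instance, by showing the chain of transition cocycles can be trivialized, exploiting that the nerve of a cover of an arc is an interval, or by producing a global holomorphic coordinate directly from the normalization sending $(n_-, p_i, p_j)$ to $(\infty, 0, 1)$ together with the relative dualizing sheaf), or else abandon the single-chart ambition and revert to the finite-sum decomposition as in the paper.
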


\begin{proof}
    The open immersion $V_{ij}\to \overline{P}_n$ classifies a family $\pi:\mathcal{C}_{ij}\to V_{ij}$ of scaled curves. Consider $\cC^{\rm{sm}}_{ij}:= \cC_{ij}\setminus \cC^{\rm{sing}}_{ij}$. The fibres of $\pi:\cC^{\rm{sm}}_{ij}\to V_{ij}$ are smooth and $\pi$ is flat, being a composition of flat morphisms. So, by \cite[\href{https://stacks.math.columbia.edu/tag/01V8}{Tag 01V8}]{Stacksproject} $\pi:\cC^{\rm{sm}}_{ij}\to V_{ij}$ is smooth. 

    Henceforth, we consider the induced morphism of analytic spaces (omitting $(-)^{\rm{an}}$ from the notation). The marked points $p_i$ and $p_j$ defined fibrewise correspond to sections $s_i,s_j:V_{ij}\to \cC^{\rm{sm}}$ of $\pi$. Now, fix a point $0\in V_{ij}$ and denote the corresponding fibre by $\cC_0$. $s_i(0)$ and $s_j(0)$ can be connected by a $C^\infty$ curve in the irreducible component of $\cC_{0}^{\rm{sm}}$ on which they lie.\endnote{Indeed, the irreducible of $\cC_0$ in which they lie is isomorphic to $\bP^1$ and meets the singular locus only at its single node point connecting it to the rest of the tree. So, the corresponding irreducible component of $\cC^{\rm{sm}}_0$ is isomorphic to $\bA^1$.} Parametrize this curve as $\eta:[0,1]\to \cC_0^{\rm{sm}}$ such that $\eta(0) = s_i(0)$ and $\eta(1) = s_j(0)$. At any $p\in \eta(I)$, $\cC_0$ is smooth and by \cite{Fischer}*{Thm. p.159} $\pi$ is an analytic submersion at $p$. Consequently, by \cite{Fischer}*{pp. 99-100} there exists an open neighborhood $U_p$ around $p$ such that $\pi$ factors as 
    \begin{equation}
    \label{E:trivialized}
        \begin{tikzcd}
            U_p \arrow[r,"\sim"]\arrow[d,"\pi",swap]& \Delta_p \times D\arrow[dl,"\rm{pr}_1"]\\
            \Delta_p
        \end{tikzcd}
    \end{equation}
    where $\Delta_p \subset V_{ij}$ is an open neighborhood of $0$ and $D\subset \bC$ the unit disk. By a compactness argument, there exists an open cover $\{U_i\}_{i=1}^n$ of $\eta(I)$ and $0 = x_0<x_1<\cdots<x_n = 1$ in $I$ such that for each $1\le i \le n$, $\eta(x_{i-1})$ and $\eta(x_i)$ lie in $U_i$ and $\pi$ restricted to $U_i$ is trivialized as in \eqref{E:trivialized}. Put $\Delta = \bigcap_{i=1}^n \Delta_{p_i}$ and for each $1\le i \le n$, construct local sections $\sigma_i:\Delta \to U_i \cong \Delta \times D$ by $t\mapsto (t,0)$. Over $\Delta$, $I_{ij}(t) = \sum_{k=1}^n \int_{\sigma_{k-1}(t)}^{\sigma_k(t)} \omega_t$, so we prove that $\int_{\sigma_{k-1}(t)}^{\sigma_k(t)}\omega_t$ is continuous in $t$ for each $k$. Using the trivialization, this corresponds to an integral $\int_0^{a(t)}f(z,t)dz$ for $t\in \Delta$ such that $\lim_{t\to 0} a(t) = a$ and $f\in C^0(\Delta \times D,\bC)$. However, it follows from the dominated convergence theorem that for a sequence $(t_n)\to 0$ in $\Delta$ one has $\lim_{n\to\infty} \int_0^{a(t_n)} f(z,t_n)dz = \int_0^a f(z,t)$.\endnote{Up to shrinking $\Delta\times D$, we may assume that $\Delta \times D$ is precompact and that $f$ extends continuously to the boundary. Therefore, $f:\Delta \times D\to \bC$ is bounded. Consequently, choose $C$ so that $\lvert f\rvert \le C$ on $\Delta \times D$. Now, 
    \[
    \int_{a}^{a(t)}f(z,t)dz = \int_0^a f(z,t)dz + \int_a^{a+\epsilon(t)} f(z,t)dz
    \]
    where $\lim_{t\to0} \epsilon(t) = 0$. $\lvert \int_a^{a+\epsilon(t)} f(z,t) dz \rvert \le \epsilon(t)\cdot C$ and so 
    \[
    \lim_{n\to\infty} \int_a^{a(t_n)}f(z,t_n) dz = \lim_{n\to\infty} \int_0^af(z,t_n) dz
    \]
    and the dominated convergence theorem implies the claim.}
\end{proof}

\begin{cor}
\label{C:scalingsagree}
    $\sigma^*(I_{ij}) = \Pi_{ij}:U_{ij}\to \bb{C}$ for all $1\le i < j \le n$.    
\end{cor}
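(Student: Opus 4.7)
The plan is to use a density and continuity argument. Both sides of the desired equality $\sigma^\ast(I_{ij}) = \Pi_{ij}$ are continuous complex-valued functions on $U_{ij}$ (in the analytic topology), so it suffices to verify the equality on a dense subset and invoke continuity.

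First I would verify continuity of both sides. For $\Pi_{ij}$: by \Cref{T:spaceconstruction}, $\Pi_{ij}:\Mmscbar_n \to \bP^1$ is a morphism of algebraic varieties, and on $U_{ij}$ the points $p_i$ and $p_j$ lie on the same terminal component so $\Pi_{ij}$ takes values in $\bC$ and is in particular continuous. For $\sigma^\ast(I_{ij})$: by \Cref{C:ijsets}, $\sigma$ restricts to a continuous map $U_{ij} \to V_{ij}$, and by \Cref{L:continuousintegration} $I_{ij}:V_{ij}\to \bC$ is continuous, so the composite is continuous.

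Next I would show equality on the dense open $\bA^n/\bG_a \subset U_{ij}$. By \Cref{P:factsaboutxi}, $\xi$ restricts to the identity on $\bA^n/\bG_a$, and by construction $f_0 = \gamma \circ f$ (both $f$ and $\gamma$ are over $\bP(V)$) so $\sigma$ likewise restricts to the identity on $\bA^n/\bG_a$. Under the identification $\Mmscbar_n^\circ = \bA^n/\bG_a$ recalled in \S3.2, the function $\Pi_{ij}$ restricts to $z_j - z_i$, and under the corresponding identification $V_{ij} \cap \bA^n/\bG_a \subset \overline{P}_n(\bC)$, the function $I_{ij}$ is computed on the irreducible scaled line $(\bP^1,\infty,dz,z_1,\ldots,z_n)$ as $\int_{z_i}^{z_j} dz = z_j - z_i$. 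Hence $\sigma^\ast(I_{ij}) = \Pi_{ij}$ on $\bA^n/\bG_a$.

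Finally, since $\bA^n/\bG_a$ is Zariski open and dense in $\Mmscbar_n$ (by \Cref{T:spaceconstruction}(1)) and $U_{ij}$ is a Zariski open subset of $\Mmscbar_n$ containing it, $\bA^n/\bG_a$ is dense in $U_{ij}$ for the analytic topology as well. Two continuous functions agreeing on a dense subset of a Hausdorff space agree everywhere, so $\sigma^\ast(I_{ij}) = \Pi_{ij}$ on all of $U_{ij}$. There is no real obstacle here, the proof is essentially a one-line density argument once continuity of $I_{ij}$ (established in the preceding proposition) and equality on the generic locus are in hand.
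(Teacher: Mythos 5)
Your argument is correct and is essentially identical to the paper's proof: the paper likewise observes that $\sigma^*(I_{ij})$ is continuous (via \Cref{L:continuousintegration}) and agrees with $\Pi_{ij}$ on the dense subset $\bA^n/\bG_a\cap U_{ij}$, and concludes by density. You simply spell out the intermediate steps (continuity of both sides, the explicit computation $\int_{z_i}^{z_j}dz = z_j-z_i$ on the irreducible locus) in slightly more detail than the paper does.
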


\begin{proof}
    $\sigma^*(I_{ij})$ is a continuous function that agrees with $\Pi_{ij}$ on $\bA^n/\bG_a \cap U_{ij}$. $\bA^n/\bG_a \cap U_{ij}$ is dense in $U_{ij}$ and consequently $\sigma^*(I_{ij}) = \Pi_{ij}$ on $U_{ij}$. 
\end{proof}

\begin{defn}
    Given $\rho \in L_n$, we define subfunctors of $F$ on $T$-points by
    \begin{enumerate}   
        \item $F_\rho(T) \subset F(T)$ parametrizes families such that $i\sim^\rho j \Rightarrow p_i = p_j$;
        \item $G_\rho(T)\subset F(T)$ parametrizes families such that $p_i = p_j\Rightarrow i\sim^\rho j$; and
        \item $F_\rho^\circ(T) = F_\rho(T)\times_{F(T)} G_\rho(T)$, i.e. $p_i = p_j \iff i\sim^\rho j$. 
    \end{enumerate}
\end{defn}

\begin{lem}
\label{L:movingstrata}
For any $\rho \in L_n$, $F_\rho$ is a closed subfunctor of $F$, $G_\rho$ is an open subfunctor of $F$, and $F_\rho^\circ$ is a locally closed subfunctor of $F$. 
\end{lem}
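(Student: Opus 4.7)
The plan is to show that each of these subfunctors is cut out by the locus where certain pairs of sections do or do not agree, which is closed respectively open because the family $C \to T$ is separated.

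Given a $T$-point of $F$, classified by a morphism $T \to \overline{P}_n$ pulling back the universal family $\pi : \cC \to \overline{P}_n$ and its marked sections $p_1,\ldots,p_n : \overline{P}_n \to \cC$, I would first observe that $\cC \to T$ is projective, hence separated. For any pair $i,j$, the equalizer of the two sections $p_i, p_j : T \to \cC$ is then a closed subscheme $E_{ij} \subset T$, while its complement $U_{ij} := T \setminus E_{ij}$ is an open subscheme on which the sections are disjoint. This behaves well under base change: for any $T' \to T$, the pullback of $E_{ij}$ is the equalizer of the pulled back sections, and likewise for $U_{ij}$.

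Now the three subfunctors are cut out as follows:
\begin{itemize}
    \item $F_\rho(T) = \{T \to F : T \to E_{ij} \text{ for every pair } i \sim^\rho j\}$, so $F_\rho$ is represented by the closed subscheme $\bigcap_{i \sim^\rho j} E_{ij}$, showing $F_\rho \hookrightarrow F$ is a closed subfunctor.
    \item $G_\rho(T) = \{T \to F : T \to U_{ij} \text{ for every pair } i \not\sim^\rho j\}$, so $G_\rho$ is represented by the open subscheme $\bigcap_{i \not\sim^\rho j} U_{ij}$, giving an open subfunctor.
    \item $F_\rho^\circ = F_\rho \times_F G_\rho$ is then represented by the locally closed subscheme $\left(\bigcap_{i \sim^\rho j} E_{ij}\right) \cap \left(\bigcap_{i \not\sim^\rho j} U_{ij}\right)$ of $T$, hence is a locally closed subfunctor of $F$.
\end{itemize}

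The main technical point to justify is the separation property of $\cC \to T$, which follows from projectivity of the universal family asserted in Theorem \ref{T:zahariucmarkedmain}; granted this, the rest is formal. No real obstacle is expected since the conditions defining the subfunctors are exactly coincidence/noncoincidence of sections, which are the prototypical closed/open conditions for a separated morphism.
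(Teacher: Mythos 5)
Your proof is correct and follows essentially the same approach as the paper: form the equalizer subschemes $E_{ij}\subset T$ of the sections $p_i$ and $p_j$, use separatedness of $C\to T$ (the paper cites this directly from the definition of a prestable family rather than routing through projectivity of the universal family, but both are valid) to conclude each $E_{ij}$ is closed, and then realize the pullbacks of $F_\rho$, $G_\rho$, $F_\rho^\circ$ along any $T\to F$ as $\bigcap_{i\sim^\rho j}E_{ij}$, $\bigcap_{i\not\sim^\rho j}E_{ij}^c$, and their intersection respectively. The only cosmetic difference is that the paper works directly with an arbitrary $T$-point rather than pulling back from the universal family over $\overline{P}_n$.
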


\begin{proof}
    The third claim follows from the first two. Given a $T$-point of $F$, $(\pi:C\to T,\omega,s_\infty,s_1,\ldots, s_n)$, where the $s_i$ are sections of $\pi$, the following square is Cartesian:
    \[
        \begin{tikzcd}
            W\arrow[d]\arrow[r]&T\arrow[d]\\
            G_\rho\arrow[r]&F,
        \end{tikzcd}
    \]
    where, $W = \bigcap_{i\not\sim^\rho j} E_{ij}^c$, for $E_{ij}\subset T$ the equalizer subscheme of $s_i$ and $s_j$. In particular, by  \cite[\href{https://stacks.math.columbia.edu/tag/01KM}{Tag 01KM}]{Stacksproject} each $E_{ij}$ is closed.\endnote{Note that we can apply the stronger version of the cited lemma because $\pi:C\to T$ is separated by definition.} It follows that $G_\rho$ is an open subfunctor of $F$. A similar argument implies that $F_\rho$ is a closed subfunctor of $F$.\endnote{In the case of $F_\rho \subset F$, the scheme in the fibre product diagram is the (scheme theoretic) intersection of the elements of $\{E_{ij}\}_{i\sim^\rho j}$ where $E_{ij}\subset T$ is the equalizer subscheme of $T$ as in the body of the proof.}
\end{proof}

\Cref{L:movingstrata} allows us to make the following definitions.

\begin{defn}
    For $\rho \in L_n$, let $Z_\rho$ denote the closed subscheme of $\overline{P}_n$ representing $F_\rho$, $Y_\rho$ the open subscheme of $\overline{P}_n$ representing $G_\rho$, and $Z_\rho^\circ = Y_\rho \cap Z_\rho$ the locally closed subscheme representing $F_\rho^\circ$. We have $Z_\top = \overline{P}_n$ and $Z_\top^\circ$ is the open subscheme of $\overline{P}_n$ parametrizing curves on which no marked points collide.
\end{defn}

In what follows, $\overline{M}_{0,n+1}$ denotes the Grothendieck-Knudsen moduli space of stable $(n+1)$-marked genus $0$ curves \cite{Knudsencurves}. It is smooth and projective over $\Spec \bZ$ of dimension $n-2$. Given a finite set $A$, we also consider $\overline{M}_{0,A}$ which is the moduli space of stable $A$-marked genus $0$ curves. There are noncanonical isomorphisms $\overline{M}_{0,A}\to \overline{M}_{0,|A|}$ coming from choosing a bijection $\{1,\ldots, |A|\}\to A$. $\mathfrak{M}_{0,n+1}$ denotes the stack of prestable $(n+1)$-marked genus $0$ curves (cf. \cite{BehrendGWII}) and $\mathfrak{M}_{0,A}$ is defined analogously. For any $\rho \in L_n$ it will be convenient to put $B^+(\pi) := B(\pi) \cup \{\infty\}$. 

Given any $\rho \in L_n$, there is a forgetful map $Y_\rho\to \mathfrak{M}_{0,B^+(\rho)}$, which on $T$-points is given by $(C\to T,\omega,p_\infty,p_1,\ldots, p_n)\mapsto (C\to T,p_\infty,\{p_{\mu(b)}:b\in B(\rho)\})$. \cite{Knudsencurves} constructs a contraction morphism $c:\mathfrak{M}_{0,n+1} \to \overline{M}_{0,n+1}$ which on closed points contracts unstable irreducible components to points. In particular, when $C\in \mathfrak{M}_{0,n+1}(\bC)$ has a component with one node point and one marked point, this component is deleted and a new marked point is placed in the position of the node point. 

\begin{defn} 
    For any $\rho \in L_n$, define $\psi_\rho:Y_\rho \to \overline{M}_{0,B^+(\rho)}$ by the composition $Y_\rho \to \mathfrak{M}_{0,B^+(\rho)} \to \overline{M}_{0,B^+(\rho)}$ of the two maps above.
\end{defn}

On $\bC$-points, $\psi_\rho$ is given by sending $(C,\omega,p_\infty,p_1,\ldots, p_n)\mapsto (C,p_\infty,\{p_{\mu(b)}:b\in B(\rho)\}) \mapsto c(C,p_\infty,\{p_{\mu(b)}:b\in B(\rho)\})$.

\begin{lem}
\label{L:preimage}
    For any $\rho$, we have $\sigma^{-1}(Z_\rho(\bC)) = T_\rho$, $\sigma^{-1}(Y_\rho(\bC))$ and $\sigma^{-1}(Z_\rho^\circ(\bC)) = T_\rho^\circ$.
\end{lem}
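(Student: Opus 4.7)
The plan is to reduce all three assertions to Corollaries \ref{C:ijsets} and \ref{C:scalingsagree}, which give $\sigma^{-1}(V_{ij}) = U_{ij}$ and $\sigma^*(I_{ij}) = \Pi_{ij}$ on $U_{ij}$. The crucial pointwise observation is: for $C \in \overline{P}_n(\bC)$ one has $p_i(C) = p_j(C)$ if and only if $C \in V_{ij}$ and $I_{ij}(C) = 0$. One direction is immediate, since if the marked points coincide they lie on the same component, which must be terminal by stability, and the period integral of any differential between equal points vanishes. The converse uses \Cref{L:scaledline}: on a terminal component the meromorphic differential $\omega_v$ has its only pole at the descending node, so it is nonvanishing on the smooth locus; choosing an affine coordinate $z$ with $\omega_v = c\,dz$ for some $c \in \bC^*$ gives $I_{ij}(C) = c(z(p_j) - z(p_i))$, which vanishes exactly when $p_i = p_j$. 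The same argument yields the analogous statement on $\Mmscbar_n$: $p_i(\Sigma) = p_j(\Sigma)$ iff $\Sigma \in U_{ij}$ and $\Pi_{ij}(\Sigma) = 0$.

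With this in hand the first claim is a rewriting exercise. A scaled curve lies in $Z_\rho(\bC)$ iff $p_i(C) = p_j(C)$ for every $i \sim^\rho j$, so
\[
Z_\rho(\bC) = \bigcap_{i \sim^\rho j}\bigl(V_{ij} \cap Z(I_{ij})\bigr).
\]
Applying $\sigma^{-1}$ and invoking the two corollaries,
\[
\sigma^{-1}(Z_\rho(\bC)) = \bigcap_{i \sim^\rho j}\bigl(U_{ij} \cap Z(\Pi_{ij})\bigr) = \{\Sigma \in \Mmscbar_n : p_i(\Sigma) = p_j(\Sigma) \text{ for all } i \sim^\rho j\} = T_\rho.
\]

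Analogously, $C \in Y_\rho(\bC)$ iff $p_i(C) \ne p_j(C)$ for every $i \not\sim^\rho j$, which translates to $Y_\rho(\bC) = \bigcap_{i \not\sim^\rho j}(V_{ij}^c \cup D(I_{ij}))$. Pulling back identifies $\sigma^{-1}(Y_\rho(\bC))$ with the open subset $\{\Sigma \in \Mmscbar_n : p_i(\Sigma) = p_j(\Sigma) \Rightarrow i \sim^\rho j\}$. The third claim then follows by intersection: since $Z_\rho^\circ(\bC) = Z_\rho(\bC) \cap Y_\rho(\bC)$, one obtains $\sigma^{-1}(Z_\rho^\circ(\bC)) = T_\rho \cap \sigma^{-1}(Y_\rho(\bC)) = T_\rho^\circ$.

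I do not anticipate a substantial obstacle. All the analytical content has already been absorbed into \Cref{C:scalingsagree} via the continuity of period integrals established in \Cref{L:continuousintegration}, and the present lemma simply translates the moduli-theoretic descriptions of $Z_\rho$, $Y_\rho$, $Z_\rho^\circ$ into explicit conditions on $\Pi_{ij}$ through the pullback formula. The only minor subtlety is the pointwise equivalence between coincidence of marked points and vanishing of period integrals, which rests on the fact that the meromorphic differentials of \Cref{D:multiscaledline}(3), and those of scaled curves on terminal components, are nonvanishing on the relevant smooth locus.
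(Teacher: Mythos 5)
Your proof is correct and follows essentially the same route as the paper's, which disposes of the lemma in two sentences by invoking \Cref{C:scalingsagree}. You simply make explicit what the paper leaves implicit: the pointwise equivalence (via \Cref{L:scaledline}) between $p_i = p_j$ and the conjunction $C \in V_{ij}$, $I_{ij}(C) = 0$, and the use of \Cref{C:ijsets} to match the domains on which $I_{ij}$ and $\Pi_{ij}$ are finite.
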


\begin{proof}
    For example $C\in Z_\rho(\bC)$ if $I_{ij}(C) = 0$ for all $i\sim^\rho j$. However, by \Cref{C:scalingsagree}, the same is true of any $\Sigma \in \sigma^{-1}(C)$ and conversely. The same argument applies for $Z_\rho^\circ$. 
\end{proof}

Recall that $T_\top^\circ$ denotes the open dense subset of $\Mmscbar_n$ in which no marked points collide (see \Cref{P:collisionstratification}). By \Cref{L:preimage}, $\sigma$ maps $T_\top^\circ \to Z_\top^\circ = Y_\top^\circ$.\endnote{$Y_\top^\circ$ parametrizes curves for which $p_i = p_j$ implies $i\sim^\top j$, i.e. curves where no collision occurs. $Z_\top$ parametrizes curves for which $i\sim^\top j$ implies $p_i = p_j$, i.e. with no condition imposed. So, $Z_\top = \overline{P}_n$. Thus, $Z_\top^\circ = Y_\top^\circ$.}   

\begin{prop}
\label{P:knudsenmapcomp}
    Viewed as maps $T_\top^\circ\to \overline{M}_{0,n+1}(\bC)$ on closed points, $\psi_\top \circ \sigma = \psi_\top \circ \xi$. 
\end{prop}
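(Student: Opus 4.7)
My plan is a density-and-continuity argument: I will show that $\psi_\top \circ \sigma$ and $\psi_\top \circ \xi$ are continuous maps $T_\top^\circ \to \overline{M}_{0,n+1}(\bC)$ agreeing on the dense open subset $U := (\bA^n/\bG_a) \cap T_\top^\circ$, then invoke Hausdorffness of $\overline{M}_{0,n+1}(\bC)$ to extend the equality to all of $T_\top^\circ$.

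For agreement on $U$: both $\xi$ and $\sigma = \gamma \circ f$ restrict to the identity inclusion $\bA^n/\bG_a \hookrightarrow \overline{P}_n(\bC)$ --- for $\xi$ this is \Cref{P:factsaboutxi}, while for $\sigma$ it follows from \Cref{T:isothm} (giving $f|_{\bA^n/\bG_a} = \mathrm{id}$) together with the fact that $\gamma$ is an isomorphism over $\bA^n/\bG_a \subset \overline{P}_n$, since $\bA^n/\bG_a$ lies in the smooth locus. Hence on $U$ both compositions agree with the natural morphism $(z_1,\ldots,z_n) \mapsto [\bP^1;\infty,z_1,\ldots,z_n]$ into $\overline{M}_{0,n+1}(\bC)$. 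Continuity of $\psi_\top \circ \sigma$ is immediate: $\sigma$ is a morphism of algebraic varieties with $\sigma^{-1}(Y_\top) = T_\top^\circ$ (by an argument analogous to \Cref{L:preimage}, using \Cref{C:ijsets} applied to all pairs together with \Cref{C:scalingsagree} to detect collisions), and $\psi_\top$ is a morphism by construction.

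The substantive step is continuity of $\psi_\top \circ \xi$ in the analytic topology, which I would establish chart-locally on $\Mmscbar_n$. On each chart $U_\Gamma$ with coordinates $(z_{ij},t_m)$ from \Cref{P:identification}, the underlying $(n+1)$-pointed prestable nodal curve of $\Sigma \in U_\Gamma$ admits an explicit presentation as a flat family over $U_\Gamma$: the combinatorial type is the underlying tree $|\Gamma|$, each irreducible component is a copy of $\bP^1$, and the positions of ascending nodes on non-terminal components and marked points on terminal components are rational functions of $(z_{ij},t_m)$, essentially by a family version of \Cref{L:determinedbyintegrals} and \Cref{C:determinedbyz}. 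Using that stable multiscaled lines have trivial complex projective automorphisms, this family classifies an algebraic morphism $U_\Gamma \to \mathfrak{M}_{0,n+1}$, and composing with the Knudsen stabilization $c:\mathfrak{M}_{0,n+1} \to \overline{M}_{0,n+1}$ yields an algebraic (hence analytically continuous) morphism whose closed-point map on $U_\Gamma \cap T_\top^\circ$ is precisely $\psi_\top \circ \xi$. As the $U_\Gamma$ cover $\Mmscbar_n$, continuity holds on all of $T_\top^\circ$.

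Since $\overline{M}_{0,n+1}(\bC)$ is Hausdorff, the equalizer of two continuous maps into it is closed; containing the dense subset $U$, it equals $T_\top^\circ$, completing the proof. The main technical obstacle is the construction of the local algebraic families of underlying pointed curves over each $U_\Gamma$: while the combinatorial structure is read off from $\Gamma$, verifying that the positions of nodes and marked points depend algebraically on $(z_{ij},t_m)$ requires careful bookkeeping with the formulas of \S 3.2, particularly relating the normalized differentials to cross-ratios on each irreducible component.
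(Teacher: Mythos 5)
Your overall scaffold --- agreement of $\psi_\top\circ\sigma$ and $\psi_\top\circ\xi$ on a dense subset, continuity of both, and Hausdorffness of $\overline{M}_{0,n+1}(\bC)$ --- is a perfectly natural framing, and the paper's own proof implicitly relies on pieces of it (agreement on $\bA^n/\bG_a$, continuity of $\psi_\top\circ\sigma$). But you have not established the one thing that is actually hard, and the paper's argument is structured precisely to sidestep it. You explicitly defer the continuity of $\psi_\top\circ\xi$ to the existence of an algebraic flat family of $(n+1)$-pointed prestable curves over each chart $U_\Gamma$, saying this is ``careful bookkeeping.'' It is not; it amounts to constructing a universal curve over $U_\Gamma$ (in particular a total space, with gluing at nodes varying in families), which is essentially the content of $\xi$ being a morphism --- and that is more or less what \Cref{T:diagramcommutes} ultimately delivers as a \emph{conclusion}. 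Nothing in \S3 produces such a family, and $\xi$ is a priori only a set map, so this step is genuinely open in your write-up, not a routine verification.

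The paper avoids this by a different route: induction on the length of the dual level tree. Given $\Sigma$ of length $k+1$, it exhibits an explicit one-parameter path $\beta:\bA^1\to U_\Gamma$ into $\Sigma$ whose general fibre has shorter tree, uses the inductive hypothesis on $\bA^1\setminus\{0\}$, and then \emph{computes} $\lim_{t\to0}(\psi_\top\circ\sigma\circ\beta)(t)$ directly by a rescaling argument: the sections take the form $z_b/t + z_i$, rescaling by $t$ gives $z_b + tz_i$, and a blow-up of the collision points $\{z_b\}$ in the central fibre of the (trivial) family identifies the limit curve with the stabilization of $\Sigma$. That concrete blow-up computation in a one-parameter family replaces the family construction your argument would need over all of $U_\Gamma$, and it is entirely absent from your proposal. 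So the gap is real: you have reduced the proposition to a statement that is not easier than the proposition itself.

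A smaller inaccuracy: over $U_\Gamma$ the combinatorial type of $\Sigma$ is \emph{not} constantly $|\Gamma|$; $\Sigma\in U_\Gamma$ only means there exists a contraction $\Gamma\twoheadrightarrow\Gamma(\Sigma)$, and the generic fibre is irreducible. This matters because it is exactly the jumping of dual graphs that makes the putative family over $U_\Gamma$ nontrivial to glue.
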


\begin{proof}
    We induct on the length of the dual tree. In the case of length $0$, the maps agree because $\sigma$ identifies $T_\top^\circ \cap (\bA^n/\bG_a)$ with $Y_\top^\circ \cap (\bA^n/\bG_a)$. Suppose the result is known for all points of $\Mmscbar_n$ with dual tree of length $\le k$. Consider $\Sigma \in \Mmscbar_n$ with dual tree of length $k+1$, corresponding to $\rho_\bullet = \{\rho_1<\cdots<\rho_{k+1}\} \in \mathfrak{Ch}(L_{n}^-)$. Write $\Gamma = \Gamma(\Sigma)$ and consider $U_\Gamma$ as in \S3. Define $\beta:\bA^1\to U_\Gamma$ by $z_{ij}(t) = z_{ij}(\Sigma)$ for all $t$, $t_i(t) = 0$ for all $1\le i \le \ell-1$, and $t_\ell = t$. $\beta$ restricts to a morphism $\bA^1\setminus \{0\}\to S_{\rho_1,\ldots,\rho_k}^\circ$ and $\beta(0) = \Sigma$. By induction, $\psi_\top\circ \sigma \circ \beta = \psi_\top\circ \xi \circ \beta$ on $\bA^1\setminus \{0\}$. As $\psi_\top\circ \sigma \circ \beta$ is continuous in the analytic topology, $(\psi_\top\circ \sigma)(\Sigma) = \lim_{t\to0}(\psi_\top\circ\sigma\circ\beta)(t)$. 
    
    $(\psi_\top\circ\sigma\circ \beta): \bA^1\setminus \{0\} \to \overline{M}_{0,n+1}$ classifies a family $C^*\to \bA^1\setminus \{0\}$ whose fibre over $t$ is the contraction of the underlying curve of $\beta(t)$; this amounts to contracting the terminal components containing a single marked point. Forgetting the marked points, $C^*\to \bA^1\setminus \{0\}$ is a trivial family and admits a trivial flat extension $C\to \bA^1$.

    Consider a terminal component of $\beta(t)$ corresponding to $B\in B(\rho_k)$, which we may assume is not a singleton since such components have been contracted. There is an associated family $\bA^1\times (\bA^1\setminus \{0\}) \to \bA^1\setminus \{0\}$ with sections corresponding to the marked points specified by regular functions on the base: $\{z_b/t + z_i: i\in b\in B(\rho_{k+1})\text{ and } b\subset B\}$, with $z_b \ne z_{b'}$ for $b\ne b'$. We obtain an equivalent family by rescaling by $t$ with sections $\{z_b + z_i t: i\in b\in B(\rho_{k+1})\text{ and } b\subset B\}.$\endnote{Each terminal component with the ascending node removed corresponds to a trivial flat family $\bA^1\times (\bA^1\setminus \{0\})\to \bA^1\setminus \{0\}$ and we can obtain an equivalent family by $\bG_m$ fibrewise by $(t,x,t)\mapsto (tx,t).$}

    As $t\to 0$, a pair of marked points on this component collides if and only if their constant terms are equal; in this case, they limit to the common value $z_b$. Blow up $\{z_b\}_{b\subset B}$ on this terminal component. The strict transform of $z_b+tz_i$ intersects the exceptional divisor over $z_b$ at the point $[z_b:1]$ in projective coordinates.\endnote{The subscheme corresponding to the family of terminal components corresponding to a given $B\in B(\rho_k)$ is isomorphic to $\bA^2\to \bA^1 = \Spec \bC[t]$ with sections determined by $z_b+tz_i\in\bC[t]$. Consider $\{z_b+tz_i:i\in b\}$ for some $b\in B$. As $t\to0$ these points intersect $z_b$ in the fibre over $t=0$, which we blowup. 
    
    Slightly more precisely, the section $\bA^1\to \bA^2 = \Spec \bC[s,t]$ has image given by the subvariety $f_i(s,t) = 0$ for $f_i(s,t) = z_b+z_it-s$. $T_{(0,z_b)} \bA^2$ has basis given by $\{\partial_t,\partial_s\}$. Now, $df_i = z_idt - ds$ and the strict transform of this section intersects $\bP(T_{(0,z_b)}\bA^2)$ in the zero locus $df_i = 0$. In homogeneous coordinates, this is $[1:z_b]$ as was to be shown.} Denote the resulting $(n+1)$-marked nodal curve in the central fibre by $\widetilde{C}_0$. We stabilize $\widetilde{C}_0$ by contracting any exceptional divisor that contains a unique marked point, i.e. those corresponding to $b\subset B$ which is a singleton. Comparing this to the stabilization of $\Sigma$, we have $(\psi_\top\circ \sigma)(\Sigma) = (\psi_\top\circ \xi)(\Sigma)$.
\end{proof}


\begin{lem}
\label{L:recoveringcurve}
    $(C,\omega,p_\infty,p_1,\ldots,p_n)\in Z_\rho^\circ(\bC)$ is determined up to isomorphism by the data of $\Gamma(C),\psi_\rho(C)$, and $\{I_{ij}(C)\}$, where $i,j$ ranges over all pairs for which the half edges of $p_i$ and $p_j$ lie on the same vertex of $\Gamma(C)$.
\end{lem}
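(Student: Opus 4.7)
My approach is to reconstruct $(C,\omega,p_\infty,p_1,\ldots,p_n)$ up to isomorphism in two stages: first the underlying $(n+1)$-marked prestable curve, then the meromorphic differential $\omega$. Since $C\in Z_\rho^\circ(\bC)$, marked points coincide exactly on blocks of $\rho$, and the prestable $B^+(\rho)$-marked curve obtained by retaining only the representatives $\{p_\infty\}\cup\{p_{\mu(b)}:b\in B(\rho)\}$ has $\Gamma(C)$ as its dual tree with stabilization equal to $\psi_\rho(C)\in\overline{M}_{0,B^+(\rho)}$. A direct count of special points shows that this stabilization contracts exactly those terminal components $C_v$ of $C$ whose marked points all lie in a single block $b\in B(\rho)$: each such $C_v\cong \bP^1$ has only two special points, namely the descending node $n_v$ and the collision point $p_{\mu(b)}$, while non-terminal components and terminal components carrying representatives of two or more blocks retain $\ge 3$ special points. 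The marking of $\Gamma(C)$ identifies exactly which terminal components are contracted and to which parent vertex each attaches; reinserting a $\bP^1$ at the location of the corresponding $p_{\mu(b)}$ on the parent component recovers the prestable $B^+(\rho)$-marked curve, and attaching every non-representative $p_i$ at the location of its block representative yields the full $(n+1)$-marked curve.

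To recover $\omega$, note that by \Cref{L:scaledline} the dual vector field $\omega^\vee$ vanishes on every non-terminal component, so $\omega$ is forced to be the section at infinity of $\bP(\cO_C\oplus\omega_C)$ there, which depends only on the underlying curve. On each terminal component $C_v$, choose an affine coordinate $z$ with $n_v$ at $\infty$, so that $\omega|_{C_v}=c_v\,dz$ for some $c_v\in\bC^*$. When $C_v$ carries representatives of two or more distinct blocks of $\rho$, any two distinct marked points $p_i,p_j\in C_v$ satisfy $I_{ij}(C)=c_v(z_j-z_i)$; since $\psi_\rho(C)$ determines the positions $\{z_i\}$ up to the affine group fixing $\infty$, the integral pins down $c_v$ compatibly with this choice of gauge. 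When all marked points on $C_v$ collide in a single block, no nonzero integral is available and we pick $c_v\in\bC^*$ arbitrarily.

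The essential step is to show that the reconstruction is well-defined modulo isomorphism of scaled curves. For a non-contracted terminal component, two affine gauges $z$ and $w=az+b$ are related by $z_i\mapsto az_i+b$ and $c_v\mapsto c_v/a$, and the self-isomorphism $C_v\to C_v$, $z\mapsto az+b$, carries one gauge to the other, fixes the descending node, and extends by the identity on all other components to an isomorphism of the full scaled curve. For a contracted terminal component with marked points colliding at $q$, two choices $c_v,c_v'$ are matched by the automorphism $z\mapsto (c_v/c_v')z$ of $C_v$ fixing both $n_v=\infty$ and $q=0$; this also extends by the identity to an isomorphism of scaled curves. The main obstacle is the combinatorial bookkeeping in the un-stabilization step, especially the precise identification of each contracted terminal component's reattachment point in $\psi_\rho(C)$ from the marking of $\Gamma(C)$; once this is handled, the two local gauge-freedom arguments close out the proof.
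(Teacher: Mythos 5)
Your proposal is correct and follows essentially the same route as the paper's proof: both reverse-engineer the Knudsen contraction $\psi_\rho$ to recover the underlying $(n+1)$-marked prestable curve, and then use the integrals $I_{ij}$ to pin down the terminal differentials. You are somewhat more explicit than the paper in two places that the paper leaves terse: (i) the special-point count that shows $\psi_\rho$ contracts \emph{exactly} the terminal components whose markings lie in a single block of $\rho$, rather than some longer cascade of contractions; and (ii) the gauge-freedom argument showing the reconstructed scaled curve is well-defined modulo isomorphism, including the observation that on a contracted terminal component the $\bG_m$-automorphism group fixing the node and the collision point absorbs the undetermined scalar $c_v$. This extra care is a genuine improvement in exposition, not a different method. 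The one small caveat: your phrase ``the positions $\{z_i\}$ up to the affine group fixing $\infty$'' is slightly loose, since a stable $B^+(\rho)$-marked curve has trivial automorphism group so the positions are determined once a representative of $\psi_\rho(C)$ is fixed; what you actually have is the freedom to change the affine coordinate $z$ on $C_v$, which is exactly what your rescaling argument handles, so the conclusion stands. You also implicitly assume $C$ is reducible when you write ``$n_v$ at $\infty$''; in the irreducible case the role of the descending node is played by $p_\infty$, which your argument covers verbatim with that substitution.
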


\begin{proof}
    Given $(\Gamma(C),\psi_\rho(C),\{I_{ij}(C)\})$, we reconstruct $C$ as follows: $\psi_\rho(C)$ is a contraction of $(C,p_\infty,\{p_{\mu(b)}:b\in B(\rho)\})$ in the sense of \cite{Knudsencurves}. By the hypothesis that $(C,\omega,p_\infty,p_1,\ldots,p_n) \in Z_\rho^\circ(\bC)$, each terminal component of $(C,p_\infty,\{p_{\mu(b)}:b\in B(\rho)\})$ contains at least one marked point.\endnote{Given a terminal component $C_\tau$ of $C$, there exists $b\in B(\rho)$ such that $i\in B$ implies $p_i\in C_\tau$. Consequently, $p_{\mu(b)}\in C_\tau$.} So, contraction only deletes terminal components $C_\tau$ containing a unique marked point $p_{\mu(b)}$ and places a new marked point in place of the node connecting $C_\tau$ to the rest of the curve. Associated to this is a surjective map $\Gamma(C)\twoheadrightarrow\Gamma(\psi_\rho(C))$ which collapses edges corresponding to deleted nodes. 
    
    Suppose a marked point $p_{\mu(b)}$ on $\psi_\rho(C)$ corresponds to a half edge on $\Gamma(\psi_\rho(C))$ attached to a vertex with more than $1$ preimage under $\Gamma(C) \twoheadrightarrow \Gamma(\psi_\rho(C))$.\endnote{The contraction map in this case only contracts terminal components with a single marked point. Consequently, $v\in V(\Gamma(\psi_\rho(C)))$ has multiple preimage points if and only if a terminal vertex of $\Gamma(C)$ has been contracted to it.} Attach $(\bP^1,\infty,0)$ to $\psi_\rho(C)$ by identifying $\infty$ with $p_{\mu(b)}$. Applying this procedure for all such $p_{\mu(b)}$ recovers $(C,p_\infty,\{p_{\mu(b)}:b\in B(\rho)\})$.

    Now, on each terminal component $C_\tau$ of $C$, $\{I_{ij}:p_i,p_j\in C_\tau\}$ allows us to recover the structure of the irreducible scaled line $(C_\tau,\omega_\tau,p_i\in C_\tau)$. Reattach the relevant marked points and equip $C_\tau$ with differential $\omega_\tau$ for each terminal component $C_\tau$. This procedure recovers $(C,\omega,p_\infty,p_1,\ldots,p_n)$ up to isomorphism of scaled lines.
\end{proof}

\begin{proof}[Proof of \Cref{T:diagramcommutes}]
    Suppose $\Sigma \in T_\top^\circ$ so that by \Cref{P:knudsenmapcomp} one has $\psi_\top(\xi(\Sigma)) = \psi_\top(\sigma(\Sigma))$. By \Cref{L:dualtreepreserved}, the dual tree of $\sigma(\Sigma)$ is $|\Gamma(\Sigma)|$ and by definition this is also the case for $\xi(\Sigma)$. Finally, by \Cref{C:scalingsagree} $I_{ij}(\sigma(\Sigma)) = \Pi_{ij}(\Sigma) = I_{ij}(\xi(\Sigma))$ for all $i,j$ for which $p_i$ and $p_j$ lie on the same component of $\Sigma$. By \Cref{L:preimage}, $\sigma(\Sigma)$ and $\xi(\Sigma)$ are in $Z_\top^\circ$ so \Cref{L:recoveringcurve} implies that $\sigma(\Sigma) = \xi(\Sigma)$.

    Now, we induct on the number of colliding points, having proved the $k=0$ case. Let $T_{\le k}\subset \Mmscbar_n$ be the set of $\Sigma$ for which $\le k$ marked points collide. Suppose $\xi|_{T_{\le k}} = \sigma|_{T_{\le k}}$ for $0\le k \le n-1$. Consider $(\Sigma,p_\infty,\omega_\bullet,\preceq,p_1,\ldots, p_n)\in T_\rho^\circ$ for which exactly $k+1$ points collide\endnote{I.e., $\rho \in L_n$ and $p_i = p_j$ if and only if $i\sim^\rho j$.} and put $\Gamma = \Gamma(\Sigma)$. Up to reindexing and applying an automorphism, say $p_1 = p_2 = 0$. In $U_\Gamma$, we have $z_{12} = \Pi_{12}$. 
    
    We define a family of stable $n$-marked multiscaled lines $(\Sigma_t,p_\infty(t),\omega_\bullet(t),\preceq_t,p_1(t),\ldots,p_n(t))$ depending on $t\in \bA^1$. Let $(\Sigma_t,p_\infty(t),\omega_\bullet(t)\preceq_t) = (\Sigma,p_\infty,\omega_\bullet,\preceq)$ for all $t$. Put $p_1(t) = 0$, $p_2(t) = t$, and let $p_i(t) = p_i$ for all $3\le i \le n$. $t\mapsto \Sigma_t$ defines a map $\varphi:\bA^1 \to U_{\Gamma}$ which in coordinates is given by $t_i(t) = 0$ for all $i$, $z_{12}(t) = t$, and $z_{ij}(t) = z_{ij}(\Sigma)$ for all other $i<j$. 
    
    Write $\pi$ for $\max \mathfrak{c}(\Sigma)$ and note that $\pi \le \rho$. In particular, each terminal component of $\Sigma$ contains at least one $p_{\mu(b)}$ for $b\in B(\rho)$. Write $B\in B(\pi)$ for the block containing $1$ and $2$ and $\Sigma_B$ for the corresponding irreducible component of $\Sigma$. There is a nonempty open $W\subset \bA^1$ such that $\varphi(W)\subset T_{\le k}$ and so $(\sigma \circ \varphi)|_{W} = (\xi \circ \varphi)|_{W}$ by induction.\endnote{We define $W$ by removing the finitely many times at which $p_2$ collides with another $p_i$.} There are now two cases:
    \begin{enumerate}
        \item $B$ contains only one block of $\rho$; or
        \item $B$ contains multiple blocks of $\rho$.
    \end{enumerate}

    In case $(1)$, $(\psi_\rho \circ \sigma)(t)$ is constant for all $t\in W$ and $(\psi_\rho\circ \xi)(t)$ is constant for all $t\in \bA^1$.\endnote{The only dependence of $\varphi(t) = \Sigma_t$ on $t$ is the distance between $p_1$ and $p_2$ on the terminal component corresponding to $B\in B(\pi)$. However, applying $\psi_\rho$ we see that this component is contracted to a point independently of $t$. Since $\xi\circ \varphi = \sigma\circ \varphi$ on $\bA^1\setminus \{0\}$, this implies the claim.} This implies that $\psi_\rho(\xi(\Sigma)) = \lim_{t\to 0} (\psi_\rho\circ \sigma)(t) = \psi_\rho(\sigma(\Sigma))$.\endnote{$\psi_\rho(\sigma(\varphi(t)))$ is constantly equal to $\psi_\rho(\xi(\varphi(0)))$ for all $t\ne 0$ and then as $\psi_\rho \circ \sigma \circ \varphi$ is continuous, we have the result.}

    In case (2), the terminal component of $\xi(t)$ corresponding to $B\in B(\pi)$ is not contracted by $\psi_\rho$ for any $t\in W$. We describe $\psi_\rho(\sigma(t))$ by describing the configuration of marked points on $\Sigma_B$ as a function of $t$. Write this as $(\Sigma_B,p_{\mu(b)}(t):b\subset B)$. By continuity of $\sigma$, the limit is given by $(\Sigma_B,p_{\mu(b)}(0):b\subset B)$. However, this equals the corresponding configuration for $\xi(0) = \xi(\Sigma).$

    We have shown that $\psi_\rho(\sigma(\Sigma)) = \psi_\rho(\xi(\Sigma))$. By \Cref{L:dualtreepreserved} they both have dual tree given by $|\Gamma(\Sigma)|$ and by \Cref{C:scalingsagree} we know that $I_{ij}(\xi(\Sigma)) = I_{ij}(\sigma(\Sigma))$ whenever $i$ and $j$ lie on the same component. As $\sigma(\Sigma)$ and $\xi(\Sigma)$ both lie in $Z_\rho^\circ$, we see that $\sigma(\Sigma) \cong \xi(\Sigma)$ by \Cref{L:recoveringcurve} and hence agree as elements of $\overline{P}_n(\bC)$.
\end{proof}

\appendix

\section{Blowup lemmas}

In this appendix we record lemmas related to the geometry of blowups of varieties and strict transforms. First we recall a version of the universal property of blowing up. For existence of blowups of Noetherian schemes, see \cite{HartshorneAG}.

\begin{lem}
\label{L:univpropblowup}
    Let $W$ be a $k$-scheme. Let $Z\subset W$ denote a closed subspace and $\cC$ the full subcategory of $\mathrm{Sch}_{/W}$ consisting of morphisms $Y\to W$ such that the inverse image of $Z$ is an effective Cartier divisor on $Y$. The blowup $\pi:\Bl_Z W\to  W$ is a terminal object in $\cal{C}$.
\end{lem}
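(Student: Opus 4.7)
The plan is to prove the universal property in the standard way via the $\mathrm{Proj}$ construction. Let $\mathcal{I}\subset \mathcal{O}_W$ be the ideal sheaf of $Z$, and recall that $\Bl_Z W = \underline{\mathrm{Proj}}_W\bigl(\bigoplus_{n\geq 0}\mathcal{I}^n\bigr)$, with structure morphism $\pi:\Bl_Z W \to W$. First I would verify that $\Bl_Z W$ itself lies in $\mathcal{C}$: the preimage $\pi^{-1}(Z)$ is cut out by the ideal generated by $\pi^{-1}\mathcal{I}\cdot \mathcal{O}_{\Bl_Z W}$, which on the Proj side corresponds to the twisting sheaf $\mathcal{O}(1)$ and is therefore an invertible ideal sheaf, i.e.\ an effective Cartier divisor. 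This is a local calculation on affine patches of the form $\mathrm{Spec}(A[\mathcal{I}/f])$ for local generators $f$ of $\mathcal{I}$.

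Next, given an arbitrary object $f:Y\to W$ of $\mathcal{C}$, I need to produce a unique morphism $g:Y\to \Bl_Z W$ over $W$. The hypothesis says $f^{-1}\mathcal{I}\cdot\mathcal{O}_Y \subset \mathcal{O}_Y$ is an invertible ideal sheaf; call it $\mathcal{L}$. Then $f^*\mathcal{I} \twoheadrightarrow \mathcal{L}$ is a surjection of $\mathcal{O}_Y$-modules, and taking symmetric powers and reducing modulo torsion yields a surjection of graded $\mathcal{O}_Y$-algebras $f^*\bigl(\bigoplus_{n\ge 0}\mathcal{I}^n\bigr) \twoheadrightarrow \bigoplus_{n\ge 0}\mathcal{L}^n$. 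The invertible sheaf $\mathcal{L}$ on $Y$ together with this surjection determines, by the universal property of relative $\mathrm{Proj}$, a unique $W$-morphism $g:Y\to \underline{\mathrm{Proj}}_W\bigl(\bigoplus_{n\ge 0}\mathcal{I}^n\bigr) = \Bl_Z W$ such that $g^*\mathcal{O}(1)\cong \mathcal{L}$.

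For uniqueness, I would argue that any $W$-morphism $g':Y\to \Bl_Z W$ must pull the tautological surjection $\pi^*\mathcal{I}\twoheadrightarrow \mathcal{O}(1)$ back to a surjection $f^*\mathcal{I}\twoheadrightarrow (g')^*\mathcal{O}(1)$ onto an invertible quotient. Since $f^{-1}\mathcal{I}\cdot \mathcal{O}_Y = \mathcal{L}$ is already invertible, this quotient is forced to be $\mathcal{L}$, and then the relative $\mathrm{Proj}$ universal property pins down $g'$ uniquely. The main (minor) obstacle is just bookkeeping: one has to check that the surjection of graded algebras is compatible with the Rees algebra structure, so that the induced map really lands in $\mathrm{Proj}(\bigoplus \mathcal{I}^n)$ rather than a quotient thereof, and this is handled locally by writing $\mathcal{I}=(f_0,\dots,f_r)$ and exhibiting $g$ on affine charts $D_+(f_i)$ as the map induced by sending $f_j/f_i$ to the ratio of local generators of $\mathcal{L}$. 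Since this is a well-known result (see e.g.\ \cite{HartshorneAG}*{II.7.14}), I would give only a brief indication of the argument and cite the standard reference.
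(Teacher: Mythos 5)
Your proposal is correct and is essentially the standard textbook argument; the paper itself gives no proof and simply cites \cite{Stacksproject}*{Tag 085U} for this classical fact, so there is no substantive difference in approach. One small imprecision worth noting: the surjection of graded algebras does not require ``taking symmetric powers and reducing modulo torsion'' --- one has directly that $f^{-1}(\mathcal{I}^n)\cdot\mathcal{O}_Y = (f^{-1}\mathcal{I}\cdot\mathcal{O}_Y)^n = \mathcal{L}^n$, so the natural maps $f^*(\mathcal{I}^n)\twoheadrightarrow \mathcal{L}^n$ already assemble into a surjection of graded $\mathcal{O}_Y$-algebras from $f^*\bigl(\bigoplus_{n\ge 0}\mathcal{I}^n\bigr)$ onto $\bigoplus_{n\ge 0}\mathcal{L}^n$ without any detour through $\mathrm{Sym}$.
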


\begin{proof}
    See \cite{Stacksproject}*{Tag 085U}.
\end{proof}

In what follows, we restrict ourselves to the case of a smooth variety $W$ with smooth subvariety $Z$ and $\epsilon:\Bl_Z W\to W$ the associated blowup morphism. Let $V\subset W$ be a smooth subvariety such that $V\not\subset Z$ and $\widetilde{V}$ let denote strict its transform. Let $\cI$ denote the ideal sheaf of $Z$. Since $Z$ is smooth in $W$, $\cI/\cI^2 = \cN_Z^\vee$ is locally free on $Z$ of rank $c = \rm{codim}_Z X$. 

\begin{lem}
\label{L:blowupmapexplicit}
    Suppose $Z = V(g_1,\ldots, g_c)$ for $g_i \in H^0(W,\cO_W)$ for all $1\le i \le c$. Let $f:X\to W$ be given such that $f^{-1}(Z) = D$ is a smooth divisor and put $f_i = f^*(g_i)$ for all $i$.
    \begin{enumerate}  
        \item $\widetilde{f}:X\to \Bl_Z W$ from \Cref{L:univpropblowup} maps $D\to E = \bP(\cN_{Z|W})$, by $\widetilde{f}|_D(x) = (f(x),[df_{1,x}:\cdots:df_{c,x}])$ where $df_i \in H^0(Z,\cN_Z^\vee)$ for all  $1\le i \le c$.\\
        \item Write $f^*(g_c) = t$ and suppose $f_i = h_i\cdot t$ with $h_i \in H^0(X,\cO_X)$ such that $v_D(h_i) = 0$ for all $1\le i \le c-1$.\endnote{This is to say that the discrete valuation $v_D$ associated to $D$ vanishes on $h_i$; i.e., $h_i$ does not vanish along $D$ for each $i$.} Then
        \begin{equation*}
            \widetilde{f}|_D(x) = 
            \begin{cases}
                [h_1(x):\cdots:h_{c-1}(x):1] &\text{ if } dt_x \ne 0\\
                [h_1(x):\cdots:h_{c-1}(x):0] &\text{ if } dt_x = 0.
            \end{cases}
        \end{equation*}
    \end{enumerate}
\end{lem}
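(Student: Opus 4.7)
The argument is local around $x\in D$ with image $z:=f(x)\in Z$. For part (1), I would first realise $\Bl_Z W$ near $z$ as the closure in $W\times\bP^{c-1}$ of the graph of the rational map $[g_1:\cdots:g_c]$, so that the exceptional divisor identifies canonically with $\bP(\cN_{Z|W})$ and the projective coordinates are labelled by the images $\overline g_i\in\cI_Z/\cI_Z^2=\cN_{Z|W}^\vee$, which form a local frame by smoothness of $Z$. The universal property of \Cref{L:univpropblowup} then forces $\widetilde f(y)=(f(y),[f_1(y):\cdots:f_c(y)])$ on $X\setminus D$. To extend to $D$, pick a local defining equation $\tau$ for $D$ near $x$: since $f^{-1}(Z)=D$ scheme-theoretically, $(f_1,\ldots,f_c)=(\tau)$ in $\cO_{X,x}$, so we may factor $f_i=\tau h_i$ with $(h_1,\ldots,h_c)=(1)$ locally, and the map $[h_1:\cdots:h_c]$ extends regularly to $x$, giving $\widetilde f|_D(x)=(f(x),[h_1(x):\cdots:h_c(x)])$. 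To match this with $[df_{1,x}:\cdots:df_{c,x}]$, the Leibniz computation $df_{i,x}=h_i(x)\,d\tau_x$ shows that the $df_{i,x}$ all lie on the line $\cN_{D|X}^\vee|_x$; under the natural surjection $f^*\cN_{Z|W}^\vee\twoheadrightarrow\cN_{D|X}^\vee$ carrying $\overline g_i\mapsto\overline f_i$, the projective class $[h_1(x):\cdots:h_c(x)]$ becomes $[df_{1,x}:\cdots:df_{c,x}]$ in $\bP(\cN_{Z|W})_{f(x)}$.

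For part (2), set $f_c=t$ (so $h_c=1$) and split on $dt_x$. In Case A ($dt_x\ne 0$), $t\in\cI_D$ has nonzero differential at the smooth point $x\in D$ on the smooth divisor $D$, so $t$ is itself a local defining equation for $D$ at $x$; applying (1) with $\tau=t$ directly gives $\widetilde f|_D(x)=[h_1(x):\cdots:h_{c-1}(x):1]$. In Case B ($dt_x=0$), $t$ is no longer a local defining equation, so pick any $\tau$ defining $D$ near $x$ and write $t=\tau\phi$; since $dt_x=\phi(x)\,d\tau_x$, the condition $dt_x=0$ forces $\phi(x)=0$. Then apply (1) with the regular factorisation $f_i=\tau\tilde h_i$, where $\tilde h_i:=f_i/\tau$; the identity $(\tau)=(f_1,\ldots,f_c)=(\tau)(\tilde h_1,\ldots,\tilde h_c)$ forces $(\tilde h_1,\ldots,\tilde h_c)=(1)$ locally at $x$, so part (1) gives $\widetilde f|_D(x)=[\tilde h_1(x):\cdots:\tilde h_c(x)]$. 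Since $f_i=h_i t=h_i\tau\phi$ yields $\tilde h_i=h_i\phi$ for $i<c$ and $\tilde h_c=\phi$, the ratios $\tilde h_i/\tilde h_c=h_i$ on $X\setminus D$ together with $\tilde h_c(x)=\phi(x)=0$ show that the projective class at $x$ equals $[h_1(x):\cdots:h_{c-1}(x):0]$.

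The main obstacle will be carrying out Case B of part (2) rigorously: one must explain how $(\tilde h_1,\ldots,\tilde h_c)=(1)$ at $x$ can be compatible with $\tilde h_c(x)=\phi(x)=0$, which requires that at least one of the $h_i$ acquires a compensating pole at $x$ along a higher-codimension stratum where $\phi$ vanishes, so that $\tilde h_i=h_i\phi$ remains regular and nonvanishing. The formula $[h_1(x):\cdots:h_{c-1}(x):0]$ is then correctly interpreted as the projective limit of $[h_1:\cdots:h_{c-1}:1]$ along curves in $X\setminus D$ approaching $x$, the dominance of the diverging $h_i$'s over the constant $1$ forcing the last coordinate to vanish in the limit.
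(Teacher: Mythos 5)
Your handling of part (1) and Case A of part (2) is correct and follows essentially the same route as the paper: the universal property yields a map $D\to E=\bP(\cN_{Z|W})$ classified by the surjection $f|_D^*\cN_{Z|W}^\vee\twoheadrightarrow\cN_{D|X}^\vee$ sending $\overline{g_i}\mapsto \overline{f_i}\in\cI_D/\cI_D^2$, and a Leibniz computation translates this into the factored form $[h_1(x):\cdots:h_{c-1}(x):1]$ when $dt_x\neq 0$.

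Case B is where a genuine gap lies, and your own computation already exhibits it. Under the stated hypotheses one has, as sheaves of ideals, $\cI_D=(f_1,\ldots,f_c)=(h_1t,\ldots,h_{c-1}t,t)=(t)$, so $t$ is automatically a local generator of $\cI_D$ and $dt_x$ can never vanish on the smooth divisor $D$: Case B is vacuous as stated. Your $\tilde h$ analysis is this fact in disguise: if $\phi(x)=0$ then $\tilde h_i(x)=h_i(x)\phi(x)=0$ for all $i$, contradicting $(\tilde h_1,\ldots,\tilde h_c)=(1)$ at $x$. The escape you sketch, that some $h_i$ "acquires a compensating pole," is not available when $h_i\in H^0(X,\cO_X)$, and with bounded $h_i$ the projective limit of $[h_1:\cdots:h_{c-1}:1]$ along curves approaching $x$ is $[h_1(x):\cdots:h_{c-1}(x):1]$, not $[\cdots:0]$, so the concluding heuristic does not recover the stated formula either. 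For what it is worth, the paper's own proof of Case B has the same tension: the ratio computation $df_{i,x}/df_{j,x}=(h_i/h_j)(x)$, evaluated at $i=c$ with $h_c=1$, yields a nonzero $c$-th coordinate $1/h_j(x)$, again producing $[\cdots:1]$ rather than $[\cdots:0]$. In the places Case B is actually invoked downstream (the blowup-map identifications in \S4), the would-be $h_\alpha$ are $\bP^1$-valued periods $\Pi_{\alpha\beta}$ rather than regular functions, so the factorization hypotheses are not literally met there and the formula is being applied as an informal shorthand for a more delicate local computation.
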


\begin{proof}
    (1) Since $Z$ is codimension $c$ and cut out by $g_1,\ldots, g_c$, $\cN_Z^\vee$ is trivialized by the global sections $dg_1,\ldots, dg_c$. By definition, $\Bl_{Z}(X) = \mathbf{Proj}_X(\bigoplus_{n\ge 0}\cI^n)$ and the restriction to $Z$ is $\bP(\cN_Z) = \mathbf{Proj}_Z(\bigoplus_{n\ge 0}\cI^n/\cI^{n+1})$ with homogeneous fibre coordinates over $q\in Z$ given by $dg_{1,q},\ldots, dg_{c,q}$. By \cite[\href{https://stacks.math.columbia.edu/tag/085D}{Tag 085D}]{Stacksproject}, $\widetilde{f}$ is classified by the map of graded sheaves $\bigoplus_{n\ge 0}f^*(\cI^n)\to \bigoplus_{n \ge 0} \cI_D^n$, where $\cI_D$ is the ideal sheaf of $D$. One can verify that $\widetilde{f}|_D:D\to E$ is then classified by the surjection $f^*(\cN_Z^\vee) = f^*(\cI/\cI^2)\twoheadrightarrow\cI_D/\cI_D^2$. By the universal property of $\bP(\cN_Z)$ \cite{HartshorneAG}*{Prop. 7.12}, $\widetilde{f}|_D:D\to \bP(\cN_Z) \cong Z\times \bP^{c-1}$ is $\widetilde{f}(x) = (f(x),[df_{1,x}:\cdots:df_{c,x}]),$ where $df_i$ is regarded as a section of the line bundle $\cN_D^\vee$. 

    (2) Suppose $dt_x \ne 0$. We compute $df_{i,x}/dt_x\in \cN_{D,x}^\vee/\mathfrak{m}_x \cN_{D,x}^\vee \cong \bC$. $f_i = t\cdot h_i$ so the Leibniz rule gives $df_{i} = dt\cdot h_i + t\cdot dh_i$. Evaluating at $x$ gives $df_{i,x} = dt_x\cdot h_i(x)$. In the case where $dt_x = 0$, consider $df_i/df_j$ for $i\ne j$. $df_i = dt\cdot h_i + t\cdot dh_i$, however since $df_i$ is considered as a section along $Z$, $t$ vanishes identically. Consequently, $df_{i,x}/df_{j,x} = (h_i/h_j)(x)$ where we assume without loss of generality that $h_j(x)\ne 0$.
\end{proof}

\begin{lem}
[\cite{Hucompactification} \cite{Ulyanovpoly}]
\label{L:propertransformblowup}
Let $Z$ and $V$ be smooth closed subvarieties of a smooth variety $W$ that intersect cleanly. There is a commutative diagram
\[
\begin{tikzcd}
    \Bl_{Z\cap V} V\arrow[r,hook]\arrow[d]& \Bl_Z W\arrow[d]\\
    V\arrow[r,hook]&W
\end{tikzcd}
\]
where the horizontal arrows are closed immersions and the image of $\Bl_{Z\cap V}V$ is $\widetilde{V}\subset \Bl_Z W$.
\end{lem}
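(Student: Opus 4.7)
The plan is to construct $\widetilde{f}:\Bl_{Z\cap V}V\to \Bl_Z W$ via the universal property of blowing up (\Cref{L:univpropblowup}) and then verify locally, using clean intersection, that it is a closed immersion with image $\widetilde{V}$.

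For the construction, I would consider the composition $g:\Bl_{Z\cap V}V\xrightarrow{\pi}V\hookrightarrow W$, where $\pi$ is the blowup morphism. Since $Z\cap V$ is smooth by clean intersection, $\pi^{-1}(Z\cap V)$ is the exceptional divisor of $\pi$, an effective Cartier divisor. Since $g$ factors through $V$, $g^{-1}(Z)=\pi^{-1}(Z\cap V)$ scheme-theoretically, so this is also an effective Cartier divisor. Then \Cref{L:univpropblowup} produces a unique morphism $\widetilde{f}:\Bl_{Z\cap V}V\to \Bl_Z W$ over $W$, yielding the commutative square. Since $\Bl_{Z\cap V}V\to W$ is proper and $\Bl_Z W\to W$ is separated, $\widetilde{f}$ is proper and its image is closed in $\Bl_Z W$.

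To identify the image with $\widetilde{V}$, note that away from the exceptional divisor $E\subset \Bl_Z W$, the projection $\epsilon:\Bl_Z W\to W$ is an isomorphism, and $\widetilde{f}$ restricts there to the inclusion $V\setminus(Z\cap V)\hookrightarrow W\setminus Z$. Since $\widetilde{V}$ is by definition the closure of $\epsilon^{-1}(V\setminus Z)$ in $\Bl_Z W$, the closed image of $\widetilde{f}$ must coincide with $\widetilde{V}$.

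The substantive step is to verify that $\widetilde{f}$ is a closed immersion along $E$, which is a local computation. Using clean intersection, at any $p\in Z\cap V$ I would choose \'etale coordinates $(x_1,\ldots,x_n)$ on $W$ such that $Z=V(x_1,\ldots,x_a)$ and $V=V(x_{q+1},\ldots,x_{q+c})$ for some $0\le q\le a$ with $q+c\ge a$; then $Z\cap V=V(x_1,\ldots,x_{q+c})$ has codimension $q$ in $V$. In the affine chart of $\Bl_Z W$ adapted to the generator $x_i$ with $1\le i\le q$ (coordinates $u_j=x_j/x_i$), the strict transform $\widetilde{V}$ is cut out by the saturated ideal $u_{q+1}=\cdots=u_a=0=x_{a+1}=\cdots=x_{q+c}$, and this recovers precisely the corresponding chart of $\Bl_{Z\cap V}V$ adapted to $x_i$; for $q<i\le a$, $\widetilde{V}$ does not meet the $x_i$-chart. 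Thus $\widetilde{f}$ is a chart-by-chart isomorphism onto $\widetilde{V}$, giving the closed immersion. The main obstacle is choosing coordinates correctly: the integer $q$ encodes the overlap between the defining equations of $Z$ and $V$ (the transverse case being $q=a$, i.e., no overlap), and this overlap must be tracked carefully through both blowup charts. Once the coordinates are in place, the computation is routine.
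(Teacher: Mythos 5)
Your proposal is correct but takes a genuinely different and more hands-on route than the paper. The paper's proof is a one-line reduction to Hartshorne Cor. II.7.15, which states that for a closed immersion $i:V\hookrightarrow W$ the strict transform $\widetilde V$ in $\Bl_Z W$ is canonically isomorphic to the blowup of $V$ along the inverse image ideal $i^{-1}\mathcal I_Z\cdot\mathcal O_V$; the clean intersection hypothesis is then invoked exactly once, to identify this ideal with the ideal of the reduced smooth subvariety $Z\cap V$, giving $\widetilde V\cong\Bl_{Z\cap V}V$ at a stroke. You instead build the morphism $\widetilde f$ by hand from the universal property of $\Bl_Z W$ and verify it is a closed immersion by an explicit computation in the standard blowup charts of a local normal form. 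The clean intersection hypothesis enters your argument in two places, and it is worth making both explicit. First, when you assert $g^{-1}(Z)=\pi^{-1}(Z\cap V)$ scheme-theoretically, you are using that the scheme-theoretic intersection $i^{-1}\mathcal I_Z\cdot\mathcal O_V$ coincides with the ideal of the reduced $Z\cap V$ (which is what smoothness of the scheme-theoretic intersection gives you) -- this is the same reduction step the paper uses. Second, your local coordinate model ($Z=V(x_1,\ldots,x_a)$, $V=V(x_{q+1},\ldots,x_{q+c})$ with $q+c\ge a$) requires the standard normal-form lemma that a clean intersection is étale-locally a pair of linear subspaces of $\mathbb A^n$; this is true but should be cited or proved. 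One small loose end: your chart computation identifies $\widetilde V$ with $\Bl_{Z\cap V}V$ chart-by-chart, but you should also note that this identification agrees with $\widetilde f$ -- they coincide on the dense open locus $V\setminus Z$, hence everywhere by separatedness of $\Bl_Z W$, which is the same density trick you already used to identify the image. What your approach buys is explicitness: the chart description of $\widetilde V$ falls out directly. What the paper's approach buys is efficiency, as the entire local analysis is packaged into the citation of Hartshorne.
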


\begin{proof}
    This follows from \cite{HartshorneAG}*{Cor. 7.15}. Indeed, if $\cI$ is the ideal sheaf of $Z$ then $\Bl_Z W$ is the blowup of $\cI$ as in \emph{loc. cit.} Let $i:V\to W$ denote the inclusion and consider the inverse image ideal sheaf $i^{-1}\cI\cdot \cO_V$ on $V$. By the clean intersection hypothesis, $i^{-1}\cI\cdot \cO_V$ is the ideal sheaf of $Z\cap V$ in $V$ and consequently by \emph{loc. cit.} the result follows.
\end{proof}

Write $E$ for the exceptional divisor in $\Bl_Z W$. By the construction of the blowup, it is a projective bundle $\bP(\cN_{Z|W})\to Z$. One can also use \cite{HartshorneAG}*{Cor. 7.15} to prove that $\widetilde{V} \cap E$ is the subbundle $\bP(\cN_{V\cap Z|V})$ of $\bP(\cN_{Z|W})|_{V\cap Z}$.

\begin{lem}
\label{L:Appstricttransform}
    If a proper irreducible variety $S$ is given with a map $g:S\to \Bl_Z W$ such that $(\pi \circ g)(S) \subset V$ and $(\pi \circ g)(S)\not\subset Z$, then $g:S\to \Bl_Z W$ has image contained in $\widetilde{V}$. 
\end{lem}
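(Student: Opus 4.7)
The plan is to exhibit a dense open subset of $S$ whose image lies in $\widetilde{V}$, and then to conclude by closedness of $\widetilde{V}$ and irreducibility of $S$.

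First, I would set $U := (\pi\circ g)^{-1}(V\setminus Z)\subset S$. By hypothesis $(\pi\circ g)(S)\subset V$ but $(\pi\circ g)(S)\not\subset Z$, so $U$ is a nonempty open subset of $S$; since $S$ is irreducible, $U$ is dense. Next I would use the standard description of the strict transform: $\widetilde{V}$ is the scheme-theoretic closure in $\Bl_Z W$ of $\pi^{-1}(V\setminus Z)$. In particular, since $\pi$ restricts to an isomorphism $\Bl_Z W\setminus E \xrightarrow{\sim} W\setminus Z$, one has the equality $\pi^{-1}(V\setminus Z) = \widetilde{V}\cap \pi^{-1}(W\setminus Z)$; this identification can be read off from the description of $\widetilde{V}$ in \Cref{L:propertransformblowup} as $\Bl_{Z\cap V} V$ mapped into $\Bl_Z W$, because the blowup morphism $\Bl_{Z\cap V} V \to V$ is an isomorphism over $V\setminus Z$.

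For any $s\in U$, the point $g(s)$ lies in $\pi^{-1}(V\setminus Z)$, hence in $\widetilde{V}$. Thus $g(U)\subset \widetilde{V}$. Since $\widetilde{V}\subset \Bl_Z W$ is closed (as the image of the proper morphism $\Bl_{Z\cap V} V \hookrightarrow \Bl_Z W$ from \Cref{L:propertransformblowup}, or just as the scheme-theoretic closure of a subscheme), I would conclude that the closure $\overline{g(U)}$ in $\Bl_Z W$ is contained in $\widetilde{V}$. Finally, continuity of $g$ and density of $U$ in the irreducible space $S$ give $g(S)\subset \overline{g(U)}\subset \widetilde{V}$, as claimed.

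The argument is essentially formal once one has the correct characterization of $\widetilde{V}$; the only step that requires a small amount of care is verifying that $\pi^{-1}(V\setminus Z) = \widetilde{V}\cap \pi^{-1}(W\setminus Z)$ under the clean intersection hypothesis, which I expect to be the main (and only) subtle point. This is however already implicit in \Cref{L:propertransformblowup}, so no new calculation is required. Properness of $S$ is not strictly needed for the containment, but guarantees that $g(S)$ itself is closed in $\Bl_Z W$ if one wishes to upgrade the conclusion to an equality with a closed subvariety of $\widetilde{V}$.
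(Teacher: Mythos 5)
Your argument is correct and is essentially identical to the paper's proof: set $U=(\pi\circ g)^{-1}(V\setminus Z)$, note it is nonempty open (hence dense by irreducibility of $S$), observe $g(U)\subset\pi^{-1}(V\setminus Z)$, and conclude $g(S)\subset\overline{g(U)}\subset\overline{\pi^{-1}(V\setminus Z)}=\widetilde V$. Your side remark that properness of $S$ is not actually needed for the containment is also accurate.
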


\begin{proof}
    $(\pi\circ g)^{-1}(V\setminus Z) = U$ is a nonempty open subset of $S$. $g(U)\subset \pi^{-1}(V\setminus Z)$ and thus since $S = \overline{U}$, one has $g(S) \subset \overline{g(U)} \subset \overline{\pi^{-1}(V\setminus Z)} = \widetilde{V}$. 
\end{proof}

\begin{lem}
\label{L:strictrestrict}
    Suppose $V$ and $Z$ meet cleanly. Let $X$ be a smooth variety with a morphism $f:X\to W$, $D$ a divisor in $X$, and $S\subset X$ a smooth and proper subvariety such that 
    \begin{enumerate}
        \item $f^{-1}(Z) = D$; and
        \item $f^{-1}(V) = S$ and $f(S) \not\subset Z$. 
    \end{enumerate}
    By \Cref{L:univpropblowup}, there is an induced $\widetilde{f}: X\to \Bl_Z W$ which restricts to a morphism $\widetilde{f}|_S:S\to \widetilde{V}$ by \Cref{L:Appstricttransform}. By \Cref{L:propertransformblowup}, $\widetilde{V} = \Bl_{Z\cap V} V$, and as $f|_S^{-1}(Z\cap V) = S\cap D$, there is an induced map $\widetilde{g}:S\to \Bl_{Z\cap V} V$ by \Cref{L:univpropblowup}. The following diagram commutes:
    \[
        \begin{tikzcd}
            S \arrow[r,"\widetilde{f}|_S"]\arrow[dr,"\widetilde{g}",swap] & \widetilde{V}\arrow[d,"\psi"]\\
            & \Bl_{Z\cap V} V.
        \end{tikzcd}
    \]
\end{lem}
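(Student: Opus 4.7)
The plan is to apply the universal property of blowing up (\Cref{L:univpropblowup}) twice to reduce the commutativity of the triangle to an equality of morphisms $S \to V$, which then follows by a straightforward diagram chase.

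More precisely, observe that both $\widetilde{g}$ and $\psi \circ \widetilde{f}|_S$ are morphisms from $S$ to $\Bl_{Z\cap V}V$. By \Cref{L:univpropblowup}, $\widetilde{g}$ is characterized as the unique morphism making the diagram
\[
\begin{tikzcd}
S \arrow[r,"\widetilde{g}"] \arrow[dr,"f|_S",swap] & \Bl_{Z\cap V}V \arrow[d,"\beta"] \\
& V
\end{tikzcd}
\]
commute (where $\beta$ is the blowup structure morphism), since $(f|_S)^{-1}(Z\cap V) = S\cap D$ is an effective Cartier divisor on $S$ by hypothesis. Thus it suffices to verify that $\beta \circ (\psi \circ \widetilde{f}|_S) = f|_S$ as well.

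For this, let $\iota: V \hookrightarrow W$ denote the inclusion and $\pi: \Bl_Z W \to W$ the blowup morphism. \Cref{L:propertransformblowup} asserts commutativity of the square
\[
\begin{tikzcd}
\Bl_{Z\cap V} V \arrow[r,hook]\arrow[d,"\beta"] & \Bl_Z W \arrow[d,"\pi"] \\
V \arrow[r,hook,"\iota"] & W,
\end{tikzcd}
\]
in which the top horizontal arrow is the closed immersion with image $\widetilde{V}$; since $\psi$ is its inverse onto $\widetilde{V}$, we have $\iota \circ \beta \circ \psi = \pi|_{\widetilde{V}}$. Composing on the right with $\widetilde{f}|_S$ and using that $\pi \circ \widetilde{f} = f$, we obtain
\[
\iota \circ \beta \circ \psi \circ \widetilde{f}|_S = \pi|_{\widetilde{V}} \circ \widetilde{f}|_S = f|_S \text{ (post-composed with } \iota\text{)}.
\]
Since $\iota$ is a closed immersion and hence a monomorphism, this forces $\beta \circ \psi \circ \widetilde{f}|_S = f|_S$, completing the verification.

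There is no real obstacle here: the argument is essentially categorical, using only the universal property of $\Bl_{Z\cap V}V$ and the already-established commutativity of the square in \Cref{L:propertransformblowup}. The only point requiring minor care is to note that $\psi$ is genuinely an isomorphism (being the inverse of the closed immersion $\Bl_{Z\cap V}V \xrightarrow{\sim} \widetilde{V} \subset \Bl_Z W$), so that the identity $\iota \circ \beta \circ \psi = \pi|_{\widetilde{V}}$ is well-posed.
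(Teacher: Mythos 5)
Your proof is correct and takes essentially the same route as the paper: both arguments show that $\widetilde{g}$ and $\psi\circ\widetilde{f}|_S$ are each lifts of $f|_S:S\to V$ along the blowup $\Bl_{Z\cap V}V\to V$ and then invoke the uniqueness clause in \Cref{L:univpropblowup}. You simply spell out, via the commuting square in \Cref{L:propertransformblowup} and the monomorphism property of the closed immersion $V\hookrightarrow W$, the step that the paper leaves implicit, namely that the composite $S\to\widetilde{V}\to\Bl_{Z\cap V}V\to V$ agrees with $f|_S$.
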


\begin{proof}
    To verify that $\psi \circ \widetilde{f}|_S$ is induced by the universal property, it suffices to use the diagram 
    \[
        \begin{tikzcd}
            S \arrow[r,"\widetilde{f}|_S"]\arrow[dr,"\widetilde{g}",swap] & \widetilde{V}\arrow[d,"\psi"] \arrow[r]& V\\
            & \Bl_{Z\cap V} V\arrow[ur,"\pi",swap]&
        \end{tikzcd}
    \]
    to observe that the top row's composition is $f|_S$, and thus that $\widetilde{g}$ and $\psi \circ \widetilde{f}|_S$ are both lifts of $f|_S:S\to V$, whence they agree by \Cref{L:univpropblowup}.
\end{proof}

\begin{lem}
\label{L:linearstricttransform}
   Suppose $L = Z(f_1,\ldots, f_c)$ for $f_1,\ldots, f_c\in H^0(\bP^n,\cO(1))$ linearly independent. Let $E$ denote the exceptional divisor of $\Bl_L\bP^n$. Then,
    \begin{enumerate}
        \item $\cN_{L} \cong \cO_L(1)^{\oplus c}$ and consequently $\bP(\cN_{L}) \cong L\times \bP^{c-1}$. 
        \item $df_1,\ldots, df_c$ define global sections of $\bP(\cN_L^\vee)$ which are trivializing. Consequently, they define elements of $\bP H^0(\bP(\cN_L),\cO_{\cN_L}(1))$.
        \item If $M = Z(g_1,\ldots, g_b)$ for $g_i = \sum_{j=1}^c a_{ij}f_j$ for each $1\le i \le b$ and $a_{ij}\in \bC$ then $\widetilde{M} \cap E$ corresponds to $L \times Z(dg_1,\ldots, dg_b) \subset L\times \bP^{c-1}$ where $dg_i = \sum_{j=1}^c a_{ij}df_j$ is regarded as an element of $\bP H^0(\bP(\cN_L),\cO_{\cN_L}(1))$.
    \end{enumerate}
\end{lem}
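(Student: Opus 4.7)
The strategy is to reduce all three parts to explicit linear algebra on the complete intersection $L$. For (1), since $L$ has codimension exactly $c$, the linear forms $f_1,\ldots,f_c$ form a regular sequence, so the Koszul complex provides a locally free resolution of $\cO_L$ and identifies the conormal sheaf $\cI_L/\cI_L^2$ with the free $\cO_L$-module generated by the classes of $f_1,\ldots,f_c$; because each $f_i$ is a section of $\cO(1)$, this identification reads $\cN_L^\vee \cong \cO_L(-1)^{\oplus c}$, and dualizing gives $\cN_L \cong \cO_L(1)^{\oplus c}$. Twisting by a line bundle does not change the associated projective bundle, so $\bP(\cN_L) \cong \bP(\cO_L^{\oplus c}) \cong L \times \bP^{c-1}$, the isomorphism being pinned down by the chosen basis.

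Part (2) is essentially a restatement of (1): under the isomorphism constructed there, the $df_i$ are by definition the distinguished basis sections of $\cN_L^\vee$ (absorbing the twist by $\cO_L(1)$ appropriately), and correspond under $\bP(\cN_L) \cong L \times \bP^{c-1}$ to the standard homogeneous coordinates on $\bP^{c-1}$. Viewed via the canonical map $H^0(\bP^{c-1},\cO(1))\to H^0(\bP(\cN_L),\cO_{\bP(\cN_L)}(1))$ induced by the trivialization, they thus form a basis of the latter.

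For (3), I first observe that $L \subseteq M$: each $g_i = \sum_j a_{ij} f_j$ vanishes along $L$, so $M \cap L = L$, which is smooth, and the clean intersection hypothesis of \Cref{L:propertransformblowup} holds trivially. That lemma then identifies $\widetilde{M}$ inside $\Bl_L \bP^n$ with $\Bl_L M$, whose exceptional divisor is the sub-bundle $\bP(\cN_{L/M}) \subseteq \bP(\cN_{L/\bP^n}) = E$. I then invoke the conormal short exact sequence
\[
0 \longrightarrow \cN_{M/\bP^n}^{\vee}\big|_L \longrightarrow \cN_{L/\bP^n}^{\vee} \longrightarrow \cN_{L/M}^{\vee} \longrightarrow 0,
\]
together with the fact that the leftmost map carries $dg_i \mapsto \sum_j a_{ij}\, df_j$, to describe $\cN_{L/M} \subset \cN_{L/\bP^n}$ fiberwise as the subspace annihilated by the linear functionals $dg_1,\ldots, dg_b$. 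In the trivialization from (1)--(2) in which $df_1,\ldots,df_c$ become the homogeneous coordinates on $\bP^{c-1}$, this translates immediately to $\widetilde{M}\cap E \cong L \times Z(dg_1,\ldots, dg_b) \subset L \times \bP^{c-1}$.

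The only genuinely delicate point, which I will need to address with care, is keeping straight the duality between ``line in the fiber of $\cN_L$'' and ``hyperplane in the fiber of $\cN_L^{\vee}$''---this is exactly what makes the $dg_i$ appear as \emph{equations} cutting out $\widetilde{M}\cap E$ inside $L\times\bP^{c-1}$ rather than as \emph{points}. Once the conventions are pinned down by (1) and (2), (3) is a formal consequence of the conormal sequence, and no step should present a substantive obstacle.
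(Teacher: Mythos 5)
Your proof is correct and its overall shape matches the paper's. Parts (1) and (2) are essentially identical: the complete-intersection computation of $\cN_L$, and the observation that projectivizing kills the $\cO_L(1)$ twist so that the classes of the $f_i$ give a trivialization of $\bP(\cN_L^\vee)$ (the paper pins this down by checking the cocycle condition $d\phi_i = (t/s)\, d\psi_i$ on overlaps, which is the precise version of your ``absorbing the twist appropriately''). Where you diverge is part (3): the paper works fiberwise, identifying $\widetilde{M}\cap E_p = \bP(T_p M \cap \cN_{L,p})$ and verifying in local coordinates that this is $Z(dg_1,\ldots,dg_b)$, whereas you notice that $L\subseteq M$ automatically (since each $g_i$ is a linear combination of the $f_j$), invoke \Cref{L:propertransformblowup} to identify $\widetilde{M}$ with $\Bl_L M$ and hence $\widetilde{M}\cap E$ with $\bP(\cN_{L/M})\subset\bP(\cN_{L/\bP^n})$, and then read off the defining equations from the conormal exact sequence. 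This is a genuinely cleaner route: it replaces a coordinate computation with a structural argument that also makes the duality between ``sub-bundle of $\cN_L$'' and ``equations in $\cN_L^\vee$'' transparent, which is exactly the point you flag as delicate. The paper's version is more elementary and self-contained; yours is more conceptual and would generalize more readily beyond linear subspaces.
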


\begin{proof}
    (1) If $Y\subset \bP^n$ is a degree $d$ hypersurface, then $\cN_Y = \cO_Y(d)$. As $L$ is a complete intersection of $c$ hyperplanes, $\cN_{L} = \cO_L(1)^{\oplus c}$. So, $\cN_L = \cO_L(1)^{\oplus c}$ and $\bP(\cN_L) = \bP(\cO_L(1)^{\oplus c}) \cong \bP(\cO_L^{\oplus c})$, i.e. the trivial bundle. 

    (2) Consider $x\in L$ and an open neighborhood $x\in U \subset \bP^n$. Up to shrinking $U$, consider local nonvanishing sections $s,t\in \cO(1)(U)$. We obtain a system of linear equations $\phi_i = f_i/s$ and $\psi_i = f_i/t$ in $\cO(U)$ for $1\le i \le c$ defining $U\cap L$. Let $i:L\hookrightarrow \bP^n$ denote the inclusion morphism. Regarded as sections of $i^*\Omega_{\bP^n}(U\cap L)$, one has $d\phi_i = (t/s)\cdot d\psi_i$ where $t/s\in \cO_L^*(U\cap L)$. Thus, the local sections $d\phi_i$ satisfy the cocycle condition necessary to globally trivialize $\bP(\cN_{L}^\vee)$. We denote the corresponding global sections of $\bP(\cN_L^\vee)$ by $df_1,\ldots, df_c$.

    (3) For any $p\in L$, $df_{1,p},\ldots, df_{c,p}$ define elements of $\bP H^0(\bP(\cN_{L,p}),\cO(1))$. By the definition of $\Bl_L \bP^n$, $E = \bP(\cN_L)$. Suppose given $p\in L$, so that $E_p = \bP(\cN_{L,p})$. By the properties of the blowup, $\widetilde{M} \cap E_p = \bP(T_pM\cap \cN_{L,p}) \subset \bP(\cN_{L,p})$. However, one verifies in local coordinates that this is exactly $Z(dg_1,\ldots, dg_b)$
\end{proof}

\begin{lem}
\label{L:strictransform}
    Suppose $D_1,\ldots, D_k$ are smooth normal crossings divisors in a smooth variety $X$ with $Z\subset X$ a smooth subvariety and $D_i\not\subset Z$ for all $i$. Consider $\pi:\Bl_Z(X) \to X$ and let $E$ denote the exceptional divisor. One has $\widetilde{D}_1\cap \cdots \cap\widetilde{D}_k\cap E = (D_1\cap \cdots \cap D_k)^{\sim} \cap E$.
\end{lem}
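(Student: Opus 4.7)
The plan is to prove the stronger scheme-theoretic equality $\widetilde{D}_1\cap\cdots\cap\widetilde{D}_k = (D_1\cap\cdots\cap D_k)^{\sim}$ in $\Bl_Z X$; intersecting with $E$ then yields the lemma. The inclusion $(D_1\cap\cdots\cap D_k)^{\sim}\subseteq \widetilde{D}_i$ for each $i$ is immediate from functoriality of strict transforms applied to $D_1\cap\cdots\cap D_k\subseteq D_i$, so only the reverse inclusion $\widetilde{D}_1\cap\cdots\cap\widetilde{D}_k\subseteq(D_1\cap\cdots\cap D_k)^{\sim}$ requires genuine work.

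The first reduction I would make is to observe that since $D_i\not\subset Z$, the local equation of $D_i$ does not vanish identically along $Z$, so the total transform $\pi^{-1}(D_i)$ contains no irreducible component supported on $E$; equivalently, $\widetilde{D}_i=\pi^{-1}(D_i)$ as subschemes of $\Bl_Z X$. Combining over $i$ gives $\widetilde{D}_1\cap\cdots\cap\widetilde{D}_k=\pi^{-1}(D_1\cap\cdots\cap D_k)$ as subschemes, while $(D_1\cap\cdots\cap D_k)^{\sim}$ is by definition the closure of $\pi^{-1}(D_1\cap\cdots\cap D_k)\setminus E$ in $\Bl_Z X$. The claim therefore reduces to showing that $\pi^{-1}(D_1\cap\cdots\cap D_k)$ has no irreducible component contained in $E$.

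The no-extra-components statement I would establish by a local dimension count near any $p\in Z$. Using the normal crossings hypothesis together with the clean intersection compatibility between $Z$ and the $D_i$'s (implicit in the paper's applications via the invocation of Ulyanov's lemma), choose coordinates $x_1,\ldots,x_n$ on $X$ near $p$ in which each $D_i=\{x_{j_i}=0\}$ for distinct indices $j_i$, and $Z=\{x_{\iota_1}=\cdots=x_{\iota_c}=0\}$ is a coordinate subspace with $\{\iota_\alpha\}\cap\{j_i\}=\varnothing$. In a standard affine chart of $\Bl_Z X$, the coordinates $x_{j_1},\ldots,x_{j_k}$ descend unchanged and form part of a system of local parameters at any point of $E$; hence their vanishing loci cut out $\pi^{-1}(D_1\cap\cdots\cap D_k)$ as a complete intersection of codimension exactly $k$. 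Any irreducible component contained in the codimension-one divisor $E$ would have codimension at least $k+1$ in $\Bl_Z X$, contradicting this count, and the reverse inclusion follows. The principal obstacle is producing the simultaneously adapted coordinates for $Z$ and the $D_i$: this is exactly where the implicit clean intersection of $Z$ with each (partial intersection of) $D_i$ is needed, and it is precisely this compatibility that rules out spurious components over $E$.
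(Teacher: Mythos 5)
Your key reduction is the claim that $\widetilde{D}_i = \pi^{-1}(D_i)$, and you justify it by writing ``since $D_i\not\subset Z$, the local equation of $D_i$ does not vanish identically along $Z$.'' This inference is backwards. The condition ensuring that the local equation $f_i$ of $D_i$ does not vanish identically on $Z$ is $Z\not\subset D_i$, and the lemma's hypothesis $D_i\not\subset Z$ does not imply it. (Take $X=\bA^2$, $D=\{x=0\}$, $Z=\{0\}$: then $D\not\subset Z$ but $Z\subset D$, and $\pi^{-1}(D)=\widetilde{D}+E$.) With only the stated hypothesis, $Z$ may well lie inside some $D_i$, in which case $\pi^{-1}(D_i)$ picks up $E$ as a component, $\widetilde{D}_i\neq\pi^{-1}(D_i)$, and everything downstream of your first reduction breaks. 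Your subsequent coordinate model smuggles in the same assumption: requiring $\{\iota_\alpha\}\cap\{j_i\}=\varnothing$ is precisely requiring $Z\not\subset D_i$ for each $i$, which clean intersection alone does not furnish.

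This matters because the case $Z\subset D_i$ is exactly the one in which the statement has real content: there $\widetilde{D}_i\cap E_p$ is a proper hyperplane $Z(df_{i,p})\subset\bP(\cN_{Z|X,p})$ rather than all of $E_p$, and the paper's own proof is a fiberwise computation of both sides as $Z(df_{1,p},\ldots,df_{k,p})$ using the identification of the strict transform of a smooth subvariety with the projectivized image of its tangent space in the normal bundle; this requires $df_{i,p}$ to vanish on $T_pZ$, i.e.\ $Z\subset D_i$ near $p$. So your approach and the paper's treat essentially complementary regimes. If you wish to pursue the total-transform route, you must either add $Z\not\subset D_i$ as an explicit hypothesis and argue separately when $Z\subset D_i$ (writing $\pi^*f_i = u\cdot g_i$, so $\widetilde{D}_i = Z(g_i)$, and showing $Z(g_1,\ldots,g_k)$ has no component inside $E$), or follow the paper's tangent-space argument, which handles both cases uniformly once one observes that $\widetilde{V}\cap E_p = \bP\bigl(\mathrm{im}(T_pV\to\cN_{Z|X,p})\bigr)$ for any smooth $V$ meeting $Z$ cleanly.
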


\begin{proof}
    The question is local, so suppose each $D_i$ is cut out by a function $f_i$ for each $i$. Without loss of generality, suppose there exists $p \in D_1\cap \cdots \cap D_k\cap Z$. $\pi^{-1}(p) = E_p =\bP(\cN_{Z|X,p})$ and $\widetilde{D}_i \cap E_p = Z(df_{i,p})$, where $df_{i,p} \in T_p^*X$ defines an element of $\cN_{Z|X,p}^*$ by hypothesis. The strict transform of $D_{\le k} = D_1\cap \cdots \cap D_k$ is given by $\bP(T_{D_{\le k}}\cap \cN_{Z|X}) \subset \bP(\cN_{Z|X})|_{D_{\le k}}$. However, this is just given fibrewise by $Z(df_{1,p},\ldots, df_{k,p})$.
\end{proof}



\printendnotes

\bibliography{refs}{}
\bibliographystyle{plain}

\end{document}